\newcommand{\bbN}{{\mathbb N}}
\newcommand{\bbR}{{\mathbb R}}
\newcommand{\bbZ}{{\mathbb Z}}
\newcommand{\bbE}{{\mathbb E}}
\newcommand{\bbF}{{\mathbb F}}
\newcommand{\bbP}{{\mathbb P}}
\newcommand{\EE}{\mathrel{E}}
\newcommand{\FF}{\mathrel{F}}
\newcommand*{\ideal}[1][I]{\mathcal{#1}}
\def\underTilde#1{{\baselineskip=0pt\vtop{\hbox{$#1$}\hbox{$\sim$}}}{}}
\def\bSigma{\underTilde{\Sigma}}
\def\bPi{\underTilde{\Pi}}
\def\bDelta{\underTilde{\Delta}}
\newtheorem{thm}{Theorem}[section]
\newtheorem*{thm*}{Theorem}
\newtheorem{cor}[thm]{Corollary}
\newtheorem{prop}[thm]{Proposition}
\newtheorem{lem}[thm]{Lemma}
\newtheorem*{lem*}{Lemma}
\newtheorem{question}{Question}[section]
\newtheorem*{conjecture}{Conjecture}
\newtheorem{mainthm}{Theorem}
\theoremstyle{definition}
\newtheorem{maindfn}{Definition}
\newtheorem{dfn}[thm]{Definition}
\begin{document}

\title[Relative primeness and Borel partition properties]{Relative primeness and Borel partition properties for equivalence relations}
\author{John D. Clemens}
\address{Boise State University, 1910 University Dr., Boise, ID 83725}
\email{johnclemens@boisestate,edu}
\date{}
\thanks{The author would like to thank Jared Holshouser, Ben Miller, Marcin Sabok,  Douglas Ulrich, and Jindra Zapletal for many helpful conversations about this work.}
\subjclass[2010]{Primary 03E15, Secondary 03E02}
\keywords{Borel equivalence relations, Borel reducibility, partition relations}

\begin{abstract}
We introduce a notion of relative primeness for equivalence relations, strengthening the notion of non-reducibility, and show for many standard benchmark equivalence relations that non-reducibility may be strengthened to relative primeness. We introduce several analogues of cardinal properties for Borel equivalence relations, including the notion of a prime equivalence relation and Borel partition properties on quotient spaces. In particular, we introduce a notion of Borel weak compactness, and characterize partition properties for the equivalence relations ${\mathbb F}_2$ and ${\mathbb E}_1$.  We also discuss dichotomies related to primeness, and see that many natural questions related to Borel reducibility of equivalence relations may be viewed in the framework of relative primeness and Borel partition properties.
\end{abstract}

\maketitle

\tableofcontents

\section{Introduction}

The theory of Borel reducibility of definable equivalence relations has proved fruitful in analyzing the complexity of classification problems in diverse areas of mathematics. A chief tool in this endeavor has been the identification of various canonical ``benchmark'' equivalence relations to which new ones can be compared. Hence it is important to understand the relationships between these benchmark relations, and to consider what, exactly, makes these examples canonical, e.g., what sort of minimality or robustness properties they exhibit.

In this paper, we introduce a strong form of non-reducibility, called relative primeness, and show that it holds between many of these benchmark equivalence relations. This notion provides motivation for several strong properties of equivalence relations, which we call primeness and Borel weak compactness, which we expect certain canonical equivalence relations should exhibit. These properties may be viewed as Borel analogues of properties of cardinals, such as cofinality, regularity, and weak compactness, extending the idea that Borel reducibility of equivalence relations may be viewed as a comparison of the definable cardinalities of their quotient spaces.
Many natural questions about Borel equivalence relations may be reformulated within this framework, providing new directions for investigation of old questions and suggesting many new ones.

Our first notion is that of \emph{relative primeness}. Recall that a map $\varphi: X \rightarrow Y$ is a \emph{homomorphism} from $E$ to $F$ if whenever $x_1 \EE x_2$ we have $\varphi(x_1) \FF \varphi(x_2)$, and a \emph{reduction} if $x_1 \EE x_2$ iff $\varphi(x_1) \FF \varphi(x_2)$. 

\begin{maindfn}
Let $E$ and $F$ be Borel equivalence relations on $X$ and $Y$, respectively. We say that $E$ is \emph{prime to} $F$ if, whenever $\varphi$ is a Borel homomorphism from $E$ to $F$, there is a Borel reduction $\rho$ from $E$ to $E$ such that the range of $\varphi \circ \rho$ is contained in a single $F$-class, i.e., $E \leq_B E \upharpoonright \varphi^{-1}[y]_F$ for some $y$.
\end{maindfn}

We will establish that, for many canonical equivalence relations, non-reducibility can in fact be strengthened to relative primeness:

\begin{mainthm}
Let $\mathcal{E}$ be the following collection of benchmark equivalence relations: $\Delta(2)$, $\Delta(\omega)$, $\Delta(\bbR)$, $\bbE_0$, $\bbE_1$, $\bbE_0^{\omega}$, $\bbE_1^{\omega}$, $\bbF_2$, and $\bbE_2$, and let $\mathcal{F}$ include every equivalence relation in $\mathcal{E}$ together with: $E_{\infty}$, $E_{\infty}^{\omega}$, $\cong_{\text{graph}}$, $E_{K_{\sigma}}$, and the universal $E_G^X$.
Then for any $E$ in $\mathcal{E}$ and for any $F$ in $\mathcal{F}$,  either $E \leq_B F$ or $E$ is prime to $F$. 
\end{mainthm}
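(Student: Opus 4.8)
The plan is to peel the statement down to a short list of primeness claims and then prove each by upgrading a known non-reducibility argument. For any pair with $E\leq_B F$ the first disjunct holds and there is nothing to do (and, for the nontrivial $E$ at hand, the two disjuncts are in fact mutually exclusive, so the statement is a genuine dichotomy). I would first record a \emph{monotonicity lemma}: if $E$ is prime to $F$ and $F'\leq_B F$, then $E$ is prime to $F'$. Indeed, given a Borel homomorphism $\varphi$ from $E$ to $F'$ and a Borel reduction $\theta$ of $F'$ to $F$, apply primeness to the homomorphism $\theta\circ\varphi$ to obtain a Borel reduction $\rho$ of $E$ to $E$ with $\theta\circ\varphi\circ\rho$ mapping into a single $F$-class $[y]_F$; since $\theta$ is a reduction, $\theta^{-1}\bigl[[y]_F\bigr]$ is a single $F'$-class, and it contains the range of $\varphi\circ\rho$. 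Combining the monotonicity lemma with the standard web of reducibilities among these benchmarks, it suffices to prove, for each $E\in\mathcal E$, that $E$ is prime to the $\leq_B$-maximal members of $\{F\in\mathcal F:E\not\leq_B F\}$; these targets are, depending on $E$, among $\Delta(\bbR)$, $\bbE_2$, $\bbE_1^{\omega}$, $\bbE_0^{\omega}$, $\bbF_2$, $E_\infty$, $E_\infty^{\omega}$, $E_{K_\sigma}$, and the universal $E_G^X$.

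For each remaining pair I would prove primeness by a uniform two-step recipe. Step (i): given a Borel homomorphism $\varphi\colon X\to Y$ from $E$ to $F$, produce an $E$-invariant Borel set $C\subseteq X$ that is ``large'' --- of full cardinality in the $\Delta$-cases, comeager in the Baire-category cases, conull for a suitable invariant measure in the measure cases --- on which $\varphi$ \emph{degenerates}, i.e.\ $\varphi[C]$ lies in a single $F$-class. Step (ii): show $E\leq_B E\upharpoonright C$ for every such ``large'' $E$-invariant Borel $C$; a continuous Cantor-scheme construction keyed to the combinatorics of $E$ (finite modifications for $\bbE_0$ and $\bbE_2$; tails for $\bbE_1$; coordinatewise for the $\omega$-powers; set-enumerations for $\bbF_2$) does this, and in the $\Delta$-cases it reduces to the Lusin--Suslin pigeonhole that a Borel set of full cardinality admits a Borel injection from $X$. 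Composing the reduction from (ii) with $\varphi$ yields the $\rho$ required by the definition of primeness, with $\varphi\circ\rho$ trapped in one $F$-class.

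Step (ii) is routine; the content is step (i), and it splits into four families. (a) $E$ an equality relation and $F$ a smaller equality relation: a homomorphism is an arbitrary Borel map, and some fiber has full Borel cardinality. (b) $E\in\{\bbE_0,\bbE_2,\bbF_2\}$ and $F=\Delta(\bbR)$: a homomorphism to equality is an invariant Borel function, hence --- by generic ergodicity of $E$, or, for $\bbE_0$ and $\bbE_2$, by ergodicity with respect to the natural measure --- constant on a comeager (conull) invariant Borel set. (c) $E\in\{\bbE_1,\bbE_1^{\omega}\}$ and $F=E_G^X$, whence by monotonicity every orbit-equivalence-relation target ($\bbE_0$, $\bbE_0^{\omega}$, $\bbF_2$, $E_\infty$, $E_\infty^{\omega}$, $\cong_{\text{graph}}$, and the $\Delta$'s) is covered: here one reworks the Kechris--Louveau theorem that $\bbE_1$ is not reducible to an orbit equivalence relation into the form ``every Borel homomorphism from $\bbE_1$ to an orbit equivalence relation degenerates on a comeager $\bbE_1$-invariant Borel set.'' (d) $E$ among the non-essentially-countable benchmarks ($\bbF_2$, $\bbE_0^{\omega}$, $\bbE_1^{\omega}$, $\bbE_2$) and $F$ among those of $E_\infty$, $E_\infty^{\omega}$, $E_{K_\sigma}$ (and, where the source is not $S_\infty$-classifiable, $\cong_{\text{graph}}$) to which it does not reduce: here one recasts the Hjorth--Kechris non-essential-countability arguments for $\bbF_2=(=_{2^{\omega}})^{+}$ and its coordinatewise variants, and Hjorth's turbulence/measure argument for $\bbE_2$, as ``degeneracy on a comeager invariant Borel set'' statements, and a hypersmoothness-flavored version handles remaining targets such as $\bbE_1$ (using $F\leq_B\bbE_1$).

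I expect family (c) to be the crux. The Kechris--Louveau non-reducibility is proved by a global Baire-category descent --- iteratively choosing coordinates to flip so as to trap the final point of disagreement --- and the work is to run that descent \emph{inside a prescribed closed sub-cube} of $X$, checking both that the category genericity the descent needs survives the restriction and that the sub-cube still carries a Borel reduction from $\bbE_1$ (resp.\ $\bbE_1^{\omega}$), so that steps (i) and (ii) can be married. A secondary difficulty is family (d) for the pairwise $\leq_B$-incomparable ``large'' targets ($\bbF_2$, $E_\infty^{\omega}$, $\bbE_0^{\omega}$, $\bbE_1^{\omega}$, $\bbE_2$): no single dichotomy applies, so one must isolate the precise generic-ergodicity statement for each pair, and for at least one pair (e.g.\ $\bbE_1$ versus $\bbE_2$) the resulting non-reducibility may itself be new, so the degeneracy argument must be built from scratch rather than lifted from the literature.
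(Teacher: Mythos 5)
Your two-step recipe --- (i) produce a comeager (or conull) $E$-invariant set $C$ on which $\varphi$ degenerates, then (ii) embed $E$ into $E\upharpoonright C$ --- factors the definition of relative primeness through generic $F$-ergodicity, and that is precisely what fails for the hardest pairs. The paper flags this explicitly: $\bbE_1$ is \emph{not} generically $\bbE_0$-ergodic, so step (i) is simply false for $(\bbE_1,\bbE_0)$. For a concrete witness, take $\varphi\colon(2^\omega)^\omega\to 2^\omega$, $\varphi(\bar x)(n)=x_n(0)$; this is a continuous homomorphism from $\bbE_1$ to $\bbE_0$, but the preimage of every $\bbE_0$-class is meager. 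You cannot ``rework Kechris--Louveau into degeneracy on a comeager invariant set'' because no such set exists. The paper's argument is structurally different: it is a single interleaved Cantor-scheme construction driven by Hjorth's continuity lemma for Polish-group homomorphisms. At each stage one picks a finite-support coordinate flip $h$ that is generic enough that $\varphi(h\cdot x_s)=g\cdot\varphi(x_s)$ for some small $g\in V_k$; the auxiliary group elements $g_s$ are forced to be Cauchy, so \emph{every} branch $\rho(x)$ of the scheme has $\varphi(\rho(x))$ in the single orbit of $\varphi(x_\emptyset)$. The reduction $\rho$ and the $F$-class containment are built together, not in two passes, and $\varphi$ need never be constant on any large set.

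The same objection hits your treatment of $\bbE_0^\omega$ and $\bbE_1^\omega$ against $E_{K_\sigma}$: the paper handles all $F_\sigma$ targets via a good/bad-point dichotomy plus density of a single $E$-class (the map $\rho(\bar z)=\bar y\upharpoonright k\smallfrown\overline\psi(\bar z)$ lands one $F_k$-cell, no comeager set involved). Your monotonicity lemma is correct and is Lemma~2.5(2) in the paper, and your families (a) and (b) do go through by Silver/Galvin and generic ergodicity; likewise $\bbE_2$ against $S_\infty$-classifiable targets and against $\bbE_1,\bbE_1^\omega$ is genuine generic ergodicity (Hjorth; Kanovei--Reeken), and $\bbF_2$ being prime is the Kanovei--Sabok--Zapletal theorem. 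But the claim ``the content is step (i)'' is wrong for the $\bbE_1$, $\bbE_1^\omega$, $\bbE_0^\omega$ sources: there the content is a direct construction of $\rho$ that sidesteps degeneracy on a large set entirely. As written, your plan would prove a false lemma for family (c) and stall.
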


We next introduce several global properties of equivalence relations:

\begin{maindfn}
We say that a Borel equivalence relation $E$ is \emph{prime} if it has at least two equivalence classes, and for any Borel equivalence relation $F$, either $E \leq_B F$ or $E$ is prime to $F$. We say that $E$ is \emph{regular} if $E$ is prime to any $F$ with $F <_B E$.
\end{maindfn}

Many previous results may be interpreted as showing regularity and primeness of certain benchmark equivalence relations, e.g., $\Delta(\bbR)$, $\bbE_0$, and $\bbF_2$ are prime, and $\bbE_1$, $\bbE_0^{\omega}$, and $\bbE_2$ are regular. We explore properties of prime equivalence relations and investigate approaches to further examples. We also introduce Borel partition properties and consider a strengthening of primeness:

\begin{maindfn}
We say that a Borel equivalence relation $E$ is \emph{Borel weakly compact} if for every symmetric $E \times E$-invariant Borel function $f: X^2 \rightarrow 2$ there is a Borel set $A$ with $E \leq_B E \upharpoonright A$ so that $f$ is constant on $X^2 \setminus E$. 
\end{maindfn}

Previous results establish the Borel weak compactness of $\Delta(\bbR)$ and $\bbE_0$,  but this turns out to be a much rarer property than primeness. We show that it fails for two natural candidates:

\begin{mainthm}
$\bbF_2$ is not Borel weakly compact.
\end{mainthm}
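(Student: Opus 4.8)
The plan is to refute Borel weak compactness by a single unavoidable coloring, namely the one that records $\subseteq$-comparability of countable sets. Realize $\bbF_2$ on $X=(2^{\bbN})^{\bbN}$ as equality of the enumerated set $S(x)=\{x_n:n\in\bbN\}$, and set $f(x,y)=1$ if $S(x)\subseteq S(y)$ or $S(y)\subseteq S(x)$, and $f(x,y)=0$ otherwise. This $f$ is symmetric and $\bbF_2\times\bbF_2$-invariant by construction, and it is Borel because ``$S(x)\subseteq S(y)$'' is $\mathbf{\Pi}^0_3$. It therefore suffices to show that for every Borel $A$ with $\bbF_2\leq_B\bbF_2\upharpoonright A$ the family $\{S(a):a\in A\}$ contains two distinct $\subseteq$-comparable sets as well as two $\subseteq$-incomparable sets, so that $f$ is not constant on $(A\times A)\setminus\bbF_2$.

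First I would dispose of the incomparable pair, i.e.\ rule out that $\{S(a):a\in A\}$ is a $\subseteq$-chain. If it were, then ``$S(a)\subseteq S(b)$'' would be a Borel linear quasi-order on $A$ whose associated equivalence relation is exactly $\bbF_2\upharpoonright A$; pulling this quasi-order back along a Borel reduction witnessing $\bbF_2\leq_B\bbF_2\upharpoonright A$ would yield a Borel linear quasi-order on $X$ with associated equivalence relation $\bbF_2$, i.e.\ would Borel-linearly-order $X/\bbF_2$. Since $\bbE_0\leq_B\bbF_2$ (send $z$ to a canonical Borel enumeration of $[z]_{\bbE_0}$) and being Borel-linearly-orderable is preserved downward under $\leq_B$, this would descend to a Borel linear order on $2^{\bbN}/\bbE_0$, which is well known to be impossible. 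Hence some pair in $A$ is $\subseteq$-incomparable.

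The comparable pair, i.e.\ that $\{S(a):a\in A\}$ is not a $\subseteq$-antichain, is where the real work lies, and I expect this to be the main obstacle. Here a Borel reduction witnessing $\bbF_2\leq_B\bbF_2\upharpoonright A$ supplies a Borel map $G$ on countable subsets of $2^{\bbN}$, injective modulo $\bbF_2$, whose range is a $\subseteq$-antichain; in particular $S\subsetneq T$ forces $G(S)\not\subseteq G(T)$, so each enlargement of the input must eject a point from the output. My proposed attack is a diagonalization driven by the two facts that obstruct every naive attempt to build such a $G$: a countable union of countable sets cannot exhaust $2^{\bbN}$ (so $G(\{z\})$ must omit a point of the fixed countable set $G(\emptyset)$, and iterating forces $G$ to omit an infinite set of ``witnesses'' on a shrinking domain), together with the primeness of $\bbF_2$, which at each stage lets me pass to a still $\bbF_2$-rich Borel piece after splitting the current domain into countably many pieces according to which witness is selected. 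Converting this into an outright contradiction — equivalently, proving directly that no Borel $\subseteq$-antichain of countable subsets of $2^{\bbN}$ is $\bbF_2$-rich, possibly alternatively via a Dilworth-type dichotomy for Borel quasi-orders — is the technical heart of the argument; granting it, the chain and antichain cases together show that $f$ is monochromatic on no rich Borel set, so $\bbF_2$ is not Borel weakly compact.
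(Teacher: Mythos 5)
You have chosen exactly the right coloring --- the $\subseteq$-comparability partition, which is the paper's $P_0$ --- and your chain case is handled correctly and essentially as the paper does it: an $\bbF_2$-rich Borel $\subseteq$-chain would give a Borel linear quasi-order whose induced equivalence relation is $\bbF_2$, which pulls back (via $\bbE_0 \leq_B \bbF_2$) to a Borel linear order on $2^{\omega}/\bbE_0$, which cannot exist (this is the content of Lemma~\ref{lem:orderable}).

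The antichain case, however, is left as a gap, and you say so yourself (``granting it''); but that grant \emph{is} the theorem. Your proposed diagonalization --- iterate ``a countable union of countable sets cannot exhaust $\bbR$'' while using primeness of $\bbF_2$ to pass to rich Borel sub-pieces --- is the right intuition but does not close as stated. There is no canonical Borel selection of a ``first ejected witness'' from the antichain condition, and after $\omega$ stages you have controlled only countably many points with no contradiction yet in hand; the obstruction is genuinely $\Pi^1_1$ in nature and a naive $\omega$-length recursion cannot see it. A Borel Dilworth dichotomy that would hand you the antichain conclusion is not available. What the paper actually does is two things you do not mention: (i) use generic $\Delta(\bbR)$-ergodicity of $\bbF_2$ together with the Generic Embedding Lemma to upgrade a hypothetical reduction onto an antichain into an $\bbF_2$-invariant Borel map $F$ with the strong disjointness properties $F(x)\not\subseteq F(y)$ for $\bbF_2$-inequivalent $x,y$, $F(x)\cap F(y)=\emptyset$ for disjoint $x,y$, and $F(x)\cap F(x\cup y)=\emptyset$ for disjoint $x,y$; and (ii) prove a Friedman-style diagonalizer lemma (Lemma~\ref{lem:F2diagonal}) ruling out any such $F$, via a transfinite recursion $y_\alpha = F\bigl(F(x_0)\cup\bigcup_{\beta<\alpha}y_\beta\bigr)$ whose well-ordered runs cannot be Borel-separated from the ill-founded ones, by Harrington's separation theorem. (A forcing proof using pinned names is an alternative, and is the route by which Zapletal strengthened the result to ``$\bbF_2\upharpoonright H$ is pinned for any $P_0$-homogeneous $H$.'') Both (i) and (ii) are needed, and neither is present in your sketch; the argument has a real gap precisely where the work lies.
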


\begin{mainthm}
$\bbE_1$ is not Borel weakly compact.
\end{mainthm}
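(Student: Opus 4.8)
\emph{The plan} is to exhibit a symmetric, $\bbE_1\times\bbE_1$-invariant Borel colouring $f\colon X^2\to 2$, where $X=(2^\omega)^\omega$ and $x\mathrel{\bbE_1}x'$ iff $x(n)=x'(n)$ for all but finitely many $n$, such that no Borel set $A$ with $\bbE_1\leq_B\bbE_1\upharpoonright A$ is $f$-homogeneous on $(A\times A)\setminus\bbE_1$. It helps to keep in mind the canonical filtration $\bbE_1=\bigcup_N F_N$, where $x\mathrel{F_N}x'$ iff $x\upharpoonright[N,\omega)=x'\upharpoonright[N,\omega)$: each $F_N$ is smooth, $F_N\subseteq F_{N+1}$, and every $\bbE_1$-class is the increasing union of the $F_N$-classes through any of its points. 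A pair $x\not\mathrel{\bbE_1}x'$ carries two pieces of symmetric $\bbE_1$-invariant data: the difference $\delta(x,x')=(x(n)\oplus x'(n))_n$, well-defined modulo $\bbE_1$ in each variable, and, for each $N$, the ``depth'' $\ell_N(x,x')$ of the first discrepancy between the tails $x\upharpoonright[N,\omega)$ and $x'\upharpoonright[N,\omega)$ relative to a fixed Borel coding of $(2^\omega)^\omega$ by $2^\omega$; the germ of $(\ell_N(x,x'))_N$ at infinity is again invariant and symmetric. The colouring $f$ will be a $\{0,1\}$-valued function of the pair $\big([\delta(x,x')]_{\bbE_1},\ \text{germ of }(\ell_N(x,x'))_N\big)$.

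The main difficulty to anticipate is that $X$ teems with $\bbE_1$-large Borel sets that are nonetheless ``thin'', on which $\bbE_1$ restricts faithfully while essentially every simple invariant becomes constant: e.g.\ $\{x:x(n)\upharpoonright g(n)=t(n)\upharpoonright g(n)\ \text{for all }n\}$ for any $g\to\infty$ and fixed $t$; $\{x:x(2n)=\mathbf 0\ \text{for all }n\}$; the image of a ``disjoint-range'' coding $x\mapsto(\beta_n(x(n)))_n$ with $\beta_n$ a Borel injection of $2^\omega$ onto a perfect set $V_n$, the $V_n$ pairwise disjoint; and $\{x:x(n)=\mathbf 0\ \text{for all }n\notin T\}$ for any infinite $T\subseteq\omega$. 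On the last of these the difference $\delta$ realises precisely the sequences supported inside $T$, so by a Galvin--Prikry argument any colouring factoring through the support $\{n:x(n)\neq x'(n)\}$ modulo finite is constant on a further such set; and colourings built from $\delta(x,x')$ alone fare little better, since the difference-classes realised on such a set again form a faithful copy of $\bbE_1$. Thus $f$ must genuinely involve the \emph{pair} of $\bbE_1$-classes, not merely their difference, and the first concrete step of the proof is to pin down a combination of the data above that meets both values on (inequivalent pairs from) each of the thin sets just listed; I expect this to be fiddly but routine.

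The crux, and the step I expect to be by far the hardest, is ruling out \emph{all} homogeneous sets, not only the thin ones one can write down. Here one must control an arbitrary Borel reduction $\psi$ witnessing $\bbE_1\leq_B\bbE_1\upharpoonright A$ by invoking the classical rigidity of $\bbE_1$ --- it is not Borel reducible to any hyperfinite equivalence relation, nor to any orbit equivalence relation of a Polish group action --- in a sharp canonization form: after passing to an $\bbE_1$-large Borel subdomain and replacing $\psi$, level by level along the filtration, by its generic value, one expects $\psi$ to become continuous and to carry each $F_N$-class into an $F_N$-class. A reduction of this normalized shape cannot simultaneously collapse the difference-invariant and flatten the discrepancy-germs on its image, so a diagonal argument --- feeding $\psi$ one pair whose difference is spread out and whose depths stay bounded, and one whose depths are unbounded --- yields inequivalent pairs of both colours in $A$. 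Establishing the precise canonization statement, and matching it to the combinatorics of $f$ so that no normalized image is monochromatic, is where essentially all the difficulty lies; it amounts to a quantitative use of the failure of $\bbE_1$ to be essentially hyperfinite. (One observes that Borel weak compactness entails primeness --- given a Borel homomorphism $\varphi$ from $\bbE_1$ to some $F$ with $\bbE_1\not\leq_B F$, applying weak compactness to $(x,x')\mapsto[\varphi(x)\mathrel{F}\varphi(x')]$ forces $\varphi$ to map a faithful sub-copy of $\bbE_1$ into a single $F$-class --- so one could instead try to refute weak compactness by refuting primeness; but by the first theorem above $\bbE_1$ is prime relative to every benchmark in $\mathcal{F}$, so any such witness would be exotic, and the direct construction seems the better route.)
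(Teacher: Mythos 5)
Your outline differs substantially from the paper's proof, and more importantly it leaves the essential work undone in a way that a careful reader cannot fill in.

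The paper does not search for a bespoke combinatorial colouring built from the difference $\delta(x,x')$ and the discrepancy depths $\ell_N$. Instead it exploits a recursion-theoretic characterization: Kanovei's dichotomy says $\bbE_1\leq_B\bbE_1\upharpoonright X$ for $\Delta^1_1(z)$ sets $X$ iff some $x\in X$ has $\exists^\infty n\,(x_n\notin\Delta^1_1(z,x_{>n}))$. This leads to the partition $P(x,y)\Leftrightarrow\forall m\,\forall^\infty n\,(x_n\in\Delta^1_1(y_{>m},x_{>n})\wedge y_n\in\Delta^1_1(x_{>m},y_{>n}))$, which is $\Pi^1_1$ rather than Borel, and then to its Borel refinement $\widetilde P$ obtained by replacing $\Delta^1_1$ with $\leq_T$. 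The partition is tailored so that one colour ($[H]^2\subseteq\widetilde P$) \emph{automatically} forces $\bbE_1\upharpoonright H<_B\bbE_1$ by the dichotomy; the entire proof is then concentrated on the other colour, where the paper uses the $s$-cube canonization of Kanovei--Sabok--Zapletal (Theorem 7.14 of \cite{ksz}) and a delicate fusion to produce two inequivalent points of $H$ satisfying $\widetilde P$. So the colouring is chosen to do half the work for free, and the serious argument happens once, not twice.

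Your proposal, by contrast, declares the colouring $f$ to be "a $\{0,1\}$-valued function" of $([\delta(x,x')]_{\bbE_1},\text{germ of }(\ell_N)_N)$ without specifying which function, and you explicitly flag this as unresolved. But this is not fiddly bookkeeping: it is a genuine obstacle, because the "thin" $\bbE_1$-large sets you list really do kill all the obvious candidates, and without the recursion-theoretic ingredient there is no apparent mechanism that makes one colour automatically small. The second gap is larger still. The canonization you gesture at — passing to an $\bbE_1$-large subdomain on which a reduction becomes continuous and carries $F_N$-classes to $F_N$-classes — is in the right spirit (and indeed something of this shape, the $s$-cube theorem, is exactly what the paper invokes), but "a diagonal argument feeding $\psi$ one pair whose difference is spread out and whose depths stay bounded, and one whose depths are unbounded" is not an argument; the paper's corresponding fusion occupies roughly two pages and involves interleaving the cube coordinates with a Turing-coding of one tail into the other. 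You acknowledge "essentially all the difficulty lies" here, and indeed it does. As written, the proposal is a plausible research plan pointed in a different (purely combinatorial) direction, not a proof, and it is not evident that the combinatorial route can be made to work without reintroducing something equivalent to the $\Delta^1_1$/Turing machinery that makes the paper's partition usable.

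One small point in your favour: the closing observation that Borel weak compactness implies primeness and that refuting primeness would therefore suffice is correct, and you are right to discard it, since primeness of $\bbE_1$ is open (indeed conjectured) in the paper.
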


Primeness of an equivalence relation may be characterized in terms of being reducible to every coarser equivalence relation whose classes are not too large. Although primeness of $\bbE_1$ remains unresolved, we can establish a weaker property for $\bbE_1$ and a large class of equivalence relations induced by ideals:

\begin{mainthm}
Let $\ideal$ be one of the following: $\text{FIN}$,  $\text{FIN} \times 0$, $\ideal_s$, $\ideal_s \times 0$, or a $\bPi_1^1$ ideal of the form $\ideal[J] \times \text{FIN}$ or $\ideal[J] \times \ideal_s \times 0$ or $\ideal[J] \times \text{FIN} \times 0$. Then for any $\bSigma_1^1$ equivalence relation $F$ of countable index over $E_{\ideal}$ we have $E_{\ideal} \sqsubseteq_c F$.
\end{mainthm}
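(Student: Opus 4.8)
The plan is to realize the embedding as a ``structured copy'' of $E_\ideal$ sitting inside its own domain $X$ (a space of the form $2^{\omega^k}$, $k\le 3$) on which $F$ collapses back down to $E_\ideal$. Since $E_\ideal\subseteq F$ the identity is already a homomorphism from $E_\ideal$ to $F$, so the whole task is to replace it by an injective continuous reduction, equivalently to find a continuous injection $\varphi\colon X\to X$ whose image meets every $F$-class in at most one $E_\ideal$-class while $\varphi$ still reduces $E_\ideal$ to $E_\ideal$. I would factor this into two ingredients: first, a \emph{generic triviality} of $F$ over $E_\ideal$, namely that $F\setminus E_\ideal$ is meager in $X^2$; and second, a \emph{dodging self-embedding property} of $E_\ideal$, namely that for every meager $M\subseteq X^2$ there is an injective continuous $\varphi\colon X\to X$ with $x\mathrel{E_\ideal}y\iff\varphi(x)\mathrel{E_\ideal}\varphi(y)$ and $(\varphi\times\varphi)[X^2\setminus E_\ideal]\cap M=\emptyset$. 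Granting both: fix an $F_\sigma$ meager $M\supseteq F\setminus E_\ideal$ and take $\varphi$ from the second ingredient; if $x\not\mathrel{E_\ideal}y$ then $\varphi(x)\not\mathrel{E_\ideal}\varphi(y)$ and $(\varphi(x),\varphi(y))\notin M\supseteq F\setminus E_\ideal$, so $\varphi(x)\not\mathrel{F}\varphi(y)$, while $x\mathrel{E_\ideal}y$ gives $\varphi(x)\mathrel{E_\ideal}\varphi(y)$ and hence $\varphi(x)\mathrel{F}\varphi(y)$; thus $\varphi$ witnesses $E_\ideal\sqsubseteq_c F$.

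For the first ingredient, since $F$ is $\bSigma_1^1$ and $E_\ideal$ is $\bPi_1^1$, the set $F\setminus E_\ideal$ is $\bSigma_1^1$ and so has the Baire property, as do its sections. The countable-index hypothesis means each vertical section $[x]_F\setminus[x]_{E_\ideal}$ is a countable union of $E_\ideal$-classes distinct from $[x]_{E_\ideal}$. For every ideal $\ideal$ on the list the $E_\ideal$-classes are meager: two points are $E_\ideal$-related exactly when their ``difference set'' lies in $\ideal$, and a comeager set of points differs from any fixed one on an $\ideal$-positive set --- in the $\text{FIN}$-type cases on all but finitely many coordinates, in the $\ideal_s$-type cases on a set of divergent harmonic sum, and in the Fubini cases on an $\ideal[J]$-positive set of blocks --- each a short Kuratowski--Ulam computation on the relevant product $2^{\omega^k}$. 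Hence every section of $F\setminus E_\ideal$ is meager, and by Kuratowski--Ulam $F\setminus E_\ideal$ is meager in $X^2$; cover it by countably many closed nowhere dense sets $M_k$.

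For the second ingredient I would argue by a fusion / Cantor-scheme construction adapted to the natural forcing associated with $E_\ideal$, treating the cases inductively along the Fubini structure. The base cases are essentially classical: for $\ideal=\text{FIN}$ this is the Harrington--Kechris--Louveau construction of a continuous copy of $\bbE_0$ lying in a prescribed comeager invariant set, and for $\ideal=\ideal_s$ it is the analogous $\bbE_2$-scheme, in which divergence of $\sum 1/(n+1)$ provides, for every $\bbE_2$-inequivalent pair, infinitely many ``levels'' of the scheme at which the two images can be pushed off the next $M_k$. The cases with a $0$-coordinate (including $\text{FIN}\times 0=\bbE_1$) and the Fubini products $\ideal[J]\times\text{FIN}$, $\ideal[J]\times\ideal_s\times 0$, $\ideal[J]\times\text{FIN}\times 0$ are then handled by nesting these schemes along the coordinate structure, using that $\ideal[J]$ is $\bPi_1^1$ so that ``$\ideal[J]$-positive Borel conditions'' are available and suitably absolute for the outer layer. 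The scheme must be set up so that $\varphi$ is simultaneously injective, a reduction of $E_\ideal$ to $E_\ideal$ (so it must respect the ``tail / positive-set'' structure of $\ideal$ in each direction), and such that every $E_\ideal$-inequivalent pair leaves infinitely many active stages at which to dodge the next $M_k$.

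The main obstacle is exactly this second ingredient, and within it the uniformity: building a single scheme robust enough to be iterated through the Fubini products while remaining a reduction of $E_\ideal$ to itself and still leaving room to diagonalize against an arbitrary meager set. The first ingredient and the final assembly are routine by comparison; the real work is the bookkeeping that forces the dodging to succeed along \emph{every} inequivalent pair at once, which is where the specific ideals on the list --- rather than an arbitrary $\bPi_1^1$ ideal --- actually get used.
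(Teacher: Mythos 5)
Your ``dodging self\nobreakdash-embedding'' ingredient is not true for all of the ideals on the list, and in fact is refuted by a theorem proved later in the same paper. Take $\ideal = \text{FIN}\times 0$, so $E_\ideal = \bbE_1$. If for every meager $M\subseteq X^2$ there were a continuous injective reduction $\varphi$ of $\bbE_1$ to itself with $(\varphi\times\varphi)[X^2\setminus \bbE_1]\cap M=\emptyset$, one could apply this to the meager Borel set $\widetilde{P}$ defined in Section~\ref{section:E1} (which is meager by mutual Cohen genericity of the columns), producing a Borel set $H=\operatorname{range}(\varphi)$ with $[H]^2_{\bbE_1}\cap\widetilde{P}=\emptyset$ and $\bbE_1\sqsubseteq_c\bbE_1\upharpoonright H$. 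But the paper proves (via the $s$\nobreakdash-cube fusion argument) that any Borel $H$ with $[H]^2_{\bbE_1}\cap\widetilde{P}=\emptyset$ satisfies $\bbE_1\upharpoonright H<_B\bbE_1$ --- this is precisely how the paper shows $\bbE_1$ is not Borel weakly compact. So your ingredient 2 would make $\bbE_1$ Borel weakly compact, which is false; the same obstruction applies to $\bbE_1^\omega$ and more generally to any $\ideal$ with a ``$\times 0$'' layer.

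The source of the gap is that ingredient 1 throws away too much information: once you replace ``$F$ is a $\bSigma^1_1$ equivalence relation of countable index over $E_\ideal$'' with the much weaker ``$F\setminus E_\ideal$ is meager in $X^2$,'' the remaining dodging problem becomes genuinely unsolvable. The paper instead keeps the equivalence\nobreakdash-relation structure of $F$ in play throughout. It shows that $E_\ideal$ \emph{generically separates classes}: for $\neg x\mathrel{E_\ideal}y$, for comeagerly many $z$ one still has $\neg x\upharpoonright z\mathrel{E_\ideal}y\upharpoonright z$. Because each $F$\nobreakdash-class is a countable union of $E_\ideal$\nobreakdash-classes, intersecting countably many such comeager sets of $z$ gives, for each $w$, a comeager set of $z$ that separates $[w]_{E_\ideal}$ from the rest of $[w]_F$; this is the ``generically separates classes within $F$'' property. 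One then chooses a single $z$ and a section $X_{z,\alpha}=\{x:x\upharpoonright z=\alpha\upharpoonright z\}$ such that on a comeager subset $D$ of the section, $F$\nobreakdash-equivalence already implies $E_\ideal$\nobreakdash-equivalence (because all points of the section agree on the $z$\nobreakdash-coordinates), and $E_\ideal\sqsubseteq_c E_\ideal\upharpoonright D$ (the ``generically maintains complexity on sections'' property, proved from vertical invariance of the ideal plus a Mycielski\nobreakdash-type lemma on sections). The embedding is then a one\nobreakdash-dimensional restriction to a section of $X$, not a two\nobreakdash-dimensional dodge in $X^2$, and it is here that the transitivity and countable index of $F$ are indispensable. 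Your final assembly step is fine, and your sketch of why the specific ideals behave well under restriction (Fubini layering, positivity of the harmonic sum, etc.) is in the right spirit for the ``maintains complexity on sections'' half; but you need to replace the arbitrary\nobreakdash-meager\nobreakdash-set dodge with the section argument that actually uses that $F$ is an equivalence relation.
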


In particular, this holds for $\bbE_1$, $\bbE_0^{\omega}$, $\bbE_1^{\omega}$, and $\bbE_2$. Finally, we show that except low in the Borel reducibilty hierarchy, relative primeness can not be used to characterized potential complexity:

\begin{mainthm}
If $\Gamma$ is a Wadge class containing $\bSigma^0_2$, then there is no equivalence relation $E$ such that for any Borel equivalence relation $F$, $F \in \text{pot}(\Gamma)$ iff $E$ is prime to $F$.
\end{mainthm}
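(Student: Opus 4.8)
The plan is to argue by contradiction. Suppose $E$ is a Borel equivalence relation with the stated property, i.e.\ $F \in \text{pot}(\Gamma)$ iff $E$ is prime to $F$, for every Borel equivalence relation $F$. I would first make two easy reductions. If $E$ has at most one class then $E$ is prime to every Borel $F$ (a constant map is a homomorphism with range in a single class, and is itself a Borel reduction of $E$ to $E$); but $\Gamma$ is a Wadge class, hence contained in some $\bDelta^0_\zeta$, while iterating the Friedman--Stanley jump produces Borel equivalence relations of potential complexity cofinal in the Borel hierarchy, so some Borel $F$ lies outside $\text{pot}(\Gamma)$ --- contradicting the biconditional. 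So $E$ has at least two classes. Next, applying the biconditional with $F = E$ and $\varphi = \mathrm{id}$: any Borel reduction $\rho$ of $E$ to $E$ has range meeting at least two $E$-classes (else $E \leq_B E\upharpoonright[x]_E$ would force $E$ to have one class), so $E$ is not prime to $E$; hence $E \notin \text{pot}(\Gamma)$, and since $\bSigma^0_2 \subseteq \Gamma$ also $E \notin \text{pot}(\bSigma^0_2)$. In particular $E$ is non-smooth and, once more by the biconditional, $E$ is prime to \emph{every} equivalence relation lying in $\text{pot}(\bSigma^0_2)$, for instance to $\Delta(2)$, $\Delta(\omega)$, $\Delta(\bbR)$, $\bbE_0$, $\bbE_1$, $\bbE_2$, $E_\infty$, and $E_{K_\sigma}$.

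It now suffices to exhibit one Borel equivalence relation $F$ with $F \notin \text{pot}(\Gamma)$ and $E$ prime to $F$; this contradicts the biconditional. Since $E$ has at least two classes, such an $F$ automatically has $E \not\leq_B F$ (a Borel reduction is a non-concentrable homomorphism), so we are seeking $F$ of high potential complexity lying \emph{transverse} to $E$ --- in particular $F$ can be neither smooth nor essentially countable, since those relations lie in $\text{pot}(\bSigma^0_2) \subseteq \text{pot}(\Gamma)$. For the benchmark relations this is already supplied by the benchmark theorem: for $E = \bbF_2$ one may try $F = \bbE_0^\omega$ and for $E = \bbE_0^\omega$ one may try $F = \bbF_2$, since $\bbF_2 \not\leq_B \bbE_0^\omega$ and $\bbE_0^\omega \not\leq_B \bbF_2$ yield, via the benchmark dichotomy, that $\bbF_2$ is prime to $\bbE_0^\omega$ and vice versa; and if both of $\bbF_2, \bbE_0^\omega$ already lie in $\text{pot}(\Gamma)$ then $F = E$ in the first paragraph has already produced the contradiction. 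The real content is producing such an $F$ for an \emph{arbitrary} $E$.

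For the general case the plan is to locate $F$ in a benchmark-like family transverse to $E$: fix a transverse ``direction'' --- an iterated Friedman--Stanley tower, or an iterated power of $\bbE_0$, chosen so as not to meet the upward cone of $E$ --- climb it until the potential complexity surpasses $\Gamma$, and verify primeness of $E$ to the relation so obtained by a homomorphism-rigidity argument patterned on the proof of the benchmark theorem: every Borel homomorphism from $E$ into such an $F$ should, after precomposition with a Borel self-reduction of $E$, be concentrated in a single $F$-class. The elementary facts that $\text{pot}(\Gamma)$ is downward $\leq_B$-closed and omits some Borel equivalence relation, together with the computation that the relevant towers have potential complexities cofinal in the Borel hierarchy, guarantee that an appropriate $F$ of complexity just beyond $\Gamma$ is always available. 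The hypothesis $\bSigma^0_2 \subseteq \Gamma$ is precisely what makes this possible: it forces $E \notin \text{pot}(\bSigma^0_2)$, placing $E$ strictly above the bottom of every such tower; for the small classes this fails, and indeed $\Delta(\bbR)$ and $\bbE_0$ respectively \emph{do} characterize $\text{pot}(\bSigma^0_1)$ and $\text{pot}(\bPi^0_1)$ via primeness, so the threshold in the hypothesis is sharp.

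The main obstacle is exactly this homomorphism-rigidity step for a general $E$: given that $E$ is prime to all of $\text{pot}(\bSigma^0_2)$ (in particular to $\Delta(\bbR)$, $\Delta(\omega)$, $\bbE_0$, and $\bbE_1$), one must single out a transverse family escaping $\text{pot}(\Gamma)$ and show $E$ is prime to its members. The difficulties are the ones familiar from such arguments: controlling the ranges of Borel homomorphisms into Friedman--Stanley jumps (where a class remembers only a countable set of lower-level classes) or into products like $\bbE_0^\omega$, using primeness of $E$ to $\Delta(\omega)$ and $\Delta(\bbR)$ to pass to a single ``coordinate'' or to a Borel piece carrying a reduced copy of $E$, and keeping the self-reductions of $E$ coherent if the construction must be iterated through the countable ordinals to outrun $\Gamma$. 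One should expect this to require a careful analysis analogous to, and building on, the techniques behind the benchmark theorem.
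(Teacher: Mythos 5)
Your easy reductions are sound: $E$ must have at least two classes, $E$ is not prime to itself, hence $E\notin\text{pot}(\Gamma)$, hence (since $\bSigma^0_2\subseteq\Gamma$) $E\notin\text{pot}(\bSigma^0_2)$, so $E$ is non-smooth and $E$ is prime to every $F\in\text{pot}(\bSigma^0_2)$. But the rest of the proposal has a genuine gap, and one of your illustrative examples is actually wrong: you suggest ``for $E=\bbE_0^\omega$ one may try $F=\bbF_2$, since $\bbF_2\not\leq_B\bbE_0^\omega$ and $\bbE_0^\omega\not\leq_B\bbF_2$,'' but in fact $\bbE_0^\omega\leq_B\bbF_2$ (send $\bar x$ to the countable set $\{\langle n\rangle^\smallfrown(g_k\cdot x_n):n,k\in\omega\}$ where $\{g_k\}$ generates $\bbE_0$). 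So $\bbE_0^\omega$ is reducible to, not prime to, $\bbF_2$.

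The real problem is the step you yourself label ``the main obstacle'': for an arbitrary $E$ satisfying the constraints, you must produce a Borel $F\notin\text{pot}(\Gamma)$ with $E$ prime to $F$, and you propose to do this by climbing a transverse tower and running a ``homomorphism-rigidity argument patterned on the proof of the benchmark theorem.'' But the benchmark-theorem proofs (Hjorth's lemma on orbit equivalence relations, the $F_\sigma$ argument for $E^\omega$, turbulence, etc.) are each tailored to the specific source equivalence relation, and there is no general rigidity machine that, given only ``$E$ is prime to everything in $\text{pot}(\bSigma^0_2)$,'' will let you conclude $E$ is prime to some member of a chosen tower. You have not supplied this step, you acknowledge it is the hard part, and it is not clear it can be carried out in that form. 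So the proposal is not a proof.

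The paper's actual proof routes around the rigidity problem entirely by exploiting a structural construction from Clemens--Lecomte--Miller. There is a single $K_\sigma$ equivalence relation $E_{[T]}$ (induced by a treeing) together with a family $E_{\ideal}^{\ast}=E_{\ideal}\cap E_{[T]}$ indexed by ideals $\ideal$, with two decisive properties: the $E_{\ideal}^{\ast}$ have cofinal potential Wadge degree as $\ideal$ varies over vertically invariant ideals, and each $E_{\ideal}^{\ast}$ restricted to a single $E_{[T]}$-class is smooth. Given these, the argument is two lines. Since $E_{[T]}$ is $K_\sigma$ and $\bSigma^0_2\subseteq\Gamma$, we have $E_{[T]}\in\Gamma$, so the hypothesized biconditional forces $E$ to be prime to $E_{[T]}$; in particular $E$ is prime to $\Delta(\bbR)$ and hence (Lemma~\ref{lem:basic}~(2)) to every smooth equivalence relation. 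Then $E_{\ideal}^{\ast}\subseteq E_{[T]}$ and $E_{\ideal}^{\ast}\upharpoonright[y]_{E_{[T]}}$ is smooth for each $y$, so Lemma~\ref{lem:basic}~(3) propagates primeness down: $E$ is prime to every $E_{\ideal}^{\ast}$. Since these have cofinal potential complexity, some $E_{\ideal}^{\ast}\notin\text{pot}(\Gamma)$, contradicting the biconditional. The key ideas you are missing are (i) the existence of a cofinal family of Borel equivalence relations all \emph{contained in a single $K_\sigma$ one} with smooth restrictions to its classes, and (ii) the use of Lemma~\ref{lem:basic}~(3) to transfer primeness from the large $F$ to the sub-relations $F'\subseteq F$, which is exactly the mechanism that makes the rigidity-free argument possible.
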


The paper is organized as follows. In \S2 we review basic facts about Borel equivalence relations and introduce the notion of relative primeness together with its fundamental properties. We introduce many standard benchmark equivalence relations in \S3 and establish relative primeness results between them. In \S4 and \S5 we introduce the notion of a prime equivalence relation and stronger versions together with Borel partition properties for quotient spaces, and reformulate previous work in this framework. We discuss the equivalence relation $\bbF_2$ in \S6, exploring primeness and partition properties; we extend this to higher iterates of the Friedman--Stanley jump in \S7. In \S8 we show that $\bbE_1$ is not Borel weakly compact, and establish a weakening of primeness for the equivalence relation $\bbE_1$ and other equivalence relations induced by ideals, and in \S9 we rule out certain global dichotomy theorems related to primeness.

\section{Fundamental facts about relative primeness}

We begin with the fundamental comparisons between definable equivalence relations on Polish spaces.

\begin{dfn}
Given equivalence relations $E$ and $F$ on the Polish spaces $X$ and $Y$, we say that a map $\varphi: X \rightarrow Y$ is a \emph{homomorphism} from $E$ to $F$ if whenever $x_1 \EE x_2$ we have $\varphi(x_1) \FF \varphi(x_2)$, and a \emph{cohomomorphism} if the reverse holds. We say that $\varphi$ is a \emph{reduction} if $x_1 \EE x_2$ iff $\varphi(x_1) \FF \varphi(x_2)$, and $\varphi$ is an \emph{embedding} if it is an injective reduction. We write $E \leq_B F$ when there is a Borel-measurable reduction from $E$ to $F$, $E \sqsubseteq_B F$ when there is a Borel embedding, and $E \leq_c F$ or $E \sqsubseteq_c F$ when there is a continuous reduction or embedding. We write $E \sim_B F$ when $E \leq_B F$ and $F \leq_B E$.
\end{dfn}

A Borel reduction from $E$ to $F$ represents a definable injection from the quotient space $X/E$ into the quotient space $Y/F$. As such, the study of Borel reducibility may be viewed as the study of definable cardinalities. Although we will not do so here, this view may be extended to the study of actual cardinalities in choiceless models of set theory, such as determinacy models. It is thus natural to extend properties of cardinals in ZFC (i.e., initial ordinals) to the realm of definable cardinalities. Much of the focus here may be viewed as trying to characterize large cardinal properties of definable cardinalities, with a view that benchmark equivalence relations should exhibit some sort of transcendence over other equivalence relations. Several previous results discussed below have already taken this viewpoint; here we provide a unifying framework, develop the basic theory, settle several questions, and raise many others for future study.

Our starting point is the following notion.

\begin{dfn}
Let $E$ and $F$ be Borel equivalence relations on $X$ and $Y$, respectively. We say that $E$ is \emph{prime to} $F$ if, whenever $\varphi$ is a Borel homomorphism from $E$ to $F$, there is a Borel reduction $\rho$ from $E$ to $E$ such that the range of $\varphi \circ \rho$ is contained in a single $F$-class, i.e., $E \leq_B E \upharpoonright \varphi^{-1}[y]_F$ for some $y$.
\end{dfn}

That is, there is some $y$ so that $E \upharpoonright \varphi^{-1}[y]_F$ is as complicated as $E$; in this case we may say $\varphi^{-1}[y]_F$ has the same size as $E$. Note that when $E$ has at least two classes this immediately implies that $E \not\leq_B F$, but primeness is generally a stronger notion as it implies that any homomorphism fails to be a reduction in a very strong way. This may also be viewed as saying that when the quotient $X/E$ is partitioned into ``$Y/F$-many pieces'', then at least one of the pieces has effective cardinality of $X/E$ itself. We explore this viewpoint below in terms of Borel partition properties. Note that the trivial equivalence relation $\Delta(1)$ is prime to any $E$. Many results we prove here will actually produce a continuous embedding of $E$ into the preimage of a single $F$-class.

We begin by establishing some basic results about primeness.
We write $\Delta(X)$ for the equality relation on $X$. The equivalence relation $\bbE_0$ is defined on $2^{\omega}$ by $x \mathrel{\bbE_0} y$ iff $x(n)=y(n)$ for all but finitely many $n$.  Given equivalence relations $E$ and $F$ on $X$ and $Y$, we define the product $E \times F$ on $X \times Y$ by $(x_1,y_1) \mathrel{E \times F} (x_2,y_2)$ iff $x_1 \EE x_2 \wedge y_1 \FF y_2$. We define the amalgamation (disjoint union) $E \amalg F$ on $\{0\} \times X \cup \{1\} \times Y$ by $(i,x) \mathrel{E \amalg F} (j,y)$ iff $(i=j=0 \wedge x \EE y) \vee (i=j=1 \wedge x \FF y)$, and similarly for countable amalgamations. We define the countable product $E^{\omega}$ on $X^{\omega}$ by $\bar{x} \mathrel{E^{\omega}} \bar{y}$ iff $x_n \EE y_n$ for all $n$.

We will make use of several Baire category notions. We write $\forall^{\ast} x P(x)$ when $P$ holds for a comeager set of $x$, and $\exists^{\ast} x P(x)$ when $P$  holds for a non-meager set of $x$. We also use the \emph{Vaught transforms}, where for a Polish group $G$ acting on a Polish space $X$ and an open set $U \subseteq G$ and a set $B \subseteq X$ we let $B^{\ast U} = \{ x \in X : \{g \in G : g \cdot x \in B\} \text{ is comeager in $U$} \}$.

\begin{dfn}
We say that $E$ \emph{maintains complexity on comeager (resp. non-meager) sets} if $E \sqsubseteq_c E \upharpoonright C$ for any comeager (resp. non-meager) $C \subseteq X$.
\end{dfn}

When $E$ maintains complexity on comeager sets and is generated by homeomorphisms then $E$ maintains complexity on non-meager sets. A key fact about equivalence relations which maintain complexity on comeager sets is that they achieve their potential descriptive complexity in the following sense. For a Wadge class $\Gamma$, we say that $E$ is \emph{potentially $\Gamma$} if $E$ is Borel reducible to some $F \in \Gamma$. When $E$ maintains complexity on comeager sets and is potentially $\Gamma$ then $E$ is in fact in $\Gamma$, as we may find a comeager set on which the Borel reduction is continuous, and then a continuous embedding of $E$ into this comeager set.
For the same reason, when $E$ maintains complexity on comeager sets it is sufficient to consider only continuous homomorphisms $\varphi$ in the definition of primeness, and the conclusion can be strengthened to include all Baire-measurable $\varphi$. Another immediate consequence is:

\begin{lem}
If $E$ maintains complexity on non-meager sets, then $E$ is prime to $\Delta(\omega)$.
\end{lem}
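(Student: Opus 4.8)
The plan is to unwind what a Borel homomorphism from $E$ to $\Delta(\omega)$ actually is: it is just a Borel-measurable map $\varphi\colon X\to\omega$ each of whose fibers $X_n=\varphi^{-1}(\{n\})$ is $E$-invariant, so $\varphi$ induces a partition $X=\bigsqcup_{n\in\omega}X_n$ into countably many Borel sets. So the first step is to fix such a $\varphi$ and pass to this partition. Since a Polish space is non-meager in itself, a countable union of meager sets cannot exhaust $X$; hence some $X_n$ is non-meager, and I would fix such an $n$. This Baire-category pigeonhole is the only real ``content'' of the argument.

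For the second step, since $E$ maintains complexity on non-meager sets, there is a continuous embedding $\rho\colon X\to X_n$ witnessing $E\sqsubseteq_c E\upharpoonright X_n$. This $\rho$ is in particular a Borel reduction from $E$ to $E$ (as $E\upharpoonright X_n$ agrees with $E$ on pairs from $X_n$), and because the range of $\rho$ is contained in $X_n=\varphi^{-1}(\{n\})$, the composition $\varphi\circ\rho$ is constantly equal to $n$. Thus the range of $\varphi\circ\rho$ is $\{n\}$, contained in the single $\Delta(\omega)$-class $[n]_{\Delta(\omega)}$, which is precisely the assertion $E\leq_B E\upharpoonright\varphi^{-1}[n]_{\Delta(\omega)}$ required by the definition of $E$ being prime to $\Delta(\omega)$. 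I expect no genuine obstacle here: the proof is essentially immediate once the homomorphism is recognized as a countable Borel partition, and in fact one does not even need the $E$-invariance of the fibers — only that one piece is non-meager.
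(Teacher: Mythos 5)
Your proof is correct and matches the argument the paper has in mind (it presents this as an "immediate consequence" without writing it out): the fibers of $\varphi$ give a countable Borel partition, Baire category forces one piece to be non-meager, and then the hypothesis hands you the embedding whose composition with $\varphi$ is constant. Your closing remark is also accurate — the $E$-invariance of the fibers, while automatic, plays no role in the verification.
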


It is well-known that many of the standard equivalence relations which we study here, such as $\bbE_0$, $\bbE_1$, $\bbE_2$, and $\bbE_0^{\omega}$, maintain complexity on non-meager sets. We will show this for a broad class of equivalence relations induced by ideals in Theorem~\ref{thm:non-meager-sections} below. Kanovei--Sabok--Zpletal have also shown that  $\bbF_2$ maintains complexity on comeager sets, which we show in Corollary~\ref{cor:F2comeager}.
In contrast, a non-hyperfinite countable Borel equivalence relation does not maintain complexity on comeager sets, since any countable Borel equivalence relation is generically hyperfinite.

A related notion to primeness has been previously studied.

\begin{dfn}
We say that $E$ is \emph{generically $F$-ergodic} if, whenever $\varphi$ is a Borel homomorphism from $E$ to $F$, there is $y \in Y$ such that $\varphi^{-1}[y]_F$ is comeager.
\end{dfn}
When $E$ maintains complexity on comeager sets and is generically $F$-ergodic, then $E$ is prime to $F$, but the converse may fail (e.g., when $E$ is $\bbE_1$ and $F$ is $\bbE_0$; see below).
For example, as $\bbE_0$ is generically $\Delta(\bbR)$-ergodic and maintains complexity on comeager sets, we have that $\bbE_0$ is prime to $\Delta(\bbR)$.

We first establish some fundamental properties of primeness.
\begin{lem}
\label{lem:basic}
Let $E$ be prime to $F$.
\begin{enumerate}
\item If $R$ is a Borel equivalence relation with $E \leq_B R \times F$, then $E \leq_B R$.
\item If $F' \leq_B F$ then $E$ is prime to $F'$.
\item If $F' \subseteq F$ and $E$ is prime to $F' \upharpoonright [y]_F$ for all $y \in Y$, then $E$ is prime to $F'$. In particular, this holds when $F$ is of countable index over $F'$ and $E$ maintains complexity on non-meager sets.
\item If $E$ is prime to both $F$ and $F'$, then $E$ is prime to $F \times F'$ and $E$ is prime to $F \cap F'$.
\item If $E$ is prime to both $F$ and $F'$, and $E$ is prime to $\Delta(2)$, then $E$ is prime to $F \amalg F'$. Similarly, if $E$ is prime to each of $F_n$ and $E$ is prime to $\Delta(\omega)$, then $E$ is prime to $\amalg_n F_n$.
\item If $E' \sim_B E$ then $E'$ is prime to $F$.
\end{enumerate}
\end{lem}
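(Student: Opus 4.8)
The plan is to exploit one structural fact repeatedly: Borel reductions from $E$ to itself are closed under composition (and composites of Borel maps are Borel). So for each part I will invoke the primeness hypothesis, sometimes in succession, and paste the resulting self-reductions $\rho_0,\rho_1,\dots$ into a single $\rho$, using the previously obtained $\rho_i$ at each stage to convert a leftover coordinate of a homomorphism into a new homomorphism with a simpler target. A complementary idea handles two parts by combining one use of primeness with a Borel reduction between the \emph{target} relations: part (2) uses a reduction witnessing $F'\leq_B F$, and the $F\cap F'$ half of (4) uses $F\cap F'\leq_B F\times F'$ (via $y\mapsto (y,y)$, for $F,F'$ on a common space), so that only the product case of (4) needs its own argument.

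First the single-application parts. For (1): write a reduction witnessing $E\leq_B R\times F$ as $\psi=(\psi_R,\psi_F)$; then $\psi_F$ is a Borel homomorphism from $E$ to $F$, so primeness produces a Borel reduction $\rho$ from $E$ to $E$ with $\psi_F\circ\rho$ valued in one $F$-class $[y]_F$, and $\psi_R\circ\rho$ then reduces $E$ to $R$---the nontrivial direction being that $\psi_R(\rho(x_1))\mathrel{R}\psi_R(\rho(x_2))$ combines with $\psi_F(\rho(x_1))\FF y\FF \psi_F(\rho(x_2))$ to give $\rho(x_1)\EE\rho(x_2)$, hence $x_1\EE x_2$. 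For (2): from a Borel homomorphism $\varphi$ from $E$ to $F'$ and a Borel reduction $\theta$ from $F'$ to $F$, the composite $\theta\circ\varphi$ is a homomorphism from $E$ to $F$; primeness gives $\rho$ with $\theta(\varphi(\rho(x)))\in[y]_F$ for all $x$, and the key point is that $\theta^{-1}[y]_F$ lies in a single $F'$-class \emph{precisely because $\theta$ is a reduction} (if $\theta(y'_1),\theta(y'_2)\FF y$ then $\theta(y'_1)\FF\theta(y'_2)$, so $y'_1\mathrel{F'}y'_2$). For (6): conjugate---given Borel reductions $\sigma$ from $E'$ to $E$ and $\tau$ from $E$ to $E'$ and a Borel homomorphism $\varphi'$ from $E'$ to $F$, apply primeness to the homomorphism $\varphi'\circ\tau$ from $E$ to $F$ to get $\rho$, then verify $\tau\circ\rho\circ\sigma$ is a Borel reduction from $E'$ to $E'$ carrying $\varphi'$ into the same $F$-class.

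Now the iterated parts. For the product case of (4): split $\varphi=(\varphi_1,\varphi_2)$, apply primeness of $E$ to $F$ to $\varphi_1$, getting $\rho_1$ pinning $\varphi_1\circ\rho_1$ into one $F$-class; note $\varphi_2\circ\rho_1$ is a homomorphism from $E$ to $F'$, so apply primeness of $E$ to $F'$, getting $\rho_2$ pinning $\varphi_2\circ\rho_1\circ\rho_2$ into one $F'$-class; take $\rho=\rho_1\circ\rho_2$. For (5): split $\varphi=(c,\psi)$ with $c\colon X\to 2$ (resp.\ $c\colon X\to\omega$); being a homomorphism into $F\amalg F'$ (resp.\ $\amalg_n F_n$) forces $c$ to be constant on $E$-classes, i.e.\ $c$ is a homomorphism into $\Delta(2)$ (resp.\ $\Delta(\omega)$), so primeness of $E$ to $\Delta(2)$ (resp.\ $\Delta(\omega)$) makes $c\circ\rho_0$ constantly some $i_0$, after which $\psi\circ\rho_0$ is a homomorphism from $E$ to $F_{i_0}$ and primeness of $E$ to $F_{i_0}$ finishes as before. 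For (3): any homomorphism $\varphi$ from $E$ to $F'$ is also one into $F$ since $F'\subseteq F$, so primeness of $E$ to $F$ gives $\rho_0$ with $\varphi\circ\rho_0$ in a single $F$-class $[y]_F$; then $\varphi\circ\rho_0$ is a homomorphism from $E$ to $F'\upharpoonright[y]_F$, and the hypothesis gives $\rho_1$ pinning it into a single $(F'\upharpoonright[y]_F)$-class, which since $F'\subseteq F$ is just a single $F'$-class. For the ``in particular'' clause, countable index means each $F'\upharpoonright[y]_F$ is a Borel equivalence relation with only countably many (Borel) classes, hence $F'\upharpoonright[y]_F\leq_B\Delta(\omega)$; as $E$ maintains complexity on non-meager sets, the preceding lemma gives that $E$ is prime to $\Delta(\omega)$, so by (2) $E$ is prime to each $F'\upharpoonright[y]_F$, and the first part of (3) applies.

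I do not expect a real obstacle here; the whole argument is bookkeeping around a single idea. The two spots that warrant care are: in (2), using that $\theta$ is a reduction and not merely a homomorphism is essential---it is exactly what forces $\theta^{-1}[y]_F$ into a single $F'$-class; and, in (3)--(5), keeping the composition order straight and checking at each stage both that the pasted map remains a reduction from $E$ to $E$ (so the next invocation of primeness is legitimate) and that replacing $\rho_i$ by $\rho_i\circ\rho_{i+1}$ does not disturb the single target class already secured for earlier coordinates. One should also note in (3) that $[y]_F$ and its $F'$-subclasses are Borel, so every space and map in sight lives on a standard Borel space.
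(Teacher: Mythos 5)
Your proof is correct and follows essentially the same strategy as the paper's, part by part: decompose the homomorphism, apply primeness once or twice, and compose the resulting self-reductions. The only genuine (and harmless) divergence is in the $F\cap F'$ half of (4), where you reduce $F\cap F'$ to $F\times F'$ via the diagonal and then invoke (2), whereas the paper derives it from (3) with $F\cap F'$ in place of $F'$; both routes are valid and of comparable length.
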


\begin{proof}
Assume $E$ is prime to $F$.

\begin{enumerate}
\item 
Let $\varphi$ be a Borel reduction from $E$ to $R \times F$ and write $\varphi(x)=(\varphi_0(x), \varphi_1(x))$. Since $\varphi_1$ is a homomorphism from $E$ to $F$, there is a single $y$ and a Borel reduction $\rho$ from $E$ to $E \upharpoonright \varphi^{-1}[y]_F$. We have that $\varphi \circ \rho$ is still a reduction of $E$ to $R \times F$, so $x_1 \EE x_2$ iff $\varphi_0(\rho(x_1)) \mathrel{R} \varphi_0(\rho(x_2)) \wedge \varphi_1(\rho(x_1)) \FF \varphi_1(\rho(x_2))$. Since $\varphi_1(\rho(x_1)) \FF \varphi_1(\rho(x_2))$ for all $x_1$ and $x_2$, we have that $\varphi_0 \circ \rho$ is a reduction from $E$ to $R$.

\item
Let $\varphi$ be a a homomorphism from $E$ to $F'$, and $\psi$ a reduction from $F'$ to $F$. Then $\psi \circ \varphi$ is a homomorphism from $E$ to $F$, so there is a reduction $\rho$ from $E$ to $E$ such that the range of $\psi \circ \varphi \circ \rho$ is contained in a single $F$-class. Since $\psi$ is a reduction, the range of $\varphi \circ \rho$ must be contained in a single $F'$-class.

\item
Let $\varphi$ be a homomorphism from $E$ to $F'$. Since $F' \subseteq F$, $\varphi$ is also a homomorphism from $E$ to $F$, so there is a reduction $\rho_0$ from $E$ to $E$ such that the range of $\varphi \circ \rho_0$ is contained in a single $F$-class. Then $\varphi \circ \rho_0$ is a homomorphism from $E$ to $F' \upharpoonright [y]_F$, so there is a reduction $\rho_1$ from $E$ to $E$ such that the range of $\varphi \circ \rho_0 \circ \rho_1$ is contained in a single $F'$-class. Then $\rho= \rho_0 \circ \rho_1$ is as desired.

\item
The second part follows immediately from the previous result, using $F \cap F'$ in place of $F'$.
For the first part, let $\varphi$ be a homomorphism from $E$ to $F \times F'$; we may write $\varphi(x) = ( \varphi_0(x), \varphi_1(x))$ where $\varphi_0$ is a homomorphism from $E$ to $F$ and $\varphi_1$ is a homomorphism from $E$ to $F'$. We can then find a reduction $\rho_0$ from $E$ to $E$ so that the range of $\varphi_0 \circ \rho_0$ is contained in a single $F$-class. Then $\varphi_1 \circ \rho_0$ is a homomorphism from $E$ to $F'$, so there is a reduction $\rho_1$ from $E$ to $E$ such that the range of $\varphi_1 \circ \rho_0 \circ \rho_1$ is contained in a single $F'$-class. Since the range of $\varphi_0 \circ \rho_0 \circ \rho_1$ is still contained in a single $F$-class, the range of $\varphi \circ \rho_0 \circ \rho_1$ is contained in a single $F \times F'$-class, so $\rho_0 \circ \rho_1$ is as desired.

\item
We prove the first part; the second is similar. Let $\varphi$ be a homomorphism from $E$ to $F \amalg F'$. Define the homomorphism $\psi$ from $E$ to $\Delta(2)$ by $\psi(x)=0$ if $\varphi(x)$ is in the domain of $F$, and $\psi(x)=1$ if $\varphi(x)$ is in the domain of $F'$. Then $E$ is reducible to the the restriction of $E$ to the preimage of either 0 or 1 under $\psi$, i.e.,  one of $X_0 =\varphi^{-1}[Y]$ or $X_1 =\varphi^{-1}[Y']$.  So there is a reduction $\rho_0$ from $E$ to $E$ whose range is contained in $X_i$ for some $i \in 2$. Then $\varphi \circ \rho_0$ is a homomorphism to either $F$ or $F'$, so there is a reduction $\rho_1$ from $E$ to $E$ with $\varphi \circ \rho_0 \circ \rho_1$ contained in a single $F$-class or a single $F'$-class (and hence a single $F \amalg F'$-class), so $\rho_0 \circ \rho_1$ is as desired. 

\item
Let $\varphi$ be a homomorphism from $E'$ to $F$. If $\psi$ is a reduction of $E$ to $E'$, then $\varphi \circ \psi$ is a homomorphism from $E$ to $F$, so there is $y$ for which $E \leq_B E \upharpoonright (\varphi \circ \psi)^{-1} [y]_F$. But then $E \leq_B E' \upharpoonright \varphi^{-1} [y]_F$, so also $E' \leq_B E' \upharpoonright \varphi^{-1} [y]_F$.
\qedhere
\end{enumerate}
\end{proof}

Even seemingly simple relative primeness results may have non-trivial consequences. For instance, if $E$ is prime to $\Delta(2)$, then for any $E$-invariant Borel set $A$, either $E \leq_B E \upharpoonright A$ or $E \leq_B E \upharpoonright (X \setminus A)$; this property is of particular interest among countable Borel equivalence relations.

The terminology of relative primeness is partially motivated by Property (1), suggestive as it is of the corresponding algebraic property. Property (1) does not imply that $E$ is prime to $F$, though. For example, let $E= \bbE_1 \amalg E_{\infty}$ and $F = \Delta(2)$. Then $E$ is not prime to $F$ since there is a homomorphism sending the domain of $\bbE_1$ to $0$ and the domain of $E_{\infty}$ to $1$, and $E$ is not reducible to either part. Suppose, though, that $E \leq_B R \times F$ for some Borel equivalence relation $R$. As we will see below, both $\bbE_1$ and $E_{\infty}$ are prime to $\Delta(2)$, so that $\bbE_1 \leq_B R$ and $E_{\infty} \leq_B R$. Let $\varphi_1$ and $\varphi_2$ be respective reductions, and note that $\varphi_2$ is countable-to-one. Let $X_1 = \{ x : \exists y \ \varphi_1(x) \mathrel{R} \varphi_2(y)\}$, which is $\bbE_1$-invariant and Borel since it is the projection of a Borel set with countable sections. We cannot have $\bbE_1 \leq_B \bbE_1 \upharpoonright X_1$, since the Lusin--Novikov selection theorem would give a reduction of $\bbE_1 \upharpoonright X_1$ to $E_{\infty}$, and $\bbE_1$ is not essentially countable. Thus $\bbE_1 \leq_B \bbE_1 \upharpoonright (X \setminus X_1)$, so composing with $\varphi_1$ gives a reduction of $\bbE_1$ to $R$ whose range has $R$-saturation disjoint from that of the range of $\varphi_2$. Combining these gives a reduction of $E$ to $R$.

Note that we do not always have $E \amalg F \leq_B R$ when $E \leq_B R$ and $F \leq_B R$. We can ask if the above example extends to other equivalence relations.
\begin{question}
If $E$ and $F$ are relatively prime to one another, $E \leq_B R$, and $F \leq_B R$, is $E \amalg F \leq_B R$?
\end{question}

If $E$ is prime to $F$ then $E$ is prime to $F^n$ for all $n \in \omega$; however, $E$ need not be prime to $F^{\omega}$, e.g., when $E$ is $\bbE_0^{\omega}$ and $F$ is $\bbE_0$. Similarly, if $E$ is prime to $F_n$ for all $n \in \omega$ it need not be prime to $\bigcap_n F_n$, e.g., when $E$ is $\bbE_0^{\omega}$ and $F_n$ is $\bbE_0^n \times I( (2^{\omega})^{\omega})$, where $I( (2^{\omega})^{\omega})= (2^{\omega})^{\omega}\times (2^{\omega})^{\omega}$.

\section{Relative primeness of benchmark equivalence relations}

We begin by showing that many earlier non-reducibility results can be improved to show relative primeness, and establishing that many of the standard benchmark equivalence relations are prime to one another.
We briefly recall the main benchmark equivalence relations which will be used below.

\begin{dfn} Let $X$ be any Polish space.
\begin{enumerate}
\item $\Delta(X)$ is the identity relation on $X$, and $I(X)$ is $X \times X$.
\item $\bbE_0$ is defined on $2^{\omega}$ by $x \mathrel{\bbE_0} y$ iff $\forall^{\infty} n\ x(n)=y(n)$.
\item $\bbE_1$ is defined on $\left(2^{\omega}\right)^{\omega}$ by  $\bar{x} \mathrel{\bbE_1} \bar{y}$ iff $\forall^{\infty} n\ x_n=y_n$.
\item $E_{\infty}$ is the universal countable Borel equivalence relation, which can be represented by the shift action of the free group on two generators, $F_2$, on $2^{F_2}$.
\item For any $E$ on $X$, $E^{\omega}$ is defined on $X^{\omega}$ by $\bar{x} \mathrel{E^{\omega}} \bar{y}$ iff $\forall n\ x_n \EE y_n$. We will consider in particular $\bbE_0^{\omega}$, $\bbE_1^{\omega}$, and $E_{\infty}^{\omega}$.
\item $\bbE_2$ is the summable equivalence relation, defined on $2^{\omega}$ by $x \mathrel{\bbE_2} y$ iff $\sum\{\frac{1}{n+1} : x(n) \neq y(n)\} < \infty$.
\item $\bbE_d$ is the density equivalence relation, defined on $2^{\omega}$ by $x \mathrel{\bbE_d} y$ iff the upper density of $\{n : x(n) \neq  y(n)\}$ is 0.
\item $\bbF_2$ is equality of countable sets of reals, defined on $\left(2^{\omega}\right)^{\omega}$ by $\bar{x} \mathrel{\bbF_2} \bar{y}$ iff $\{x_n : n \in \omega\} = \{y_n : n \in \omega\}$.
\item $\cong_{\text{graph}}$ is the isomorphism relation on countable graphs.
\item $E_{K_{\sigma}}$ is the universal $K_{\sigma}$ equivalence relation.
\item The universal orbit equivalence relation is maximum among all orbit equivalence relations $E_G^X$ induced by the continuous action of a Polish group $G$ on a Polish space $X$.
\end{enumerate}
\end{dfn}

\subsection{${\mathbb E}_1$ and ${\mathbb E}_1^{\omega}$}

We will show that $\bbE_1$ and $\bbE_1^{\omega}$ are prime to any orbit equivalence relation, strengthening a result from \cite{KL}. We will use $E_G^X$ to denote the orbit equivalence relation induced by a continuous action of the Polish group $G$ on the Polish space $X$. We let $F_n$ denote the closed equivalence relation on $(2^{\omega})^{\omega}$ given by $\bar{x} \mathrel{F_n} \bar{y}$ iff $\forall m \geq n (x_m = y_m)$, so that $\bbE_1 = \bigcup_n F_n$. We will also apply $F_n$ to finite sequences sharing the same domain.

We use the following lemma from \cite{Hjorth}:

\begin{lem}[Hjorth]
\label{lem:hjorth}
Let $G$ and $H$ be Polish groups, $\varphi : X \rightarrow Y$ a Baire-measurable homomorphism from $E_G^X$ to $E_H^Y$, and $V$ an open neighborhood of $1_H$. Then for a comeager set of $x \in X$ there is an open neighborhood $U$ of $1_G$ such that
$\forall^{\ast} g \in U  (\varphi (g \cdot x) \in V \cdot \varphi(x))$.
\end{lem}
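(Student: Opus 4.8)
The plan is to carry out the standard Baire-category reductions that sit behind the turbulence machinery, and then to isolate the one step where the homomorphism hypothesis and an Effros-type argument do the real work. First, use Baire-measurability of $\varphi$ to fix a dense $G_\delta$ set $C \subseteq X$ on which $\varphi \upharpoonright C$ is continuous. The action map $a \colon G \times X \to X$, $a(g,x) = g \cdot x$, is continuous and open, since it sends a basic open box $U \times W$ to $\bigcup_{g \in U} g \cdot W$, a union of open sets; hence $a^{-1}(C)$ is comeager in $G \times X$, and by the Kuratowski--Ulam theorem the Vaught transform $C^{\ast G}$ is comeager in $X$. Thus $X_0 := C \cap C^{\ast G}$ is comeager, and for every $x \in X_0$ the set $G_x := \{g \in G : g \cdot x \in C\}$ is comeager in $G$ and contains $1_G$, while $g \mapsto \varphi(g \cdot x)$ is continuous on $G_x$ with value $\varphi(x)$ at $1_G$. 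I would record two consequences for $x \in X_0$: (i) for every open $W \ni \varphi(x)$ in $Y$ there is an open $U \ni 1_G$ with $\varphi(g \cdot x) \in W$ for all $g \in U \cap G_x$, hence $\forall^{\ast} g \in U\,(\varphi(g\cdot x) \in W)$; and (ii) since $\varphi$ is a homomorphism and $g \cdot x \mathrel{E_G^X} x$, we always have $\varphi(g \cdot x) \in [\varphi(x)]_{E_H^Y} = H \cdot \varphi(x)$.

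Combining (i) and (ii), the lemma reduces to finding, for comeager-many $x$, an open $W \ni \varphi(x)$ in $Y$ with $W \cap (H \cdot \varphi(x)) \subseteq V \cdot \varphi(x)$: then $U$ from (i) gives $\forall^{\ast} g \in U\,\bigl(\varphi(g\cdot x) \in W \cap H\cdot\varphi(x) \subseteq V\cdot\varphi(x)\bigr)$. Existence of such a $W$ at $y = \varphi(x)$ is an instance of the orbit map $h \mapsto h \cdot y$ being open at $1_H$, equivalently (by Effros's theorem) of $H \cdot y$ being non-meager in its closure, which can fail at arbitrary points (for example when $E_H^Y$ is turbulent). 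To arrange it generically I would invoke the Becker--Kechris machinery: replace the topology of $Y$ by a finer Polish topology $\tau'$ with the same Borel sets, keeping the $H$-action continuous and making the orbit structure tame enough that the relevant orbit maps become $\tau'$-open at $1_H$; since $\varphi$ stays Baire-measurable into $(Y,\tau')$, one re-runs the first paragraph in $\tau'$ and obtains $W$ as a $\tau'$-open set. A more hands-on alternative is a direct Vaught-transform computation inside $Y$: show that for comeager-many $x$ and sufficiently small open $N \ni x$, $\varphi(x)$ lies in $\bigl(V \cdot \varphi[C \cap N]\bigr)^{\ast W_0}$ for a small open $W_0 \ni 1_H$, which forces $\varphi(g \cdot x) \in V \cdot \varphi(x)$ for non-meager-many small $g$.

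I expect the main obstacle to be exactly this upgrade from ``$\varphi(g \cdot x) \to \varphi(x)$ in $Y$'' to ``$\varphi(g \cdot x) \in V \cdot \varphi(x)$'': it is the only step using more than soft Baire-category bookkeeping together with the definition of a homomorphism, and it is where the turbulence-theoretic content lives. The delicate part is carrying out the topology refinement (or the Vaught-transform argument) so that the conclusion holds on a genuinely comeager set of $x$, with enough uniformity to let $V$ shrink afterward; the reductions of the first paragraph and the deduction of the second are routine manipulations with comeager sets and the Kuratowski--Ulam theorem. (This is Hjorth's lemma; see \cite{Hjorth}.)
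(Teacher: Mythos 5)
Your setup is fine---passing to a comeager $C$ on which $\varphi$ is continuous, forming $C \cap C^{\ast G}$, and noting both the continuity of $g \mapsto \varphi(g \cdot x)$ on $G_x$ and the containment $\varphi(g \cdot x) \in H \cdot \varphi(x)$---and you correctly isolate the crux. But the reduction you propose, to finding an open $W \ni \varphi(x)$ with $W \cap H \cdot \varphi(x) \subseteq V \cdot \varphi(x)$, is asking for strictly more than the lemma needs, and (as you yourself observe) it is false whenever the orbit $H \cdot \varphi(x)$ is meager in its closure, which is exactly the turbulent case the lemma must cover. Neither of your proposed repairs closes this gap. A Becker--Kechris topology refinement on $Y$ can make a prescribed Borel set open while keeping the $H$-action continuous, but it cannot make orbit maps $h \mapsto h \cdot y$ open at $1_H$ in general: that would make orbits locally closed and $E_H^Y$ potentially $\bPi^0_2$, which fails for turbulent actions, precisely the intended setting. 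The ``direct Vaught-transform in $Y$'' alternative is stated too loosely to check; as written, $\varphi(x) \in (V \cdot \varphi[C \cap N])^{\ast W_0}$ concerns the $H$-action on $Y$, and it is not clear how it yields the required statement about non-meager sets of $g \in G$.

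The missing idea is that one should not try to make $V \cdot \varphi(x)$ relatively open in the orbit; instead, cover the orbit by countably many analytic pieces and use the Baire property of their preimages. Fix a symmetric open $W \ni 1_H$ with $W^2 \subseteq V$ and a countable dense $\{h_n\} \subseteq H$, so $H \cdot \varphi(x) = \bigcup_n W h_n \cdot \varphi(x)$. Each $W h_n \cdot \varphi(x)$ is analytic, so each $B_n := \psi_x^{-1}(W h_n \cdot \varphi(x)) \subseteq G_x$ is analytic, hence has the Baire property, and $\bigcup_n B_n = G_x$ is comeager; thus for comeagerly many $g$ there are $n$ and an open $U' \ni g$ with $B_n$ comeager in $U'$. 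For such $g$ and any $g' \in B_n \cap U'$ one gets $\psi_x(g') \in W W^{-1} \cdot \psi_x(g) \subseteq V \cdot \psi_x(g)$, giving the desired conclusion at the point $g \cdot x$ (translate $U'$ by $g^{-1}$). This shows $\forall^\ast x\, \forall^\ast g\, (g \cdot x \in T)$ where $T$ is the set of $x$ satisfying the lemma's conclusion; $T$ has the Baire property (category quantifiers over analytic matrices), so Kuratowski--Ulam yields a single $g$ with $g^{-1} \cdot T$ comeager, and since $g^{-1}$ acts by a homeomorphism, $T$ itself is comeager. Your argument never performs this countable covering and translation step, so it does not establish the lemma; note also that the paper simply cites this as Hjorth's lemma without giving a proof.
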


\begin{thm}
Let $E_G^X$ be any orbit equivalence relation. Then $\bbE_1$ is prime to $E_G^X$.
\end{thm}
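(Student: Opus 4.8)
\emph{Strategy.} I will strengthen the Kechris--Louveau argument that $\bbE_1$ is not reducible to an orbit equivalence relation: instead of merely refuting a hypothetical reduction, I will convert an arbitrary continuous Borel homomorphism $\varphi$ from $\bbE_1$ to $E_G^X$ into a continuous embedding of $\bbE_1$ into $\bbE_1$ whose entire image $\varphi$ sends into a single $E_G^X$-orbit $[x^*]_{E_G^X}$; this gives $\bbE_1 \sqsubseteq_c \bbE_1 \upharpoonright \varphi^{-1}[x^*]_{E_G^X}$, which is more than primeness demands. It suffices to treat continuous $\varphi$, since $\bbE_1$ maintains complexity on comeager sets (and then the conclusion extends to Baire-measurable $\varphi$). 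Throughout, for $\bar x \in (2^\omega)^\omega$ and $v \in 2^\omega$ write $\bar x[n/v]$ for $\bar x$ with its $n$-th coordinate replaced by $v$.

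\emph{Local approximation.} For each $n$, the action $w \cdot_n \bar x := \bar x[n/(x_n \triangle w)]$ (addition modulo $2$ into the $n$-th coordinate) of the compact Polish group $2^\omega$ on $(2^\omega)^\omega$ has orbit equivalence relation contained in $F_{n+1} \subseteq \bbE_1$, so $\varphi$ restricts to a homomorphism from it into $E_G^X$. Fixing a neighborhood basis $(V_j)_j$ of $1_G$, applying Lemma~\ref{lem:hjorth} to each of these homomorphisms and each $V_j$, and intersecting the countably many comeager sets obtained, I get a comeager $C \subseteq (2^\omega)^\omega$ with the property: for every $\bar x \in C$, every $n$, and every $j$, the set $\{\,v \in 2^\omega : \varphi(\bar x[n/v]) \in V_j \cdot \varphi(\bar x)\,\}$ is dense in some nonempty open $O \subseteq 2^\omega$ (and open, since $\varphi$ is continuous). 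So any one coordinate of a point of $C$ can be moved, over a Cantor set's worth of values, while keeping the $\varphi$-value inside a prescribed small neighborhood.

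\emph{The fusion.} Fix $\bar x_0 \in C$ and set $x^* = \varphi(\bar x_0)$. By a Cantor-scheme recursion in which each node commits only finitely many bits of finitely many coordinates, I build a continuous injection $\psi : (2^\omega)^\omega \to (2^\omega)^\omega$ so that, writing $\bar z_{(n)}$ for the point agreeing with $\psi(\bar z)$ on coordinates $< n$ and with $\bar x_0$ on coordinates $\geq n$:
\begin{enumerate}
\item[(a)] each output coordinate $\psi(\bar z)_n$ depends only on an input tail $z_{f(n)}, z_{f(n)+1}, \dots$ with $f(n) \to \infty$, and carries an injective-in-$z_n$ copy of $z_n$; this already makes $\psi$ a continuous reduction --- hence embedding --- of $\bbE_1$ into $\bbE_1$ (if $\bar z \mathrel{F_N} \bar z'$ then $\psi(\bar z)_n = \psi(\bar z')_n$ once $f(n) \geq N$, and the injectivity clause gives the converse);
\item[(b)] every $\bar z_{(n)}$ lies in $C$ and differs from $\bar x_0$ in only finitely many coordinates, hence lies in $[\bar x_0]_{\bbE_1}$, so $\varphi(\bar z_{(n)}) = c_n(\bar z) \cdot x^*$ for some $c_n(\bar z) \in G$;
\item[(c)] the one-coordinate step from $\bar z_{(n)}$ to $\bar z_{(n+1)}$ moves coordinate $n$ to a value inside the dense open set furnished by the local-approximation property at $\bar z_{(n)}$ for a neighborhood $V_{j_n}$, so $c_{n+1}(\bar z) \in V_{j_n} \cdot c_n(\bar z)$;
\item[(d)] the $V_{j_n}$ are chosen adaptively --- after each finite level of the scheme has been committed, hence with only finitely many group elements in play --- small enough that $(c_n(\bar z))_n$ is Cauchy in a fixed complete compatible metric on $G$, along every branch.
\end{enumerate}
By (d), $c_n(\bar z) \to c_\infty(\bar z) \in G$; since $\bar z_{(n)} \to \psi(\bar z)$ and $\varphi$ and the action are continuous, $\varphi(\psi(\bar z)) = c_\infty(\bar z) \cdot x^* \in [x^*]_{E_G^X}$ for every $\bar z$, so $\psi$ is as desired.

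\emph{Main obstacle.} The substance is entirely in the fusion, and the key tension is between (a) and (c): (a) forbids output coordinate $n$ from depending on the head $z_0, \dots, z_{f(n)-1}$ of the input, whereas the dense open sets used in (c) are those attached to the approximants $\bar z_{(n)}$, whose heads are precisely the already-committed output coordinates and therefore depend on the head of $\bar z$. Making the values that code $z_n$ and that keep $\varphi$ inside a shrinking neighborhood simultaneously depend only on a tail of $\bar z$ --- by interleaving the structural coding with the $\varphi$-corrections inside a single scheme, and exploiting that ``dense open'' (rather than merely ``comeager'') leaves enough room to thread the constraints along every branch --- is the heart of the argument. The secondary point, landing in a single orbit rather than only the orbit closure of $x^*$, is exactly what (d) secures, using only that $G$ is Polish and that each level of the scheme is finite.
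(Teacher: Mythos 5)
Your proposal takes essentially the same route as the paper: Hjorth's Lemma~\ref{lem:hjorth} applied coordinatewise, a comeager set built by iterating Vaught transforms, a Cantor-scheme fusion that tracks a Cauchy sequence of group elements along each branch, and convergence of those elements to land the whole range of $\varphi \circ \rho$ in a single orbit. The tension you flag in the Main Obstacle is indeed the crux, and the paper resolves it exactly the way you gesture at --- at each finite level of the scheme there are only finitely many nodes $x_s$, so one can choose a single small perturbation $h$ that is admissible for all of them at once (and keeps them in the comeager set), which makes the new displacement independent of the already-committed head and delivers the tail-dependence your condition (a) requires.
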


\begin{proof}
Let $\varphi$ be a Borel homomorphism from $\bbE_1$ to $E_G^X$; we may assume $\varphi$ is continuous since $\bbE_1$ maintains complexity on comeager sets. We view $2^{\omega}$ as a Polish group under symmetric difference. Let $H_n = (2^{\omega})^n$ act on $(2^{\omega})^{\omega}$ by coordinate-wise symmetric difference, generating  $F_n$, and note that $\varphi$ is a homomorphism from $F_n = E_{H_n}^{(2^{\omega})^{\omega}}$ to $E_G^X$. Let $d_G$ be a complete metric on $G$, and set $V_k = B_{2^{-k}}(1_G)$. By Lemma~\ref{lem:hjorth} we can find for each $n$ and $k$ a comeager set $C_{n,k} \subseteq (2^{\omega})^{\omega}$ such that for all $x \in C_{n,k}$ there is an open neighborhood $1_{H_n} \in U \subseteq H_n$ such that $\forall^{\ast} h \in U \exists  g \in V_k (\varphi(h \cdot x) = g \cdot \varphi(x))$.
 Let $C_0 = \bigcap_{n,k} C_{n,k}$, and for each $k$ let $C_{k+1} = \bigcap_n C_k^{\ast H_n}$, where $C_k^{\ast H_n} = \{x : \forall^{\ast} h \in H_n (h \cdot x \in C_k) \}$. Then each $C_k$ is comeager, and so is $C_{\infty} = \bigcap_k C_k$. Note that for all $x \in C_{\infty}$ and all $n$ we have $\forall^{\ast} h \in H_n (h \cdot x \in C_{\infty})$.

Fix an enumeration $\langle (m_i,j_i) \rangle_i$ of $\omega \times \omega$ and let $A_n = \{ (m_i,j_i) : i < n\}$. For $s \in 2^{A_n}$ and $i \in 2$ we let $s \smallfrown i$ be the obvious element of $2^{A_{n+1}}$. For $n \in \omega$ we will inductively find a strictly increasing sequence $(m_n)_{n \in \omega} \in \omega^{\omega}$, and $x_s \in (2^{\omega})^{\omega}$ and $g_s \in G$ for $s \in 2^{A_n}$ with the following properties:
\begin{enumerate}
\item $x_s \in C_{\infty}$.
\item For $t \in A_m$ with $s \sqsubseteq t$ we have $x_s \upharpoonright (2^{m_n})^{m_n} = x_t \upharpoonright (2^{m_n})^{m_n}$.
\item For $t \in A_n$, if $s \mathrel{F_k} t$  then $x_s \mathrel{F_k} x_t$.
\item For $t \in A_n$, if $\neg s  \mathrel{F_k} t$ then $\neg x_s \upharpoonright (2^{m_n})^{m_n}  \mathrel{F_k} x_t \upharpoonright (2^{m_n})^{m_n}$.
\item  For $i \in 2$ we have $d_G(g_{s \smallfrown i}, g_s) < 2^{-n}$.
\item $g_s \cdot \varphi(x_{\emptyset}) = \varphi(x_s)$.
\end{enumerate}

Granting this, given $x \in (2^{\omega})^{\omega}$ let $s_n = x \upharpoonright A_n$ and set $\rho(x) = \lim_n x_{s_n}$, which exists by condition (2), which also ensures the continuity of $\rho$. Condition (3) and the closedness of the $F_k$'s ensure that $\rho$ is a homomorphism from $\bbE_1$ to $\bbE_1$, whereas conditions (2) and (4) guarantee that it is a cohomomorphism and injective. Thus $\rho$ is a continuous embedding of $\bbE_1$ into itself. To see that the range of $\rho$ is contained in a single $E_G^X$-class, let $x$ be given and set $s_n = x \upharpoonright A_n$. Condition (5) ensures that the sequence $\langle g_{s_n} \rangle_n$ is Cauchy, and hence converges to some $g_{\infty} \in G$. We have $\lim x_{s_n} = \rho(x)$ so $\lim \varphi(x_{s_n}) = \varphi(\rho(x))$ by continuity of $\varphi$. But this limit is the same as $\lim g_{s_n} \cdot \varphi(x_{\emptyset}) = g_{\infty} \cdot \varphi(x_{\emptyset})$ by the continuity of the $G$-action, and hence $\varphi(\rho(x)) \mathrel{E_G^X} \varphi(x_{\emptyset})$ for all $x$.

For the construction:  choose any point $x_{\emptyset} \in C_{\infty}$, and let $m_0=0$ and $g_{\emptyset} = 1_G$. Suppose then $m_n$ and $x_s$ and $g_s$ for $s \in 2^{A_n}$ have been defined to meet the above conditions; we do the same for $n+1$. Let $A_{n+1} \setminus A_n = \{ (m, j) \}$ be the new coordinate on which we extend sequences. Choose $k$ large enough so that for all $s \in 2^{A_n}$ and all $g \in V_k$ we have $d_G(g_s, gg_s) < 2^{-n}$. 
Since each $x_s$ is in $C_0$ and there are only finitely many of them, we can find an open neighborhood $1 _{H_{m+1}} \in U \subseteq H_{m+1}$ such that for all $s \in 2^{A_n}$ we have $\forall^{\ast} h \in U \exists g \in V_k (\varphi(h \cdot x_s) = g \cdot \varphi(x))$; 
since $x_s \in C_{\infty}$ we also have $\forall^{\ast} h \in U (h \cdot x_s \in C_{\infty})$.
We can also take $U$ small enough so that $h \cdot x \upharpoonright (2^{m_n})^{m_n} = x \upharpoonright (2^{m_n})^{m_n}$ for all $x$ and all $h \in U$. Choose an $h$ meeting the previous conditions for all $s \in 2^{A_n}$ and such that $h \notin H_m$ (so $\neg h \cdot x  \mathrel{F_m} x$). Let $x_{s \smallfrown 0} = x_s$, $g_{s \smallfrown 0} = g_s$, $x_{s \smallfrown 1} = h \cdot x_s$, and $g_{s \smallfrown 1} = g'_s g_s$, where $g'_s \in V_k$ satisfies $\varphi(h \cdot x_s) = g'_s \cdot \varphi(x_s)$. Finally, take $m_{n+1}>m_n$ large enough to witness the $F_m$-inequivalence of the relevant pairs. This satisfies the conditions for $n+1$.
\end{proof}

\begin{cor}
$\bbE_1$ is prime to $\Delta(X)$, $\bbE_0$, $\bbE_0^{\omega}$, $E_{\infty}$, $E_{\infty}^{\omega}$, $\bbE_2$, $\bbF_2$, $\bbE_d$, $\cong_{\text{graph}}$, and the universal $E_G^X$.
\end{cor}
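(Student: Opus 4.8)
The plan is to read off the corollary from the preceding theorem — that $\bbE_1$ is prime to every orbit equivalence relation $E_G^X$ — together with Lemma~\ref{lem:basic}(2), which propagates primeness to $F$ downward to every $F'$ with $F' \leq_B F$. It therefore suffices to check that each relation on the list is, or is Borel reducible to, an orbit equivalence relation, and then to apply these two facts.

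First I would dispatch the relations that are orbit equivalence relations by inspection. The universal $E_G^X$ is one by definition; $\Delta(X)$ is the orbit equivalence relation of the trivial group acting on $X$ (alternatively $\Delta(X) \leq_B \Delta(\bbR) \leq_B \bbE_0$); $\bbE_0$ is the orbit equivalence relation of the countable group $\bigoplus_n \bbZ/2\bbZ$ acting on $2^{\omega}$ by symmetric difference; $E_{\infty}$ is given as the shift action of $F_2$ on $2^{F_2}$; and $\bbE_0^{\omega}$, $E_{\infty}^{\omega}$ are the orbit equivalence relations of the Polish groups $\bigl(\bigoplus_n \bbZ/2\bbZ\bigr)^{\omega}$ and $F_2^{\omega}$ acting coordinatewise on the respective power spaces, using that a countable power of a continuous Polish group action again induces an orbit equivalence relation. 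Finally $\cong_{\text{graph}}$ is the orbit equivalence relation of the logic action of $S_{\infty}$ on the space of countable graphs, and $\bbF_2 \leq_B \cong_{\text{graph}}$ by the standard coding of a countable set of reals as a countable structure, so Lemma~\ref{lem:basic}(2) yields that $\bbE_1$ is prime to $\bbF_2$.

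For $\bbE_2$ and $\bbE_d$ I would invoke the classical fact that the summable ideal $\{A \subseteq \omega : \sum_{n \in A} \tfrac{1}{n+1} < \infty\}$ and the density-zero ideal are analytic P-ideals, hence Polishable, so that $\bbE_2$ and $\bbE_d$ are (Borel bireducible with) orbit equivalence relations of Polish groups; the preceding theorem and Lemma~\ref{lem:basic}(2) then apply. I do not expect a genuine obstacle here: all the substance lies in the preceding theorem, and the only points needing a little care are these realizations of $\bbE_2$ and $\bbE_d$ as Polish group actions together with the elementary verifications about countable powers and about $\bbF_2 \leq_B \cong_{\text{graph}}$.
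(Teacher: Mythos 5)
Your proposal is correct and is essentially the paper's (implicit) argument: the corollary is stated without proof immediately after the theorem that $\bbE_1$ is prime to every orbit equivalence relation, and the intended justification is exactly that each listed relation is (Borel reducible to) an orbit equivalence relation, with Lemma~\ref{lem:basic}(2) handling those that are only bireducible with one. A marginal simplification is available: since every relation on the list is reducible to the universal $E_G^X$, which is itself on the list, a single appeal to the preceding theorem plus Lemma~\ref{lem:basic}(2) covers everything without needing to exhibit each relation individually as a Polish group action.
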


The previous theorem can be extended from $\bbE_1$ to $\bbE_1^{\omega}$.
\begin{thm}
Let $E_G^X$ be any orbit equivalence relation. Then $\bbE_1^{\omega}$ is prime to $E_G^X$.
\end{thm}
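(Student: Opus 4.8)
The plan is to adapt, in detail, the construction just used for $\bbE_1$, now carried out on $Y := \left(\left(2^{\omega}\right)^{\omega}\right)^{\omega}$, which I identify with $2^{\omega \times \omega \times \omega}$ via coordinates $(j,n,m)$ standing for copy $j$, $\bbE_1$-position $n$, and bit $m$. For $\vec{n} \in \omega^{\omega}$ let $G_{\vec{n}} = \prod_{j} \left(2^{\omega}\right)^{n_j}$ act on $Y$ by coordinate-wise symmetric difference on the first $n_j$ positions of the $j$-th copy; this is a Polish-group action generating the closed equivalence relation $F_{\vec{n}}$ given by $z \mathrel{F_{\vec{n}}} z'$ iff for every $j$ and every $n \geq n_j$ we have $z_{j,n,\cdot} = z'_{j,n,\cdot}$. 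Then $\bbE_1^{\omega} = \bigcup_{\vec{n} \in \omega^{\omega}} F_{\vec{n}}$, each $F_{\vec{n}} \subseteq \bbE_1^{\omega}$ equals $E^{Y}_{G_{\vec{n}}}$, and hence the given $\varphi$ — which we may take continuous since $\bbE_1^{\omega}$ maintains complexity on comeager sets — is a homomorphism from each $E^{Y}_{G_{\vec{n}}}$ to $E_G^X$. The crucial observation for the bookkeeping is that only the \emph{countably many} finitely supported $\vec{n}$ will be needed.

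For the construction I would first, for each finitely supported $\vec{n}$ and each $k$, apply Lemma~\ref{lem:hjorth} to $E^{Y}_{G_{\vec{n}}} \to E_G^X$ with $V_k = B_{2^{-k}}(1_G)$ to obtain a comeager set, intersect all of these into $C_0$, and iterate Vaught transforms over the (countably many) groups $G_{\vec{n}}$ with $\vec{n}$ finitely supported to obtain a comeager $C_{\infty}$ such that for each $z \in C_{\infty}$ and each finitely supported $\vec{n}$ we have $\forall^{\ast} g \in G_{\vec{n}}\ (g \cdot z \in C_{\infty})$. I would then fix an enumeration $\langle c_i \rangle_i$ of $\omega \times \omega \times \omega$ dovetailed block-by-block and square-by-square, set $A_k = \{c_i : i < k\}$ and $c_k = (j_k,n_k,m_k)$, and recursively build $z_s \in C_{\infty}$ and $g_s \in G$ for $s \in 2^{A_k}$, together with a growing sequence of finite ``committed'' regions $Q_k \subseteq \omega \times \omega \times \omega$ (cube-like: the first finitely many copies, each committed up through some position- and bit-bound, all bounds tending to $\infty$), using at level $k$ a \emph{single} toggle $h^{(k)} \in G_{\vec{m}^{(k)}}$ for \emph{all} nodes of that level, where $\vec{m}^{(k)}$ is supported on $\{j_k\}$ with value $n_k+1$, so that $z_{s \smallfrown 0} = z_s$ and $z_{s \smallfrown 1} = h^{(k)} \cdot z_s$. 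The invariants are the evident analogues of (1)--(6) from the previous proof: $z_s \in C_{\infty}$; coherence of $z_s$ with its extensions on $Q_k$; the separation property that for $s,t \in 2^{A_k}$ and any $\vec{p}$, $\neg s \mathrel{F_{\vec{p}}} t$ implies $\neg z_s \upharpoonright Q_k \mathrel{F_{\vec{p}}} z_t \upharpoonright Q_k$; that $h^{(k)}$ fixes $Q_k$ pointwise but genuinely flips $c_k$; $d_G(g_{s \smallfrown i}, g_s) < 2^{-k}$; and $g_s \cdot \varphi(z_{\emptyset}) = \varphi(z_s)$. The recursion step copies the $\bbE_1$ argument: choose $K$ so left translation by $V_K$ moves each $g_s$ by less than $2^{-k}$; since the finitely many $z_s$ lie in $C_0$, take a clopen subgroup $U \subseteq G_{\vec{m}^{(k)}}$ small enough that every $h \in U$ fixes $Q_k$, and pick $h^{(k)}$ inside $U$ in the Hjorth-comeager good set (good for all nodes, and keeping all $h^{(k)} \cdot z_s$ in $C_{\infty}$) and satisfying the clopen, nonempty condition of flipping $c_k$; set $g_{s \smallfrown 1} = g'_s g_s$ for the witnessing $g'_s \in V_K$, commit $c_k$ into $Q_{k+1}$, and enlarge the bounds.

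As in the $\bbE_1$ case, $\rho(x) := \lim_k z_{x \upharpoonright A_k}$ is then a well-defined continuous map, and since $\langle g_{x \upharpoonright A_k} \rangle_k$ is Cauchy with limit $g_{\infty}$, continuity of $\varphi$ and of the action give $\varphi(\rho(x)) = \lim_k g_{x \upharpoonright A_k} \cdot \varphi(z_{\emptyset}) = g_{\infty} \cdot \varphi(z_{\emptyset}) \in [\varphi(z_{\emptyset})]_{E_G^X}$, so the range of $\varphi \circ \rho$ lies in one $E_G^X$-class; injectivity and the cohomomorphism property follow from the coherence and separation invariants exactly as before (for cohomomorphism, given $\neg x \mathrel{\bbE_1^{\omega}} x'$ fix a copy $j^{\ast}$ on which $x,x'$ differ at unboundedly many positions and use the growing $Q_k$ to read off unboundedly many positions of difference between $\rho(x)$ and $\rho(x')$ within copy $j^{\ast}$). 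The one place where the argument genuinely departs from the $\bbE_1$ case — and what I expect to be the main obstacle — is that $\bbE_1^{\omega} = \bigcup_{\vec{n}} F_{\vec{n}}$ is an \emph{uncountable} union of orbit equivalence relations, so one can neither meet a comeager set relative to every relevant $F_{\vec{n}}$ nor ``diagonalize'' to force $\rho$ to respect all of them. Both difficulties dissolve: for the first, every toggle in the construction moves only one copy, so only finitely supported $\vec{n}$ ever occur; for the second, that $\rho$ is a homomorphism of $\bbE_1^{\omega}$ needs no appeal to Hjorth's lemma at all, because each $G_{\vec{n}}$ is abelian of exponent $2$ and the per-level toggles commute, whence $z_s = \bigl(\prod_{i : s(c_i) \neq t(c_i)} h^{(i)}\bigr) \cdot z_t$ for $s,t \in 2^{A_k}$, a product confined to the copies and position-ranges in which $s$ and $t$ already differ; consequently $x \mathrel{F_{\vec{p}}} x'$ forces $z_{x \upharpoonright A_k} \mathrel{F_{\vec{p}}} z_{x' \upharpoonright A_k}$ for every $k$ and hence $\rho(x) \mathrel{F_{\vec{p}}} \rho(x')$ in the limit, for \emph{every} $\vec{p} \in \omega^{\omega}$. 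The remaining cost relative to $\bbE_1$ is purely bookkeeping: the committed regions are now cube-shaped and must be grown copy by copy in step with the dovetailed enumeration so that each new coordinate can be flipped by a toggle supported off the committed region.
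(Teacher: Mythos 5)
Your proof is correct and takes essentially the same approach as the paper: Hjorth's lemma applied to the countably many single-copy translation subgroups (the paper's $H_n^m$, which are your $G_{\vec m^{(k)}}$), iterated Vaught transforms to obtain $C_\infty$, and the same finitely branching tree of points in $C_\infty$ with Cauchy-tracking group elements $g_s$. Your observation that only finitely supported $\vec n$ ever arise, so that the uncountable union $\bbE_1^\omega = \bigcup_{\vec n} F_{\vec n}$ is not an obstruction, is precisely what the paper's choice to work directly with the groups $H_n^m$ implements.
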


\begin{proof}
Let $\varphi$ be a continuous homomorphism from $\bbE_1^{\omega}$ to $E_G^X$. Let $H_n^m = (2^{\omega})^n$ act on $((2^{\omega})^{\omega})^{\omega}$ by coordinate-wise symmetric difference on the $m$-th coordinate, generating  $F_n$ on the $m$-th coordinate (i.e., $\prod_{i < m} \Delta((2^{\omega})^{\omega}) \times F_n \times \prod_{i > m} \Delta((2^{\omega})^{\omega})$). Let $d_G$ be a complete metric on $G$, and set $V_k = B_{2^{-k}}(1_G)$. By Lemma~\ref{lem:hjorth} we can find for each $m$, $n$, and $k$ a comeager set $C_{m,n,k} \subseteq ((2^{\omega})^{\omega})^{\omega}$ such that for all $x \in C_{m,n,k}$ there is an open neighborhood $1_{H_n^m} \in U \subseteq H_n^m$ such that $\forall^{\ast} h \in U \exists  g \in V_k (\varphi(h \cdot x) = g \cdot \varphi(x))$.
 Let $C_0 = \bigcap_{m,n,k} C_{m,n,k}$, and for each $k$ let $C_{k+1} = \bigcap_{m,n} C_k^{\ast H_n^m}$. Then each $C_k$ is comeager, and so is $C_{\infty} = \bigcap_k C_k$, and for all $x \in C_{\infty}$ and all $n$ and $m$ we have $\forall^{\ast} h \in H_n^m (h \cdot x \in C_{\infty})$.

Fix an enumeration $\langle (\ell_i,m_i,j_i) \rangle_i$ of $\omega \times \omega \times \omega$ and let $A_n = \{ (\ell_i,m_i,j_i) : i < n\}$. For $s \in 2^{A_n}$ and $i \in 2$ we let $s \smallfrown i$ be the obvious element of $2^{A_{n+1}}$, and we write $s_{\ell}$ for $s \upharpoonright A_n \cap \{\ell\}\times \omega \times \omega$. For $n \in \omega$ we will inductively find a strictly increasing sequence $m_n \in \omega$, and $x_s \in ((2^{\omega})^{\omega})^{\omega}$ and $g_s \in G$ for $s \in 2^{A_n}$ with the following properties:
\begin{enumerate}
\item $x_s \in C_{\infty}$.
\item For $s \sqsubseteq t \in A_m$ we have $x_s \upharpoonright ((2^{m_n})^{m_n})^{m_n} = x_t \upharpoonright ((2^{m_n})^{m_n})^{m_n}$.
\item For $t \in A_n$, if $s_{\ell} \mathrel{F_k} t_{\ell}$  then $(x_s)_{\ell} \mathrel{F_k} (x_t)_{\ell}$.
\item For $t \in A_n$, if $\neg s_{\ell} \mathrel{F_k} t_{\ell}$ then $\neg (x_s)_{\ell} \upharpoonright (2^{m_n})^{m_n} \mathrel{F_k} (x_t)_{\ell} \upharpoonright (2^{m_n})^{m_n}$.
\item  For $i \in 2$ we have $d_G(g_{s \smallfrown i}, g_s) < 2^{-n}$.
\item $g_s \cdot \varphi(x_{\emptyset}) = \varphi(x_s)$.
\end{enumerate}

Granting this, given $x \in ((2^{\omega})^{\omega})^{\omega}$ let $s_n = x \upharpoonright A_n$ and set $\rho(x) = \lim_n x_{s_n}$. As before, $\rho$ is a homomorphism from $\bbE_1^{\omega}$ to $\bbE_1^{\omega}$, as well as a cohomomorphism and injective, so that $\rho$ is a continuous embedding of $\bbE_1^{\omega}$ into itself. To see that the range of $\rho$ is contained in a single $E_G^X$-class, let $x$ be given and set $s_n = x \upharpoonright A_n$. Condition (5) ensures that the sequence $\langle g_{s_n} \rangle_n$ is Cauchy, and hence converges to some $g_{\infty} \in G$. We have $\lim x_{s_n} = \rho(x)$ so $\lim \varphi(x_{s_n}) = \varphi(\rho(x))$ by continuity of $\varphi$. But this limit is the same as $\lim g_{s_n} \cdot \varphi(x_{\emptyset}) = g_{\infty} \cdot \varphi(x_{\emptyset})$ by the continuity of the $G$-action, and hence $\varphi(\rho(x)) \mathrel{E_G^X} \varphi(x_{\emptyset})$ for all $x$.

For the construction,  choose any point $x_{\emptyset} \in C_{\infty}$, and let $m_0=0$ and $g_{\emptyset} = 1_G$. Suppose then $m_n$ and $x_s$ and $g_s$ for $s \in 2^{A_n}$ have been defined to meet the above conditions; we do the same for $n+1$. Let $A_{n+1} \setminus A_n = \{ (\ell,m, j) \}$ be the new coordinate on which we extend sequences. Choose $k$ large enough so that for all $s \in 2^{A_n}$ and all $g \in V_k$ we have $d_G(g_s, gg_s) < 2^{-n}$. 
Since each $x_s \in C_0$ and there are only finitely many of them, we can find an open neighborhood $1 _{H_{m+1}^{\ell}} \in U \subseteq H_{m+1}^{\ell}$ such that for all $s \in 2^{A_n}$ we have $\forall^{\ast} h \in U \exists g \in V_k (\varphi(h \cdot x_s) = g \cdot \varphi(x))$; 
since $x_s \in C_{\infty}$ we also have $\forall^{\ast} h \in U (h \cdot x_s \in C_{\infty})$.
We can also take $U$ small enough so that $h \cdot x \upharpoonright ((2^{m_n})^{m_n})^{m_n} = x \upharpoonright ((2^{m_n})^{m_n})^{m_n}$ for all $x$ and all $h \in U$. Choose an $h$ meeting the previous conditions for all $s \in 2^{A_n}$ and such that $h \notin H_m^{\ell}$ (so $\neg (h \cdot x)_{\ell} \mathrel{F_m} x_{\ell}$). Let $x_{s \smallfrown 0} = x_s$, $g_{s \smallfrown 0} = g_s$, $x_{s \smallfrown 1} = h \cdot x_s$, and $g_{s \smallfrown 1} = g'_s g_s$, where $g'_s \in V_k$ satisfies $\varphi(h \cdot x_s) = g'_s \cdot \varphi(x_s)$. Finally, take $m_{n+1}>m_n$ large enough to witness the $F_m^{\ell}$-inequivalence of the relevant pairs. This satisfies the conditions for $n+1$.
\end{proof}

\begin{cor}
$\bbE_1^{\omega}$ is prime to $\Delta(X)$, $\bbE_0$, $\bbE_0^{\omega}$, $E_{\infty}$, $E_{\infty}^{\omega}$, $\bbE_2$, $\bbF_2$, $\bbE_d$, $\cong_{\text{graph}}$, and the universal $E_G^X$.
\end{cor}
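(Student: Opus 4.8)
The plan is to read this off from the preceding theorem. By Lemma~\ref{lem:basic}(2), if $\bbE_1^{\omega}$ is prime to an orbit equivalence relation $E_G^X$ and $F \leq_B E_G^X$, then $\bbE_1^{\omega}$ is prime to $F$; so it suffices to check that each relation in the list is Borel reducible to the orbit equivalence relation of a continuous Polish group action. In fact all but one of them is literally such an orbit equivalence relation. The relation $\Delta(X)$ is the orbit equivalence relation of the trivial group acting on $X$. The relations $\bbE_0$ and $\bbE_0^{\omega}$ are generated by the coordinatewise translation actions of the Polish groups $\bigoplus_n \bbZ/2\bbZ$ on $2^{\omega}$ and $\left(\bigoplus_n \bbZ/2\bbZ\right)^{\omega}$ on $(2^{\omega})^{\omega}$. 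The relations $E_{\infty}$ and $E_{\infty}^{\omega}$ are generated by the shift action of $F_2$ on $2^{F_2}$ and the coordinatewise shift action of $F_2^{\omega}$ on $(2^{F_2})^{\omega}$. The relations $\bbE_2$ and $\bbE_d$ are generated by the translation actions on $2^{\omega}$, viewed as a group under symmetric difference, of the summable ideal and the density-zero ideal, each of which is a Polishable Borel subgroup by Solecki's theorem on analytic P-ideals. The relation $\cong_{\text{graph}}$ is the orbit equivalence relation of the logic action of $S_{\infty}$ on the space of graphs on $\omega$, and the universal $E_G^X$ is an orbit equivalence relation by definition. For each of these the preceding theorem applies directly.

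The single relation not obviously of this form on its given domain is $\bbF_2$. For it I would invoke the standard Borel reduction $\bbF_2 \leq_B \cong_{\text{graph}}$ obtained by coding a countable set of reals as a countable graph; since $\cong_{\text{graph}}$ is an orbit equivalence relation, the preceding theorem gives that $\bbE_1^{\omega}$ is prime to $\cong_{\text{graph}}$, and then Lemma~\ref{lem:basic}(2) yields that $\bbE_1^{\omega}$ is prime to $\bbF_2$. In fact this observation subsumes the whole corollary: every relation $F$ on the list satisfies $F \leq_B E_G^X$ for the universal orbit equivalence relation, the theorem gives primeness of $\bbE_1^{\omega}$ to $E_G^X$, and Lemma~\ref{lem:basic}(2) pushes it down to $F$.

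I do not expect any genuine obstacle: the statement is a direct corollary of the previous theorem, and the only points needing a line of justification are the realizations of $\bbE_2$ and $\bbE_d$ as orbit equivalence relations via Polishability of analytic P-ideals, and the routine reduction of $\bbF_2$ into an $S_{\infty}$-orbit equivalence relation.
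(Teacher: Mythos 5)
Your proposal is correct and takes the same route the paper intends: the corollary is implicitly derived from the immediately preceding theorem (primeness of $\bbE_1^{\omega}$ to any orbit equivalence relation) together with Lemma~\ref{lem:basic}(2). You fill in the standard realizations of each listed relation as (or as Borel reducible to) an orbit equivalence relation, which is exactly the unstated content of the corollary; your closing observation that reducing everything below the universal $E_G^X$ already suffices is the cleanest possible summary.
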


An ostensibly broader class of equivalence relations are the \emph{idealistic} equivalence relations. It is known that $\bbE_1$ is not reducible to any idealistic equivalence relation.

\begin{question}
Are $\bbE_1$ and $\bbE_1^{\omega}$ prime to every idealistic equivalence relation?
\end{question}

\subsection{${\mathbb E}_0^{\omega}$ and ${\mathbb E}_1^{\omega}$}

We next show that many infinite products, such as $\bbE_0^{\omega}$ and $\bbE_1^{\omega}$, are prime to any $F_{\sigma}$ equivalence relation.

\begin{thm}
\label{thm:Fsigma}
Let $E$ be any equivalence relation with an equivalence class $[x_0]_E$ so that $E \leq_B E \upharpoonright \overline{[x_0]_E}$ (e.g., a dense equivalence class), and such that $E^{\omega}$ maintains complexity on comeager sets. Let $F$ be any $F_{\sigma}$ equivalence relation. Then $E^{\omega}$ is prime to $F$.
\end{thm}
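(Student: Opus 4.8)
\emph{The plan} is to analyze the target of a homomorphism $\varphi$ through an $F_\sigma$ presentation of $F$, splitting into an easy case handled by Baire category and a harder case handled by a Cantor-scheme construction in the spirit of the two preceding theorems, with the hypothesis $E\leq_B E\upharpoonright\overline{[x_0]_E}$ supplying the room to code $E^\omega$.

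\emph{Reductions.} Let $\varphi$ be a Borel homomorphism from $E^\omega$ to $F$. Since $E^\omega$ maintains complexity on comeager sets, after restricting to a comeager set on which $\varphi$ is continuous and precomposing with a continuous embedding of $E^\omega$ into it, I may assume $\varphi\colon X^\omega\to Y$ is continuous. Fix a presentation $F=\bigcup_n F_n$ with each $F_n$ closed, reflexive and symmetric, and $F_n\subseteq F_{n+1}$. Put $Z=\overline{[x_0]_E}$ and $\bar x_0=(x_0,x_0,\dots)$, and aim for the class $[\varphi(\bar x_0)]_F$. As $\varphi$ is a homomorphism it carries the $E^\omega$-class $[x_0]_E^\omega$ of $\bar x_0$ into $[\varphi(\bar x_0)]_F$; so, writing $B_n=\{\bar z\in X^\omega:\varphi(\bar z)\mathrel{F_n}\varphi(\bar x_0)\}$, each $B_n$ is closed, $B_n\subseteq B_{n+1}$, and $[x_0]_E^\omega\subseteq\bigcup_n B_n=\varphi^{-1}[\varphi(\bar x_0)]_F$. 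Since $[x_0]_E$ is dense in $Z$, the set $[x_0]_E^\omega$ is dense in $Z^\omega$; and since $E\sim_B E\upharpoonright Z$ we have $E^\omega\sim_B(E\upharpoonright Z)^\omega$. It now suffices to produce a Borel reduction of $E^\omega$ to $E^\omega$ with range inside $\bigcup_n B_n$: this gives $E^\omega\leq_B E^\omega\upharpoonright\varphi^{-1}[\varphi(\bar x_0)]_F$, hence the $y$ demanded by the definition.

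\emph{The easy case.} The closed sets $\overline{B_n\cap[x_0]_E^\omega}\subseteq B_n$ increase with $n$ and their union contains $[x_0]_E^\omega$, which is dense in $Z^\omega$. If some $\overline{B_n\cap[x_0]_E^\omega}$ is non-meager in $Z^\omega$ then, being closed, it contains a basic open box $\prod_m U_m$ of $Z^\omega$ with $U_m=Z$ for all but finitely many $m$; restricting $E^\omega$ to this box yields $\prod_m(E\upharpoonright U_m)$, which has $(E\upharpoonright Z)^\omega\sim_B E^\omega$ as a subproduct, so $E^\omega\leq_B E^\omega\upharpoonright B_n\leq_B E^\omega\upharpoonright\bigcup_n B_n$ and we are done.

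\emph{The hard case, and the main obstacle.} Otherwise every $\overline{B_n\cap[x_0]_E^\omega}$ is nowhere dense in $Z^\omega$ (so $\bigcup_n B_n$ is meager there). Here I would imitate the fusions of the previous two theorems: recursing on a tree that reveals the coordinates of $X^\omega$ one at a time, build finite approximations $x_s\in[x_0]_E^\omega$ converging to a continuous map $\rho\colon X^\omega\to X^\omega$, using density of $[x_0]_E$ in $Z$ to split coordinates into $E$-inequivalent limit values so that $\rho$ is a continuous reduction (indeed embedding) of $E^\omega$ into $Z^\omega$. Since each $x_s\in[x_0]_E^\omega=[\bar x_0]_{E^\omega}$ we get $\varphi(x_s)\mathrel{F_{n(s)}}\varphi(\bar x_0)$ for some level $n(s)$; if these levels stay bounded along every branch — with eventual value $N$ on a branch $x$ — then, $B_N$ being closed, $\rho(x)\in B_N$, so $\varphi(\rho(x))\mathrel F\varphi(\bar x_0)$ for all $x$, as needed. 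The difficulty, which I expect to be the crux of the argument, is exactly this boundedness: moving a coordinate to a fresh $E$-class in order to split the tree can send $\varphi$ into a far $F_n$-ball. Handling it should come down to locating, inside a single $B_n$, a ``slab'' with infinitely many coordinates still ranging over $Z$ (as happens, e.g., when $F=\bbE_0$ and $\varphi$ factors through finitely many coordinates) on which $E^\omega$ still restricts to a copy of itself, and then organizing the recursion so that, once a level is committed to, it is respected at all later stages.
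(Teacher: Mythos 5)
Your ``easy case'' (some $\overline{B_n\cap[x_0]_E^\omega}$ non-meager) is sound but not the generic situation, and your ``hard case'' --- which is where the real content lies --- is left as a sketch you explicitly say you cannot close. There is therefore a genuine gap.

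The missing idea is a \emph{no-bad-points lemma}, proved by a direct diagonalization through the increasing closed pieces $F_n$, which renders both your Baire-category dichotomy and the contemplated Cantor-scheme fusion unnecessary. Say $\bar x$ is \emph{good} if there exist $k$ and $\bar y\in[\bar x]_{E^\omega}$ with $y_i=x_i$ for $i\geq k$ such that every $\bar z$ with $\bar z\upharpoonright k=\bar y\upharpoonright k$ and $\bar z\in[\bar y]_{E^\omega}$ satisfies $\varphi(\bar z)\mathrel{F_k}\varphi(\bar x)$. If $\bar x$ were bad, one builds $\bar x_0=\bar x,\ \bar x_1,\ \bar x_2,\dots$ in $[\bar x]_{E^\omega}$, together with levels $k_0<k_1<\cdots$, so that at stage $n$ one finds a tail-modification escaping $F_{k_n}\cdot\varphi(\bar x)$, and then (using closedness of $F_{k_n}$ and continuity of $\varphi$) a larger $k_{n+1}$ at which this escape is protected against all further tail changes. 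The limit $\bar x'$ exists, is $E^\omega$-equivalent to $\bar x$, and satisfies $\neg\,\varphi(\bar x')\mathrel{F_{k_n}}\varphi(\bar x)$ for every $n$, contradicting $\varphi(\bar x')\mathrel F\varphi(\bar x)$. The $F_\sigma$ structure is used here, not in any Baire-category argument.

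Applying goodness to $\overline{x_0}$ produces $k$ and $\bar y$; the witness set $\{\bar z : \bar z\upharpoonright k=\bar y\upharpoonright k,\ \bar z\in[\bar y]_{E^\omega}\}=\{\bar y\upharpoonright k\}\times[x_0]_E^{\omega\setminus k}$ is dense in the tail cylinder $B=\{\bar y\upharpoonright k\}\times C^{\omega\setminus k}$ (where $C=\overline{[x_0]_E}$), and since $\varphi^{-1}\{y : y\mathrel{F_k}\varphi(\overline{x_0})\}$ is closed it contains all of $B$. The reduction $\rho(\bar z)=\bar y\upharpoonright k\smallfrown\overline{\psi}(\bar z)$ (with $\psi\colon E\leq_B E\upharpoonright C$) lands in $B$ and finishes the proof. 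Note that $B$ is nowhere dense in $C^\omega$ once $k>0$ and $C$ is perfect, so the slab you were hoping to locate is \emph{not} found via a non-meager closed set; it is the goodness of $\overline{x_0}$, not Baire category or a level-bounding fusion, that produces it.
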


\begin{proof}
Let $\varphi$ be a continuous homomorphism from $E^{\omega}$ to $F$. Let $F = \bigcup_n F_n$ where the $F_n$'s are closed, symmetric relations with $F_n \subseteq F_{n+1}$. 
We write $\bar{x}  \mathrel{E^{<k}} \bar{y}$ if $\bar{x} \mathrel{E^{\omega}} \bar{y}$ and $x_i = y_i$ for all $i \geq k$; similarly, we write  $\bar{x}  \mathrel{E^{\geq k}} \bar{y}$ if $\bar{x} \mathrel{E^{\omega}} \bar{y}$ and $x_i = y_i$ for all $i < k$.
We say $\bar{x} \in X^{\omega}$ is \emph{good} if there are $k$ and $\bar{y}$ with $\bar{x} E^{<k} \bar{y}$ such that for any $\bar{z}$ with $\bar{z} \mathrel{E^{\geq k}} \bar{y}$ we have $\varphi(\bar{z}) \mathrel{F_k} \varphi(\bar{x})$; otherwise we say $\bar{x}$ is bad.

We claim that there are no bad points. Let $\bar{x}$ be bad, so that for all $k$ and all $\bar{y}$ with $\bar{x} \mathrel{E^{<k}} \bar{y}$ there is $\bar{z}$ with $\bar{z} \mathrel{E^{\geq k}} \bar{y}$ such that $\neg \varphi (\bar{z}) \mathrel{F_k} \varphi(\bar{x})$. Let $\bar{x}_0 = \bar{x}$ and $k_0=0$. Given $k_n$ and $\bar{x}_n$ such that $\bar{x}_n \mathrel{E^{<k_n}} \bar{x}$, by our assumption of badness for $\bar{x}$ we may find $\bar{y}$ with $\bar{y} \mathrel{E^{\geq k_n}} \bar{x}_n$ such that $\neg \varphi(\bar{y}) \mathrel{F_{k_n}} \varphi(\bar{x})$. Since $F_{k_n}$ is closed we may find $k_{n+1} > k_n$ such that for any $\bar{z}$ with $\bar{z} \mathrel{E^{\geq k_{n+1}}} \bar{y}$ we have $\neg \varphi(\bar{z}) \mathrel{F_{k_n}} \varphi(\bar{x})$. 
Let $\bar{x}_{n+1} = \bar{y} \upharpoonright k_{n+1} \smallfrown \bar{x} \upharpoonright (\omega \setminus k_{n+1})$, so that $\bar{x}_{n+1} \mathrel{E^{\geq k_{n+1}}} \bar{y}$ and $\bar{x}_{n+1} \mathrel{E^{< k_{n+1}}} \bar{x}$. Repeat for all $n \in \omega$.
Then $\bar{x}' = \lim_n \bar{x}_n$ exists since 
$\bar{x}_{n+1} \mathrel{E^{\geq n}} \bar{x}_n$, and satisfies $\bar{x}' E^{\omega} \bar{x}$; however, $\bar{x}' \mathrel{E^{\geq k_{n+1}}} \bar{x}_{n+1}$ for all $n$, so $\neg \varphi(\bar{x}') \mathrel{F_{k_n}} \varphi(\bar{x})$ for all $n$, so $\neg \varphi(\bar{x}') \FF \varphi(\bar{x})$, contradicting that $\varphi$ was a homomorphism.

Now let $x_0 \in X$ be such that $E \leq_B E \upharpoonright C$, where $C = \overline{[x_0]_E}$. 
Let $\psi$ be a reduction from $E$ to $E \upharpoonright C$, so that $\psi$ induces a reduction $\overline{\psi}$ from $E^{\omega}$ to $E^{\omega} \upharpoonright C^{\omega}$. 
Let $\overline{x_0}$ be the sequence with constant value $x_0$.
Since $\overline{x_0}$ is good, we may fix $k$ and $\bar{y}$ with $\overline{x_0} \mathrel{E^{<k}} \bar{y}$ to witness goodness. Since $\varphi$ is continuous and $F_k$ is closed, the set $A=\varphi^{-1}\{y : y \mathrel{F_k} \varphi(\overline{x_0})\}$ is closed and contains $\{ \bar{w} : \bar{w} \mathrel{E^{\geq k}} \bar{y}\}$. This latter set is dense in the closed set $B= \{\bar{y} \upharpoonright k \} \times C^{\omega \setminus k}$ and thus $A$ contains $B$. Hence $\varphi(\bar{w}) \mathrel{F_k} \varphi(\overline{x_0})$ for all $\bar{w} \in B$. 
Define $\rho(\bar{z}) = \bar{y} \upharpoonright k \smallfrown \overline{\psi}(\bar{z})$; $\rho$ is clearly a reduction from $E^{\omega}$ to $E^{\omega}$, and $\varphi(\rho(\bar{z})) \mathrel{F_k} \varphi(\overline{x_0})$ for all $\bar{z}$, which completes the proof. 
\end{proof}

This result  can be generalized to other products $\prod_i E_i$, provided that for each $i$ there are infinitely many $j$ with $E_i \sqsubseteq_c E_j$.

\begin{cor}
Both $\bbE_0^{\omega}$ and $\bbE_1^{\omega}$ are prime to each of $\Delta(X)$, $\bbE_0$, $\bbE_1$, $E_{\infty}$, $\bbE_2$, and $E_{K_{\sigma}}$.
\end{cor}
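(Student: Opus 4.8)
The plan is to obtain all of these cases at once from Theorem~\ref{thm:Fsigma}, applied with $E = \bbE_0$ (yielding the statements about $\bbE_0^\omega$) and with $E = \bbE_1$ (yielding those about $\bbE_1^\omega$), against each of the six listed relations in the role of $F$. So there are really only three things to check: that each of $\Delta(X)$, $\bbE_0$, $\bbE_1$, $E_\infty$, $\bbE_2$, $E_{K_\sigma}$ is $F_\sigma$; that $\bbE_0$ and $\bbE_1$ each have an equivalence class $[x_0]$ with $E \leq_B E \upharpoonright \overline{[x_0]_E}$; and that $\bbE_0^\omega$ and $\bbE_1^\omega$ maintain complexity on comeager sets.

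For the first point: $\Delta(X)$ is closed; $\bbE_0 = \bigcup_n\{(x,y): \forall m \geq n\ x(m)=y(m)\}$ and $\bbE_1 = \bigcup_n F_n$ are unions of countably many closed sets; $\bbE_2 = \bigcup_n \{(x,y): \sum\{\tfrac{1}{k+1}: x(k)\neq y(k)\} \leq n\}$ is likewise $F_\sigma$; $E_{K_\sigma}$ is $K_\sigma$, hence $F_\sigma$; and $E_\infty$, realized by the shift action of $F_2$ on $2^{F_2}$, is the union over $\gamma \in F_2$ of the closed graphs $\{(x,\gamma\cdot x) : x \in 2^{F_2}\}$, hence $F_\sigma$. (This is also consistent with the absence of $\bbE_d$ from the list.) For the second point, both $\bbE_0$ and $\bbE_1$ have a dense class: the $\bbE_0$-class of the constant sequence $\bar 0 \in 2^\omega$ consists of the eventually-zero sequences, and the $\bbE_1$-class of the sequence in $(2^\omega)^\omega$ all of whose coordinates are the zero real consists of the sequences eventually equal to it; both are dense, so $\overline{[x_0]_E} = X$ and the required reduction $E \leq_B E \upharpoonright \overline{[x_0]_E}$ is the identity.

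Finally, $\bbE_0^\omega$ and $\bbE_1^\omega$ maintain complexity on non-meager sets, hence on comeager sets: for $\bbE_0^\omega$ this is among the standard facts recalled in \S2, and $\bbE_1^\omega$ is one of the ideal-induced relations covered by Theorem~\ref{thm:non-meager-sections}. With all three hypotheses in hand, Theorem~\ref{thm:Fsigma} immediately gives that $\bbE_0^\omega$ and $\bbE_1^\omega$ are each prime to each of the six relations. No step presents a real obstacle, as this is essentially a bookkeeping exercise once Theorem~\ref{thm:Fsigma} is available; the only items requiring a moment's thought are exhibiting the $F_\sigma$ presentation of $E_\infty$ and invoking complexity-maintenance for $\bbE_1^\omega$ and not merely $\bbE_0^\omega$.
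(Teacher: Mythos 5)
Your proposal is correct and matches the intended argument: the corollary is stated directly after Theorem~\ref{thm:Fsigma}, and the paper's (unstated) proof is exactly the application you describe, using the $F_\sigma$ presentations of the six target relations, the density of the $\bbE_0$-class of $\bar 0$ and of the $\bbE_1$-class of the all-zero sequence, and complexity-maintenance for $\bbE_0^\omega$ and $\bbE_1^\omega$ (the latter via Theorem~\ref{thm:non-meager-sections}, which is independent of this corollary). All the verifications you supply are accurate.
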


The above theorem does not apply to $E_{\infty}^{\omega}$, as it does not maintain complexity on comeager sets; in fact there is a comeager set $C$ so that $E_{\infty}^{\omega} \upharpoonright C\sim_B \bbE_0^{\omega}$.

\begin{question}
Is $E_{\infty}^{\omega}$ prime to every $F_{\sigma}$ equivalence relation?
\end{question}

We are not sure if these results can be extended higher in the Borel hierarchy.

\begin{question}
Can we extend Theorem~\ref{thm:Fsigma} result from $F_{\sigma}$ to  $\bSigma^0_3$ equivalence relations? 
\end{question}

There has been some work in determining the precise relationships between $\bbE_1$,  $\bbE_0^{\omega}$ and their disjoint union and products. For instance, Kechris and Louveau have asked if there are any equivalence relations properly between $\bbE_1 \amalg \bbE_0^{\omega}$ and $\bbE_1 \times \bbE_0^{\omega}$.
Since $\bbE_1^{\omega}$ is also prime to $\bbE_1$, using Lemma~\ref{lem:basic} (4) we also get:
\begin{cor}
$\bbE_1^{\omega}$ is prime to $\bbE_1 \times \bbE_0^{\omega}$.
\end{cor}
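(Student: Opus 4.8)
The plan is to obtain this as an immediate consequence of Lemma~\ref{lem:basic}(4), once we observe that $\bbE_1^{\omega}$ is prime to each of the two factors $\bbE_1$ and $\bbE_0^{\omega}$ separately.

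That $\bbE_1^{\omega}$ is prime to $\bbE_1$ follows from Theorem~\ref{thm:Fsigma}: the relation $\bbE_1 = \bigcup_n F_n$ is $F_{\sigma}$, being an increasing union of the closed relations $F_n$; moreover $\bbE_1^{\omega}$ maintains complexity on comeager sets and $\bbE_1$ has a dense class (so $\bbE_1 \leq_B \bbE_1 \upharpoonright \overline{[x_0]_{\bbE_1}}$ holds trivially for a suitable $x_0$), so the hypotheses of Theorem~\ref{thm:Fsigma} are met with $E = \bbE_1$ and $F = \bbE_1$. (This primeness was in fact already recorded in the corollary immediately above.) That $\bbE_1^{\omega}$ is prime to $\bbE_0^{\omega}$ follows from the theorem that $\bbE_1^{\omega}$ is prime to every orbit equivalence relation $E_G^X$, since $\bbE_0^{\omega}$ is an orbit equivalence relation — it is the coordinatewise orbit relation of a countable product of the countable group inducing $\bbE_0$ — and this was recorded above as well.

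Finally, I would apply Lemma~\ref{lem:basic}(4) with $E = \bbE_1^{\omega}$, $F = \bbE_1$, and $F' = \bbE_0^{\omega}$ to conclude that $\bbE_1^{\omega}$ is prime to $\bbE_1 \times \bbE_0^{\omega}$. Since the whole argument is a direct appeal to results already established, there is no substantive obstacle; the only points requiring (routine) verification are that $\bbE_1$ lies in the scope of Theorem~\ref{thm:Fsigma} and that $\bbE_0^{\omega}$ is an orbit equivalence relation, both of which have already been noted.
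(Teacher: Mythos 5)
Your proof is correct and is essentially the same as the paper's: the paper also obtains this corollary by noting that $\bbE_1^{\omega}$ is prime to $\bbE_1$ (via Theorem~\ref{thm:Fsigma}) and to $\bbE_0^{\omega}$ (via the orbit-equivalence theorem), and then invoking Lemma~\ref{lem:basic}(4).
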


\subsection{${\mathbb E}_2$ and turbulent actions}

We note here several consequences regarding primeness for $\bbE_2$ which follow immediately from earlier results. The notion of a \emph{turbulent} orbit equivalence relation is introduced in \cite{Hjorth}; we omit some definitions as we do not need the details here. We recall the following:

\begin{thm}
If $E_G^X$ is a turbulent Polish $G$-space and $E_{S_{\infty}}^Y$ is a Polish $S_{\infty}$-space, then $E_G^X$ is generically $E_{S_{\infty}}^Y$-ergodic.
\end{thm}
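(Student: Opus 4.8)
This is Hjorth's turbulence theorem \cite{Hjorth}; the plan for a proof is as follows. Write $S_{\infty} = \mathrm{Sym}(\omega)$ and let $V_n \leq S_{\infty}$ be the pointwise stabilizer of $\{0,\dots,n-1\}$, so that the $V_n$ form a neighborhood basis of $1_{S_{\infty}}$ consisting of open subgroups with $\bigcap_n V_n = \{1_{S_{\infty}}\}$. Write $R_n = E_{V_n}^Y$ for the orbit equivalence relation of the restricted action of $V_n$ on $Y$. Given a Borel homomorphism $\varphi$ from $E_G^X$ to $E_{S_{\infty}}^Y$, the goal is to produce a comeager set on which $\varphi$ is constant; this in particular makes $\varphi^{-1}[y]_{E_{S_{\infty}}^Y}$ comeager for the constant value $y$, witnessing generic $E_{S_{\infty}}^Y$-ergodicity.

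First I would fix $n$ and apply Hjorth's Lemma (Lemma~\ref{lem:hjorth}) with the open set $V = V_n$, obtaining a comeager set of $x \in X$ for which there is an open $U \ni 1_G$ with $\forall^{\ast} g \in U\,(\varphi(g \cdot x) \mathrel{R_n} \varphi(x))$; that is, off a meager set $\varphi$ carries sufficiently small $G$-moves to $R_n$-moves. The second and central step is to promote this local statement to a global one using turbulence: since every local orbit $\mathcal{O}(x,U,W)$ is somewhere dense, one chains the previous observation along generic turbulent paths --- controlling the meager exceptional sets at each step by passing to Vaught transforms, so that the intermediate points stay generic --- to see that the set $\{x' : \varphi(x') \mathrel{R_n} \varphi(x)\}$, which is analytic and hence has the Baire property, is comeager in some nonempty open set. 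Since all $G$-orbits are dense and the action is turbulent, a further Baire-category argument spreads this out to a comeager $E_G^X$-invariant set $C_n$ together with a single $R_n$-class $\kappa_n$ satisfying $\varphi[C_n] \subseteq \kappa_n$. This gluing step is precisely where turbulence enters and is the main obstacle; I would follow Hjorth's treatment in \cite{Hjorth} for the details. (Alternatively one first reduces, via Becker--Kechris, to the case that $E_{S_{\infty}}^Y$ is a logic action, where $R_n$ becomes ``having the same induced structure on $\{0,\dots,n-1\}$ up to the $S_{\infty}$-action''.)

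Finally I would intersect: put $C = \bigcap_n C_n$, a comeager $E_G^X$-invariant set, and fix $y_{\ast} \in \varphi[C]$. Then $y_{\ast} \in \kappa_n$ for every $n$, so $\kappa_n = V_n \cdot y_{\ast}$ and $\varphi[C] \subseteq \bigcap_n V_n \cdot y_{\ast}$. If $y \in \bigcap_n V_n \cdot y_{\ast}$, writing $y = h_n \cdot y_{\ast}$ with $h_n \in V_n$ gives $h_n \to 1_{S_{\infty}}$, whence $y = \lim_n h_n \cdot y_{\ast} = y_{\ast}$ by continuity of the action. So $\varphi$ is constant on $C$, as desired. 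The first step here is a direct application of Hjorth's Lemma and the last is a short point-set computation; the turbulence-gluing in the middle paragraph is the only genuinely delicate part.
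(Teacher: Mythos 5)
The paper does not prove this theorem; it is cited directly from Hjorth's book, so there is no in-paper argument to compare against. Your sketch, however, contains a genuine gap in the gluing and final steps, not merely a deferred detail.

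Your plan is to obtain, for each $n$, a comeager $E_G^X$-invariant set $C_n$ and a single $R_n$-class $\kappa_n$ with $\varphi[C_n]\subseteq\kappa_n$, and then to intersect and conclude that $\varphi$ is constant on a comeager set. This conclusion is strictly stronger than generic ergodicity and is false in general. Consider $\varphi(x)=h(x)\cdot y_{\ast}$ where $h:X\to S_{\infty}$ is any non-constant continuous map and $y_{\ast}$ has trivial stabilizer: this is automatically a homomorphism from $E_G^X$ into $E_{S_{\infty}}^{Y}$ (its range lies in a single orbit), yet it is not constant on any comeager set. For such $\varphi$, the set $\{x:\varphi(x)\in V_n\cdot y_{\ast}\}=\{x:h(x)\in V_n\}$ is not comeager for $n\geq 1$, so your claimed $C_n$ cannot exist.

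The specific point where the spreading fails is this: the set $\{x':\varphi(x')\mathrel{R_n}\varphi(x)\}$ is not $E_G^X$-invariant. If $x''=g\cdot x'$ then $\varphi(x'')=h\cdot\varphi(x')$ for some $h\in S_{\infty}$ that need not lie in $V_n$, so $\varphi(x'')$ need not be $R_n$-equivalent to $\varphi(x)$. Consequently, even if this set is comeager in some nonempty open set, you cannot use density of $G$-orbits to inflate it to a comeager subset of $X$; that inference is valid only for $E_G^X$-invariant sets. What you \emph{can} legitimately spread is the $E_{S_{\infty}}^{Y}$-saturation $\{x':\varphi(x')\mathrel{E_{S_{\infty}}^{Y}}\varphi(x)\}$, which is invariant because $\varphi$ is a homomorphism into $E_{S_{\infty}}^{Y}$; but then you recover only generic ergodicity directly, not constancy, and the intersection over $n$ plays no role. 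The actual proof in \cite{Hjorth} does not fix a single $R_n$-class per level and intersect; rather it builds, by a back-and-forth construction in which the Vaught-transform bookkeeping is intertwined with the choice of the approximating finite partial permutations, an isomorphism between $\varphi(x)$ and $\varphi(x')$ for a comeager set of pairs $(x,x')$. Your first step (applying Lemma~\ref{lem:hjorth} with $V=V_n$) is the right starting point, but the middle of the argument needs to track data beyond a single $V_n$-orbit, and the last paragraph as written proves too much.
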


Since $\bbE_2$ and $\bbE_d$ are turbulent and maintain complexity on comeager sets, we thus have:

\begin{cor}
$\bbE_2$ and $\bbE_d$ are prime to any $E_{S_{\infty}}^Y$. In particular, $\bbE_2$ is prime to $\Delta(X)$, $\bbE_0$, $\bbE_0^{\omega}$, $E_{\infty}$, $E_{\infty}^{\omega}$, $\bbF_2$, and $\cong_{\text{graph}}$.
\end{cor}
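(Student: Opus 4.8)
The plan is to read off the corollary from the turbulence theorem just stated together with the observation from \S2 that an equivalence relation which maintains complexity on comeager sets and is generically $F$-ergodic is prime to $F$. As recalled in the sentence preceding the corollary (and due to Hjorth), $\bbE_2$ and $\bbE_d$ are each realized as a turbulent orbit equivalence relation and each maintains complexity on comeager sets. Hence the theorem applies directly: for every Polish $S_{\infty}$-space $Y$, both $\bbE_2$ and $\bbE_d$ are generically $E_{S_{\infty}}^Y$-ergodic, and since they maintain complexity on comeager sets this upgrades at once to primeness over $E_{S_{\infty}}^Y$. Thus the first assertion of the corollary requires nothing beyond chaining these two inputs.

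For the ``in particular'' clause I would first observe that each of $\Delta(X)$, $\bbE_0$, $\bbE_0^{\omega}$, $E_{\infty}$, $E_{\infty}^{\omega}$, $\bbF_2$, and $\cong_{\text{graph}}$ is classifiable by countable structures, hence Borel reducible to $\cong_{\text{graph}}$ and therefore to some orbit equivalence relation $E_{S_{\infty}}^Y$: this is trivial for $\cong_{\text{graph}}$ itself; it is standard for the smooth relation $\Delta(X)$, for the essentially countable $\bbE_0$ and $E_{\infty}$, and for their countable powers $\bbE_0^{\omega}$ and $E_{\infty}^{\omega}$ (code a sequence of countable structures as one countable structure); and $\bbF_2$, being the Friedman--Stanley jump of equality of reals, is classifiable by countable structures as well. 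Then Lemma~\ref{lem:basic}(2), applied with $F = \cong_{\text{graph}}$ and $F'$ equal to any relation on the list, converts primeness of $\bbE_2$ over $\cong_{\text{graph}}$ into primeness of $\bbE_2$ over each such $F'$; the same remark gives the analogous statements for $\bbE_d$.

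The only step carrying any real content is the black box invoked throughout, namely that $\bbE_2$ and $\bbE_d$ genuinely are turbulent and do maintain complexity on comeager sets; this is where the difficulty of the result actually resides, but it is Hjorth's theorem and is cited rather than reproved here, so the corollary itself is a purely formal consequence of the turbulence theorem, the comeager-complexity criterion for primeness, and Lemma~\ref{lem:basic}(2).
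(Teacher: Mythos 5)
Your argument is correct and follows essentially the same route as the paper: invoke Hjorth's theorem that turbulence yields generic $E_{S_\infty}^Y$-ergodicity, upgrade to primeness via the comeager-complexity criterion from \S2, and dispatch the ``in particular'' list by downward closure of primeness under $\leq_B$ (Lemma~\ref{lem:basic}(2)), since every relation on the list is classifiable by countable structures. Routing through $\cong_{\text{graph}}$ is a harmless variant of the paper's implicit observation that each listed relation is reducible to some $E_{S_\infty}^Y$.
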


Further conclusions can be obtained from results of Kanovei--Reeken from \cite{kan-ree}. 

\begin{dfn}
Let $\mathcal{E}$ be the smallest class of equivalence relations containing $\Delta(X)$ for Polish spaces $X$ and closed under the following operations:
\begin{enumerate}
\item countable unions on the same space (when this yields an equivalence relation)
\item countable intersections on the same space
\item countable amalgamations (disjoint unions)
\item countable products
\item Fubini products mod FIN, where $\bar{x} \mathrel{\prod_i E_i / \text{FIN}} \bar{y}$ iff $\{i : \neg x_i \EE_i y_i \} \in \text{FIN}$
\item the Friedman--Stanley jump $E^{+}$, where $\bar{x} \mathrel{E^{+}} \bar{y}$ iff $\{ [x_i]_E : i \in \omega\} = \{[y_i]_E : i \in \omega \}$
\end{enumerate}
\end{dfn}

In particular, $\bbE_1$ is in $\mathcal{E}$, being the Fubini product of $\Delta(2^{\omega})$ mod FIN, as is $\bbE_1^{\omega}$.

\begin{thm}[Kanovei--Reeken]
Let $E_G^X$ be a turbulent Polish $G$-space, and $F \in \mathcal{E}$. Then $E_G^X$ is generically $F$-ergodic.
\end{thm}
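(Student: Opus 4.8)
The plan is to prove the theorem by induction on the structure of the class $\mathcal{E}$. Fix a turbulent Polish $G$-space $E_G^X$. Since the $G$-action is by homeomorphisms, every Baire-measurable homomorphism out of $E_G^X$ is continuous on a comeager $G$-invariant set, so it suffices to treat continuous homomorphisms; it is convenient to carry the inductive hypothesis in the relativized form ``for every comeager $G$-invariant $C \subseteq X$ and every continuous homomorphism $\varphi$ from $E_G^X \upharpoonright C$ to $F$ there is a comeager $G$-invariant $C' \subseteq C$ with $\varphi[C']$ contained in a single $F$-class,'' which self-strengthens and makes the inductive steps compose cleanly. The base case $F = \Delta(Z)$ is the classical fact that turbulent actions are generically ergodic: their orbits are dense and meager and their local orbits are somewhere dense, so an invariant Baire-measurable function is constant on a comeager invariant set.

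The inductive steps for operations (2), (3), and (4) are routine. A continuous homomorphism to a countable product $\prod_n F_n$ is a sequence of homomorphisms to the $F_n$; apply the hypothesis to each and intersect the resulting comeager invariant sets. A countable intersection is handled identically. For a countable amalgamation $\amalg_n F_n$, the map taking $x$ to the index of the summand containing $\varphi(x)$ is $E_G^X$-invariant, hence generically constant, reducing to a single summand $F_n$ and the hypothesis for $F_n$. The Fubini product mod FIN, operation (5), reduces to the increasing-union case: $\prod_i E_i / \text{FIN}$ is the increasing union of the equivalence relations $R_n$ that impose the full relation on the first $n$ coordinates and $E_i$ on each coordinate $i \geq n$, and each $R_n$ is directly seen to be generically ergodic by applying the hypothesis to the relations $E_i \in \mathcal{E}$ for $i \geq n$ coordinatewise and intersecting.

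This leaves two genuinely nontrivial steps. The first is the increasing union $F = \bigcup_n F_n$ with each $F_n$ generically ergodic: given a continuous homomorphism $\varphi$, one wants a comeager invariant $C$ and a single $n$ so that $\varphi \upharpoonright C$ is a homomorphism into $F_n$, after which the hypothesis for $F_n$ finishes. Here turbulence does real work for the first time. The sets $A_n = \{(x,g) : \varphi(x) \mathrel{F_n} \varphi(g \cdot x)\}$ are closed and cover $X \times G$, so some $A_n$ is nonmeager and hence has nonempty interior; from this, using that every local orbit is somewhere dense, one argues by a Baire-category propagation along orbits that the least $n$ witnessing $\varphi(x) \mathrel{F} \varphi(g \cdot x)$ is generically locally bounded, yielding the required invariant comeager $C$ and fixed $n$.

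The main obstacle is the Friedman--Stanley jump, operation (6): deducing that $F^{+}$ is generically ergodic from the same for $F$. Given a continuous homomorphism $\varphi = (\varphi_i)_i$ from $E_G^X$ to $F^{+}$, the invariant object to control is $x \mapsto \{\, [\varphi_i(x)]_F : i \in \omega \,\}$, which must be shown generically constant. This is exactly the content of Hjorth's turbulence theorem in the special case $F = \Delta$ (where $F^{+} \sim_B \mathbb{F}_2 \leq_B E_{S_\infty}^Y$), and the plan is to run the natural recursive generalization of that argument: connect $x$ to $g \cdot x$ by a turbulent path of small steps, use the continuity of $\varphi$ to control how the finite approximations to the $\varphi_i$ can move along the path, and invoke the inductive generic $F$-ergodicity to pin down the individual $F$-classes being matched across steps, so that the whole set of $F$-classes is unchanged along the orbit and therefore constant on a comeager invariant set. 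Coordinating the combinatorics of turbulent paths with the bookkeeping for the multiset of $F$-classes — precisely the difficulty Hjorth faces for $S_\infty$ — is the crux; the rest of the theorem is assembled from it by the structural induction above.
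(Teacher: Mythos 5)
The paper does not prove this theorem; it is cited verbatim from Kanovei--Reeken~\cite{kan-ree}, so there is no in-paper proof to compare your argument against. I will therefore assess the proposal on its own terms. Your overall strategy --- structural induction on $\mathcal{E}$, reducing to continuous homomorphisms and carrying a self-strengthening relativized hypothesis --- is the natural route and (as far as I can tell) is indeed how Kanovei and Reeken proceed. The treatment of countable products, intersections, and amalgamations is correct and genuinely routine, and your reduction of the Fubini-product-mod-FIN case to an increasing union is fine, though you should not list it among the ``routine'' steps since it inherits all the difficulty of the union case you defer to.

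The problem is that the two steps where the actual content lives are only described, not carried out. For the increasing-union case: you assert the sets $A_n = \{(x,g) : \varphi(x) \mathrel{F_n} \varphi(g\cdot x)\}$ are \emph{closed}, but the $F_n$ arising in $\mathcal{E}$ are merely Borel, so the $A_n$ are Borel sets with the Baire property, not closed; you should say ``comeager in an open box'' rather than ``nonempty interior.'' More seriously, the sentence ``one argues by a Baire-category propagation along orbits that the least $n$ is generically locally bounded'' is a statement of what must be proved, not a proof. The natural obstruction is that having $A_n$ comeager in $U \times V$ only controls a single local piece of each orbit, and a turbulent orbit passes through $U$ in a meager set of ways; one has to use the somewhere-denseness of local orbits together with transitivity of $F_n$ to chain these local pieces, and this chaining is where the argument actually lives. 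For the Friedman--Stanley jump: you say the plan is ``to run the natural recursive generalization of Hjorth's argument,'' and you explicitly flag the coordination of turbulent paths with the bookkeeping of the set of $F$-classes as ``the crux.'' That is an accurate diagnosis, but it means the hardest inductive step is not done. Note also that for $F \neq \Delta$, the relation $F^{+}$ is not itself a Polish $S_\infty$-orbit equivalence relation (it is the join of the coordinate-permutation action with $F^{\omega}$), so one cannot simply quote Hjorth's theorem; the recursion through the inductive generic $F$-ergodicity has to be woven into the turbulence argument, not appended to it. As written, then, this is a plausible and well-organized outline that correctly identifies the load-bearing steps, but it is not yet a proof of either of them.
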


\begin{cor}
$\bbE_2$ and $\bbE_d$ are prime to $\bbE_1$ and prime to $\bbE_1^{\omega}$.
\end{cor}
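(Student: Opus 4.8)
The plan is to assemble this from three ingredients already in hand: the Kanovei--Reeken generic ergodicity theorem, the closure properties of the class $\mathcal{E}$, and the general principle converting generic ergodicity into primeness. First I would recall that $\bbE_2$ (the summable relation) and $\bbE_d$ (the density relation) are induced by turbulent Polish group actions; this is exactly what was invoked in the corollary on turbulence and $E_{S_\infty}^Y$ above, and the turbulent presentation is the standard one (the relevant group is $c_0$ acting on $\bbR^\omega$ for $\bbE_2$, and the analogous additive action for $\bbE_d$). Second, I would point out that $\bbE_1$ and $\bbE_1^\omega$ both lie in $\mathcal{E}$: $\bbE_1$ is the Fubini product $\prod_i \Delta(2^\omega)/\text{FIN}$, which is operation (5) in the definition of $\mathcal{E}$, and $\bbE_1^\omega$ is then a countable product of copies of $\bbE_1$, which is operation (4). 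Both facts were already noted in the text immediately preceding the Kanovei--Reeken theorem.

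With these in place, the Kanovei--Reeken theorem applies directly: taking $E_G^X$ to be the turbulent presentation of $\bbE_2$ (resp. $\bbE_d$) and $F$ to be $\bbE_1$ or $\bbE_1^\omega$, we conclude that $\bbE_2$ and $\bbE_d$ are generically $\bbE_1$-ergodic and generically $\bbE_1^\omega$-ergodic. Finally, I would invoke the principle recorded early in \S2 --- that if $E$ maintains complexity on comeager sets and is generically $F$-ergodic, then $E$ is prime to $F$ --- together with the fact (also used above for the turbulence corollary) that $\bbE_2$ and $\bbE_d$ maintain complexity on comeager sets. This yields that $\bbE_2$ and $\bbE_d$ are prime to $\bbE_1$ and to $\bbE_1^\omega$, as desired.

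There is no substantive obstacle here; the statement is a genuine corollary, and the only points requiring a moment's care are bookkeeping ones: verifying that the turbulent actions representing $\bbE_2$ and $\bbE_d$ are the same ones to which Kanovei--Reeken applies, and confirming that $\bbE_1^\omega \in \mathcal{E}$ is obtained by the product operation rather than needing any new closure property. Both are immediate from the definitions given. If one wished to be maximally self-contained, one could alternatively cite \cite{kan-ree} directly for the generic $\bbE_1$-ergodicity of turbulent actions, but routing through the stated class $\mathcal{E}$ is cleaner and also delivers the $\bbE_1^\omega$ case in one stroke.
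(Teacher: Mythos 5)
Your proposal is correct and matches the paper's intended derivation exactly: $\bbE_1, \bbE_1^{\omega} \in \mathcal{E}$ (the latter via the countable-product closure), Kanovei--Reeken then gives that the turbulent relations $\bbE_2$ and $\bbE_d$ are generically ergodic with respect to them, and the general principle that generic $F$-ergodicity plus maintaining complexity on comeager sets yields primeness finishes it. One small parenthetical slip worth fixing if you write this up: the turbulent group for $\bbE_2$ is $\ell^1$-like (the summable ideal group), not $c_0$ --- it is $\bbE_d$ whose inducing group is more $c_0$-like --- but this does not affect the argument since the paper already records that both are turbulent.
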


It is known that $\bbE_2 \not\leq_B \bbE_d$ and $\bbE_d \not\leq_B \bbE_2$, but we do not know if this can be extended to relative primeness.

\begin{question}
Is $\bbE_2$ prime to $\bbE_d$ or vice versa?
\end{question}

\subsection{The universal countable Borel equivalence relation $E_{\infty}$}
\label{subsection:countable}

Several long-standing questions about $E_{\infty}$ may be rephrased in terms of primeness. Martin's Conjecture (MC) on Turing degree-invariant functions, for instance, has several consequences for primeness results concerning $E_{\infty}$. Discussion of Martin's Conjecture, Turing equivalence $\equiv_T$, and countable Borel equivalence relations may be found in \cite{MSS}.
One direct result is the following:

\begin{thm}[Marks, Theorem 3.1 of \cite{MSS}] 
$E_{\infty}$ is prime to $\Delta(\bbR)$.
\end{thm}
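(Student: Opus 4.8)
My plan is to run a computability-theoretic argument, with Borel determinacy as the engine. The template is Turing equivalence: by Martin's cone theorem --- a consequence of Borel determinacy --- every $\equiv_T$-invariant Borel subset of $2^{\omega}$ either contains a Turing cone $\{z : z \geq_T a\}$ or is disjoint from one. Applying this to the countably many $\equiv_T$-invariant Borel sets $\{z : g(z)(n) = 1\}$ and intersecting the resulting cones, any $\equiv_T$-invariant Borel $g : 2^{\omega} \to \bbR$ is constant on a cone. Together with two standard facts --- that the $\equiv_T$-saturation of a cone again contains a cone, and that $\equiv_T$ restricted to a cone is Borel bireducible with $\equiv_T$ by relativization --- this is exactly the statement that $\equiv_T$ is prime to $\Delta(\bbR)$, the witness being (essentially) a restriction to a cone. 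The goal is to transport this picture to $E_{\infty}$; the point to bear in mind is that one cannot simply quote $E_{\infty} \sim_B\, \equiv_T$ together with Lemma~\ref{lem:basic}(6), since the universality of $\equiv_T$ is open.

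To make the transfer, I would first fix a universal countable Borel equivalence relation $E$ on a standard Borel space $Z$ equipped with a ``computability-like'' structure: a Borel preorder extending $E$ and closed under a Borel join, for which the three facts above hold verbatim --- every $E$-invariant Borel set contains or is disjoint from a cone, the $E$-saturation of a cone contains a cone, and $E$ restricted to a cone is Borel bireducible with $E$. Granting this, let $\varphi$ be a Borel homomorphism from $E_{\infty}$ to $\Delta(\bbR)$, that is, an $E_{\infty}$-invariant Borel $\varphi : X \to \bbR$, and fix a Borel reduction $g : Z \to X$ of $E$ to $E_{\infty}$ (recalling also $E_{\infty} \leq_B E$ by universality of $E$). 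Then $\psi = \varphi \circ g : Z \to \bbR$ is $E$-invariant, hence constant --- say with value $c$ --- on some cone; its $E$-saturation $B$ is an $E$-invariant Borel set that contains a cone and on which $\psi \equiv c$. Hence $g[B] \subseteq \varphi^{-1}(c) =: A$, and since $A$ is $E_{\infty}$-invariant, $g \upharpoonright B$ is a Borel reduction of $E \upharpoonright B$ to $E_{\infty} \upharpoonright A$. Using $E \leq_B E \upharpoonright B$, chaining gives $E_{\infty} \leq_B E \leq_B E \upharpoonright B \leq_B E_{\infty} \upharpoonright A$; composing these reductions produces a Borel $\rho$ reducing $E_{\infty}$ to $E_{\infty}$ whose range lies in $A$, so $\varphi \circ \rho$ is constant with value $c$. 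That is precisely what it means for $E_{\infty}$ to be prime to $\Delta(\bbR)$.

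The hard part will be step one: exhibiting a universal countable Borel equivalence relation that genuinely carries a Martin-style cone structure, with the relativization property, rather than the trivially-structured $\equiv_T$ whose universality is unknown. An alternative that sidesteps an explicit such $E$ is to argue directly on the free part of the shift action of $F_2$ on $2^{F_2}$, which represents $E_{\infty}$: there one replaces the global cone theorem with the Borel determinacy of a game in which the two players alternately build, along a geodesic in $F_2$, finite approximations to two points lying in a common $\varphi$-fiber and differing by a prescribed generator, and then assembles winning strategies into a continuous $F_2$-equivariant injection of $\mathrm{Free}(2^{F_2})$ into $\varphi^{-1}(c)$ --- any such injection is automatically a Borel reduction of $E_{\infty}$ to $E_{\infty} \upharpoonright \varphi^{-1}(c)$. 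This is essentially the route Marks takes; the delicate part is running the game uniformly over all relevant finite conditions and checking that the map it yields is simultaneously injective and a cohomomorphism.
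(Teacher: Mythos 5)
The paper does not actually prove this statement; it is quoted from Marks (Theorem 3.1 of \cite{MSS}), so your proposal must be measured against that argument. Your first route has the right architecture --- it is, in outline, how the cited result is proved --- but the step you defer as ``the hard part'' is the entire mathematical content, and it is not an open-ended search: the relation that does the job is arithmetic equivalence $\equiv_A$. The heavy theorem behind the citation is that $\equiv_A$ is a (uniformly) universal countable Borel equivalence relation; since every $\equiv_A$-invariant Borel set is in particular $\equiv_T$-invariant, Martin's cone theorem (Borel determinacy) makes any $\equiv_A$-invariant Borel map to $\bbR$ constant on a Turing cone; and the fact that $E_{\infty}$ reduces to $\equiv_A$ restricted to a cone is obtained by relativizing the \emph{universality construction} to the base of the cone, not by ``relativizing the relation.'' This last point also exposes a flaw in the ``standard facts'' you quote: the map $z \mapsto z \oplus a$ is a homomorphism but not a reduction (take $z_1=a$, $z_2=0$: then $z_1\oplus a \equiv_T z_2 \oplus a$ while $a \not\equiv_T 0$), so ``$\equiv_T$ restricted to a cone is Borel bireducible with $\equiv_T$ by relativization'' cannot simply be asserted, and the same gloss infects your claim that the cone argument alone shows $\equiv_T$ is prime to $\Delta(\bbR)$. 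For $\equiv_A$ the needed cone-restriction statement is rescued precisely because the proof of universality relativizes, which is the ingredient missing from your write-up.

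Your alternative game-theoretic route is too loosely sketched to substitute for this, and as described it does not engage the actual difficulty: a Borel homomorphism to $\Delta(\bbR)$ partitions the space into continuum many invariant fibers, whereas Marks-style determinacy arguments on free products yield homogeneity for two (or countably many) pieces with a Borel payoff. Your proposed game (``build two points in a common $\varphi$-fiber along a geodesic'') leaves unexplained how a single value $c$ gets selected and how the winning strategies for the countably many finite conditions cohere into one equivariant injection landing in one fiber; one cannot just intersect a decreasing sequence of invariant sets, since reducibility of $E_{\infty}$ does not pass to such intersections. The closing observation that a continuous equivariant injection of the free part is automatically a reduction is correct, but the construction feeding it is exactly what is unproved. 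So: right skeleton in route one, with the load-bearing ingredient (universality of $\equiv_A$ and its relativization to cones) missing, and route two is not currently a proof.
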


However, $E_{\infty}$ is not prime to Turing equivalence $\equiv_T$, and thus is not a prime equivalence relation if $\equiv_T$ is not a universal countable Borel equivalence relation (prime relations will be introduced in the next section). Note that Martin's Conjecture contradicts $\equiv_T$ being a universal countable Borel equivalence relation.

\begin{dfn}
A countable Borel equivalence relation $E$ is \emph{weakly universal} if for every countable Borel equivalence relation $F$ there is a countable-to-one homomorphism from $F$ to $E$. Equivalently, $E$ is weakly universal if it contains a universal countable Borel equivalence relation.
\end{dfn}
In particular, $\equiv_T$ is known to be weakly universal. If $E$ is weakly universal then there is a countable-to-one homomorphism from $E_{\infty}$ to $E$ so that $E_{\infty}$ is not prime to $E$. Hence:

\begin{lem} 
If MC is true, then $\equiv_T <_B E_{\infty}$ but $E_{\infty}$ is not prime to $\equiv_T$.
\end{lem}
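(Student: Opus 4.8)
The plan is to prove the two conclusions separately, since each is obtained by assembling facts already recorded above. For $\equiv_T <_B E_{\infty}$, one direction is free: $\equiv_T$ is a countable Borel equivalence relation (it is Borel and every Turing degree is countable), so by the universality of $E_{\infty}$ among countable Borel equivalence relations we have $\equiv_T \leq_B E_{\infty}$. For strictness I would invoke the incompatibility of Martin's Conjecture with the universality of $\equiv_T$ noted above: under MC, $\equiv_T$ is not a universal countable Borel equivalence relation, hence $E_{\infty} \not\leq_B \equiv_T$. Combining these gives $\equiv_T <_B E_{\infty}$.

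For the failure of primeness --- which does not actually use MC --- I would use that $\equiv_T$ is weakly universal, so there is a countable-to-one Borel homomorphism $\varphi$ from $E_{\infty}$ to $\equiv_T$. If $E_{\infty}$ were prime to $\equiv_T$, then applying the definition of primeness to $\varphi$ yields some $y$ with $E_{\infty} \leq_B E_{\infty} \upharpoonright \varphi^{-1}[y]_{\equiv_T}$. But $[y]_{\equiv_T}$ is countable and $\varphi$ is countable-to-one, so $\varphi^{-1}[y]_{\equiv_T}$ is a countable set; hence $E_{\infty} \upharpoonright \varphi^{-1}[y]_{\equiv_T}$ has at most countably many classes and is therefore smooth, which would make $E_{\infty}$ smooth, contradicting $\bbE_0 \leq_B E_{\infty}$. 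So $E_{\infty}$ is not prime to $\equiv_T$, and the lemma follows.

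I do not expect a genuine obstacle here: the argument is bookkeeping built from (i) the countable-Borel-ness and hence reducibility of $\equiv_T$ to $E_{\infty}$, (ii) the weak universality of $\equiv_T$, (iii) the known theorem that MC refutes the universality of $\equiv_T$, and (iv) the elementary fact that the restriction of $E_{\infty}$ to a countable set is smooth. The only nontrivial imported input is (iii), which I would cite rather than reprove.
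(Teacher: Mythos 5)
Your proposal is correct and follows the same route the paper intends: $\equiv_T \leq_B E_\infty$ by universality of $E_\infty$, strictness from MC contradicting universality of $\equiv_T$, and failure of primeness from weak universality of $\equiv_T$ giving a countable-to-one homomorphism whose fibers of $\equiv_T$-classes are countable (hence smooth). Your remark that MC is not needed for the non-primeness half is also consistent with how the paper sets up the surrounding discussion.
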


Under Martin's Conjecture, weak universality gives a precise criterion for $E_{\infty}$ to be prime to $E$, established by Marks using results of Thomas from \cite{thomas}:

\begin{thm}[Marks, Theorem 3.3 of \cite{MSS}] 
If MC is true and $E$ is countable and not weakly universal, then $E_{\infty}$ is prime to $E$.
\end{thm}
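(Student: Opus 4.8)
The plan is to reduce the statement to a strong ergodicity property of Turing equivalence supplied by Martin's Conjecture — in the form isolated by Thomas — and then to transfer that property to $E_\infty$ by a fusion argument in the style of the $\bbE_1$ constructions of \S3.

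The key input from \cite{thomas} is the following consequence of (the uniform form of) Martin's Conjecture: if $E$ is a countable Borel equivalence relation which is \emph{not} weakly universal, then every Borel homomorphism from $\equiv_T$ to $E$ maps some Turing cone into a single $E$-class. I would first recall why this should hold: writing $E$ as the orbit equivalence relation $E_\Gamma^Y$ of a countable group $\Gamma$ (Feldman--Moore), a Borel homomorphism from $\equiv_T$ to $E$ yields, after Borel uniformization, a Borel cocycle $\equiv_T \to \Gamma$; if this homomorphism were constant on no cone, then Martin's Conjecture applied to the Turing-degree-invariant functions built from the cocycle would force a rigid behaviour on some cone — agreement there with a fixed iterate of the Turing jump relative to a fixed oracle — and unwinding this produces a countable-to-one Borel homomorphism of a universal countable Borel equivalence relation into $E$, contradicting that $E$ is not weakly universal. (This generalizes Thomas's proof that $\equiv_T$ is not universal.)

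Now let $\varphi$ be a Borel homomorphism from $E_\infty$ to $E$; I must produce a Borel reduction $\rho$ of $E_\infty$ to itself with $\varphi \circ \rho$ taking values in one $E$-class. Recall that $\equiv_T$ is weakly universal (Slaman--Steel), and that, $E_\infty$ being universal, $E_\infty \sim_B E_\infty \times \equiv_T$ (the product of countable Borel equivalence relations reduces to the universal one); so by Lemma~\ref{lem:basic}(6) we may replace $E_\infty$ by $E_\infty \times \equiv_T$ and assume $\varphi$ is a Borel homomorphism from $E_\infty \times \equiv_T$ to $E$. For each $w$ the section $z \mapsto \varphi(w,z)$ is a Borel homomorphism from $\equiv_T$ to $E$, hence by Thomas's theorem constant on a Turing cone whose base, by the uniformity in Martin's Conjecture, may be taken recursive in $w$ together with a fixed oracle. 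One then runs a Cantor-scheme fusion: along a tree of finite conditions one commits, at each node, a finite piece of an embedding of $E_\infty$ into the first coordinate together with Turing degrees for the second coordinate which dominate all the cone-bases attached to the relevant first-coordinate approximations and which still assemble, in the limit, into a full copy of $\equiv_T$. Thomas's theorem guarantees that $\varphi$ stays inside the targeted $E$-class throughout, so the limit of the tree yields a Borel reduction of $E_\infty \times \equiv_T$ to itself — hence a reduction $\rho$ of $E_\infty$ to itself — with $\varphi \circ \rho$ contained in a single $E$-class.

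The main obstacle is carrying out this fusion: the cones provided by Thomas's theorem genuinely depend on the first coordinate and no single cone serves all of $E_\infty$'s worth of first coordinates, so the Turing degrees selected for the second coordinate must be interleaved with the construction of the embedding, allowed to grow along the tree to dominate the ever-refining cone-bases, while the second-coordinate branching still realizes a genuine copy of $\equiv_T$; it is the combination of these two demands that is delicate, and it is precisely why the product presentation $E_\infty \sim_B E_\infty \times \equiv_T$ is used — so that the $E_\infty$-factor, where no cone control is available, is kept cleanly separate from the $\equiv_T$-factor, where Thomas's theorem applies. The reverse implication needs nothing new: if $E$ is weakly universal, a countable-to-one Borel homomorphism from $E_\infty$ to $E$ has countable $E$-class preimages, so no reduction $\rho$ as above can exist; thus, under MC, $E_\infty$ is prime to $E$ exactly for those countable $E$ which are not weakly universal.
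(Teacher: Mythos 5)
The paper only cites this result, so the proposal must be measured against the argument in \cite{MSS}. Your choice of Thomas's ergodicity theorem as the key input is correct, but the route through $E_\infty \times \equiv_T$ and a Cantor-scheme fusion has a genuine gap. Cone-constancy of the sections $z \mapsto \varphi(w,z)$ controls only the variation of $\varphi$ in the $\equiv_T$-coordinate for each fixed $w$; as you vary $w$ along your embedding of $E_\infty$, the constant $E$-class $[v_w]_E$ attached to $w$ is free to change, and nothing in the fusion forces the $v_w$'s to agree across $E_\infty$-inequivalent branches. Indeed, the assignment $w \mapsto v_w$ is itself a Borel homomorphism from $E_\infty$ to $E$, so your construction just recreates the original problem one level down — the degenerate case where $\varphi(w,z)$ does not depend on $z$ at all makes this plain, since there your fusion does nothing to collapse the range.

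There is a much more direct argument, which I believe is essentially the one in \cite{MSS}. By Slaman--Steel, arithmetic equivalence $\equiv_A$ is universal, so take $E_\infty = \equiv_A$. Since $\equiv_T \subseteq \equiv_A$, any Borel homomorphism $\varphi$ from $\equiv_A$ to $E$ is automatically a Borel homomorphism from $\equiv_T$ to $E$. Thomas's theorem then applies directly: under MC, since $E$ is not weakly universal, $\varphi$ maps a Turing cone $\{ x : x \geq_T c \}$ into a single $E$-class. It remains to observe that $\equiv_A$ restricted to this cone is still universal; this follows from the relativization of the Slaman--Steel argument to the oracle $c$ (and post-composing with $x \mapsto x \oplus c$ to push the range into the cone, using that on the cone arithmetic-in-$c$ equivalence coincides with $\equiv_A$). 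No fusion or product trick is needed; the two ingredients you identified are correct, but they combine through the inclusion $\equiv_T \subseteq \equiv_A$ rather than through a tree construction.
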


Thus, under MC a countable Borel equivalence relation $E$ is weakly universal if and only if $E_{\infty}$ is not prime to $E$.
A question posed by Hjorth is whether every weakly universal countable Borel equivalence relation is universal. The above observations show that this would follow from primeness results about $E_{\infty}$:

\begin{lem}
If $E_{\infty}$ is prime to every countable $E$ with $E <_B E_{\infty}$, then for any countable $E$ with $E_{\infty} \subseteq E$ we have that $E_{\infty} \sim_B E$.
\end{lem}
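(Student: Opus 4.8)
The plan is to argue by contradiction, exploiting the fact that a single class of a countable Borel equivalence relation is countable, hence smooth. First I would observe that the hypothesis $E_{\infty} \subseteq E$ presupposes that $E$ and $E_{\infty}$ live on the same Polish space $X$ (say $X = 2^{F_2}$). Since $E_{\infty}$ is a universal countable Borel equivalence relation and $E$ is countable Borel, we have $E \leq_B E_{\infty}$, so either $E \sim_B E_{\infty}$ — in which case we are done — or $E <_B E_{\infty}$. The whole argument is to rule out the latter.

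So suppose $E <_B E_{\infty}$. By the hypothesis of the lemma, $E_{\infty}$ is then prime to $E$. Now the key point is that the identity map $\mathrm{id}_X : X \to X$ is a Borel homomorphism from $E_{\infty}$ to $E$: if $x_1 \mathrel{E_{\infty}} x_2$ then $x_1 \mathrel{E} x_2$ precisely because $E_{\infty} \subseteq E$. Applying primeness to this homomorphism yields a Borel reduction $\rho$ from $E_{\infty}$ to $E_{\infty}$ whose range is contained in a single $E$-class $[y]_E$; that is, $E_{\infty} \leq_B E_{\infty} \upharpoonright [y]_E$.

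To finish, I would note that $[y]_E$ is a countable set, since $E$ is a countable Borel equivalence relation. Hence $E_{\infty} \upharpoonright [y]_E$ is an equivalence relation on a countable set, so it is smooth (map each of its countably many classes to a distinct natural number), giving $E_{\infty} \leq_B \Delta(\omega)$. This contradicts the well-known fact that $E_{\infty}$ is not smooth (e.g.\ $\bbE_0 \leq_B E_{\infty}$ and $\bbE_0 \not\leq_B \Delta(\omega)$). Therefore $E \not<_B E_{\infty}$, and combined with $E \leq_B E_{\infty}$ we conclude $E_{\infty} \sim_B E$.

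There is essentially no serious obstacle here; the only thing one must be careful about is the bookkeeping that $E_{\infty} \subseteq E$ forces a common domain and that primeness is being applied to the identity homomorphism rather than to some cleverly constructed map. If one wanted the conclusion $E \sqsubseteq_c E_{\infty}$ or a continuous version, one could additionally invoke the remark (following the definition of primeness) that the relevant reductions can typically be taken continuous, but for the stated $\sim_B$ conclusion the bare Borel statement of primeness suffices.
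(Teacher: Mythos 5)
Your proof is correct and is the natural argument: apply the primeness hypothesis to the identity homomorphism, observe that a single $E$-class is countable (since $E$ is countable Borel), and derive the contradiction that $E_{\infty}$ would be smooth. The paper states this lemma without proof, and your argument is exactly the intended one.
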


\subsection{Summary of benchmarks}

Summarizing the above results, together with the result of Kanovei--Sabok--Zapletal  proved in Section~\ref{sec:f2prime} (which show that for each Borel equivalence relation $E$, either $\bbF_2 \leq_B E$ or $\bbF_2$ is prime to $E$), we have the following:

\begin{thm}
Let $\mathcal{E}$ be the following collection of benchmark equivalence relations: $\Delta(2)$, $\Delta(\omega)$, $\Delta(\bbR)$, $\bbE_0$, $\bbE_1$, $\bbE_0^{\omega}$, $\bbE_1^{\omega}$, $\bbF_2$, and $\bbE_2$, and let $\mathcal{F}$ include every equivalence relation in $\mathcal{E}$ together with: $E_{\infty}$, $E_{\infty}^{\omega}$, $\cong_{\text{graph}}$, $E_{K_{\sigma}}$, and the universal $E_G^X$.
Then for any $E$ in $\mathcal{E}$ and for any $F$ in $\mathcal{F}$,  either $E \leq_B F$ or $E$ is prime to $F$. 
\end{thm}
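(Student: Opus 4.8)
The plan is to read the statement as a catalogue and verify the dichotomy ``$E \leq_B F$ or $E$ is prime to $F$'' separately for each of the nine $E \in \mathcal{E}$. For a fixed $E$ I would partition $\mathcal{F}$ into the $F$ with $E \leq_B F$, handled by standard Borel reductions, and the $F$ with $E$ prime to $F$, handled by the relative primeness corollaries of this section, the result on $\bbF_2$ from Section~\ref{sec:f2prime}, and Lemma~\ref{lem:basic}(2), which lets one transfer a ``prime to every orbit (resp.\ $E_{S_{\infty}}$, resp.\ $F_{\sigma}$) equivalence relation'' statement onto a concrete member of $\mathcal{F}$. No new mathematics is required; the work is bookkeeping.

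First I would dispose of the relations that need no case analysis. For $E = \Delta(2)$: every $F \in \mathcal{F}$ has at least two classes, so $\Delta(2) \leq_B F$. For $E = \Delta(\omega)$: every $F \in \mathcal{F}$ other than $\Delta(2)$ has infinitely many classes, so $\Delta(\omega) \leq_B F$ for all of those, while $\Delta(\omega)$ is prime to $\Delta(2)$ since any homomorphism $\omega \to 2$ has an infinite fibre into which $\omega$ injects. For $E \in \{\Delta(\bbR), \bbE_0, \bbF_2\}$ the relation $E$ is \emph{prime} --- for $\Delta(\bbR)$ this is a standard reading of Silver's perfect set dichotomy (apply it to the pullback of $F$ along a homomorphism); for $\bbE_0$ it comes from generic $\Delta(\bbR)$-ergodicity together with maintenance of complexity on comeager sets, so that $\bbE_0$ is prime to $\Delta(\bbR)$ and hence, by Lemma~\ref{lem:basic}(2), to $\Delta(\omega)$ and $\Delta(2)$, while $\bbE_0 \leq_B F$ for every non-smooth $F$; for $\bbF_2$ it is the Kanovei--Sabok--Zapletal result of Section~\ref{sec:f2prime} --- so for these $E$ the dichotomy against every $F \in \mathcal{F}$ is immediate from the definition of primeness.

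The remaining four benchmarks $\bbE_1$, $\bbE_1^{\omega}$, $\bbE_0^{\omega}$, $\bbE_2$ --- for which primeness is unavailable, and indeed open for $\bbE_1$ --- I would handle individually. The relation $\bbE_1$ is prime to every orbit equivalence relation, covering $\Delta(X)$, $\bbE_0$, $\bbE_0^{\omega}$, $E_{\infty}$, $E_{\infty}^{\omega}$, $\bbE_2$, $\bbF_2$, $\cong_{\text{graph}}$, and the universal $E_G^X$; it reduces to $\bbE_1$ and $\bbE_1^{\omega}$ trivially, and to $E_{K_{\sigma}}$ since $\bbE_1 = \bigcup_n F_n$ with each $F_n$ compact, hence $K_{\sigma}$. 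The relation $\bbE_1^{\omega}$ is likewise prime to every orbit equivalence relation (covering all of $\mathcal{F}$ except $\bbE_1$, $E_{K_{\sigma}}$, and $\bbE_1^{\omega}$ itself), and prime to the $F_{\sigma}$ relations $\bbE_1$ and $E_{K_{\sigma}}$ by Theorem~\ref{thm:Fsigma}, since every $\bbE_1$-class is dense and $\bbE_1^{\omega}$ maintains complexity on comeager sets. The relation $\bbE_0^{\omega}$ is prime to $\Delta(X)$, $\bbE_0$, $\bbE_1$, $E_{\infty}$, $\bbE_2$, and $E_{K_{\sigma}}$ by the corollary to Theorem~\ref{thm:Fsigma}, reduces to $\bbE_0^{\omega}$, $\bbE_1^{\omega}$, and $E_{\infty}^{\omega}$ trivially, and reduces to $\bbF_2$ via $\bar x \mapsto \{(n,z) : n \in \omega,\ z \mathrel{\bbE_0} x_n\}$ (enumerated as an element of $(2^{\omega})^{\omega}$ after coding $\omega \times 2^{\omega}$ into $2^{\omega}$), whence also $\bbE_0^{\omega} \leq_B \bbF_2 \leq_B \cong_{\text{graph}} \leq_B E_G^X$. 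Finally $\bbE_2$ is prime to every $E_{S_{\infty}}^Y$, being turbulent and maintaining complexity on comeager sets, which covers $\Delta(X)$, $\bbE_0$, $\bbE_0^{\omega}$, $E_{\infty}$, $E_{\infty}^{\omega}$, $\bbF_2$, $\cong_{\text{graph}}$; it is prime to $\bbE_1$ and $\bbE_1^{\omega}$ by the Kanovei--Reeken corollary; and it reduces to $\bbE_2$, to $E_G^X$, and to $E_{K_{\sigma}}$, since the relations $\{(x,y) : \sum\{\tfrac{1}{n+1} : x(n) \neq y(n)\} \leq m\}$ are compact.

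The hard part is purely combinatorial: ensuring the $9 \times 14$ table has no gap. The entries not supplied verbatim by a headline corollary are the reductions $\bbE_0^{\omega} \leq_B \bbF_2$, $\bbE_1 \leq_B E_{K_{\sigma}}$, $\bbE_2 \leq_B E_{K_{\sigma}}$, and $\bbE_0^{\omega} \leq_B \cong_{\text{graph}}$, together with the applications of Lemma~\ref{lem:basic}(2) that specialize ``prime to every orbit (resp.\ $E_{S_{\infty}}$, resp.\ $F_{\sigma}$) relation'' to a particular $F$; and the pairs most in danger of being overlooked --- $\bbE_0^{\omega}$ against $\bbF_2$, $\bbE_1^{\omega}$ against $\bbE_1$ and against $E_{K_{\sigma}}$, and $\bbE_2$ against $\bbE_1^{\omega}$ --- are each resolved as above. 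Once every row of the table is exhausted in this way, the proof is complete.
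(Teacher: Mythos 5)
Your proposal is correct and is essentially the paper's own proof: the paper states the theorem as a summary of the corollaries obtained earlier in Section 3 (the orbit-ergodicity results for $\bbE_1$ and $\bbE_1^{\omega}$, the $F_{\sigma}$ result for $\bbE_0^{\omega}$ and $\bbE_1^{\omega}$, the turbulence and Kanovei--Reeken corollaries for $\bbE_2$) together with the primeness of $\Delta(\bbR)$, $\bbE_0$, and $\bbF_2$, and you have carried out exactly this assembly, filling in the same small list of reductions ($\bbE_0^{\omega}\leq_B\bbF_2$, $\bbE_1\leq_B E_{K_\sigma}$, $\bbE_2\leq_B E_{K_\sigma}$) and the same downward-transfer uses of Lemma~\ref{lem:basic}(2).
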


Notable omissions here are when $E$ is one of  $E_{\infty}$, $E_{\infty}^{\omega}$, $\cong_{\text{graph}}$, $E_{K_{\sigma}}$, or the universal $E_G^X$.

\section{Prime equivalence relations}
\label{sec:nodes}

We now introduce global properties of equivalence relations arising from primeness.
A \emph{node} in the Borel reducibility hierarchy is a Borel equivalence relation $E$ such that for any Borel equivalence relation $F$, either $E \leq_B F$ or $F \leq_B E$. Kechris and Louveau showed in \cite{KL} that the only nodes are $\Delta(n)$ for $n \leq \omega$, $\Delta(\bbR)$, and $\bbE_0$. This limits the possibility for global dichotomies regarding reducibility among Borel equivalence relations, although many additional local dichotomies have been proved (where the collection of equivalence relations is restricted). We can hope for a richer class, and more global dichotomies, if we modify the second alternative.

The following definitions may be viewed as analogues of regular cardinals in ZFC. One formulation of this is to say that a cardinal $\kappa$ is regular if for any $\lambda < \kappa$ and any partition of $\kappa$ into $\lambda$-many subsets, at least one subset must have cardinality $\kappa$. Since definable cardinalities are not linearly ordered, we can consider two possible properties.

\begin{dfn}
We say that a Borel equivalence relation $E$ is \emph{prime} if it has at least two equivalence classes, and for any Borel equivalence relation $F$, either $E \leq_B F$ or $E$ is prime to $F$. We say that $E$ is \emph{regular} if $E$ is prime to any $F$ with $F <_B E$.
\end{dfn}

Any prime equivalence relation is regular. Following algebraic practice we do not consider $\Delta(1)$ to be prime; perhaps we should call it a unit. This ensures that for each $F$, the two possibilities ($E \leq_B F$ or $E$ is prime to $F$) are mutually exclusive. Note that we have restricted ourselves to Borel equivalence relations; we might extend this to analytic equivalence relations $E$, in which case we may wish to consider all analytic equivalence relations $F$. Many of the results here will extend to analytic relations via reflection arguments. We easily have that $\Delta(2)$ is prime, as is $\Delta(\omega)$ by the infinite version of Ramsey's Theorem. Various dichotomy theorems may be rephrased to establish other primeness and regularity results, which we summarize briefly here. Silver's Theorem, for instance, shows that any Borel equivalence relation with uncountably many equivalence classes has a prefect set of equivalence classes, from which we immediately get:

\begin{thm}[Silver, \cite{silver}]
$\Delta(\bbR)$ is prime.
\end{thm}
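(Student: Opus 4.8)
The plan is to invoke Silver's dichotomy directly. We must show $\Delta(\bbR)$ has at least two classes — obvious — and that for every Borel equivalence relation $F$ on $Y$, either $\Delta(\bbR) \leq_B F$ or $\Delta(\bbR)$ is prime to $F$. So suppose $\Delta(\bbR) \not\leq_B F$; we must show $\Delta(\bbR)$ is prime to $F$. By Silver's Theorem, $\Delta(\bbR) \not\leq_B F$ forces $F$ to have only countably many equivalence classes, since otherwise $F$ has a perfect set of classes and hence $\Delta(\bbR) \sqsubseteq_B F$. So fix an enumeration $\{[y_n]_F : n \in \omega\}$ of the $F$-classes.

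Now let $\varphi: \bbR \to Y$ be an arbitrary Borel homomorphism from $\Delta(\bbR)$ to $F$; since $\Delta(\bbR)$ is just equality, \emph{every} Borel map $\varphi$ is a homomorphism. The sets $B_n = \varphi^{-1}[y_n]_F$ for $n \in \omega$ form a countable Borel partition of $\bbR$. By a Baire category argument (or simply the fact that $\bbR$ is not a countable union of sets each of size less than continuum), at least one $B_n$ must be uncountable; fix such an $n$. Being an uncountable Borel subset of a Polish space, $B_n$ contains a perfect set $P$, so there is a continuous injection (indeed a homeomorphism onto a Cantor set) $\rho: \bbR \to B_n$; this $\rho$ is a continuous embedding, hence a Borel reduction, from $\Delta(\bbR) = \Delta(\bbR) \upharpoonright \bbR$ into $\Delta(\bbR) \upharpoonright B_n = \Delta(\bbR) \upharpoonright \varphi^{-1}[y_n]_F$. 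Thus $\Delta(\bbR) \leq_B \Delta(\bbR) \upharpoonright \varphi^{-1}[y_n]_F$ and the range of $\varphi \circ \rho$ lies in the single $F$-class $[y_n]_F$, which is exactly what primeness of $\Delta(\bbR)$ relative to $F$ demands.

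There is no real obstacle here: the entire content is Silver's dichotomy, together with the elementary facts that every uncountable Borel set in a Polish space has a perfect subset (so is Borel-isomorphic to $\bbR$ as far as $\Delta$ is concerned) and that a countable union of countable sets is countable. If one wishes to phrase matters so that the two alternatives are genuinely exclusive, note that $\Delta(\bbR)$ has more than one class, so $\Delta(\bbR) \leq_B F$ and ``$\Delta(\bbR)$ prime to $F$'' cannot both hold, as observed after the definition of relative primeness.
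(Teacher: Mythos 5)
Your proof is correct and takes essentially the same approach as the paper, which simply remarks that primeness of $\Delta(\bbR)$ follows immediately from Silver's dichotomy (every Borel equivalence relation with uncountably many classes has a perfect set of classes). You have merely filled in the details the paper leaves implicit: if $\Delta(\bbR) \not\leq_B F$ then $F$ has countably many classes, so any Borel $\varphi$ partitions $\bbR$ into countably many Borel pieces, one of which is uncountable and hence contains a perfect set, giving the required reduction into a single $F$-class.
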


Similarly, the Generalized Glimm--Effros Dichotomy due to Harrington--Kechris--Louveau established that every Borel equivalence relation is either reducible to $\Delta(\bbR)$ or reduces $\bbE_0$. Using that $\bbE_0$ is prime to $\Delta(\bbR)$, we have:

\begin{thm}[Harrington--Kechris--Louveau, \cite{HKL}] 
$\bbE_0$ is prime.
\end{thm}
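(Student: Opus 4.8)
The plan is to bootstrap from the Glimm--Effros dichotomy by pulling $F$ back along a homomorphism and then invoking the already-established fact that $\bbE_0$ is prime to $\Delta(\bbR)$. Since $\bbE_0$ has at least two classes, it suffices to fix a Borel equivalence relation $F$ on $Y$ and show that either $\bbE_0 \leq_B F$ or $\bbE_0$ is prime to $F$. So assume $\bbE_0 \not\leq_B F$, fix an arbitrary Borel homomorphism $\varphi$ from $\bbE_0$ to $F$, and aim to produce a Borel reduction $\rho$ from $\bbE_0$ to $\bbE_0$ whose composition with $\varphi$ has range inside a single $F$-class.

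The central construction is the pullback $R = \{(x,x') \in 2^\omega \times 2^\omega : \varphi(x) \FF \varphi(x')\}$. As $F$ and $\varphi$ are Borel, $R$ is a Borel equivalence relation, and because $\varphi$ is a homomorphism we have $\bbE_0 \subseteq R$; note also that $\varphi$ maps every $R$-class into a single $F$-class. First I would apply the Harrington--Kechris--Louveau dichotomy to $R$: either $R \leq_B \Delta(\bbR)$ or $\bbE_0 \leq_B R$. In the latter case a Borel reduction $\iota$ of $\bbE_0$ to $R$ satisfies $x \mathrel{\bbE_0} x' \iff \varphi(\iota(x)) \FF \varphi(\iota(x'))$, so $\varphi \circ \iota$ reduces $\bbE_0$ to $F$, contradicting our standing assumption. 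Hence $R$ is smooth, and I fix a Borel reduction $\psi$ from $R$ to $\Delta(\bbR)$.

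Since $\bbE_0 \subseteq R$, the map $\psi$ is in particular a Borel homomorphism from $\bbE_0$ to $\Delta(\bbR)$. Because $\bbE_0$ is prime to $\Delta(\bbR)$ (recorded above, following from generic $\Delta(\bbR)$-ergodicity together with maintaining complexity on comeager sets), there is a Borel reduction $\rho$ from $\bbE_0$ to $\bbE_0$ with the range of $\psi \circ \rho$ contained in a single $\Delta(\bbR)$-class, i.e.\ $\psi \circ \rho$ is constant with some value $c$. Then the range of $\rho$ lies inside $\psi^{-1}(\{c\})$, which is a single $R$-class, so $\varphi$ sends it into a single $F$-class $[y]_F$. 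Thus $\rho$ witnesses $\bbE_0 \leq_B \bbE_0 \upharpoonright \varphi^{-1}[y]_F$, which is exactly primeness of $\bbE_0$ to $F$.

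I expect no genuine obstacle here: the argument is a short bootstrap, and the only points needing care are that the pullback $R$ is actually Borel (so that the dichotomy applies) and that in the non-smooth alternative the reduction into $R$ composes with $\varphi$ to an honest reduction of $\bbE_0$ to $F$ --- both immediate from the definitions. All of the real content is imported, residing in the Glimm--Effros dichotomy and in the previously established primeness of $\bbE_0$ to $\Delta(\bbR)$.
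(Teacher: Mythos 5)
Your proof is correct, and it rests on exactly the same two imported ingredients as the paper's: the Harrington--Kechris--Louveau dichotomy and the previously established fact that $\bbE_0$ is prime to $\Delta(\bbR)$. The one place you diverge is a small detour: you pull $F$ back to the relation $R = (\varphi\times\varphi)^{-1}[F]$ on $2^\omega$ and apply the dichotomy to $R$, whereas the paper applies the dichotomy directly to $F$: if $\bbE_0 \not\leq_B F$ then $F \leq_B \Delta(\bbR)$, and then Lemma~\ref{lem:basic}(2) (primeness transfers to any $F' \leq_B F$) immediately yields that $\bbE_0$ is prime to $F$. Your pullback step is sound --- indeed it is the mechanism behind the alternate characterization in Lemma~\ref{lem:altprime} --- but it is not needed here, since once one knows $F$ itself is smooth, Lemma~\ref{lem:basic}(2) finishes the argument for an arbitrary homomorphism $\varphi$ without ever forming $R$. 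So the two arguments are the same in substance; the paper's route is just a bit more economical because it lets the already-proved closure property of relative primeness absorb the bookkeeping you carry out by hand.
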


We might ask if $\bbE_0$ is actually a node with respect to relative primeness.

\begin{question}
If $\bbE_0 <_B E$, is $E$ prime to $\bbE_0$?
What if $E$ is minimal above $\bbE_0$?
\end{question}

The Kechris--Louveau dichotomy shows that any equivalence relation $E <_B \bbE_1$ is reducible to $\bbE_0$. Using the result from earlier that $\bbE_1$ is prime to $\bbE_0$, this gives:

\begin{thm}[Kechirs--Louveau, \cite{KL}]
$\bbE_1$ is regular.
\end{thm}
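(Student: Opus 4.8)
The plan is to assemble this as a short corollary of two facts already in hand: the Kechris--Louveau dichotomy and the primeness of $\bbE_1$ over $\bbE_0$.

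First I would fix a Borel equivalence relation $F$ with $F <_B \bbE_1$. The point of strictness is that $\bbE_1 \not\leq_B F$, so the Kechris--Louveau dichotomy (every Borel equivalence relation $E$ satisfies either $E \leq_B \bbE_0$ or $\bbE_1 \leq_B E$) forces $F \leq_B \bbE_0$. This is the one place the hypothesis $F <_B \bbE_1$ is used, and it is exactly why regularity, rather than full primeness, is what gets proved: we are only handling the $F$'s that the dichotomy pushes down to $\bbE_0$.

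Next I would invoke the result from the subsection on $\bbE_1$ that $\bbE_1$ is prime to every orbit equivalence relation $E_G^X$; since $\bbE_0$ is the orbit equivalence relation of the action of $\bigoplus_n \bbZ/2\bbZ$ on $2^{\omega}$ by finitely-supported coordinatewise addition, in particular $\bbE_1$ is prime to $\bbE_0$. Now I would apply Lemma~\ref{lem:basic}(2): with $\bbE_1$ prime to $\bbE_0$ and $F \leq_B \bbE_0$, we conclude $\bbE_1$ is prime to $F$. As $F$ was an arbitrary Borel equivalence relation strictly below $\bbE_1$, this is precisely the statement that $\bbE_1$ is regular.

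There is no genuine obstacle at this stage: all of the mathematical content has been pushed into the earlier ingredients, namely the Kechris--Louveau dichotomy and the theorem that $\bbE_1$ is prime to all orbit equivalence relations (which itself relied on Hjorth's Lemma~\ref{lem:hjorth} and a fusion construction producing a self-embedding of $\bbE_1$ whose image lies in a single orbit). So the only thing to get right here is the bookkeeping: that $F <_B \bbE_1$ really does supply $\bbE_1 \not\leq_B F$, that $\bbE_0$ is genuinely an orbit equivalence relation so the earlier theorem applies, and that Lemma~\ref{lem:basic}(2) transfers primeness downward along $\leq_B$.
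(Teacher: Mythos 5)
Your proposal follows the paper's proof exactly: invoke the Kechris--Louveau dichotomy to push any $F <_B \bbE_1$ down to $\bbE_0$, then transfer the previously established fact that $\bbE_1$ is prime to $\bbE_0$ downward along $\leq_B$ via Lemma~\ref{lem:basic}(2). The one thing to correct is your parenthetical statement of the Kechris--Louveau dichotomy: as stated (``every Borel equivalence relation $E$ satisfies either $E \leq_B \bbE_0$ or $\bbE_1 \leq_B E$'') it is false --- take $E = E_{\infty}$, which is neither below $\bbE_0$ nor above $\bbE_1$. The correct statement is restricted to hypersmooth $E$, i.e.\ $E \leq_B \bbE_1$: for such $E$, either $E \leq_B \bbE_0$ or $E \sim_B \bbE_1$. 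This does not break your argument, since $F <_B \bbE_1$ gives $F \leq_B \bbE_1$, so the dichotomy applies and strictness eliminates the second alternative; but the hypothesis $E \leq_B \bbE_1$ is essential and should be made explicit.
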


In \cite{HK}, Hjorth--Kechris established that any $E <_B \bbE_0^{\omega}$ is reducible to $\bbE_0$, so using the earlier result that $\bbE_0^{\omega}$ is prime to $\bbE_0$ we have:

\begin{thm}[Hjorth--Kechris, \cite{HK}]
$\bbE_0^{\omega}$ is regular.
\end{thm}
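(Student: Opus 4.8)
The plan is to obtain this as a formal consequence of the Hjorth--Kechris dichotomy together with the relative primeness result for $F_\sigma$ equivalence relations proved above. By definition, to show $\bbE_0^\omega$ is regular I must verify that $\bbE_0^\omega$ is prime to every Borel equivalence relation $F$ with $F <_B \bbE_0^\omega$. So fix such an $F$.

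First I would invoke the dichotomy of Hjorth--Kechris from \cite{HK}: for every Borel equivalence relation $E$, either $E \leq_B \bbE_0$ or $\bbE_0^\omega \leq_B E$. In particular, since $F <_B \bbE_0^\omega$ we have $\bbE_0^\omega \not\leq_B F$, and hence $F \leq_B \bbE_0$. Next I would use the earlier result (an instance of Theorem~\ref{thm:Fsigma}) that $\bbE_0^\omega$ is prime to $\bbE_0$: here $\bbE_0$ is $F_\sigma$, being $\bigcup_n F_n$ with $F_n$ the closed relation of eventual agreement past coordinate $n$; $\bbE_0$ has a dense equivalence class, so its closure is all of $2^\omega$ and trivially absorbs $\bbE_0$; and $\bbE_0^\omega$ maintains complexity on non-meager, hence comeager, sets. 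Thus all hypotheses of Theorem~\ref{thm:Fsigma} are met with $E = \bbE_0$, $E^\omega = \bbE_0^\omega$, and the $F_\sigma$ relation $\bbE_0$ in the role of $F$.

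Finally I would apply Lemma~\ref{lem:basic}(2): if $\bbE_0^\omega$ is prime to $\bbE_0$ and $F \leq_B \bbE_0$, then $\bbE_0^\omega$ is prime to $F$. Applying this to our fixed $F <_B \bbE_0^\omega$ gives exactly what is required, so $\bbE_0^\omega$ is regular. There is no substantive obstacle in this argument itself; the genuine content is packaged into the cited Hjorth--Kechris dichotomy and into Theorem~\ref{thm:Fsigma} (whose proof is the more delicate category-theoretic construction above), and the present statement is simply their combination via the downward closure of relative primeness in its second argument. The analogous proofs of the regularity statements for $\bbE_1$ and $\bbE_2$ proceed identically, each time pairing the appropriate dichotomy ($E <_B \bbE_1 \Rightarrow E \leq_B \bbE_0$ from \cite{KL}, and the turbulence-based dichotomy for $\bbE_2$) with the fact that the relation in question is prime to $\bbE_0$ and with Lemma~\ref{lem:basic}(2).
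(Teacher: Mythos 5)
Your proof is correct and follows exactly the paper's approach: invoke the Hjorth--Kechris result that $F <_B \bbE_0^{\omega}$ implies $F \leq_B \bbE_0$, use the earlier corollary of Theorem~\ref{thm:Fsigma} that $\bbE_0^{\omega}$ is prime to $\bbE_0$, and transfer via Lemma~\ref{lem:basic}(2). One minor caution: you state the Hjorth--Kechris dichotomy globally (``for every Borel $E$, either $E \leq_B \bbE_0$ or $\bbE_0^{\omega} \leq_B E$''), whereas the theorem in \cite{HK} is proved within a restricted class of equivalence relations; the paper more carefully phrases the needed consequence as ``any $E <_B \bbE_0^{\omega}$ is reducible to $\bbE_0$,'' which is what you actually use, so the application is sound.
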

In fact, the Seventh Dichotomy Theorem from the same paper established that $\bbE_0^{\omega}$ is either reducible to or prime to any $E$ with $E \leq_B E_G^X$ where $G$ is a closed subgroup of $S_{\infty}$ admitting an invariant metric (Hjorth later extended this to closed $G \subseteqq S_{\infty}$ without an invariant metric). 

A result of Hjorth in \cite{hjorth2} shows that if $E \leq_B \bbE_2$ then either $E \leq_B E_{\infty}$ or $E \sim_B \bbE_2$, so using that $\bbE_2$ is prime to $E_{\infty}$ we have:

\begin{thm}[Hjorth, \cite{hjorth2}]
$\bbE_2$ is regular.
\end{thm}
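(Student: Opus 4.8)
The plan is to deduce regularity of $\bbE_2$ directly from the dichotomy of Hjorth cited above together with the already-established fact that $\bbE_2$ is prime to $E_{\infty}$ and the permanence property in Lemma~\ref{lem:basic}(2). Recall that ``$\bbE_2$ is regular'' means precisely that $\bbE_2$ is prime to every Borel equivalence relation $F$ with $F <_B \bbE_2$, so fix such an $F$.

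First I would invoke Hjorth's theorem from \cite{hjorth2}: since $F \leq_B \bbE_2$, we have either $F \leq_B E_{\infty}$ or $F \sim_B \bbE_2$. The second alternative is incompatible with the strict inequality $F <_B \bbE_2$, so in fact $F \leq_B E_{\infty}$.

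Next I would appeal to the corollary established earlier (via the Kanovei--Reeken generic ergodicity of turbulent actions, applied to $\bbE_2$, which is turbulent and maintains complexity on comeager sets) that $\bbE_2$ is prime to $E_{\infty}$. Applying Lemma~\ref{lem:basic}(2) with $E = \bbE_2$, with $E_{\infty}$ in the role of the target, and with the reduction witnessing $F \leq_B E_{\infty}$, we conclude that $\bbE_2$ is prime to $F$. This is exactly what regularity requires; note also that $\bbE_2$ has at least two classes, so primeness here genuinely strengthens non-reducibility.

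Since all the substantive content is packaged inside the two cited theorems, there is no real obstacle internal to this argument. The only points worth checking are that Lemma~\ref{lem:basic}(2) propagates primeness \emph{downward} along $\leq_B$ in the target coordinate (which is the direction needed, as $F \leq_B E_{\infty}$), and that Hjorth's dichotomy applies to an arbitrary Borel $F \leq_B \bbE_2$ rather than to some restricted subclass.
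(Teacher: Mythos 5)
Your argument is correct and is exactly the paper's own (one-line) derivation: apply Hjorth's dichotomy to conclude $F \leq_B E_\infty$, then propagate the previously-established fact that $\bbE_2$ is prime to $E_\infty$ down to $F$ via Lemma~\ref{lem:basic}(2). No further commentary is needed.
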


More recently, Kanovei--Sabok--Zapletal have  established that $\bbF_2$ is prime. We will present a proof of this in Section~\ref{sec:f2prime} below, and use the tools developed there to further analyze $\bbF_2$.
\begin{thm}[Kanovei--Sabok--Zapletal]
$\bbF_2$ is prime.
\end{thm}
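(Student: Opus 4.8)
The plan is to show that $\bbF_2$ is prime by establishing two things: first, the Friedman--Stanley jump dichotomy for $\bbF_2$ — namely that for any Borel equivalence relation $F$, either $\bbF_2 \leq_B F$ or $\bbF_2$ is prime to $F$ — and second, that $\bbF_2$ has at least two equivalence classes, which is trivial. So the content is entirely in the dichotomy. The natural starting point is the structure theory of $\bbF_2 = \Delta(2^\omega)^{+}$ as equality of countable sets of reals. Given a Borel homomorphism $\varphi$ from $\bbF_2$ to $F$, I would first invoke the fact (to be proved in Section~\ref{sec:f2prime}, and which we are permitted to cite, via Corollary~\ref{cor:F2comeager}) that $\bbF_2$ maintains complexity on comeager sets; this lets us assume $\varphi$ is continuous and reduces the problem to analyzing continuous homomorphisms on a comeager set. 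The goal is then to produce a continuous reduction $\rho$ of $\bbF_2$ into itself whose image lands in a single $F$-class.

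The key step is a fusion/forcing construction adapted to the $\bbF_2$ setting. A point of $(2^\omega)^\omega$ representing a countable set $\{x_n : n\}$ can be built along a tree of conditions where at stage $n$ we have committed to finitely many coordinates of finitely many reals; the relevant notion of equivalence is controlled by which reals eventually appear in the enumeration. The idea is to build, by recursion on an enumeration of a countable dense index set, a system of approximations $x_s$ for $s$ ranging over finite binary sequences, so that the limits $\rho(x) = \lim_s x_{s \upharpoonright n}$ define an injective continuous map that is simultaneously a homomorphism and a cohomomorphism for $\bbF_2$ (this is the analogue of conditions (2)--(4) in the $\bbE_1$ arguments above), while ensuring via a Hjorth-style local-control lemma for $\bbF_2$-homomorphisms that all the values $\varphi(x_s)$ are pushed into a single $F$-class (the analogue of conditions (5)--(6)). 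For $\bbF_2$ the correct replacement for the Vaught-transform/comeager machinery used in the orbit-equivalence proofs is a Baire-category analysis of the sections: one shows that for a comeager set of countable-set codes, the $F$-class of $\varphi$ does not change when a real is added to or removed from the set in a generic way, and then one iterates this to absorb an arbitrary $\bbF_2$-reduction into the preimage of one fixed $F$-class. This is precisely where the $\Delta(2^\omega)$-building-block structure of $\bbF_2$ (as opposed to $E_0$ or higher) is used: adding a single generic real is a "small" perturbation of a countable set, much as a single coordinate change is for $\bbE_1$.

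The main obstacle I expect is the absence of a group action: the proofs for $\bbE_1$ and $\bbE_1^\omega$ lean heavily on Hjorth's Lemma~\ref{lem:hjorth} and on the translation structure of $2^\omega$ under symmetric difference, which give clean control over how $\varphi$ moves under small perturbations. For $\bbF_2$ one must instead develop (or cite, from Kanovei--Sabok--Zapletal) the right ergodicity-type statement: something like "for comeagerly many countable sets $A$ and generic reals $z$, $\varphi(A \cup \{z\})$ is $F$-equivalent to $\varphi(A)$," and then argue that a comeager tree of such perturbations closes up to a genuine embedding. Verifying that the resulting limit map is a cohomomorphism — i.e., that distinct $\bbF_2$-classes are not collapsed — requires a bookkeeping argument ensuring that each real committed into one $x_s$ but not another is permanently kept out of the other's enumeration, which in the $\bbF_2$ context is subtler than the coordinate-stabilization used for $\bbE_1$ because membership in a countable set is a $\Sigma^0_2$ rather than closed condition. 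I would handle this by working with the closed approximations $\{y \upharpoonright m\}$ to the enumerated reals and diagonalizing against all future stages, exactly as the $m_{n+1}$ bound does in the $\bbE_1$ proof above.
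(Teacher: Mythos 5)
Your proposal diverges substantially from the paper's argument and, as written, has a genuine gap at its core.

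The paper does not run a fusion construction on $(2^\omega)^\omega$ modeled on the $\bbE_1$ proof. Instead it first observes (via Lemma~\ref{lem:altprime}) that it suffices to treat the case $\bbF_2 \subseteq F$ with every $F$-class $\bbF_2$-small, which makes all the relevant sets finitely $S_\infty$-invariant and hence (by the 0-1 law in the first lemma of \S\ref{sec:f2prime}) either meager or comeager. It then proves the Genericity Lemma (Lemma~\ref{lem:genericity}) and Generic Embedding Lemma (Corollary~\ref{lem:generic-embedding}) --- Mycielski-type statements producing a Cantor set $K$ of reals whose countable unions land in prescribed comeager invariant sets --- and runs the following dichotomy. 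If any of the sets $\{(x,y): x \FF x\oplus y\}$, $\{(u,v,w): u\oplus v \FF u\oplus w\}$ is non-meager, a transitivity argument forces $F$ to have a comeager class, and maintaining complexity on comeager sets finishes. If all are meager, their complements are comeager and invariant, and the Generic Embedding Lemma yields a continuous embedding of $\bbF_2$ into $F$, contradicting $\bbF_2 \not\leq_B F$.

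The gap in your plan is that your ``Hjorth-style local-control lemma for $\bbF_2$-homomorphisms'' --- that for a comeager set of countable sets $A$ and generic $z$ one has $\varphi(A\cup\{z\}) \FF \varphi(A)$ --- is not a tool that always exists; it is precisely one arm of the dichotomy above. When $\bbF_2 \leq_B F$ it fails outright, and more to the point, the argument must show that its failure \emph{implies} $\bbF_2 \leq_B F$. Your proposal only discusses iterating the local control when it holds, pushing the image into one $F$-class; you never address the case where adding a generic real genuinely changes the $F$-class, which is exactly where the paper builds an $F\setminus\bbF_2$-discrete copy of $\bbF_2$ and closes the other branch of the dichotomy. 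Relatedly, you lean on Hjorth's Lemma~\ref{lem:hjorth}, but that lemma is specifically about Polish group actions and has no analogue for an arbitrary Borel $F$; the paper's replacement is the 0-1 law for $S_\infty$-invariant sets combined with the transitivity lemmas, which is a category dichotomy about $F$ itself rather than a uniform local-control estimate on $\varphi$. Finally, note that citing Corollary~\ref{cor:F2comeager} is not circular (it is an independently known KSZ result), but the real missing ingredient is the case split, without which the plan cannot establish the dichotomy required for primeness.
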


 These are the only known examples of regular and prime equivalence relations at present. We may hope to establish that other benchmark equivalence relations have these properties, or that large collections of equivalence relations do. We do not know if the two notions are distinct, although it seems likely that they are.
 
 \begin{question}
 Is every regular Borel equivalence relation prime? In particular, are $\bbE_1$, $\bbE_0^{\omega}$, and $\bbE_2$ prime?
 \end{question}
 
 \begin{question} Is the universal orbit equivalence relation $E_G^X$ prime?
\end{question}
 
We begin by establishing some fundamental properties of prime equivalence relations, which illustrate some aspects of why we might hope for this property among canonical benchmarks.
First we note that Lemma~\ref{lem:basic} (6) shows that primeness is preserved under bireducibility.

\begin{lem}
If $E$ is prime and $F \sim_B E$ then $F$ is prime.
\end{lem}

The following provides an alternate characterization of prime equivalence relations.

\begin{lem}
\label{lem:altprime}
$E$ is prime if and only if $E \leq_B F$ for every Borel $F$ with $E \subseteq F$ such that $E \upharpoonright [x]_F <_B E$ for all $x$.
\end{lem}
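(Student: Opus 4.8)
The plan is to prove both implications, with the forward direction being essentially immediate from the definition of primeness and the reverse direction requiring a construction that turns an arbitrary Borel homomorphism into a coarsening of $E$.

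\textbf{Forward direction.} Suppose $E$ is prime, and let $F \supseteq E$ be Borel with $E \upharpoonright [x]_F <_B E$ for all $x$. The identity map $\mathrm{id}_X \colon X \to X$ is a Borel homomorphism from $E$ to $F$ (since $E \subseteq F$). If $E$ were prime to $F$, there would be $x$ with $E \leq_B E \upharpoonright \mathrm{id}^{-1}[x]_F = E \upharpoonright [x]_F$, contradicting $E \upharpoonright [x]_F <_B E$. So $E$ is not prime to $F$, and since $E$ is prime we conclude $E \leq_B F$.

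\textbf{Reverse direction.} Suppose $E \leq_B F$ for every Borel $F \supseteq E$ whose $F$-classes all satisfy $E \upharpoonright [x]_F <_B E$; we must show $E$ is prime. First, $E$ has at least two classes: otherwise $E = I(X)$ for a single point, but then $F = I(X)$ on a two-point space (pulled back suitably) would violate the hypothesis trivially — more carefully, one checks the hypothesis is vacuous or contradictory when $E$ has one class, so we may take $X$ with at least two $E$-classes and this is consistent with the hypothesis only when... Actually the cleanest approach: assume $E$ is not prime and derive a counterexample to the hypothesis. So fix a Borel equivalence relation $G$ on $Y$ and a Borel homomorphism $\varphi \colon X \to Y$ from $E$ to $G$ such that for no $y$ do we have $E \leq_B E \upharpoonright \varphi^{-1}[y]_G$; we also assume $E \not\leq_B G$ (if $E \leq_B G$ there is nothing to prove for primeness relative to $G$, and the definition of ``$E$ is not prime'' gives us a witnessing $G$ with $E \not\leq_B G$ and a bad homomorphism). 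Now define $F$ on $X$ by $x_1 \mathrel{F} x_2$ iff $x_1 \mathrel{E} x_2$ or $\varphi(x_1) \mathrel{G} \varphi(x_2)$ — wait, this need not be transitive. Instead set $F = (E \times_\varphi G)$, precisely: $x_1 \mathrel{F} x_2$ iff $\varphi(x_1) \mathrel{G} \varphi(x_2)$, the pullback of $G$ along $\varphi$. This $F$ is a Borel equivalence relation with $E \subseteq F$ (since $\varphi$ is a homomorphism), and its classes are exactly the sets $\varphi^{-1}[y]_G$. By our assumption on $\varphi$, $E \not\leq_B E \upharpoonright [x]_F$ for every $x$, i.e., $E \upharpoonright [x]_F <_B E$ (the $\leq_B$ always holds since $E \upharpoonright [x]_F$ is a restriction, so failure of $E \leq_B E\upharpoonright[x]_F$ gives strict inequality). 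Thus $F$ witnesses a failure of the hypothesis provided $E \not\leq_B F$; but $E \leq_B F$ would mean (composing) there is a Borel reduction of $E$ to the pullback of $G$, hence a Borel homomorphism from $E$ to $G$ which is a reduction onto its image, and one checks this forces $E \leq_B G$, contradicting our choice. Hence $E \not\leq_B F$, contradicting the hypothesis, and so $E$ is prime.

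\textbf{Main obstacle.} The delicate point is the step asserting ``$E \leq_B F$ implies $E \leq_B G$'' where $F$ is the pullback of $G$ along $\varphi$: a Borel reduction $\rho$ of $E$ to $F$ satisfies $x_1 \mathrel{E} x_2 \iff \varphi(\rho(x_1)) \mathrel{G} \varphi(\rho(x_2))$, so $\varphi \circ \rho$ is directly a Borel reduction of $E$ to $G$ — this is in fact immediate, so the real subtlety is only in the bookkeeping of which $G$ to choose (one must be careful that ``$E$ not prime'' is witnessed by some $G$ with $E \not\leq_B G$, which is exactly the content of $E$ not being prime to $G$ combined with the prime alternative failing). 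Handling the trivial/small cases (one $E$-class, or $E$ having exactly finitely many classes) to confirm the ``at least two classes'' clause is consistent requires a brief separate remark but is routine.
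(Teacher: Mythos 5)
Your proof is correct and uses the same key construction as the paper: pulling back $F$ along the homomorphism $\varphi$ to obtain $F' = (\varphi\times\varphi)^{-1}[F]$, which contains $E$ and has every class of size strictly less than $E$, so the hypothesis gives $E \leq_B F' \leq_B F$. The paper phrases the reverse direction directly (for arbitrary $F$, either $E$ is prime to $F$ or $E \leq_B F$) rather than by contradiction, but the argument is identical; your worry about the $\Delta(1)$ edge case, where the right-hand side is vacuously true yet $E$ is not prime, is a genuine wrinkle but one the paper also leaves implicit.
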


\begin{proof}
Suppose $E$ is prime and let $E \subseteq F$, so that the identity is a homomorphism from $E$ to $F$. If $E \upharpoonright [x]_F <_B E$ for all $x$,then there is no embedding $\rho$ from $E$ to $E$ whose range is contained in a single $F$ class, so $E$ is not prime to $F$; hence $E \leq_B F$.

Conversely, suppose $E$ has the given property and $E$ is not prime to $F$. Then there is a homomorphism $\varphi$ from $E$ to $F$ for which there is no embedding of $E$ into the preimage of a single $F$-class. Let $F' = (\varphi \times \varphi)^{-1}[F]$, so $E \subseteq F'$ and $E \upharpoonright [x]_{F'} <_B E$ for all $x$. Then $E \leq_B F'$ by our assumption, and as $F' \leq_B F$ we have $E \leq_B F$.
\end{proof}

The following may again be seen as justification for the term ``prime''.

\begin{lem}
\label{lem:factor}
 If $E$ is prime, then for any two Borel equivalence relations $F_1$ and $F_2$ with $E \leq_B F_1 \times F_2$ we have either $E \leq_B F_1$ or $E \leq_B F_2$.
\end{lem}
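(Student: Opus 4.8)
The plan is to deduce this immediately from Lemma~\ref{lem:basic}(1) together with the defining dichotomy of primeness. Suppose $E$ is prime and $E \leq_B F_1 \times F_2$. Applying primeness of $E$ to the equivalence relation $F_2$, we have one of two cases: either $E \leq_B F_2$, in which case we are done, or $E$ is prime to $F_2$. In the second case, $E$ is prime to $F_2$ and $E \leq_B F_1 \times F_2$, so Lemma~\ref{lem:basic}(1), applied with $R = F_1$ and $F = F_2$, yields $E \leq_B F_1$. This is all that is needed.

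For the reader it is worth recalling the short argument underlying Lemma~\ref{lem:basic}(1) in this situation. Fix a Borel reduction $\varphi$ of $E$ into $F_1 \times F_2$ and write $\varphi(x) = (\varphi_0(x), \varphi_1(x))$. Then $\varphi_1$ is a Borel homomorphism from $E$ to $F_2$, so since $E$ is prime to $F_2$ there is a Borel reduction $\rho$ from $E$ to $E$ with the range of $\varphi_1 \circ \rho$ contained in a single $F_2$-class. Now $\varphi \circ \rho$ is still a reduction of $E$ into $F_1 \times F_2$; since its second coordinate $\varphi_1 \circ \rho$ is constant up to $F_2$, we get $x_1 \EE x_2$ iff $\varphi_0(\rho(x_1)) \mathrel{F_1} \varphi_0(\rho(x_2))$, i.e.\ $\varphi_0 \circ \rho$ is a Borel reduction of $E$ to $F_1$.

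\textbf{Main obstacle.} There is essentially no obstacle: the real content has already been isolated in Lemma~\ref{lem:basic}(1), and primeness supplies exactly the hypothesis ``$E$ is prime to $F_2$'' required to invoke it, once the trivial alternative $E \leq_B F_2$ is set aside. The only point to keep in mind is the asymmetry in the statement of Lemma~\ref{lem:basic}(1) (it peels off the factor $F$ while retaining $R$); since $F_1 \times F_2 \sim_B F_2 \times F_1$ and bireducibility preserves the relevant properties (Lemma~\ref{lem:basic}(6)), one may peel off whichever factor is convenient, which is precisely why the conclusion is the symmetric disjunction $E \leq_B F_1$ or $E \leq_B F_2$.
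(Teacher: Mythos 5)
Your proof is correct, but it takes a genuinely different route from the paper's. The paper argues by contradiction using Lemma~\ref{lem:basic}(4): if $E \not\leq_B F_1$ and $E \not\leq_B F_2$, then (by primeness) $E$ is prime to both, hence prime to $F_1 \times F_2$ by part~(4); since $E$ has at least two classes, that contradicts $E \leq_B F_1 \times F_2$. You instead make a single direct appeal to the primeness dichotomy on $F_2$ and then use Lemma~\ref{lem:basic}(1) to peel off the $F_2$-coordinate, landing in $F_1$. Both are short and correct; your version is arguably a bit more constructive (no contradiction, and it exhibits the reduction $\varphi_0 \circ \rho$ explicitly), while the paper's argument generalizes slightly more cleanly to the companion lemma about $F_1 \cap F_2$, where the same ``contradict via part~(4)'' template is reused verbatim. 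Your closing remark about the asymmetry of Lemma~\ref{lem:basic}(1) being harmless via $F_1 \times F_2 \sim_B F_2 \times F_1$ is accurate but unnecessary for the argument as written, since you already chose to apply the dichotomy to $F_2$ and hence invoke part~(1) with $F = F_2$.
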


\begin{proof}
If $E$ is not reducible to either $F_1$ or $F_2$ then it is prime to both, so by Lemma~\ref{lem:basic} (4) it is prime to $F_1 \times F_2$, contradicting that $E \leq_B F_1 \times F_2$.
\end{proof}

\begin{question}
Is the conclusion of Lemma~\ref{lem:factor} equivalent to primeness? 
\end{question}

We suspect this is not the case, but we do not know any examples with this property other than known prime relations.

Note that Lemma~\ref{lem:factor} cannot be extended to infinite products, as $\Delta(2^{\omega}) \sim_B\Delta(\bbR)$ is prime and  $\Delta(2^{\omega}) = \prod_{ n \in \omega} \Delta(2)$, but $\Delta(\bbR) \not\leq_B \Delta(2)$.

\begin{lem}
If $E$ is prime and $E \leq_B F_1 \cap F_2$ then either $E \leq_B F_1$ or $E \leq_B F_2$.
\end{lem}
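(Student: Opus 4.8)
The plan is to mirror the proof of Lemma~\ref{lem:factor}, reducing the intersection case to the product case that has already been handled. The key observation is purely formal: if $F_1$ and $F_2$ are equivalence relations on a common Polish space $X$ (as they must be for $F_1 \cap F_2$ to make sense), then the diagonal map $x \mapsto (x,x)$ is a continuous reduction from $F_1 \cap F_2$ to $F_1 \times F_2$, since $(x,x) \mathrel{F_1 \times F_2} (y,y)$ holds iff $x \mathrel{F_1} y \wedge x \mathrel{F_2} y$ iff $x \mathrel{F_1 \cap F_2} y$. Thus $F_1 \cap F_2 \leq_B F_1 \times F_2$.

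Granting this, the argument is immediate: from $E \leq_B F_1 \cap F_2$ and $F_1 \cap F_2 \leq_B F_1 \times F_2$ we obtain $E \leq_B F_1 \times F_2$, and Lemma~\ref{lem:factor} then gives $E \leq_B F_1$ or $E \leq_B F_2$.

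One could equally avoid citing Lemma~\ref{lem:factor} and argue directly: suppose $E$ is reducible to neither $F_1$ nor $F_2$. Since $E$ is prime, it is then prime to $F_1$ and prime to $F_2$, so by Lemma~\ref{lem:basic}(4) it is prime to $F_1 \cap F_2$. But a prime equivalence relation has at least two classes, so being prime to $F_1 \cap F_2$ entails $E \not\leq_B F_1 \cap F_2$, contradicting the hypothesis.

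I expect no real obstacle here; the statement is essentially a corollary of Lemma~\ref{lem:factor} (or of Lemma~\ref{lem:basic}(4)). The only point requiring any attention is the tacit assumption that $F_1$ and $F_2$ live on the same space, which makes the diagonal embedding available — and this is forced by the meaning of $F_1 \cap F_2$.
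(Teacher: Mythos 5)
Your proposal is correct, and your second ("direct") argument is word-for-word the paper's own proof, invoking Lemma~\ref{lem:basic}(4) and the fact that a prime relation has at least two classes. The diagonal-embedding reduction $F_1\cap F_2\leq_B F_1\times F_2$ followed by Lemma~\ref{lem:factor} is a clean alternative, but the substance is the same.
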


\begin{proof}
If $E$ is not reducible to either $F_1$ or $F_2$ then it is prime to both, so by Lemma~\ref{lem:basic} (4) it is prime to $F_1 \cap F_2$, contradicting that $E \leq_B F_1 \cap F_2$.
\end{proof}

This lemma also does not extend to infinite intersections, since $\Delta(2^{\omega})$ is prime and if we let $x \mathrel{F_n} y \ \Leftrightarrow\ x(n)=y(n)$ then $F_n$ has only two classes but $\bigcap_{n \in \omega} F_n = \Delta(2^{\omega})$. As discussed below, Kanovei--Sabok--Zapletal have established this result for $\bbF_2$, though; namely, if $\bbF_2 \leq_B \bigcap_n F_n$ then $\bbF_2 \leq_B F_n$ for some $n$ (Corollary 6.30 of \cite{ksz}).

As discussed in Section \ref{subsection:countable}, primeness of $E_{\infty}$ is closely tied to Martin's conjecture.

\begin{question}
Is $E_{\infty}$ prime? 
\end{question}

\section{Stronger primeness notions and Borel partition properties}

We can consider several properties stronger than primeness.

\begin{dfn}
We say that a Borel equivalence relation $E$ is \emph{uniformly prime} if for every Borel equivalence relation $F$ with $E \subseteq F$, either $E \leq_B E \upharpoonright [x]_F$ for some $x$, or else there is a Borel set $A$ with $E \leq_B E \upharpoonright A$ which is $F \setminus E$-discrete, i.e., if $x_1,x_2 \in A$ with $x_1 \FF x_2$ then $x_1 \EE x_2$.
\end{dfn}

As the second condition implies $E \leq_B F$, Lemma~\ref{lem:altprime} shows that every uniformly prime $E$ is prime. All known examples of prime equivalence relations are in fact uniformly prime, but we do not know if the conditions are equivalent.

\begin{question}
Is every prime equivalence relation uniformly prime?
\end{question}

A stronger property, naturally suggested by the concept of a weakly compact cardinal, is the following:

\begin{dfn}
We say that a Borel equivalence relation $E$ is \emph{Borel weakly compact} if for every symmetric $E \times E$-invariant Borel function $f: X^2 \rightarrow 2$ there is a Borel set $A$ with $E \leq_B E \upharpoonright A$ so that $f$ is constant on $X^2 \setminus E$. 
\end{dfn}

That is, any Borel partition of the set of pairs of distinct $E$ classes has a homogeneous set to which all of $E$ can be reduced. 
Borel weak compactness again implies primeness:

\begin{lem}
If $E$ is Borel weakly compact then $E$ is uniformly prime, and hence prime.
\end{lem}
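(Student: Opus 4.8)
The plan is to show that Borel weak compactness implies uniform primeness; since the latter implies primeness by the remark following the definition of uniform primeness, this suffices. So fix a Borel equivalence relation $F$ with $E \subseteq F$; I must produce either an $x$ with $E \leq_B E \upharpoonright [x]_F$, or a Borel set $A$ with $E \leq_B E \upharpoonright A$ which is $F \setminus E$-discrete. The natural move is to feed $F$ into the weak compactness hypothesis via its characteristic function: define $f : X^2 \rightarrow 2$ by $f(x_1,x_2) = 1$ if $x_1 \FF x_2$ and $f(x_1,x_2) = 0$ otherwise. Then $f$ is Borel, symmetric, and — crucially — $E \times E$-invariant precisely because $E \subseteq F$, so that $E$-equivalent points have the same $F$-class. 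Thus $f$ is a legitimate input to Borel weak compactness, and we obtain a Borel set $A$ with $E \leq_B E \upharpoonright A$ on which $f$ is constant on $A^2 \setminus E$ (here one restricts attention to pairs from $A$, which is where the homogeneity lives).

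Now split into the two cases according to the constant value of $f$ on $A^2 \setminus E$. If $f \equiv 1$ on $A^2 \setminus E$, then every pair of $E$-inequivalent points of $A$ is nonetheless $F$-equivalent; combined with $E \subseteq F$, this says that all of $A$ lies in a single $F$-class $[x]_F$ (pick any $x \in A$). Since $E \leq_B E \upharpoonright A \leq_B E \upharpoonright [x]_F$, we are in the first alternative of uniform primeness. If instead $f \equiv 0$ on $A^2 \setminus E$, then for $x_1, x_2 \in A$ with $x_1 \FF x_2$ we cannot have $\neg x_1 \EE x_2$ (else $f(x_1,x_2) = 1$), so $x_1 \EE x_2$; that is, $A$ is exactly $F \setminus E$-discrete, and since $E \leq_B E \upharpoonright A$, we are in the second alternative. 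In either case uniform primeness holds for $F$, and since $F$ was arbitrary, $E$ is uniformly prime, hence prime.

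There is essentially no obstacle here — the argument is a direct unwinding of definitions — but the one point requiring a moment of care is the reduction bookkeeping: Borel weak compactness as stated gives $f$ constant on $X^2 \setminus E$ after passing to a set $A$ with $E \leq_B E \upharpoonright A$, and one should note that it is the homogeneity witnessed within $A$ (i.e. on $A^2 \setminus E$) that is used, since two $F$-inequivalent points anywhere in $X$ would of course give $f = 0$ and say nothing. (If the intended reading of the definition is that $f$ is literally constant on all of $X^2 \setminus E$ after the reduction — which cannot happen unless $F$ is trivial or all of $X^2$ — then one instead iterates: the homogeneous set is found inside $A$, and the two cases are run there verbatim.) Either way the case division above is exactly the dichotomy in the definition of uniform primeness, so the implication is immediate.
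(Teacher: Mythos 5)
Your proposal is correct and is essentially the paper's own proof: feed the characteristic function of $F$ into Borel weak compactness and read off the two alternatives of uniform primeness from the constant value on the homogeneous set (the paper uses the opposite $0/1$ convention, which is immaterial). Your reading of the definition is also the intended one --- the homogeneity is meant to hold on $A^2 \setminus E$, so no iteration is needed.
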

\begin{proof}
Given $E \subseteq F$, let $f : X^2 \rightarrow 2$ be defined by $f(x,y)=0$ when $x \FF y$ and $1$ if not. This is symmetric and $E \times E$-invariant. A homogeneous set $A$ with constant value 0 gives a reduction of $E$ to a single $F$-class, and one with constant value 1 gives an $F \setminus E$-discrete set.
\end{proof}

We will see below, though, that not every uniformly prime equivalence relation is Borel weakly compact; in particular, this is the case for $\bbF_2$.

Following the usual Erd\"{o}s notion, we can consider various such partition relations.

\begin{dfn}
\label{dfn:partition}
For an equivalence relation $E$ on a Polish space $X$ we use $[E]^n$ to denote the set of pairwise $E$-inequivalent $n$-tuples from $X$. For an equivalence relation $F$ on $Y$, a \emph{Borel $F$-partition of $[E]^n$} is a Borel function $f: [E]^n \rightarrow Y$ which is invariant and symmetric, i.e., if $\{[x_1]_E,\ldots, [x_n]_E\} = \{[x'_1]_E,\ldots, [x'_n]_E\}$ then $f(x_1,\ldots, x_n) \FF f(x'_1,\ldots, x'_n)$.  When $F$ is $\Delta(2)$ we may identify an $F$-partition with a symmetric invariant subset of $[E]^n$.
\end{dfn}

\begin{dfn} For Borel equivalence relations $E$, $F$, and $R$ on Polish spaces $X$, $Y$, and $Z$, respectively, we write $E \rightarrow_B (R)^n_F$ to mean that for every Borel $F$-partition $f: [E]^n \rightarrow Y$ there is a Borel set $A$ so that $R \leq_B E \upharpoonright A$ and $f$ maps $A^n \cap [E]^n$  into a single $F$-class. When $F$ is $\Delta(Y)$ we simply write $E \rightarrow_B (R)^n_Y$. 
\end{dfn}

We call such a set $A$ \emph{homogeneous} for the partition $f$.
We observe that if there is an analytic homogeneous set,  then there is in fact a  Borel $E$-invariant homogeneous set for each Borel partition.

\begin{lem}
\label{lem:reflection}
Let $E$, $F$, and $R$ be Borel equivalence relations, and $f: [E]^n \rightarrow Y$ an $F$-partition. If $A$ is an analytic set which is homogeneous for $f$, then there is a Borel $E$-invariant set $B$ which is homogeneous for $f$ with $A \subseteq B$.
\end{lem}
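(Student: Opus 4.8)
The natural approach is a reflection argument: the statement "$B$ is homogeneous for $f$" is, for $B$ ranging over $E$-invariant Borel sets, a $\bPi_1^1$-on-$\bSigma_1^1$ condition, so one invokes the First Reflection Theorem (or a suitable boundedness/reflection principle for $\bPi_1^1$ on $\bSigma_1^1$) together with the fact that Borel sets can be manufactured to $E$-invariantly engulf a given analytic set. First I would replace $A$ by its $E$-saturation $[A]_E$; since $E$ is Borel and $A$ is analytic, $[A]_E$ is still analytic, it is $E$-invariant, and it remains homogeneous because the requirement on $f$ only depends on the $E$-classes met by the set (here one uses that $f$ is $E$-invariant, so enlarging $A$ by whole $E$-classes it already meets does not change the values of $f$ on $n$-tuples from the set). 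So from now on $A$ is analytic and $E$-invariant.

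Next I would phrase homogeneity as a $\bPi_1^1$ property of sets. For an $E$-invariant set $C$, being homogeneous for $f$ means: there is a single $F$-class $[y_0]_F$ — more precisely, fixing any $y_0$ in the range of $f$ restricted to $C$ — such that for all $n$-tuples $(x_1,\dots,x_n) \in C^n \cap [E]^n$ we have $f(x_1,\dots,x_n) \FF y_0$. The quantifier "for all $n$-tuples from $C$" is universal over $C$, hence $\bPi_1^1$ as a condition on the set $C$ when $C$ is analytic; and "$f(\bar x) \FF y_0$" is Borel. There is a subtlety about how to pin down $y_0$ uniformly: one way is to observe that if $C$ meets $[E]^n$ at all, the value $[y_0]_F$ is forced, so I can first fix a single witnessing tuple $(a_1,\dots,a_n) \in A^n \cap [E]^n$ (if no such tuple exists, i.e. $A$ meets fewer than $n$ distinct $E$-classes, then every set is vacuously homogeneous and there is nothing to prove — one just takes $B = X$), set $y_0 = f(a_1,\dots,a_n)$, and then homogeneity of $C \supseteq \{a_1,\dots,a_n\}$ is exactly: $\forall (x_1,\dots,x_n) \in C^n \cap [E]^n\ \bigl(f(x_1,\dots,x_n) \FF y_0\bigr)$. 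This is a $\bPi_1^1$ predicate $\Phi(C)$ in the variable $C$ (ranging over analytic subsets of $X$), and it is monotone downward is not what I want — but it is $\bPi_1^1$-on-$\bSigma_1^1$, and $\Phi(A)$ holds.

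Now I would apply the First Reflection Theorem: since $\Phi$ is $\bPi_1^1$ on $\bSigma_1^1$ and holds of the analytic set $A$, there is a Borel set $B$ with $A \subseteq B$ and $\Phi(B)$. Finally, to get $B$ to be $E$-invariant I either build $E$-invariance into the reflection (reflecting within the $\sigma$-algebra generated by $E$-invariant Borel sets, using that $E$ is Borel so $[B]_E = \{x : \exists x'\ x \EE x' \wedge x' \in B\}$ is $\bSigma_1^1$ for $B$ Borel — this is circular) or, more cleanly, reflect first to get a Borel $B_0 \supseteq A$ with $\Phi(B_0)$, then take $B = [B_0]_E$; but $[B_0]_E$ need not be Borel. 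The honest fix, which I expect to be the main obstacle, is that the reflection must be carried out relative to the $E$-invariant Borel sets directly: one verifies that $\Phi$, restricted to $E$-invariant analytic sets, still satisfies the hypotheses of a reflection theorem for the pointclass of $E$-invariant analytic sets (equivalently, one works in the quotient and uses that $E$-invariant Borel sets form a $\sigma$-algebra closed under the relevant operations, and that $[A]_E$ is $E$-invariant analytic). Given such a relativized reflection, $B$ comes out $E$-invariant Borel, $A \subseteq [A]_E \subseteq B$, and $\Phi(B)$ says precisely that $f$ maps $B^n \cap [E]^n$ into the single $F$-class $[y_0]_F$, so $B$ is homogeneous, completing the proof. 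The only real work is checking the $\bPi_1^1$-on-$\bSigma_1^1$ complexity of $\Phi$ and that the standard reflection goes through inside the class of $E$-invariant sets; everything else is routine.
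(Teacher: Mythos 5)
You correctly identify the tools — fix $y_0$, express homogeneity as a $\bPi^1_1$-on-$\bSigma^1_1$ predicate in the set variable, apply the First Reflection Theorem — and you correctly isolate the obstacle: one application of reflection produces a Borel set that need not be $E$-invariant, while its $E$-saturation need not be Borel. But you then leave a genuine gap: you invoke an unspecified ``reflection theorem for the pointclass of $E$-invariant analytic sets'' without citing or proving it, and you declare that checking it is ``routine,'' which it is not as stated. That is precisely the content of the lemma.

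The paper does not appeal to any relativized reflection principle. It closes the gap with an explicit iteration using only the plain First Reflection Theorem, together with one observation you actually made at the very beginning and then abandoned: since $f$ is $E \times \cdots \times E$-invariant, the $E$-saturation of any homogeneous set is still homogeneous, and it is $\bSigma^1_1$ when the original set is. So: reflect to get a Borel homogeneous $B_0 \supseteq A$; take its $E$-saturation $A_0$, which is $\bSigma^1_1$, $E$-invariant, and still homogeneous; reflect again to get Borel homogeneous $B_1 \supseteq A_0$; and continue, producing an increasing chain $A \subseteq B_0 \subseteq A_0 \subseteq B_1 \subseteq A_1 \subseteq \cdots$. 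Then $B = \bigcup_n B_n = \bigcup_n A_n$ is Borel (countable increasing union of Borel sets), $E$-invariant (equal to a union of $E$-invariant sets), and homogeneous (any $n$-tuple from $B$ lies in some $B_m$). This iteration \emph{is} the invariant reflection you wanted to cite; you need to supply it rather than assume it. One more small point: in the degenerate case where $A$ meets fewer than $n$ distinct $E$-classes, taking $B = X$ is wrong (nothing guarantees $X$ is homogeneous); rather one notes that the $E$-saturation of $A$ is then a finite union of Borel $E$-classes, hence Borel, $E$-invariant, and vacuously homogeneous, so it serves as $B$ directly.
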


\begin{proof}
Let $y_0$ be such that $f$ maps all $n$-tuples from distinct $E$-classes of $A$ into $[y_0]_F$.
The statement that $f$ maps all such tuples from a set $A$ into $[y_0]_F$ is $\bPi^1_1$-on-$\bSigma^1_1$, since it holds when
\[ \forall x_1 \cdots \forall x_n \left(  \left(\bigwedge_{1 \leq i < j \leq n} \neg x_i \EE x_j \wedge \bigwedge_{1 \leq i \leq n} A(x_i) \right) \rightarrow f(x_1,\ldots, x_n) \FF y_0 \right) .\]
Thus the First Reflection Theorem (Theorem 35.10 of \cite{Kechris}) gives a Borel set $B_0$  which is homogeneous for $f$ with $A \subseteq B_0$. Let $A_0$ be the $E$-saturation of $B_0$, which is analytic and still homogeneous for $f$. Repeat to find an increasing sequence $A\subseteq B_0 \subseteq A_0 \subseteq B_1 \subseteq  A_1 \subseteq \cdots$ of homogeneous sets with $A_n$ $E$-invariant and $B_n$ Borel; then $B = \bigcup_n B_n$ will be Borel, $E$-invariant, and homogeneous for $f$.
\end{proof} 

As with partition relations on cardinals, such a relation remains true if we replace $E$ by an equivalence relation into which it embeds, and if we replace $F$ or $R$ by equivalence relations embedding into them. 

\begin{lem}
Suppose $E \rightarrow_B (R)^n_F$, $E \leq_B E'$, $F' \leq_B F$, and $R' \leq_B R$. Then $E' \rightarrow_B (R')^n_{F'}$
\end{lem}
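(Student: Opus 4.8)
The plan is to establish the monotonicity of $\rightarrow_B$ directly from the definitions by composing reductions, the only subtle point being that a Borel $F$-partition pulled back along a reduction may fail to be symmetric and invariant on the nose, which is repaired using Lemma~\ref{lem:reflection}. Let me sketch the four ingredients in the order I would carry them out.

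First I would fix witnessing reductions: let $\alpha : E \to E'$, $\beta : F' \to F$, and $\gamma : R \to R'$ be Borel reductions witnessing $E \leq_B E'$, $F' \leq_B F$, $R' \leq_B R$, and let $g : [E']^n \to Y'$ be an arbitrary Borel $F'$-partition; the goal is to produce a homogeneous $A' \subseteq Y'$ (domain of $E'$) with $R' \leq_B E' \upharpoonright A'$ mapping $A'^n \cap [E']^n$ into a single $F'$-class. Second, I would pull back along $\alpha$: since $\alpha$ is a reduction, an $n$-tuple $(x_1,\dots,x_n)$ from distinct $E$-classes maps to an $n$-tuple $(\alpha(x_1),\dots,\alpha(x_n))$ from distinct $E'$-classes, so $h = \beta \circ g \circ (\alpha \times \cdots \times \alpha)$ restricted to $[E]^n$ is a well-defined Borel map into $Y$. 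The point of composing with $\beta$ is that the $F'$-invariance of $g$ only guarantees $g(\alpha(x_1),\dots) \mathrel{F'} g(\alpha(x_1'),\dots)$ when the $E'$-classes match, but a permutation of the $x_i$'s permutes the $\alpha(x_i)$'s, and an $E$-equivalent substitution gives an $E'$-equivalent (indeed equal $E'$-class) substitution, so $h$ is symmetric and invariant as an $F$-partition of $[E]^n$.

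Third, I would apply the hypothesis $E \rightarrow_B (R)^n_F$ to $h$, obtaining a Borel set $B$ with $R \leq_B E \upharpoonright B$ and $h$ mapping $B^n \cap [E]^n$ into a single $F$-class, say $[y_0]_F$. Now set $A' = \alpha[B]$; this is analytic, and I claim it is homogeneous for $g$. Indeed, any $n$-tuple of distinct $E'$-classes represented inside $A'$ can be pulled back (choosing preimages in $B$) to an $n$-tuple of distinct $E$-classes in $B$—here I use that $\alpha$ is a reduction so distinctness of $E'$-classes forces the chosen preimages to lie in distinct $E$-classes—and hence $\beta(g(\cdot)) \mathrel{F} y_0$, so by injectivity-of-$\beta$-on-classes (i.e. $\beta$ being a reduction), $g$ maps these tuples into a single $F'$-class. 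Applying Lemma~\ref{lem:reflection} to the analytic homogeneous set $A'$ upgrades it to a Borel $E'$-invariant homogeneous set $A$ with $A' \subseteq A$. Fourth, for the reduction side: composing $\gamma : R \to R'$ with the reduction $R \leq_B E \upharpoonright B$ and then with $\alpha$ gives a Borel reduction $R' \leq_B E' \upharpoonright \alpha[B] \subseteq E' \upharpoonright A$, since $\alpha$ is a reduction and so restricts to a reduction of $E \upharpoonright B$ onto $E' \upharpoonright \alpha[B]$. This gives $E' \rightarrow_B (R')^n_{F'}$.

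I expect the main obstacle to be purely bookkeeping rather than conceptual: one must be careful that the pulled-back partition $h$ is genuinely symmetric and invariant (not merely defined up to an $F$-class), which is why the composition with $\beta$ is needed and why the verification uses both the homomorphism and cohomomorphism halves of $\alpha$ being a reduction; and one must check that $\alpha[B]$, though only analytic, is handled correctly by invoking Lemma~\ref{lem:reflection}. A minor secondary point is that $E' \upharpoonright A$ need not equal $E' \upharpoonright \alpha[B]$, but since $\alpha[B] \subseteq A$ and the partition is homogeneous on the larger set $A$ while the reduction lands in the smaller set, both requirements are met simultaneously.
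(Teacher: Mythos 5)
Your proof matches the paper's argument essentially step for step: pull the $F'$-partition back along the reduction into $E'$ and forward through the reduction of $F'$ into $F$ to obtain an $F$-partition of $[E]^n$, apply the hypothesis to get a homogeneous set $B$ with $R \leq_B E \upharpoonright B$, push forward to the analytic set $\alpha[B]$, and invoke Lemma~\ref{lem:reflection} to extend to a Borel homogeneous set while composing reductions to get $R' \leq_B E' \upharpoonright \alpha[B]$. Two immaterial slips worth noting: $\gamma$ should go from the domain of $R'$ to that of $R$ (you use it correctly later), and your opening sentence misattributes the role of Lemma~\ref{lem:reflection} --- the pulled-back partition is symmetric and invariant on the nose, as you verify; what the reflection lemma repairs is that $\alpha[B]$ is only analytic.
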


\begin{proof}
Let $\rho$ witness that $E \leq_B E'$, and $\varphi$ that $F' \leq_B F$. Suppose $f'$ is an $F'$-partition of $[E']^n$. Then $f'$ composed with (the $n$-fold product of) $\rho$ induces an $F'$-partition of $[E]^n$ and the composition of $\varphi$ with this induces an $F$-partition of $[E]^n$. By assumption there is a set $A$ with $R \leq_B E \upharpoonright A$ so that $f$ maps all $n$-tuples from distinct $E$-classes from $A$ into a single $F$-class. Let $A' = \rho[A]$ (which is analytic), so that $R$, and hence $R'$ is reducible to $E' \upharpoonright A'$. We have that $\varphi \circ f'$ maps $n$-tuples from distinct $E'$-classes from $A'$ into a single $F$-class, and hence $f'$ maps them into a single $F'$-class. So $A'$ is an analytic homogeneous set for $f'$. By Lemma~\ref{lem:reflection} there is then a Borel set $B \supseteq A'$ which is homogeneous for $f'$, and we are done.
\end{proof}

Observe that the statement that $E$ is prime to $F$ may be written as $E \rightarrow_B (E)^1_F$, and the statement that $E$ is Borel weakly compact may be written as $E \rightarrow_B (E)^2_2$. In fact, these are the only non-trivial Borel partition properties which are possible.
We can see that $n \geq 3$ will be impossible for non-trivial relations. When $n=2$ and $R=k$, we may replace $k$ by a larger integer, but $R=\omega$ will also fail for non-trivial $E$.

\begin{lem}[see \cite{Kechris}, Exercises 19.9 and 19.10]
Non-trivial partition relations with $n \geq 3$ or with $n=2$ and $F \geq \omega$ are impossible when $E$ has perfectly many classes. Namely:
\begin{enumerate} 
\item There is a clopen partition of $[2^{\omega}]^2$ into countably many clopen pieces which has no homogeneous set of size 3, i.e., $\bbR \not\rightarrow_B (3)^2_{\omega}$.
\item There is a partition of $[2^{\omega}]^3$ into two clopen pieces with no perfect homogeneous set, i.e., $\bbR \not\rightarrow_B (\bbR)^3_2$.
\end{enumerate}
\end{lem}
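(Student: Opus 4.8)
The plan is to prove parts (1) and (2) directly, by exhibiting two explicit clopen partitions on $2^{\omega}$, both built from the first-difference function $\Delta(x,y) = \min\{n : x(n)\ne y(n)\}$, which is locally constant, hence continuous, on $[2^{\omega}]^2$. It is worth first reducing each statement to a purely combinatorial one: a Borel homogeneous set $A$ with $R \leq_B \Delta(\bbR)\upharpoonright A = \Delta(A)$ is simply a Borel set of cardinality at least that of $R$, so for (1) (where $R = 3$) it suffices to rule out three-element homogeneous sets, and for (2) (where $R = \bbR$) it suffices to rule out perfect homogeneous sets, since an uncountable Borel set contains a perfect set.

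For (1) I would take the $\omega$-valued partition $f(x,y) = \Delta(x,y)$; each fibre $f^{-1}(n) = \{(x,y): x\upharpoonright n = y\upharpoonright n,\ x(n)\ne y(n)\}$ is clopen, and the fibres partition $[2^{\omega}]^2$. If $\{x,y,z\}$ were homogeneous with value $n$, then $x\upharpoonright n = y\upharpoonright n = z\upharpoonright n$ while $x(n),y(n),z(n)$ would be pairwise distinct, which is impossible in $2 = \{0,1\}$. Hence no three-element homogeneous set exists, so $\bbR \not\rightarrow_B (3)^2_{\omega}$; composing $f$ with a Borel reduction from $\Delta(\omega)$ to any $F$ with $\Delta(\omega)\leq_B F$ gives $\bbR\not\rightarrow_B (3)^2_F$, and enlarging $R$ preserves the failure.

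For (2) the crucial observation is the ultrametric dichotomy for $\Delta$: for pairwise distinct $x,y,z$ the value $n_0 := \min\{\Delta(x,y),\Delta(y,z),\Delta(x,z)\}$ is attained by exactly two of the three pairs. It cannot be attained by exactly one pair (if $\Delta(x,y)=n_0$ were strictly below both $\Delta(y,z)$ and $\Delta(x,z)$, then $y(n_0)=z(n_0)=x(n_0)$, contradicting $x(n_0)\ne y(n_0)$), and it cannot be attained by all three (that would require three pairwise distinct bits). So there is a unique ``lonely'' point among $x,y,z$, namely the one whose value at coordinate $n_0$ differs from that of the other two, and I would define $c(\{x,y,z\})\in 2$ to be the common value at coordinate $n_0$ of the two non-lonely points. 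This depends only on the unordered triple, is symmetric, and is locally constant, hence gives a clopen $2$-partition of $[2^{\omega}]^3$. To see no perfect $P$ is homogeneous, let $T$ be the pruned perfect tree with $[T]=P$ and fix a splitting node $v\in T$: choosing $p\in P$ through $v\smallfrown 0$ and two distinct branches $q,r\in P$ through $v\smallfrown 1$, one has $\Delta(p,q)=\Delta(p,r)=|v|<\Delta(q,r)$, so $n_0=|v|$ with lonely point $p$, whence $c(\{p,q,r\})=q(|v|)=1$; the mirror-image choice (one branch through $v\smallfrown 1$ and two through $v\smallfrown 0$) yields value $0$. Thus $c$ is non-constant on $[P]^3$, so $\bbR\not\rightarrow_B (\bbR)^3_2$, and then $\bbR\not\rightarrow_B(\bbR)^n_F$ for all $n\geq 3$ and non-trivial $F$ — the case $n>3$ by first passing to a perfect subset order-isomorphic to $(2^{\omega},<_{\mathrm{lex}})$ and colouring an $n$-tuple by the $c$-colour of its three $<_{\mathrm{lex}}$-least members, the case of larger $F$ by composing with a reduction of $\Delta(2)$.

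I do not expect a genuine obstacle here: both partitions are easy to write down, and the perfect-tree argument for (2) is immediate once the right colouring $c$ is in hand. The only ingredient needing care is the ultrametric dichotomy and the consequent verification that $c$ is genuinely a function of the unordered triple. The fully general statement — an arbitrary $E$ with perfectly many classes in place of $\bbR$ — can be obtained by adapting these colourings to the quotient $X/E$ (pulling back along a Borel partial transversal when $E$ is smooth, and directly otherwise), but the phenomenon is entirely captured by (1) and (2).
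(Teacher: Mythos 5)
Your proposal is correct and takes essentially the same approach as the paper: for (1) you use exactly the first-difference colouring $\Delta(x,y)$, and for (2) your ``lonely point'' colouring $c$ is the same two-colouring the paper defines via $\Delta(x,y)\leq\Delta(y,z)$ on lexicographically ordered triples — unwinding the paper's definition, the common bit at $n_0$ of the two non-lonely points is $1$ precisely when $\Delta(x,y)\leq\Delta(y,z)$ (and the case $\Delta(x,y)=\Delta(y,z)$ never occurs, which is your ultrametric dichotomy). The only difference is presentational: you verify well-definedness symmetrically via the ultrametric dichotomy rather than by normalizing through $<_{\mathrm{lex}}$, which makes the symmetry of the colouring manifest, but the colouring, the splitting-node argument, and the reductions to general $n$, $F$, $R$ are all the paper's.
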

\begin{proof}
Let $\Delta(x,y)$ be the least $n$ with $x(n)\neq y(n)$ (or 0 when $x=y$). Identify $[2^{\omega}]^2$ with pairs $(x,y)$ with $x <_{\text{lex}} y$ and identify $[2^{\omega}]^3$ with triples $(x,y,z)$ with $x <_{\text{lex}} y <_{\text{lex}} z$ (where $<_{\text{lex}}$ is the lexicographical ordering on $2^{\omega}$).
For (1):  Set $P_n =\{ (x,y) : \Delta(x,y)=n \}$.
For (2):  Set $P_0 = \{(x,y,z) : \Delta(x,y) \leq \Delta(y,z)\}$ and $P_1=\{ (x,y,z) : \Delta(x,y)>\Delta(y,z)\}$.
\end{proof}

We can also consider how close an equivalence relation is to being prime.
We introduce a weakening of relative primeness.
\begin{dfn}
We say that $ F \leq_B E / R$ if for every Borel homomorphism $\varphi: E \rightarrow R$ there is $y$ so that $ F \leq_B E \upharpoonright \varphi^{-1} [y]_R$.
\end{dfn}
Thus $E$ is prime to $F$ if $E \leq_B E / F$.

\begin{dfn}
We say that $F$ is a \emph{Borel cofinality} for $E$ if for any Borel equivalence relation $R$, either $F \leq_B R$ or $F \leq_B E / R$.
\end{dfn}
In particular, $E$ is prime exactly when $E$ is a Borel cofinality for $E$. Note that any Borel cofinality for $E$ must be reducible to $E$. Also, any prime equivalence relation reducible to $E$ is a Borel cofinality for $E$.

\begin{lem}
\label{lem:prime-cofinal}
Let $F$ be a prime equivalence relation with $F \leq_B E$. Then $F$ is a Borel cofinality for $E$.
\end{lem}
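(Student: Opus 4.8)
The plan is to unwind the definitions and use the hypothesis that $F$ is prime (not merely prime to various things) to transfer a partition of $E$ to a partition of $F$. Fix a reduction $\iota$ witnessing $F \leq_B E$; by composing all homomorphisms with $\iota$ we may pass between $F$-partitions and $E$-partitions. Let $R$ be an arbitrary Borel equivalence relation; we must show either $F \leq_B R$ or $F \leq_B E/R$, i.e., for every Borel homomorphism $\varphi: E \to R$ there is some $y$ with $F \leq_B E \upharpoonright \varphi^{-1}[y]_R$.

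First I would suppose $F \not\leq_B R$. Given a Borel homomorphism $\varphi: E \to R$, consider the pulled-back equivalence relation $R' = (\varphi \times \varphi)^{-1}[R]$ on $X$, so $E \subseteq R'$ and $R' \leq_B R$ via $\varphi$; its classes are exactly the sets $\varphi^{-1}[y]_R$. Now I would apply the alternate characterization of primeness, Lemma~\ref{lem:altprime}, to $F$. Since $F$ is prime and $F \leq_B E \subseteq R'$ (composing $\iota$ with the identity inclusion makes the relevant identity a homomorphism from $F$ to $R'$, or rather: $\iota$ is a homomorphism from $F$ to $R'$), either $F \leq_B R'$ — which would give $F \leq_B R$, contradicting our assumption — or there is a class $[x]_{R'}$, i.e.\ some $\varphi^{-1}[y]_R$, with $F \leq_B F \upharpoonright [x]_{R'}$ failing to be the obstruction, so that... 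Let me be careful: Lemma~\ref{lem:altprime} says $F$ prime iff $F \leq_B G$ for every Borel $G \supseteq F$ with all restrictions $F \upharpoonright [x]_G <_B F$. Contrapositively, since $F \not\leq_B R'$, it is not the case that all restrictions of $F$ to $R'$-classes are $<_B F$; so there is some class $\varphi^{-1}[y]_R$ — more precisely some $R'$-class $C$ — with $F \upharpoonright C \not<_B F$. But $F \upharpoonright C \leq_B F$ always, and reducibility classes are not linearly ordered, so $F \upharpoonright C \not<_B F$ does not immediately yield $F \leq_B F\upharpoonright C$; this is the gap I must close.

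To close that gap I would instead argue directly from the definition of primeness rather than through Lemma~\ref{lem:altprime}. Since $F$ is prime and $\iota$ composed with $\varphi$ gives a Borel homomorphism from $F$ to $R$: either $F \leq_B R$ — contradiction — or $F$ is prime to $R$, which by definition means for the homomorphism $\varphi \circ \iota$ there is a Borel reduction $\rho$ from $F$ to $F$ with the range of $\varphi \circ \iota \circ \rho$ contained in a single $R$-class $[y]_R$. Then $\iota \circ \rho$ is a reduction of $F$ into $E$ whose image lies in $\varphi^{-1}[y]_R$, witnessing $F \leq_B E \upharpoonright \varphi^{-1}[y]_R$, exactly as required. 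Thus $F \leq_B E/R$. Since $R$ was arbitrary, $F$ is a Borel cofinality for $E$.

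The main point — and the only thing that needs care — is the dichotomy step: that $F$ being prime gives, for the specific homomorphism $\varphi \circ \iota : F \to R$, either $F \leq_B R$ or the primeness-to-$R$ conclusion, and that the latter, when pushed back through $\iota$, lands inside a single $\varphi$-preimage. Everything else is bookkeeping about composing reductions and homomorphisms. No obstacle of substance remains once one avoids the detour through the non-linearity of $\leq_B$ and works straight from the definition of "prime to."
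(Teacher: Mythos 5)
Your final argument (the second half, working straight from the definition of ``prime to'') is correct and is essentially identical to the paper's proof: since $F \not\leq_B R$, primeness gives that $F$ is prime to $R$, and applying this to the homomorphism $\varphi \circ \iota : F \to R$ produces a reduction $\rho : F \to F$ whose composition with $\iota$ lands in a single preimage $\varphi^{-1}[y]_R$. You were also right to abandon the first attempt through Lemma~\ref{lem:altprime}, since $F \upharpoonright C \not<_B F$ does not give $F \leq_B F \upharpoonright C$.
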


\begin{proof}
Let $R$ be a Borel equivalence relation. If $F \not\leq_B R$, then $F$ is prime to $R$. Suppose $\varphi$ is a Borel homomoprhism from $E$ to $R$. Let $\rho$ be a reduction from $F$ to $E$, so that $\varphi \circ \rho$ is a homomorphism from $F$ to $R$. There is then a $y$ so that $F \leq_B F \upharpoonright (\varphi \circ \rho)^{-1} [y]_R$, so $F \leq_B E \upharpoonright \varphi^{-1} [y]_R$. 
\end{proof}

We have the following consequence of weaker partition relations for Borel cofinalities:

\begin{lem}
If $E \rightarrow_B (F)^2_2$ then for any Borel equivalence relation $R$, either $F \leq_B R$ or for any homomorphism $\varphi: E \rightarrow R$ we have that $F \leq_B E \upharpoonright \varphi^{-1}[y]_R$ for some $y$, i.e., $F \leq_B E / R$. Hence $F$ is a Borel cofinality for $E$.
\end{lem}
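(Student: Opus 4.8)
The plan is to show that the hypothesis $E \rightarrow_B (F)^2_2$ directly yields the dichotomy ``$F \leq_B R$ or $F \leq_B E / R$'', and then invoke the definition of Borel cofinality. First I would assume $F \not\leq_B R$ and fix a Borel homomorphism $\varphi: E \rightarrow R$; the goal is to produce $y$ with $F \leq_B E \upharpoonright \varphi^{-1}[y]_R$. The natural move is to define a symmetric $E \times E$-invariant Borel function $g: [E]^2 \rightarrow 2$ by $g(x_1,x_2) = 0$ if $\varphi(x_1) \mathrel{R} \varphi(x_2)$ and $g(x_1,x_2) = 1$ otherwise. This is well-defined on $[E]^2$ and symmetric; it is $E \times E$-invariant precisely because $\varphi$ is a homomorphism from $E$ to $R$ (so $x_i \EE x_i'$ implies $\varphi(x_i) \mathrel{R} \varphi(x_i')$, hence the truth value of $\varphi(x_1)\mathrel{R}\varphi(x_2)$ depends only on the $E$-classes). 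By $E \rightarrow_B (F)^2_2$ there is a Borel set $A$ with $F \leq_B E \upharpoonright A$ on which $g$ is constant on pairs from distinct $E$-classes.

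Next I would rule out the constant value $1$. If $g \equiv 1$ on $[E \upharpoonright A]^2$, then $\varphi$ restricted to $A$ is a reduction of $E \upharpoonright A$ to $R$ (pairs in distinct $E$-classes land in distinct $R$-classes, and pairs in the same $E$-class land in the same $R$-class since $\varphi$ is a homomorphism). Composing with the reduction witnessing $F \leq_B E \upharpoonright A$ gives a Borel reduction of $F$ to $R$, contradicting $F \not\leq_B R$. Hence $g \equiv 0$ on $[E \upharpoonright A]^2$, which means all of $A$ maps under $\varphi$ into a single $R$-class $[y]_R$ (for any fixed $x_0 \in A$, take $y = \varphi(x_0)$; every $x \in A$ either is $E$-equivalent to $x_0$, hence $\varphi(x) \mathrel{R} \varphi(x_0)$ by homomorphism, or is in a distinct $E$-class, hence $g(x,x_0)=0$ gives $\varphi(x) \mathrel{R} \varphi(x_0)$). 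If $A = \emptyset$ then $F \leq_B E \upharpoonright A$ forces $F$ to be trivial, and in that degenerate case the conclusion holds vacuously; otherwise $A \subseteq \varphi^{-1}[y]_R$, so $F \leq_B E \upharpoonright A \leq_B E \upharpoonright \varphi^{-1}[y]_R$, which is exactly $F \leq_B E / R$. The final sentence then follows immediately: since for every Borel $R$ we have $F \leq_B R$ or $F \leq_B E / R$, $F$ is a Borel cofinality for $E$ by definition.

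I do not expect any serious obstacle here; the argument is a routine unwinding of definitions, with the only point requiring care being the verification that $g$ is genuinely $E \times E$-invariant (which uses that $\varphi$ is a homomorphism, not merely an arbitrary Borel map) and the small edge case where the homogeneous set $A$ could be empty. If one wants to be fully careful, one should note that the homogeneous set $A$ produced by $E \rightarrow_B (F)^2_2$ may be taken $E$-invariant by Lemma~\ref{lem:reflection}, though this is not actually needed for the conclusion.
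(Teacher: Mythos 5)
Your proof is correct, and since the paper states this lemma without proof, the comparison is simply with the natural argument one would expect the authors to have in mind — which is exactly what you give. Your construction of the coloring $g(x_1,x_2) = 0$ iff $\varphi(x_1) \mathrel{R} \varphi(x_2)$ mirrors the paper's own proof that Borel weak compactness implies uniform primeness (where the coloring is $f(x,y)=0$ iff $x \FF y$), and the case split on the homogeneous color is handled properly: the color-1 case yields $\varphi \upharpoonright A$ as a reduction of $E \upharpoonright A$ to $R$, contradicting $F \not\leq_B R$, while the color-0 case pushes $A$ into a single $R$-class. Your remarks about $E \times E$-invariance requiring $\varphi$ to be a homomorphism, and about the degenerate empty-$A$ case, are exactly the right cautionary footnotes. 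No gaps.
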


\begin{cor}
If $E \rightarrow_B (F)^2_2$ then whenever $E \leq_B R \times S$ we have either $F \leq_B R$ or $F \leq_B S$.
\end{cor}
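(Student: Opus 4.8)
The plan is to derive this directly from the preceding lemma together with one elementary observation about restricting a product reduction to the preimage of a single class. Suppose $E \rightarrow_B (F)^2_2$ and $E \leq_B R \times S$, and assume toward a contradiction that $F \not\leq_B R$ and $F \not\leq_B S$.

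First I would fix a Borel reduction $\rho$ witnessing $E \leq_B R \times S$ and write $\rho(x) = (\rho_0(x), \rho_1(x))$, noting that $\rho_0$ is then a Borel homomorphism from $E$ to $R$ and $\rho_1$ a Borel homomorphism from $E$ to $S$. Since $F \not\leq_B R$, the preceding lemma gives $F \leq_B E / R$; applying this to the homomorphism $\rho_0$ produces some $y$ with $F \leq_B E \upharpoonright A$, where $A = \rho_0^{-1}[y]_R$. Here $A$ is Borel, since $R$-classes are Borel and $\rho_0$ is Borel, and $A$ is $E$-invariant, since $\rho_0$ is a homomorphism from $E$ to $R$; so $E \upharpoonright A$ makes sense.

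The key observation is that $\rho_1 \upharpoonright A$ is already a reduction of $E \upharpoonright A$ to $S$: for $x_1, x_2 \in A$ we have $\rho_0(x_1) \mathrel{R} y \mathrel{R} \rho_0(x_2)$ automatically, so $x_1 \EE x_2$ iff $\rho_1(x_1) \mathrel{S} \rho_1(x_2)$. Composing a Borel reduction witnessing $F \leq_B E \upharpoonright A$ with $\rho_1 \upharpoonright A$ then yields $F \leq_B S$, contradicting the assumption; hence $F \leq_B R$ or $F \leq_B S$, as required.

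There is essentially no obstacle here: all the content sits in the preceding lemma (hence ultimately in the hypothesis $E \rightarrow_B (F)^2_2$), and the only thing to be careful about is verifying that $A$ is a genuine Borel $E$-invariant set so that $E \upharpoonright A$ is defined and the reduction $F \leq_B E \upharpoonright A$ may legitimately be composed with $\rho_1 \upharpoonright A$. One could phrase the argument symmetrically, extracting both $F \leq_B E / R$ and $F \leq_B E / S$, but a single application of the lemma suffices.
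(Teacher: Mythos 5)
Your proof is correct and fills in the omitted argument in exactly the expected way: the paper states this as an immediate corollary of the preceding lemma, and your derivation (apply $F \leq_B E/R$ to the first coordinate homomorphism $\rho_0$, observe that $\rho_1$ already reduces $E$ restricted to $\rho_0^{-1}[y]_R$ to $S$) is the natural route. The observations about $A$ being Borel and $E$-invariant are the right points to verify, and all check out.
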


Note that the infinite Ramsey Theorem shows that $\Delta(\omega)$ is Borel weakly compact, and the following theorem of Galvin (see \cite{Kechris} Theorem 19.7) shows the same for $\Delta(\bbR)$.

\begin{thm}[Galvin]
Let $X$ be a non-empty perfect Polish space and $[X]^2=P_0 \cup \cdots \cup P_{k-1}$ a partition where each $P_i$ has the Baire property. Then there is a Cantor set $C$ with $[C]^2 \subseteq P_i$ for some $i$.
\end{thm}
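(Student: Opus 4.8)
The plan is a fusion argument producing a Cantor scheme in which the colour of a pair depends only on the node where the two branches split, followed by a tree--Ramsey thinning that reduces to a single colour. First I would record the category data: using the Baire property, write each $P_i = U_i \triangle M_i$ with $U_i$ open and $M_i$ meager, and fix $M = \bigcup_{i<k} M_i = \bigcup_n F_n$ with each $F_n$ closed nowhere dense in $X^2$. The combinatorial core is a \emph{Key Lemma}: for any two disjoint non-empty open $V, W \subseteq X$ there are non-empty open $V' \subseteq V$, $W' \subseteq W$ and a colour $i<k$ with $V' \times W' \subseteq U_i$. This holds because $V \times W$ is non-meager and (missing the diagonal) is covered by the finitely many $P_i$, so some $P_i$ is comeager on a sub-box $V'' \times W''$; then $U_i$ is open and comeager, hence dense open, in $V'' \times W''$, and a dense open set --- having closed nowhere dense complement --- contains a box $V' \times W'$.

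Next I would build, by recursion, a Cantor scheme $(V_t)_{t\in 2^{<\omega}}$ of non-empty open subsets of $X$ (with $\overline{V_{t0}}, \overline{V_{t1}}$ disjoint and contained in $V_t$, and diameters tending to $0$ along branches) together with colours $c(t) < k$ such that $V_{t0} \times V_{t1} \subseteq U_{c(t)}$; and, \emph{simultaneously}, I would shrink the open sets at each stage so that $V_{t_1} \times V_{t_2}$ is disjoint from $F_n$ whenever $t_1 \perp t_2$ and $n$ is at most the current stage. Each demand is met by passing to a sub-box --- the colour-box demands by the Key Lemma, the $F_n$-avoidance by nowhere density --- and shrinking the $V_t$'s does not disturb demands already met, so the recursion goes through. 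Let $C$ be the resulting Cantor set: for $x \in 2^\omega$ let $\xi_x$ be the point with $\{\xi_x\} = \bigcap_n \overline{V_{x \upharpoonright n}}$, and put $C = \{\xi_x : x \in 2^\omega\}$. If $\xi \neq \eta$ lie in $C$ with branches diverging at the node $t$ (say $\xi$ runs through $V_{t0}$ and $\eta$ through $V_{t1}$), then $(\xi,\eta) \in V_{t0} \times V_{t1} \subseteq U_{c(t)}$ while $(\xi,\eta) \notin M$; since $U_i \setminus M_i \subseteq P_i$, we get $\{\xi,\eta\} \in P_{c(t)}$. So on $C$ the colour of a pair depends only on its splitting node.

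Finally I would run a tree--Ramsey argument to thin $C$ to a single colour. Viewing $c$ as a $k$-colouring of $2^{<\omega}$, a $k$-fold pigeonhole gives $w^* \in 2^{<\omega}$ and $i^* < k$ with $c^{-1}(i^*)$ dense above $w^*$ (otherwise build $u_0 \sqsubseteq \cdots \sqsubseteq u_k$ with $u_{j+1}$ and all its extensions avoiding colour $j$, contradicting $c(u_k) < k$). Then recursively pick $v_u \in 2^{<\omega}$ for $u \in 2^{<\omega}$ with $v_\emptyset \sqsupseteq w^*$, $v_{u \smallfrown i} \sqsupsetneq v_u \smallfrown \langle i\rangle$, and $c(v_u) = i^*$; the Cantor set $K = \{\bigcup_n v_{x\upharpoonright n} : x \in 2^\omega\} \subseteq 2^\omega$ has a tree whose branching nodes are exactly the $v_u$, so $C' = \{\xi_x : x \in K\}$ is a Cantor subset of $C$ all of whose pairs split at some $v_u$; hence $[C']^2 \subseteq P_{i^*}$, which finishes the proof.

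The main obstacle I expect is bridging the Baire-category input --- which only controls pairs generically on a box --- and the required honest inclusion $[C]^2 \subseteq \bigcup_i P_i$: this is what forces one to strengthen ``comeager'' to ``contains a box inside $U_i$'' in the Key Lemma and to run the fusion so that $C$ simultaneously steers clear of every meager error set $M_i$, interleaving the two kinds of demand. A secondary point is that the fusion does not by itself produce a single colour --- only a colour depending on the splitting node --- necessitating the concluding tree--Ramsey step; that step, and checking that the branching nodes of the thinned tree are precisely the chosen $v_u$'s, are routine.
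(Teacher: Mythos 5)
Your proof is correct. The paper itself supplies no argument for this theorem --- it simply cites Kechris, \emph{Classical Descriptive Set Theory}, Theorem 19.7 --- so there is no ``paper proof'' to compare against; what follows is an assessment of your argument on its own terms.

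Your Key Lemma is right: since $P_i$ is nonmeager somewhere in $V\times W$ and has the Baire property, it is comeager on some open sub-box $V''\times W''$, hence the open set $U_i$ is dense in $V''\times W''$, and any nonempty open subset of a product contains a nonempty open box. (The parenthetical ``having closed nowhere dense complement'' is superfluous --- you just need that nonempty open sets in a product contain basic open boxes --- but it does no harm.) The fusion is sound: at stage $n$ you satisfy the color-box demand for each level-$(n-1)$ parent via the Key Lemma and then shrink the finitely many level-$n$ boxes to avoid $F_0,\dots,F_n$ on all incomparable pairs; shrinking preserves all earlier demands, and for branches $x\neq y$ splitting at level $j$, the inclusion $(\xi_x,\xi_y)\in V_{x\upharpoonright n}\times V_{y\upharpoonright n}$ for every $n>j$ forces $(\xi_x,\xi_y)\notin F_m$ for every $m$. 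This gives the ``color depends only on the splitting node'' Cantor set. The tree-Ramsey thinning is also correct: your pigeonhole dichotomy (either some $c^{-1}(i^*)$ is dense above some $w^*$, or one builds $u_0\sqsubseteq\cdots\sqsubseteq u_k$ killing every color) is valid, and the verification that the branching nodes of the thinned tree $T_K$ are precisely the chosen $v_u$'s goes through, because the unique splitting position of two branches of $K$ must equal $|v_u|$ for the $u$ at which the corresponding indices split. So $[C']^2 \subseteq P_{i^*}$ as claimed.

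One stylistic remark: Kechris's Theorem 19.7 proves a somewhat stronger statement (including that the set of such $K$ is comeager in $K(2^\omega)$) and proceeds a bit differently, arranging the color to depend only on the \emph{level} rather than the node, which replaces your tree-Ramsey step by ordinary pigeonhole over $\omega$ followed by a diagonal thinning. Your node-based version is equally valid and needs no extra care in the fusion to homogenize colors across siblings at each level, at the cost of the slightly more delicate thinning argument on $2^{<\omega}$. Both are standard; your proof is complete and correct.
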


\begin{cor}
$\Delta(\bbR)$ is Borel weakly compact.
\end{cor}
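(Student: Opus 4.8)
The plan is to unpack the definition of Borel weak compactness for $E = \Delta(\bbR)$ and recognize it as precisely an instance of Galvin's theorem. Fix a Polish space $X$ realizing $\bbR$ as a standard Borel space — say $X = 2^{\omega}$, which is a non-empty perfect Polish space, and note $\Delta(2^{\omega}) \sim_B \Delta(\bbR)$, so by Lemma~\ref{lem:basic}(6) it suffices to work with $\Delta(2^{\omega})$. A symmetric $\Delta(2^{\omega}) \times \Delta(2^{\omega})$-invariant Borel function $f : (2^{\omega})^2 \to 2$ is nothing more than a symmetric Borel function on pairs; restricting to $(2^{\omega})^2 \setminus \Delta(2^{\omega})$, i.e. to $[2^{\omega}]^2$, it is exactly a Borel partition $[2^{\omega}]^2 = P_0 \cup P_1$ into two pieces. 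Since Borel sets have the Baire property, Galvin's theorem applies and yields a Cantor set $C \subseteq 2^{\omega}$ with $[C]^2 \subseteq P_i$ for some $i \in 2$; that is, $f$ is constant with value $i$ on $[C]^2$.

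It then remains to check that such a $C$ is an admissible homogeneous set for the partition property, i.e. that it is Borel (indeed closed, being a Cantor set) and that $\Delta(\bbR) = \Delta(2^{\omega}) \leq_B \Delta(2^{\omega}) \upharpoonright C$. The latter holds because any Cantor set $C$ contains a homeomorphic copy of $2^{\omega}$, so the inclusion of $2^{\omega}$ into $C$ is a (continuous) reduction of $\Delta(2^{\omega})$ to $\Delta(2^{\omega}) \upharpoonright C$; equivalently, $C$ being an uncountable standard Borel space, $\Delta(C) \sim_B \Delta(\bbR)$. Since $f$ is constant on $C^2 \setminus \Delta(2^{\omega})$, this $C$ witnesses the defining condition of Borel weak compactness.

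There is essentially no obstacle here: the content is entirely Galvin's theorem, and the only thing to verify is the bookkeeping that the two formulations match — namely that ``$E \times E$-invariant symmetric Borel $f : X^2 \to 2$'' corresponds to a Baire-property partition of $[X]^2$ when $E = \Delta(X)$, and that a Cantor set has the full effective cardinality of $\bbR$. One minor point worth stating explicitly is that the definition of Borel weak compactness only constrains $f$ on $X^2 \setminus E$, so the values of $f$ on the diagonal are irrelevant and impose no extra requirement. This completes the proof.
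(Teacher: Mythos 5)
Your proof is correct and matches the intent of the paper, which presents this as an immediate corollary of Galvin's theorem with no explicit argument: for $E = \Delta(2^{\omega})$, $E\times E$-invariance is vacuous, so a symmetric Borel $f$ is just a Baire-measurable two-coloring of $[2^{\omega}]^2$, Galvin supplies a Cantor homogeneous set $C$, and $\Delta(2^{\omega}) \leq_B \Delta(2^{\omega}) \upharpoonright C$ since $C$ is an uncountable Borel (indeed closed) set.
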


This extends the following result (see, e.g., Theorem 19.1 of \cite{Kechris}), where $K(X)$ is the hyperspace of compact subsets of $X$:

\begin{thm}[Mycielski, Kuratowski]
\label{mycielski}
Let $X$ be a metrizable space, and let $n_i \in \omega \setminus \{0\}$ and $R_i \subseteq X^{n_i}$ be comeager for $i \in \bbN$. Then the set $\{ K \in K(X) : \forall i \ [K]^{n_i} \subseteq R_i \}$ is comeager in $K(X)$, where $[K]^n = \{(x_1.\ldots, x_n) \in K^n: \text{$x_i \neq x_j$ for $i \neq j$} \}$.
\end{thm}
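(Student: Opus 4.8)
First I would make the standard reductions. Since a countable intersection of comeager sets is comeager, it suffices to fix a single $n \geq 1$ and a single comeager $R \subseteq X^n$ and prove that $\mathcal{D}_R := \{K \in K(X) : [K]^n \subseteq R\}$ is comeager in $K(X)$; the theorem then follows by intersecting over $i \in \bbN$. Write $R \supseteq \bigcap_m G_m$ with each $G_m$ open and dense in $X^n$, and (replacing $G_m$ by $\bigcap_{i \leq m} G_i$) decreasing. It is then enough to exhibit countably many dense open subsets of $K(X)$ whose intersection is contained in $\mathcal{D}_R$. I would also reduce to the case that $X$ is completely metrizable: pass to the completion $\hat X$ and choose open dense $\hat G_m \subseteq \hat X^n$ with $\hat G_m \cap X^n \subseteq G_m$; since the finite subsets of $X$ are dense in $K(\hat X)$, the subspace $K(X)$ is dense in $K(\hat X)$, so dense open subsets of $K(\hat X)$ restrict to dense open subsets of $K(X)$ and comeagerness descends.

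The heart of the matter is to replace the non-closed condition ``pairwise distinct'' by the closed condition ``pairwise $\tfrac1p$-separated''. For $m,p \in \omega$ put
\[
\mathcal{D}_{m,p} = \big\{ K \in K(X) : (x_1,\dots,x_n) \in G_m \text{ whenever } x_1,\dots,x_n \in K \text{ and } d(x_i,x_j) \geq \tfrac1p \text{ for all } i \neq j \big\}.
\]
I claim $\bigcap_{m,p}\mathcal{D}_{m,p} \subseteq \mathcal{D}_R$: if $K$ lies in every $\mathcal{D}_{m,p}$ and $(x_1,\dots,x_n)\in[K]^n$, then $\epsilon := \min_{i\neq j} d(x_i,x_j) > 0$, so choosing $p$ with $\tfrac1p < \epsilon$ gives $(x_1,\dots,x_n) \in G_m$ for every $m$, hence $(x_1,\dots,x_n)\in R$. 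It remains to check that each $\mathcal{D}_{m,p}$ is dense and open. For openness I would show the complement is closed: if $K_j \to K$ in the Hausdorff metric with each $K_j \notin \mathcal{D}_{m,p}$, witnessed by $\tfrac1p$-separated tuples $\bar x^{\,j} \in K_j^n$ lying in the closed set $X^n \setminus G_m$, then $\overline{K \cup \bigcup_j K_j}$ is compact (this is where completeness of $X$ is used), so a subsequence of $(\bar x^{\,j})_j$ converges to some $\bar x$; its coordinates lie in $K$ (limits of points of $K_j$ with $K_j \to K$), remain pairwise $\tfrac1p$-separated, and $\bar x \notin G_m$, so $K \notin \mathcal{D}_{m,p}$. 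For density, given a nonempty basic Vietoris-open set I would refine it to one of the form $\langle V_1,\dots,V_k\rangle$ with the $V_i$ pairwise disjoint nonempty open sets of diameter $< \tfrac1p$; for each injection $\sigma : \{1,\dots,n\} \to \{1,\dots,k\}$ the set of $(y_1,\dots,y_k) \in \prod_i V_i$ with $(y_{\sigma(1)},\dots,y_{\sigma(n)}) \in G_m$ is open and dense in $\prod_i V_i$, so there is a point $(x_1,\dots,x_k)$ in all finitely many of them; then $K := \{x_1,\dots,x_k\}$ lies in $\langle V_1,\dots,V_k\rangle$, and any $\tfrac1p$-separated $n$-tuple from $K$ has its coordinates in $n$ distinct $V_i$'s and hence lies in $G_m$, so $K \in \mathcal{D}_{m,p}$ (and if $k < n$ then $[K]^n = \emptyset$ and there is nothing to check).

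The step I expect to be the main obstacle is the openness of $\mathcal{D}_{m,p}$, and specifically the extraction of a convergent subsequence of the witnessing tuples: this relies on the relative compactness of $K \cup \bigcup_j K_j$, which in turn forces the preliminary reduction to a complete metric on $X$, with the accompanying check that $K(X)$ sits densely in $K(\hat X)$ so that comeagerness descends. Everything else --- the reductions to a single $G_m$, the density of $\mathcal{D}_{m,p}$, and the inclusion $\bigcap_{m,p}\mathcal{D}_{m,p}\subseteq\mathcal{D}_R$ --- is routine once the $\tfrac1p$-separation device is in hand.
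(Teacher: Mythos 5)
The paper does not actually prove this theorem; it cites it as Theorem 19.1 of Kechris's \emph{Classical Descriptive Set Theory}, so there is no ``paper's own proof'' to compare against. Your argument is the standard one found there, built around exactly the right device: replacing the open (non-closed) condition ``pairwise distinct'' by the closed, uniform condition ``pairwise $\tfrac1p$-separated'' so that the sets $\mathcal{D}_{m,p}$ become open, and then checking openness via a compactness/convergent-subsequence argument and density by placing a finite set with one point in each of finitely many pairwise disjoint small $V_i$. The inclusion $\bigcap_{m,p}\mathcal{D}_{m,p}\subseteq\mathcal{D}_R$, the reduction to a single comeager $R=\bigcap_m G_m$, and the passage to a basis of Vietoris-open sets with disjoint small $V_i$ are all handled correctly.

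One small remark: the preliminary reduction to a completely metrizable $X$, while correct (your check that $K(X)$ is dense in $K(\hat X)$ and that dense open sets descend is fine), is not actually needed. The fact you use --- that $K\cup\bigcup_j K_j$ is compact when $K_j\to K$ in the Hausdorff metric --- holds in any metric space $X$: the family $\{K\}\cup\{K_j\}_j$ is a convergent sequence together with its limit and hence compact in $K(X)$, and the union of a compact subfamily of $K(X)$ is always compact (given a sequence $x_i\in L_i$ with $L_i\in\mathcal{K}$, pass to a subsequence with $L_i\to L\in\mathcal{K}$, pick $y_i\in L$ with $d(x_i,y_i)\to 0$, and pass to a further subsequence with $y_i\to y\in L$, so $x_i\to y$). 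Thus the set $K\cup\bigcup_j K_j$ is already compact, not merely relatively compact in a completion, and the detour through $\hat X$ can be dropped. This does not affect correctness --- your proof is complete as written --- but it trims the argument and removes the only step whose necessity you flagged as uncertain.
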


Note also that $K_p(X) = \{ K \in K(X) : \text{$K$ is perfect} \}$ is a dense $G_{\delta}$ in $K(X)$, so for a Polish space $X$ and comeager sets $R_i$ as in the statement of the theorem there is a perfect set $K \subseteq X$ with $[K]^{n_i} \subseteq R_i$ for each $i$.

Conley has shown in Theorem 2.4 of \cite{conley} that Borel weak compactness also holds for $\bbE_0$.
\begin{thm}[Conley]
Suppose that $c : (2^{\omega})^2 \rightarrow 2$ is a symmetric, Baire measurable function. Then there exists a nonsmooth compact set $K$ such that $c$ is constant on $K^2 \setminus \bbE_0$.
\end{thm}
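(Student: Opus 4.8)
The plan is as follows. Since $c$ is Baire measurable, fix a dense $G_\delta$ set $D\subseteq(2^\omega)^2$ on which $c$ is continuous; as $\bbE_0$ is meager in $(2^\omega)^2$, the set $D\setminus\bbE_0$ is still comeager. Hence it suffices to find a compact $K$ with $\bbE_0\upharpoonright K$ nonsmooth and $K^2\setminus\bbE_0\subseteq\{c=\varepsilon\}$ for some fixed $\varepsilon\in 2$; we shall in fact arrange $K^2\setminus\bbE_0\subseteq D\cap\{c=\varepsilon\}$, so as to exploit the continuity of $c$. Now one of $\{c=0\},\{c=1\}$ is nonmeager, say $\{c=\varepsilon\}$; being Baire measurable it is comeager in some basic clopen box, which after padding we may take to be $N_{p_0}\times N_{p_1}$ with $|p_0|=|p_1|$ (here $N_s=\{x\in 2^\omega: s\sqsubseteq x\}$), and then by the symmetry of $c$ it is also comeager in $N_{p_1}\times N_{p_0}$. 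If $p_0\ne p_1$, precomposing $c$ with the continuous $\bbE_0$-embedding $i\smallfrown z\mapsto p_i\smallfrown z$ of $2^\omega$ onto $N_{p_0}\cup N_{p_1}$ --- which amounts to replacing the eventual $K$ by its homeomorphic, $\bbE_0$-isomorphic image --- we reduce to the case that $D\cap\{c=\varepsilon\}$ is comeager in $N_{\langle 0\rangle}\times N_{\langle 1\rangle}$ (and in $N_{\langle 1\rangle}\times N_{\langle 0\rangle}$); the remaining case, $p_0=p_1$, in which $D\cap\{c=\varepsilon\}$ is comeager in a box $N_p\times N_p$, is only easier.

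I would then look for $K$ as a \emph{uniform block Cantor set}: a root $t\in 2^{n_0}$, an increasing sequence $n_0<n_1<\cdots$, and strings $s_k^0\ne s_k^1\in 2^{[n_k,n_{k+1})}$ for each $k$, with $K=\{x_\sigma:\sigma\in 2^\omega\}$ and $x_\sigma=t\smallfrown s_0^{\sigma(0)}\smallfrown s_1^{\sigma(1)}\smallfrown\cdots$. Then $\sigma\mapsto x_\sigma$ is a homeomorphism of $2^\omega$ onto the compact set $K$ and a reduction of $\bbE_0$ to itself, so $\bbE_0\upharpoonright K\cong\bbE_0$ is nonsmooth. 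Moreover, if $x_\sigma,x_\tau$ are $\bbE_0$-inequivalent then $\sigma,\tau$ first disagree at some block $k$, and then $(x_\sigma,x_\tau)$ lies in the cross box $N_{w\smallfrown s_k^0}\times N_{w\smallfrown s_k^1}$, where $w=x_\sigma\upharpoonright n_k$ is their common level-$k$ node. So it is enough to choose the block data so that, for every $k$ and every level-$k$ node $w$, every point of $K^2$ lying in $N_{w\smallfrown s_k^0}\times N_{w\smallfrown s_k^1}$ but off $\bbE_0$ belongs to $D\cap\{c=\varepsilon\}$. I would try to arrange this by a fusion in the spirit of Galvin's theorem and Theorem~\ref{mycielski} above: at stage $k$, with the $2^k$ level-$k$ nodes $w_\mu$ in hand, choose $s_k^0,s_k^1$ and $n_{k+1}$ so that $D\cap\{c=\varepsilon\}$ is comeager in each new cross box $N_{w_\mu\smallfrown s_k^0}\times N_{w_\mu\smallfrown s_k^1}$, while simultaneously driving the pairs of $K$ belonging to the cross boxes of earlier levels, one dense open set at a time, into their targets; with the mesh of the $w_\mu$ tending to $0$ this yields $K^2\setminus\bbE_0\subseteq D\cap\{c=\varepsilon\}$.

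The step I expect to be the real difficulty is making this fusion succeed. In the classical Galvin--Mycielski situation one is free to keep shrinking the nodes of the Cantor scheme toward the comeager target, but here the uniform block structure \emph{freezes} the level-$(k+1)$ nodes $w_\mu\smallfrown s_k^i$ as soon as $s_k^0,s_k^1$ have been chosen, so their cross boxes can never be moved again. One must therefore carry along, as an invariant, a ``liveness'' condition --- that below every current node, $D\cap\{c=\varepsilon\}$ is still comeager in an appropriate cross box --- and arrange that a \emph{single} relative choice of the block $(s_k^0,s_k^1)$ is good at all $2^k$ current nodes at once and keeps their children live. This is exactly the point at which the group structure of $\bbE_0$ enters: any two level-$k$ nodes differ only by a finitely supported flip, so when $c$ is $\bbE_0\times\bbE_0$-invariant --- the case relevant to Borel weak compactness --- comeagerness of $\{c=\varepsilon\}$ in $N_{w_\mu\smallfrown s}\times N_{w_\mu\smallfrown s'}$ is independent of $\mu$, and the liveness invariant then propagates using only the normalization of the first paragraph and the symmetry of $c$. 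For the fully general statement, in which $c$ need not be invariant, one needs in addition a uniformization step locating block data good at all the current nodes at once; this can be supplied by recasting the whole construction as a fusion on the partial order of block Cantor sets, in the manner of the $\bbE_0$-forcing fusion arguments of canonical Ramsey theory.

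Granting the construction, $K$ is compact with $\bbE_0\upharpoonright K\cong\bbE_0$ nonsmooth, and $c$ is constant (equal to $\varepsilon$) on $K^2\setminus\bbE_0$, which is what was wanted.
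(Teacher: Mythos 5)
The paper does not itself prove this theorem; it cites Conley \cite{conley}, so there is no in-paper proof to compare against. Judged on its own terms, your plan is structurally reasonable (normalize to a cross box, build a uniform block Cantor set by fusion), and your observation that every pair in $K^2 \setminus \bbE_0$ lies in some ``cross box'' $N_{w\smallfrown s_k^0} \times N_{w\smallfrown s_k^1}$ at the first disagreement block is exactly the right reduction. You also correctly locate the difficulty. But you do not resolve it, and the gap is real, not cosmetic.

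Concretely: the invariant you can actually propagate is comeagerness of $D \cap \{c = \varepsilon\}$ in the \emph{off-diagonal} boxes $N_{w_\mu} \times N_{w_{\mu'}}$ with $\mu \neq \mu'$ (these are refinements of cross boxes already introduced). What you need at stage $k$, in addition, is that for \emph{each} of the $2^k$ nodes $w_\mu$ the same pair $(s_k^0, s_k^1)$ makes $D \cap \{c=\varepsilon\}$ comeager in the \emph{new} cross box $N_{w_\mu\smallfrown s_k^0} \times N_{w_\mu\smallfrown s_k^1}$, which sits inside the diagonal box $N_{w_\mu} \times N_{w_\mu}$. Nothing in your invariant controls what $c$ does in these diagonal boxes; $\{c = \varepsilon\}$ could perfectly well be meager in every $N_{w_\mu\smallfrown s}\times N_{w_\mu\smallfrown s'}$ and comeager only with the opposite color. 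You acknowledge that in the $\bbE_0\times\bbE_0$-invariant case the $0$--$1$ law makes $\{c=\varepsilon\}$ comeager everywhere and this issue vanishes, but the theorem as stated assumes only symmetry, and the corresponding ``liveness'' invariant and uniformization step are never specified; the appeal to ``recasting the whole construction as a fusion on the partial order of block Cantor sets'' is where the proof actually has to happen, and it is left as a gesture. To make this work one needs to choose the correct constant $\varepsilon$ and root $t$ up front by a pigeonhole/fusion argument along $\bbE_0$-trees (not merely comeagerness in one off-diagonal box), and carry a genuinely usable inductive hypothesis about each node's sub-boxes; as written the proposal does not establish the existence of the required $(s_k^0,s_k^1)$ at each stage.
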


\begin{cor}
$\bbE_0$ is Borel weakly compact.
\end{cor}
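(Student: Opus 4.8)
The plan is to read this off directly from Conley's theorem, upgrading ``nonsmooth'' to a Borel reduction via the Generalized Glimm--Effros Dichotomy. So let $f : (2^{\omega})^2 \to 2$ be a symmetric, $\bbE_0 \times \bbE_0$-invariant Borel function. Since every Borel function is Baire measurable, Conley's theorem applies and produces a nonsmooth compact set $K \subseteq 2^{\omega}$ such that $f$ is constant on $K^2 \setminus \bbE_0$.

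It then remains only to check that $A = K$ witnesses Borel weak compactness. Being compact, $K$ is closed in $2^{\omega}$, hence Borel, and $\bbE_0 \upharpoonright K$ is a Borel equivalence relation on the Polish space $K$. To say that $K$ is nonsmooth is to say that $\bbE_0 \upharpoonright K$ is not smooth, i.e., not Borel reducible to $\Delta(\bbR)$; so the Generalized Glimm--Effros Dichotomy of Harrington--Kechris--Louveau, applied to $\bbE_0 \upharpoonright K$, yields $\bbE_0 \leq_B \bbE_0 \upharpoonright K$. Thus $\bbE_0 \leq_B \bbE_0 \upharpoonright A$ while $f$ is constant on $A^2 \setminus \bbE_0$, which is exactly what is required. (If one prefers the witnessing set to be $\bbE_0$-invariant, one may instead take $A$ to be the $\bbE_0$-saturation of $K$ and use the invariance of $f$ to keep $f$ constant off $\bbE_0$ on $A^2$, but this is not needed.)

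There is essentially no obstacle here: all of the analytic content is packaged in Conley's theorem, and the only further ingredients are the elementary facts that a Borel function is Baire measurable and that a compact set is Borel, together with the standard observation --- immediate from Glimm--Effros --- that any nonsmooth Borel subequivalence relation of $\bbE_0$ already Borel reduces $\bbE_0$. Note in passing that we never use the full $\bbE_0 \times \bbE_0$-invariance of $f$, only its symmetry, which is all that Conley's theorem asks for; the invariance is only relevant if one wants to replace $K$ by its saturation as above.
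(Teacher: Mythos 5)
Your proof is correct and follows the same route the paper intends: the corollary is read off directly from Conley's theorem, with the only extra step being the standard observation (via the Generalized Glimm--Effros Dichotomy) that nonsmoothness of $\bbE_0 \upharpoonright K$ yields $\bbE_0 \leq_B \bbE_0 \upharpoonright K$, which you spell out explicitly. Your reading of the homogeneity condition as ``$f$ constant on $A^2 \setminus \bbE_0$'' is the intended one, and your remark that only symmetry of $f$ is needed for Conley's theorem is accurate.
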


We do not know if there are any other Borel weakly compact equivalence relations beyond these. We will see below that a natural candidate, $\bbF_2$, fails to be weakly compact, as does $\bbE_1$.

\begin{question}
Are there any Borel weakly compact equivalence relations above $\bbE_0$?
\end{question}

\begin{question} 
Given a benchmark equivalence relation $E$, for which $F$ do we have $E \rightarrow_B (F)^2_2$? What are the Borel cofinalities of $E$?
\end{question}

One can similarly define ``square bracket'' partition properties for equivalence relations. Other large cardinal partition relations on equivalence relations, such as J\'{o}nsson, Rowbotton, and Ramsey, have been considered in the context of AD by Jackson and Holshouser and others (see \cite{holshouser}). We may hope to find Borel analogues of other cardinal properties, such as tree properties and other partition relations. It is not clear, for instance, what a limit or strong limit should be in this context. Here the Friedman--Stanley jump operator might be used to give one interpretation of a limit. Similarly, we can ask if there are other equivalent properties to the above.

\section{Primeness and partition properties for ${\mathbb F}_2$}
\label{sec:f2prime}

We begin this section by developing some machinery for analyzing $\bbF_2$ due to Kanovei--Sabok--Zapletal, and using it to give a proof of their result that $\bbF_2$ is prime. These results, through Corollary~\ref{end-of-KSZ}, are derived from Section 6.1.3 of \cite{ksz}, where they are proved using Cohen forcing. We present them again here in a topological formulation which will be used in the second part of this section to analyze partition properties for $\bbF_2$ and to show that $\bbF_2$ is not Borel weakly compact. 
Throughout this section we will let $X= \left( 2^{\omega}\right)^{\omega}$. 
We let $S_{\infty}$ act on $X$ coordinatewise, i.e., $\pi \cdot x (k) = x(\pi^{-1}(k))$, and similarly for $(\pi_1,\ldots,\pi_n)$ acting on $X^n$. We assume all sets mentioned in definitions have the property of Baire.

Recall that the equivalence relation $\bbF_2$ is defined on $(2^{\omega})^{\omega}$ by setting $\bar{x} \mathrel{\bbF_2} \bar{y}$ iff $\{x_k : k \in \omega\} = \{y_k : k \in \omega\}$. It is straightforward to see that this is Borel bi-reducible with the orbit equivalence relation induced by $S_{\infty}$ acting coordinate-wise on $X$, and we will use both representation according to which is more convenient. We use  $2^{\omega}$ and $\bbR$ interchangably. We will also use set-theoretic notions like $\subseteq$, $\cup$, etc. even when referring to representatives of equivalence classes, i.e., countable sequences. For instance, $x \cup y$ can naturally be identified with $x \oplus y$ (defined below) and $x \subseteq y$ with $\{ x_n : n \in \omega\} \subseteq \{y_n : n \in \omega\}$. All of these operations and relations will be Borel on $\bbR^{\omega}$. We remark that $x \subsetneqq y$ will denote that $x$ is a proper subset of $y$.

\begin{dfn}
We say that $A \subseteq X^n$ is \emph{(finitely) invariant} if for any $\pi_1,\ldots, \pi_n \in S_{\infty}$ (with finite support) we have $(\pi_1,\ldots, \pi_n) \cdot A \subseteq A$.
Note that $A \subseteq X$ is invariant just in case it is $\bbF_2$-invariant.
\end{dfn}

\begin{lem} If $A \subseteq X^n$ is finitely invariant and non-meager then $A$ is comeager.
\end{lem}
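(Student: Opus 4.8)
The plan is to prove a topological zero--one law by a translation argument, exploiting the transitivity of the finite-support permutation action on coordinates. Since $A$ has the Baire property and is non-meager, it is comeager in some non-empty basic open set $U \subseteq X^n$ (write $A = O \triangle M$ with $O$ open and $M$ meager; as $A$ is non-meager, $O \ne \emptyset$, and $A$ is comeager in any basic open $U \subseteq O$). Suppose toward a contradiction that $A$ is not comeager. Then $A^c$ is non-meager and also has the Baire property, so by the same reasoning $A^c$ is comeager in some non-empty basic open set $V \subseteq X^n$. Recall that a basic open subset of $X = (2^\omega)^\omega$ is determined by specifying, for coordinates $k$ in a finite set, a finite initial segment of $x_k$; thus $U$ imposes, in the $j$-th factor of $X^n$, constraints on coordinates lying in some finite set $F_j \subseteq \omega$, and likewise $V$ imposes constraints on coordinates in finite sets $G_j \subseteq \omega$.

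Next I would move $U$ by a finite-support permutation so that it becomes compatible with $V$. For each $j \le n$, since $F_j$ is finite and $\omega \setminus (F_j \cup G_j)$ is infinite, choose distinct elements $b_1,\dots,b_m \in \omega \setminus (F_j \cup G_j)$ indexed by an enumeration $F_j = \{a_1,\dots,a_m\}$, and let $\pi_j$ be the product of the transpositions $(a_i\ b_i)$; these have pairwise disjoint supports (all $a_i$'s and $b_i$'s are distinct), so $\pi_j$ is a well-defined finite-support permutation with $\pi_j[F_j] = \{b_1,\dots,b_m\} \subseteq \omega \setminus G_j$. Put $\sigma = (\pi_1,\dots,\pi_n)$ and $U' = \sigma \cdot U$. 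Then $U'$ is a basic open set whose constrained coordinates in the $j$-th factor form the set $\pi_j[F_j]$, which is disjoint from $G_j$; hence the constraints defining $U'$ and those defining $V$ are on disjoint coordinates and the open set $W := U' \cap V$ is non-empty.

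Finally, I would derive the contradiction. Applying finite invariance to $\sigma$ gives $\sigma \cdot A \subseteq A$, and applying it to $\sigma^{-1} = (\pi_1^{-1},\dots,\pi_n^{-1})$ (also finitely supported) gives $A \subseteq \sigma \cdot A$; hence $\sigma \cdot A = A$. Since $\sigma$ acts on $X^n$ by a homeomorphism and $A$ is comeager in $U$, the set $A = \sigma \cdot A$ is comeager in $U' = \sigma \cdot U$, and therefore comeager in the non-empty open set $W \subseteq U'$. On the other hand, $A^c$ is comeager in $V$, hence comeager in $W \subseteq V$. Thus both $A$ and $A^c$ are comeager in the non-empty open set $W$, which is impossible. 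Therefore $A$ is comeager.

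The argument is essentially routine; the only point requiring care is the bookkeeping of basic open sets in the iterated product $(2^\omega)^\omega$ and the verification that a finite-support permutation tuple can always make two given basic open sets overlap, which is the elementary combinatorial core above.
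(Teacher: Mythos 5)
Your proof is correct, and it rests on the same key idea as the paper's: since basic open sets of $X^n$ constrain only finitely many coordinates, a tuple of finite-support permutations can move such a set to be compatible with any other, and finite invariance transports relative comeagerness of $A$ along these translations. The only cosmetic difference is that you conclude by contradicting comeagerness of $A^c$ in a single overlap, while the paper translates the original basic open set countably many times to cover an open dense set; both are sound.
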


\begin{proof}
If $A$ is non-meager, then $A$ is comeager in some basic open set $N_s$. We can find a sequence $\langle \pi_k : k \in \omega \rangle$ of finite support permutations in $S_{\infty}^n$ such that the sets $\pi_k (\text{range}(s))$ are pairwise disjoint. Then $A$ is comeager in $\bigcup_k \pi_k \cdot N_s$, which is open dense in $X^n$, so $A$ is comeager.
\end{proof}

\begin{dfn}
For $x_i \in X$ we define the following representations of unions:
\[ x_0 \oplus x_1 (j) = \begin{cases} x_0(k) \text{ if $j=2k$} \\ x_1(k) \text{ if $j=2k+1$} \end{cases} \]

\[\bigoplus_{i \leq n} x_i  = x_0 \oplus (x_1 \oplus (\cdots \oplus x_n) ) \]

\begin{align*}
\bigoplus_{i \in \omega} x_i (j) &= x_i(k) \text{ if $j+1 = 2^i(2k+1)$, i.e.,}\\
& \bigoplus_{i \in \omega} x_i = x_0 \oplus (x_1 \oplus (x_2 \oplus \cdots))  = \lim_n \bigoplus_{i \leq n} x_i .
\end{align*}
\end{dfn}

We begin by finding elements which are suitably generic with respect to finite unions.
\begin{lem}
\label{lem:invariantMK}
Let $R_i \subseteq X^{n_i}$ be comeager for $i \in \bbN$. Then there is a Cantor set $K \subseteq X$ such that:
\begin{enumerate}
\item For each $i$, $[K]^{n_i} \subseteq R_i$.
\item For each $i$, all $b_1,\ldots,b_{n_i} \in \bbN$, and all $x_{1,0}$, $\ldots$,  $x_{1,b_1}$, $\ldots$, $x_{n_i,0}$, $\ldots$, $x_{n_i, b_{n_i}}$ in $[K]^{n_i+b_1+\cdots+b_{n_i}}$ we have $(\bigoplus\limits_{j \leq b_1} x_{1,j},\ldots, \bigoplus\limits_{j \leq b_{n_i}} x_{n_i,j}) \in R_i$.
\item For each $i$ and $\pi_1,\ldots,\pi_{n_i}$ in $S_{\infty}$ with finite support we have all of the above containments with $(\pi_1,\ldots, \pi_{n_i}) \cdot R_i$ in place of $R_i$.
\item For $x \in K$ and $m \neq n$ we have $x(m) \neq x(n)$.
\item For $x, y \in K$ with $x \neq y$ and any $m$ and $n$ we have $x(m) \neq y(n)$.
\end{enumerate}
\end{lem}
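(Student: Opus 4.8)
The plan is to reduce all of (1)--(5) to a single application of the Mycielski--Kuratowski theorem (Theorem~\ref{mycielski}), by recasting conditions (2)--(5) as statements that certain tuples built from $K$ land in a countable family of comeager sets.

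The key preliminary observation is that each finite union map $\bigoplus_{j\le b}\colon X^{b+1}\to X$ is a homeomorphism: given the output $z$, reading off the coordinates of $z$ along the arithmetic progressions prescribed by the dyadic pattern in the definition recovers $x_0,\dots,x_b$ continuously, so $\bigoplus_{j\le b}$ has a continuous inverse. Consequently, for fixed $i$ and $\bar b=(b_1,\dots,b_{n_i})\in\bbN^{n_i}$, putting $m=n_i+b_1+\cdots+b_{n_i}$, the map
\[
\Phi_{i,\bar b}\colon X^{m}\to X^{n_i},\qquad
(x_{1,0},\dots,x_{1,b_1},\dots,x_{n_i,0},\dots,x_{n_i,b_{n_i}})\longmapsto
\Bigl(\textstyle\bigoplus_{j\le b_1}x_{1,j},\ \dots,\ \bigoplus_{j\le b_{n_i}}x_{n_i,j}\Bigr)
\]
is a homeomorphism, being a product of the maps above after the identification $X^{m}\cong\prod_{k=1}^{n_i}X^{b_k+1}$.

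Next I would assemble a countable family of comeager sets. The group of finitely supported permutations of $\omega$ is countable, so for each $n_i$ there are only countably many tuples $\bar\pi=(\pi_1,\dots,\pi_{n_i})$ of finite-support permutations, and each acts on $X^{n_i}$ by a homeomorphism; hence $(\bar\pi)\cdot R_i$ is comeager, and so is $\Phi_{i,\bar b}^{-1}\bigl[(\bar\pi)\cdot R_i\bigr]\subseteq X^{m}$ for every $i$, $\bar b$, and $\bar\pi$. To the collection of all these sets I would adjoin, for $m\ne n$, the open dense set $D_{m,n}=\{x\in X: x(m)\ne x(n)\}$ (its complement is closed with empty interior, since a basic open set fixes only finitely many bits of $x(m)$ and of $x(n)$), and the comeager set $D=\{(x,y)\in X^{2}: x(m)\ne y(n)\text{ for all }m,n\}$ (an intersection of countably many open dense sets by the same reasoning). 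This produces a countable family $\{S_j\subseteq X^{l_j}\}_{j\in\bbN}$ of comeager sets, and $X=(2^\omega)^\omega$ is a perfect Polish space.

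Finally I would apply Theorem~\ref{mycielski} in its perfect-set form to this family, obtaining a Cantor set $K\subseteq X$ with $[K]^{l_j}\subseteq S_j$ for all $j$. Unwinding the membership statements: taking $S_j=\Phi_{i,\bar b}^{-1}[(\bar\pi)\cdot R_i]$ with $\bar b$ the zero tuple and $\bar\pi$ trivial gives $[K]^{n_i}\subseteq R_i$, which is (1); for general $\bar b$ and trivial $\bar\pi$, the inclusion $[K]^{m}\subseteq\Phi_{i,\bar b}^{-1}[R_i]$ says exactly that $\Phi_{i,\bar b}$ sends any $n_i+b_1+\cdots+b_{n_i}$ pairwise distinct elements of $K$ into $R_i$, which is (2); allowing general $\bar\pi$ (and recalling $\Phi_{i,\bar 0}$ is the identity) gives (3); $[K]^{1}\subseteq D_{m,n}$ gives (4); and $[K]^{2}\subseteq D$ gives (5). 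The Mycielski--Kuratowski theorem does the substantive work here, so the only points requiring care are the verification that $\bigoplus_{j\le b}$—hence $\Phi_{i,\bar b}$—is a homeomorphism (so that preimages of comeager sets are comeager) and the observation that the index set of requirements is countable; I do not anticipate any genuine combinatorial obstacle.
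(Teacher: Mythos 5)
Your proof is correct and takes essentially the same route as the paper: you enlarge the given countable family of comeager sets to include the translates by finite-support permutations, the preimages under the (homeomorphic) $\oplus$-maps, and the two diagonal-avoidance sets for conditions (4)--(5), and then apply Theorem~\ref{mycielski} in its perfect-set form. The only cosmetic difference is that you package the $\oplus$-preimages via the explicit maps $\Phi_{i,\bar b}$, which the paper leaves implicit.
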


\begin{proof}
We extend the collection of $R_i$'s to include all the comeager sets of the form $(\pi_1,\ldots, \pi_{n_i}) \cdot R_i$ for finite support permutations $\pi_1,\ldots,\pi_{n_i}$, and apply Theorem~\ref{mycielski} to the new countable collection of comeager sets $R_i$, together with  

\begin{gather*}
\{ x \in X : \forall n \neq m (x(n) \neq x(m)) \}, \\
\{(x,y) \in X^2 :  \forall m \forall n (x(m) \neq y(n))\}, \text{ and}   \\
 \{ (x_{1,0}, \ldots,  x_{1,b_1}, \ldots, x_{n_i,0}, \ldots, x_{n_i, b_{n_i}}) : (\bigoplus\limits_{j \leq b_1} x_{1,j},\ldots, \bigoplus\limits_{j \leq b_{n_i}} x_{n_i,j}) \in R_i \}, 
 \end{gather*}
noting that these sets are all comeager since permutations and maps of the form $(x_0, \ldots, x_{n}) \mapsto \bigoplus\limits_{i \leq n} x_i$ are homeomorphisms.
\end{proof}
Note that there are comeagerly many such $K$ in $K(X)$, and that condition (3) is automatic in the case that each $R_i$ is finitely invariant.  
The key will be to find such $K$ which satisfy an additional infinite genericity property in the case of invariant comeager sets. This is a topological formulation of Claim 6.29 of \cite{ksz}.

\begin{lem}[\textbf{Genericity Lemma}]
\label{lem:genericity}
Let $\langle R_i \subseteq X^{n_i} \rangle$ be a sequence of comeager sets. Then there is a Cantor set $K \subseteq X$ which satisfies the conclusion of Lemma~\ref{lem:invariantMK} for the sequence, with the following additional property: For each $R \subseteq X^n$ in the sequence, and distinct elements $\langle x_{1,j} : j \in \omega \rangle$, $\ldots$, $\langle x_{n,j} : j \in \omega \rangle$  of $K$, if we let $y_i = \bigoplus\limits_{j \in \omega} x_{i,j}$ for $1 \leq i \leq n$ then there is $\pi \in S_{\infty}$ such that $(\pi \cdot y_1, \ldots, \pi \cdot y_n) \in R$. In particular, when $R$ is invariant we have $(y_1,\ldots,y_n) \in R$.
\end{lem}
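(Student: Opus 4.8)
The idea is to build $K$ by a fusion (Cantor-scheme) construction, just as in the proof of Lemma~\ref{lem:invariantMK}/Theorem~\ref{mycielski}, but to interleave into the construction countably many ``infinite genericity'' requirements, one for each $R$ in the sequence together with each assignment of the slots $1,\ldots,n$ to nodes of the tree being built. The point is that each individual requirement is a \emph{comeager} condition on a finite approximation, so it can be met at some finite stage, and the usual bookkeeping handles all of them.

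\textbf{Step 1: Set up the scheme.} We construct $K$ as the set of branches through a tree of basic open boxes $\{ N_t : t \in 2^{<\omega}\}$ in $X$, with $N_{t^\frown i} \subseteq N_t$, diameters shrinking to $0$, and $\overline{N_{t^\frown 0}} \cap \overline{N_{t^\frown 1}} = \emptyset$, exactly as in the proof of Mycielski--Kuratowski; conditions (1)--(5) of Lemma~\ref{lem:invariantMK} are handled by the standard argument (shrinking the boxes at stage $m$ so that all relevant $n_i$-tuples, all relevant finite unions $\bigoplus_{j \le b} x_{i,j}$, all relevant permuted copies $(\pi_1,\ldots,\pi_{n_i})\cdot R_i$, and all the disjointness-of-coordinates requirements are forced, using that all the sets in play are comeager and that the relevant maps are homeomorphisms).

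\textbf{Step 2: Add the genericity requirements.} Fix a bookkeeping enumeration of all tuples $(R, \sigma)$ where $R \subseteq X^n$ occurs in the sequence and $\sigma = (\sigma_1,\ldots,\sigma_n)$ is an $n$-tuple of \emph{distinct} nodes of $2^{<\omega}$ of the same length; think of $\sigma_i$ as committing the first few coordinates of $x_{i,0}$ (the first summand of $y_i$). At the stage assigned to $(R,\sigma)$, suppose the scheme has been built up to level $\ell$ (with $|\sigma_i| \le \ell$). Extend $\sigma_i$ to a node $t_i$ at level $\ell$ along the tree; the $\ell$-level boxes $N_{t_i}$ are already fixed. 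I now want to shrink the boxes below the $t_i$'s so that, for \emph{every} choice of continuation, the resulting $y_i = \bigoplus_{j\in\omega} x_{i,j}$ satisfy $(\pi\cdot y_1,\ldots,\pi\cdot y_n)\in R$ for a suitable $\pi$. The key observation: the map sending a tuple of branches below the $t_i$'s (and below all other nodes supplying the later summands $x_{i,j}$, $j\ge 1$) to $(y_1,\ldots,y_n)$ is continuous, and the preimage of $R$ under $(y_1,\ldots,y_n)\mapsto$ ``$\exists\pi\, (\pi\cdot y_1,\ldots,\pi\cdot y_n)\in R$'' is comeager (it contains $(S_\infty)^n \cdot$ (preimage of $R$), which is comeager since $R$ is, and the $S_\infty$-action is by homeomorphisms). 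Hence on a comeager set of continuations the requirement holds, so we may shrink the finitely many boxes $N_{t_i^\frown j}$ (and, as we descend, later boxes) to force it. Since at the stage in question only finitely many boxes are touched, this is compatible with all other requirements, which are dovetailed in the usual way.

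\textbf{Step 3: Verify.} Let $K$ be the branch set of the completed scheme. Conditions (1)--(5) of Lemma~\ref{lem:invariantMK} hold by Step 1. For the new property: given $R \subseteq X^n$ in the sequence and distinct $\langle x_{i,j}: j\in\omega\rangle$ in $K$ for $1\le i\le n$, the first summands $x_{1,0},\ldots,x_{n,0}$ are distinct, so there is a level $\ell$ at which they already split pairwise; letting $\sigma_i = x_{i,0}\upharpoonright\ell$, the stage assigned to $(R,\sigma)$ ensured exactly that $(\pi\cdot y_1,\ldots,\pi\cdot y_n)\in R$ for some $\pi\in S_\infty$, where $y_i=\bigoplus_{j\in\omega}x_{i,j}$. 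When $R$ is invariant this gives $(y_1,\ldots,y_n)\in R$ directly.

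\textbf{Main obstacle.} The one delicate point is the interplay between the genericity requirement for $y_i = \bigoplus_{j\in\omega} x_{i,j}$ and the fact that the summands $x_{i,j}$ for $j\ge 1$ are themselves branches of $K$ chosen only later in the construction: I must phrase the requirement so that it is forced for \emph{all} future continuations of all the relevant nodes simultaneously. This is handled by noting that once the finitely many boxes at the current level are shrunk appropriately, comeagerness of the target set (in the product over all coordinates feeding into the $y_i$'s) lets us keep shrinking later boxes to stay inside it — this is precisely the mechanism already used in Step 1 to force the finite-union conditions (2)--(3) of Lemma~\ref{lem:invariantMK}, now applied to infinite unions via the limit $\bigoplus_{i\in\omega}x_i = \lim_n \bigoplus_{i\le n} x_i$ and continuity. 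So no genuinely new technique is needed beyond organizing the bookkeeping so that each requirement is scheduled after its ``first summands'' have split.
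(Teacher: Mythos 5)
Your plan correctly decomposes the finite part (Step 1) and correctly identifies where the difficulty lives (your ``Main obstacle'' paragraph), but the proposed resolution there is wrong, and this is not a small technical wrinkle — it is the whole content of the lemma.

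The issue is that a Mycielski/fusion construction can force $[K]^n \subseteq T$ for a comeager $T \subseteq X^n$ only when $n$ is \emph{finite}: at a finite level $m$ of the tree there are finitely many injective $n$-tuples of nodes, and once the $n$ branches under consideration have split, shrinking the finitely many boxes below them forces the tuple into an open dense set. For injective $\omega$-tuples there is no such stage: at any finite level $m$ the tree has only $2^m$ nodes, so by pigeonhole any injective $\omega$-tuple of branches has infinitely many entries passing through a common node, and the ``pattern'' (the assignment of branches to slots) is never determined. Moreover, there are continuum many such patterns, so there is no countable bookkeeping that schedules them. Concretely, the statement ``for every comeager $T \subseteq X^{\omega}$ there is a Cantor set $K$ with $[K]^{\omega} \subseteq T$'' is simply false: take $T = \{\langle x_j \rangle : \lim_j x_j \text{ does not exist}\}$, which is comeager, but every Cantor set $K$ is compact and hence contains a convergent injective sequence, so $[K]^{\omega} \not\subseteq T$ for any $K$. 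Your claim that ``comeagerness of the target set lets us keep shrinking later boxes to stay inside it'' is exactly the step that fails, and your remark that this is ``precisely the mechanism already used in Step 1'' is not right — Step 1 involves only finitely many branches at a time, which is what makes the mechanism work.

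The paper does something genuinely different, and this is where the permutation $\pi$ earns its keep. It builds $K$ to satisfy \emph{only} the finite conditions of Lemma~\ref{lem:invariantMK} (after replacing each $R_i$ by a dense $G_{\delta}$), and then, for a \emph{given} injective $\omega$-tuple, it constructs $\pi$ a posteriori by a separate recursion: writing $R = \bigcap_k G_k$, it uses the fact that each finite union $\bigoplus_{j\le k} x_j$ lies in $R$ (and in all finite-support-permuted copies of $R$, by condition (3) of Lemma~\ref{lem:invariantMK}) to choose finite-support permutations $\pi_k$ and lengths $n_k$ so that $\pi_k \cdot \bigoplus_{j\le k} x_j$ stabilizes on longer and longer initial segments and each such initial segment already forces membership in $G_k$. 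The permutation $\pi$ is then read off as a limit of the $\pi_k$'s and depends on the particular $\omega$-tuple; it cannot be fixed in advance during the fusion. This ``post-processing'' argument is the new technique the lemma requires, and it is exactly what your proposal is missing.
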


\begin{proof}
We replace each $R_i$ by a dense $G_{\delta}$ subset and find a Cantor set $K$ satisfying the conclusion of Lemma~\ref{lem:invariantMK} for the modified sequence; we will show that this $K$ satisfies the additional property for each $R=R_i$ in the sequence. For notational simplicity we will consider $R \subseteq X^2$ (no additional complications arise for larger $n$ as there is no interaction between coordinates). Let $R = \bigcap_n G_n$ with each $G_n$ open dense. Let $\langle x_j : j \in \omega \rangle$ and $\langle y_j : j \in \omega\rangle$ be distinct elements of $K$, and let $x = \bigoplus_{j \in \omega} x_j$ and $y = \bigoplus_{j \in \omega} y_j$. We will show there is $\pi \in S_{\infty}$ with $( \pi \cdot x, \pi \cdot y) \in R$.

We inductively build a sequence of finite support permutations $\pi_k$, an increasing sequence $n_k$ of natural numbers, and an increasing sequence of finite partial injections $\rho_k : n_k \rightarrow \omega$ with the following properties for all $k$:
\begin{enumerate}
\item $\rho_k \subseteq \rho_{k+1}$ and $k \subseteq \text{range}(\rho_k)$.
\item $\left( \pi_{k+1} \cdot \bigoplus_{j \leq k+1} x_j \right) \upharpoonright n_k = \left(\pi_k \cdot \bigoplus_{j \leq k} x_j \right) \upharpoonright n_k$ and
$\left( \pi_{k+1} \cdot \bigoplus_{j \leq k+1} y_j \right) \upharpoonright n_k = \left( \pi_k \cdot \bigoplus_{j \leq k} y_j \right) \upharpoonright n_k$.
\item For any $w$ and $z$ with $w\upharpoonright n_k = \left( \pi_k \cdot \bigoplus_{j \leq k} x_j  \right)\upharpoonright n_k$ and $z\upharpoonright n_k = \left(\pi_k \cdot \bigoplus_{j \leq k} y_j \right) \upharpoonright n_k$ we have $(w,z) \in G_k$.
\item If $\rho \in S_{\infty}$ extends $\rho_k$ then $\left( \rho^{-1} \cdot x \right) \upharpoonright n_k = \left( \pi_k \cdot \bigoplus_{j \leq k} x_j \right) \upharpoonright n_k$ and $\left( \rho^{-1} \cdot y \right)\upharpoonright n_k = \left( \pi_k \cdot \bigoplus_{j \leq k} y_j \right) \upharpoonright n_k$.
\end{enumerate}
At the end, we let $\rho = \bigcup_k \rho_k$, which is a bijection by condition (1), and set $\pi = \rho^{-1}$.
For each $k$, condition (4) gives $\left( \rho^{-1} \cdot x \right) \upharpoonright n_k = \left( \pi_k \cdot \bigoplus_{j \leq k} x_j \right) \upharpoonright n_k$ and $\left( \rho^{-1} \cdot y \right)\upharpoonright n_k = \left( \pi_k \cdot \bigoplus_{j \leq k} y_j \right) \upharpoonright n_k$, so condition (3) ensures $(\rho^{-1} \cdot x, \rho^{-1} \cdot y) \in G_k$, and hence we will have $(\pi \cdot x, \pi \cdot y) \in \bigcap_kG_k = R$ as desired.

For the construction, for $k \leq \omega$ let $d_k : \omega \rightarrow (k+1) \times \omega$ be the bijection so that $\bigoplus_{i \leq k} x_i (j) = x_{(d_k(j))_0}( (d_k(j))_1)$. Then conditions (2) and (4) will be ensured by requiring $d_{\omega} \circ \rho_k \upharpoonright n_k =  d_{k+1} \circ \pi_{k+1}^{-1} \upharpoonright n_k = d_k \circ \pi_k^{-1} \upharpoonright n_k$ for each $k$. 
When $k=0$, set $\pi_0=\text{id}$. Since $(x_0,y_0) \in G_0$, there are basic open sets $U$ and $V$ with $(x_0,y_0) \in U \times V \subseteq G_k$ and we can choose $n_0$ so that when $w \upharpoonright n_0 =x_0 \upharpoonright n_0$ and $z \upharpoonright n_0 = y_0 \upharpoonright n_0$ we have $w \in U$ and $z \in V$. Let $\rho_0(i)=2i$ for $i < n_0$.

Now, given $\pi_k$, $n_k$, and $\rho_k$ satisfying the above conditions, we define $\pi_{k+1}$, $n_{k+1}$, and $\rho_{k+1}$ as follows. We can choose a finite support permutation $\pi_{k+1}$ so that $\pi_{k+1}^{-1}(j) = d_{k+1}^{-1} \circ d_k \circ \rho_k (j)$ for $j < n_k$, ensuring condition (2). Since  $\left( \pi_{k+1} \cdot \bigoplus_{j \leq k+1} x_j ,   \pi_{k+1} \cdot \bigoplus_{j \leq k+1} y_j \right) \in G_{k+1}$, we can find $n'_{k+1} > n_k$ to satisfy condition (3) for $k+1$. We then define $\rho_{k+1}(j) = d_{\omega}^{-1} \circ d_{k+1} \circ \pi_{k+1}^{-1} (j)$ for $j < n'_{k+1}$, and extend the domain of $\rho_{k+1}$ to some $n_{k+1}\geq n'_{k+1}$ so that $k+1 \subseteq \text{range}(\rho_{k+1})$, ensuring conditions (1) and (4) for $k+1$ and completing the construction.
\end{proof}

A bit more care allows us to find a comeager set of permutations satisfying the conclusion.
We call a set $K$ satisfying the conclusion of the lemma \emph{good for $\langle R_i \rangle$}.

\begin{cor}[\textbf{Generic Embedding Lemma}]
\label{lem:generic-embedding}
Let $\langle R_i \subseteq X^{n_i} \rangle$ be a sequence of comeager invariant sets. Then there is a continuous embedding $\varphi : \bbF_2 \sqsubseteq_c \bbF_2$, which preserves set operations (in the sense that $\varphi(x \oplus y) \mathrel{\bbF_2} (\varphi(x) \oplus \varphi(y))$, etc.), so that for each $i$ and pairwise disjoint $x_1, \ldots, x_{n_i}$ in $\bbR^{\omega}$ we have $(\varphi(x_1),\ldots,\varphi(x_{n_i})) \in R_i$.
\end{cor}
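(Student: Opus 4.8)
The plan is to convert a good Cantor set produced by the Genericity Lemma (Lemma~\ref{lem:genericity}) into the embedding, by enumerating the set and ``spreading'' each coordinate of an input through it via the $\oplus$-operation. First I would apply Lemma~\ref{lem:genericity} to the given sequence $\langle R_i \rangle$ (the $R_i$ are already comeager and invariant) to obtain a Cantor set $K \subseteq X$ which is good for $\langle R_i \rangle$; in particular $K$ meets the conclusion of Lemma~\ref{lem:invariantMK}, so by conditions (4) and (5) each element of $K$ enumerates its range injectively and distinct elements of $K$ have disjoint ranges. Fix a homeomorphism $k : 2^{\omega} \to K$ and, for $r \in 2^{\omega}$, write $A_r = \{ k(r)(m) : m \in \omega\}$ for the countable set of reals coded by $k(r)$; thus $A_r \cap A_{r'} = \emptyset$ for $r \neq r'$ and $r \mapsto A_r$ is injective.

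Now define $\varphi : X \to X$ by $\varphi(x) = \bigoplus_n k(x(n))$, so that $\varphi(x)$ codes $\bigcup_{n} A_{x(n)} = \bigcup_{r \in \mathrm{range}(x)} A_r$. This $\varphi$ is continuous, being assembled coordinatewise from the continuous map $k$ and the continuous operation $\bigoplus$. It is injective, since $\bigoplus_n k(x(n)) = \bigoplus_n k(y(n))$ forces $k(x(n)) = k(y(n))$, hence $x(n)=y(n)$, for every $n$. It is a reduction of $\bbF_2$ to $\bbF_2$: if $x \mathrel{\bbF_2} y$ then $\mathrm{range}(x) = \mathrm{range}(y)$ and $\varphi(x), \varphi(y)$ code the same set; conversely, if $\varphi(x) \mathrel{\bbF_2}\varphi(y)$ and $r \in \mathrm{range}(x)$, then choosing $a \in A_r$ we get $a \in A_{r'}$ for some $r' \in \mathrm{range}(y)$, and disjointness of the $A$'s forces $r = r'$, so $\mathrm{range}(x) \subseteq \mathrm{range}(y)$ and symmetrically. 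This disjointness also lets us recover $\mathrm{range}(x)$ from $\mathrm{range}(\varphi(x))$, so $\varphi$ preserves the set operations: $\varphi(x\oplus y)$, $\varphi(x \cap y)$, etc.\ code the corresponding Boolean combinations of $\bigcup_{r\in\mathrm{range}(x)}A_r$ and $\bigcup_{r\in\mathrm{range}(y)}A_r$, which are exactly what $\varphi(x)\oplus\varphi(y)$, $\varphi(x)\cap\varphi(y)$, etc.\ code.

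It remains to check the genericity conclusion. Fix $i$ and pairwise disjoint $x_1,\dots,x_{n_i}\in\bbR^{\omega}$. For each $a$ pick an injective enumeration $x'_a$ of $\mathrm{range}(x_a)$ (finite or infinite according as $\mathrm{range}(x_a)$ is); then $\varphi(x_a)\mathrel{\bbF_2}\bigoplus_j k(x'_a(j))$, since both code $\bigcup_{r\in\mathrm{range}(x_a)}A_r$. The family $\{k(x'_a(j))\}_{a,j}$ consists of pairwise distinct elements of $K$: within a fixed $a$ by injectivity of $x'_a$ and $k$, and across distinct $a$ by disjointness of the $\mathrm{range}(x_a)$'s and injectivity of $k$. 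Applying the good-set property of $K$ to $R = R_i$ with the rows $\langle k(x'_a(j)):j\rangle$, $a=1,\dots,n_i$, and using that $R_i$ is invariant, we get $\bigl(\bigoplus_j k(x'_1(j)),\dots,\bigoplus_j k(x'_{n_i}(j))\bigr)\in R_i$; invariance of $R_i$ once more lets us replace each coordinate by $\varphi(x_a)$, yielding $(\varphi(x_1),\dots,\varphi(x_{n_i}))\in R_i$.

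The only real obstacle is bookkeeping: the good-set conclusion of Lemma~\ref{lem:genericity} is phrased for rows that are genuinely infinite $\bigoplus$'s, whereas some $\mathrm{range}(x_a)$ may be finite (e.g.\ $x_a$ constant). This is harmless --- the all-finite instance is exactly condition (2) of Lemma~\ref{lem:invariantMK}, and the mixed case follows from the construction in the proof of Lemma~\ref{lem:genericity} run coordinatewise, since there is no interaction between coordinates and a finite coordinate simply stabilizes the associated bijection $d_k$. I would therefore record this mild common strengthening of Lemma~\ref{lem:invariantMK}(2) and Lemma~\ref{lem:genericity} once and invoke it here.
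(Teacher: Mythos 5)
Your construction mirrors the paper's exactly: take a good Cantor set $K$ from the Genericity Lemma, fix a homeomorphism $k : 2^\omega \to K$ (the paper calls it $g$), and set $\varphi(x) = \bigoplus_n k(x(n))$. The paper compresses this into three sentences, so your careful verification of injectivity, continuity, the reduction property, and preservation of set operations is welcome added detail, and you are right that there is a small bookkeeping point to record when some $\mathrm{range}(x_a)$ is finite (Lemma~\ref{lem:invariantMK}(2) covers the all-finite case, Lemma~\ref{lem:genericity} the all-infinite case, and the mixed case needs the mild common strengthening you propose).

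One step needs more care, though: your final move, ``invariance of $R_i$ once more lets us replace each coordinate by $\varphi(x_a)$,'' trades $\bigoplus_j k(x'_a(j))$ for $\varphi(x_a) = \bigoplus_n k(x_a(n))$. These are $\bbF_2$-equivalent, but they are related by a permutation of coordinates only when $x_a$ is itself an injective enumeration of an infinite set; if $x_a$ has repetitions (e.g.\ $x_a$ constant), then $\varphi(x_a)$ has repeated entries while $\bigoplus_j k(x'_a(j))$ does not, so no $\pi \in S_\infty$ carries one to the other. Since the paper's notion of ``invariant'' is $S_\infty$-invariance (coordinate permutations) rather than $\bbF_2^{n}$-invariance, this last step is not licensed by the hypothesis as stated; the paper's own terse proof has the same implicit issue. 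It does not affect any application in the paper --- every $R_i$ actually fed into this corollary is $\bbF_2^{n_i}$-invariant, coming from relations $F$ with $\bbF_2 \subseteq F$ --- but to make the corollary airtight as a free-standing statement you should either assume $\bbF_2^{n_i}$-invariance of the $R_i$, or restrict the conclusion to tuples of injective $x_a$'s with infinite range.
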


\begin{proof}
Let $K$ be a good Cantor set from the Genericity Lemma, and choose a continuous embedding $g: 2^{\omega} \rightarrow K$. Let $\varphi(x)= \bigoplus_n g(x_n)$. The conditions of  goodness ensure that if $x_i$ and $x_j$ are disjoint then so are $g(x_i)$ and $g(x_j)$, so the Genericity Lemma applies.
\end{proof}

We call such a $\varphi$ the \emph{generic embedding for the sequence $\langle R_i \subseteq X^{n_i} \rangle$}.
We now make two simple observations about $\bbF_2$ and a coarser equivalence relation $F$:

\begin{lem}
Let $F$ be an equivalence relation with $\bbF_2 \subseteq F$ and suppose that the set $\{(x,y,z) : x \oplus y \FF x \oplus z \}$ is non-meager. Then $F$ has a comeager equivalence class.
\end{lem}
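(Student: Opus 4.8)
The plan is to show that the hypothesis forces the $F$-class of a generic element of $X$ to be comeager. Throughout I use the Kuratowski--Ulam theorem freely, and the fact (the preceding lemma) that a finitely invariant non-meager subset of $X^n$ is comeager.

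First I would check that $A = \{(x,y,z) : x \oplus y \FF x \oplus z\}$ is finitely invariant: if $(x,y,z) \in A$ and $\pi_1,\pi_2,\pi_3 \in S_{\infty}$ have finite support, then $\pi_1 \cdot x \oplus \pi_2 \cdot y$ has the same underlying set of reals as $x \oplus y$, and similarly in the third coordinate, so from $\bbF_2 \subseteq F$ we get $\pi_1 \cdot x \oplus \pi_2 \cdot y \FF x \oplus y \FF x \oplus z \FF \pi_1 \cdot x \oplus \pi_3 \cdot z$. Being finitely invariant and, by hypothesis, non-meager, $A$ is comeager in $X^3$. By Kuratowski--Ulam there is a comeager $G_1 \subseteq X$ so that for $x \in G_1$ the set $A^x = \{(y,z) : x \oplus y \FF x \oplus z\}$ is comeager in $X^2$; since $F$ is an equivalence relation, $A^x$ is an equivalence relation on $X$, so a second application of Kuratowski--Ulam shows that $D_x := \{y : [y]_{A^x} \text{ is comeager}\}$ is a single, comeager $A^x$-class (two comeager classes would be disjoint and comeager). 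Writing $C_x$ for the $F$-class $[x \oplus y]_F$ of any $y \in D_x$ (this is well defined since $D_x$ is a single $A^x$-class), we then have $\{y : x \oplus y \in C_x\} \supseteq D_x$ comeager for each $x \in G_1$.

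The heart of the argument, which I expect to be the main obstacle, is to show that $C_x$ is generically independent of $x$, i.e.\ there is a single $F$-class $C$ with $C_x = C$ for comeagerly many $x$. The key observation is that $D_x = \{y : (x,y) \in G\}$, where $G = \{(x,y) : \{z : x\oplus y \FF x\oplus z\} \text{ is comeager}\}$ is comeager in $X^2$. Since both $G$ and its image under the coordinate flip are comeager, for $(x,x')$ in a suitable comeager subset of $X^2$ we have $x, x' \in G_1$, $(x,x') \in G$, and $(x',x) \in G$; the first membership gives $x' \in D_x$, hence $x \oplus x' \in C_x$, and the second gives $x \in D_{x'}$, hence $x' \oplus x \in C_{x'}$. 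But $x \oplus x'$ and $x' \oplus x$ have the same underlying set, so $x\oplus x' \mathrel{\bbF_2} x'\oplus x$ and therefore $x\oplus x' \FF x'\oplus x$, whence $C_x = C_{x'}$. Fixing a generic $x_0 \in G_1$ and setting $C = C_{x_0}$, it follows that $G_2 := \{x \in G_1 : C_x = C\}$ is comeager.

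Finally I would transfer this back to $X$. For $x \in G_2$ the set $\{y : x \oplus y \in C\} \supseteq D_x$ is comeager, so $\{(x,y) : x \in G_2,\ y \in D_x\}$ is non-meager in $X^2$ (it has comeager sections over a comeager set of $x$), and hence so is $\Phi^{-1}(C)$, where $\Phi(x,y) = x \oplus y$. Since $\Phi$ is a homeomorphism of $X^2$ onto $X$ (it merely interleaves the two sequences of reals), $C$ is non-meager in $X$. As $C$ is an $F$-class and $\bbF_2 \subseteq F$, $C$ is finitely invariant: for $u \in C = [w]_F$ and finite-support $\pi$, the sequence $\pi \cdot u$ has the same underlying set as $u$, so $\pi \cdot u \mathrel{\bbF_2} u \FF w$. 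Being finitely invariant and non-meager, $C$ is comeager, giving the desired comeager $F$-class.
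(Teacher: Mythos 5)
Your proof is correct, but it takes a genuinely different and longer route than the paper's. The paper shows directly that $F$ is comeager in $X^2$: each of the sets $\{(x_0,x_1,y_0) : x_0\oplus x_1\FF x_0\oplus y_0\}$, $\{(x_0,y_0) : x_0\oplus y_0\FF y_0\oplus x_0\}$, and $\{(y_0,x_0,y_1) : y_0\oplus x_0\FF y_0\oplus y_1\}$ is comeager (the first and third are your set $A$ after a renaming of variables, and the middle one is all of $X^2$ since $x_0\oplus y_0\mathrel{\bbF_2}y_0\oplus x_0$), so pulling all three back to $X^4$ and chaining by transitivity of $F$ gives that $\{(x_0,x_1,y_0,y_1) : x_0\oplus x_1\FF y_0\oplus y_1\}$ is comeager in $X^4$; pushing through the homeomorphism $(u,v)\mapsto u\oplus v$ then shows $F$ itself is comeager in $X^2$, and a single Kuratowski--Ulam application finishes. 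Your argument instead fibres over the first coordinate $x$, observes that $A^x$ is an equivalence relation, extracts its unique comeager class $D_x$ and the associated $F$-class $C_x$, runs a second Kuratowski--Ulam argument together with the symmetry $x\oplus x'\mathrel{\bbF_2}x'\oplus x$ to show $C_x$ is generically constant equal to some $C$, and finally transports a comeager set through $\Phi$ and invokes finite invariance of the class $C$ to upgrade non-meager to comeager. Both are valid; the paper's transitivity chain reaches ``$F$ is comeager in $X^2$'' in one shot and avoids the fibrewise gluing and the extra invariance step at the end, while your version makes the structure of the comeager $A^x$-classes more explicit, which is arguably more illuminating. Two small points of hygiene: ``fixing a generic $x_0\in G_1$'' should be read as ``for some $x_0$ in the comeager set of first coordinates supplied by Kuratowski--Ulam applied to $H=(G_1\times G_1)\cap G\cap\sigma(G)$,'' where $\sigma$ is the coordinate flip; and your claim that $\{(x,y):x\in G_2,\ y\in D_x\}$ is non-meager is most cleanly justified by noting it equals $(G_2\times X)\cap G$, an intersection of two comeager Baire-property sets, hence comeager.
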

\begin{proof}
The given set is invariant since $\bbF_2 \subseteq F$, so if it is non-meager then it is comeager. By the Kuratowski--Ulam Theorem it suffices to show that the set $F$ is comeager in $X^2$. Using that $F$ is invariant, we have that the following are all comeager:
\begin{align*}
& \{ (x_0,x_1,y_0) : x_0 \oplus x_1 \FF x_0 \oplus y_0 \}, \\
& \{(x_0, y_0) : x_0 \oplus y_0 \FF y_0 \oplus x_0 \}, \\
& \{ (y_0, x_0, y_1) : y_0 \oplus x_0 \FF y_0 \oplus y_1 \} .
\end{align*}
From the transitivity of $F$ we then have that the set $\{(x_0,x_1,y_0,y_1) : x_0 \oplus x_1 \FF y_0 \oplus y_1 \}$ is comeager, and using that $(x_0,x_1) \mapsto x_0 \oplus x_1$ is a homeomorphism we have that $\{(x,y) : x \FF y\}$ is comeager as desired.
\end{proof}

\begin{lem}
Let $F$ be an equivalence relation with $\bbF_2 \subseteq F$ and suppose that the set $\{(x,y) : x \FF x \oplus y \}$ is non-meager. Then $F$ has a comeager equivalence class.
\end{lem}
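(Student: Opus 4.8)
The plan is to deduce this from the previous lemma, by showing that the set
$D = \{(x,y,z) : x \oplus y \FF x \oplus z\}$ is non-meager (in fact comeager), at which point that lemma immediately provides a comeager $F$-class.

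First I would check that the hypothesized set $A = \{(x,y) : x \FF x \oplus y\}$ is finitely invariant. Indeed, if $(x,y) \in A$ and $\pi_1,\pi_2 \in S_{\infty}$ have finite support, then $\pi_1 \cdot x \mathrel{\bbF_2} x$, and $(\pi_1 \cdot x) \oplus (\pi_2 \cdot y) \mathrel{\bbF_2} x \oplus y$ since $\bbF_2$ is respected by $\oplus$ coordinatewise; as $\bbF_2 \subseteq F$ and $F$ is transitive we get
$\pi_1 \cdot x \FF x \FF x \oplus y \FF (\pi_1 \cdot x) \oplus (\pi_2 \cdot y)$,
so $(\pi_1 \cdot x, \pi_2 \cdot y) \in A$. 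By the lemma that finitely invariant non-meager sets are comeager, $A$ is comeager in $X^2$.

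Next I would lift this to $X^3$. The set $\{(x,y,z) : (x,y) \in A\}$ equals $A \times X$ and is therefore comeager in $X^3$; applying the self-homeomorphism of $X^3$ interchanging the last two coordinates, $\{(x,y,z) : (x,z) \in A\}$ is also comeager. Intersecting, the set $B$ of triples with both $x \FF x \oplus y$ and $x \FF x \oplus z$ is comeager, and for every $(x,y,z) \in B$ transitivity of $F$ gives $x \oplus y \FF x \FF x \oplus z$. Hence $D \supseteq B$ is comeager, in particular non-meager, and the preceding lemma yields a comeager $F$-equivalence class.

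I do not expect a real obstacle here; the only delicate point is the invariance check for $A$ under permutations acting \emph{independently} in the two coordinates, so that the finitely-invariant-implies-comeager lemma applies — and this rests precisely on $\bbF_2 \subseteq F$ together with the fact that $\oplus$ respects $\bbF_2$ in each coordinate.
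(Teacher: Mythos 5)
Your proof is correct, and it takes a cleaner route than the paper's: you reduce the present lemma to the preceding one by showing that its hypothesis (non-meagerness of $\{(x,y,z) : x \oplus y \FF x \oplus z\}$) is actually satisfied, whereas the paper instead redoes the argument of the preceding lemma from scratch, using the chain of comeager sets $\{(x,y) : x \FF x \oplus y\}$, $\{(x,y) : x \oplus y \FF y \oplus x\}$, $\{(y,x) : y \FF y \oplus x\}$ together with transitivity of $F$ to conclude directly that $\{(x,y) : x \FF y\}$ is comeager and then applies Kuratowski--Ulam. Both proofs start from the same key step---checking that $A = \{(x,y) : x \FF x \oplus y\}$ is finitely invariant (using $\bbF_2 \subseteq F$, that $\oplus$ respects $\bbF_2$ coordinatewise, and transitivity of $F$), so that non-meagerness upgrades to comeagerness. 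After that, you lift $A$ into $X^3$ twice, intersect, and hand off to the preceding lemma; the paper does a parallel transitivity computation in $X^2$. Your approach buys modularity (one fewer Kuratowski--Ulam application in the text, since it is already packaged in the preceding lemma), while the paper's is self-contained and mirrors the structure of the preceding proof. Both are correct and essentially the same length.
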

\begin{proof}
This is similar to the previous lemma, using that  $\{(x,y) : x \FF x \oplus y\}$, $\{(x,y) : x \oplus y \FF y \oplus x\}$, and $\{(y,x) : y \FF y \oplus x\}$ will be comeager.
\end{proof}

We can now prove the main result concerning primeness of $\bbF_2$, which is Theorem 6.24 of \cite{ksz}.

\begin{thm}[Kanovei--Sabok--Zapletal]
\label{thm:ksz-main}
Let $F$ be an analytic equivalence relation with $\bbF_2 \subseteq F$. Then either $F$ has a comeager equivalence class or $\bbF_2 \sqsubseteq_c F \upharpoonright C$ for any $\bbF_2$-invariant comeager set $C$.
\end{thm}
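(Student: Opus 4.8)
The plan is to analyze a Borel homomorphism-like situation internal to $F$ using the genericity machinery developed above, splitting into cases according to whether certain natural invariant sets are meager or comeager. Since $\bbF_2 \subseteq F$, for any $x$ the $F$-class $[x]_F$ is $\bbF_2$-invariant, and the relations we form from $F$ by plugging in $\oplus$-combinations are finitely invariant, hence (by the dichotomy lemma) either meager or comeager. I would first dispose of the easy case: if $\{(x,y) : x \FF x\oplus y\}$ is non-meager, then by the second of the two observations above, $F$ has a comeager equivalence class and we are done. So assume this set is meager, i.e., for comeagerly many $(x,y)$ we have $\neg x \FF x \oplus y$; similarly we may assume (via the first observation) that $\{(x,y,z) : x\oplus y \FF x \oplus z\}$ is meager, so for comeagerly many triples, $x \oplus y$ and $x \oplus z$ are $F$-inequivalent.

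\textbf{Main construction.} Fix an $\bbF_2$-invariant comeager set $C$. The goal is to build $\bbF_2 \sqsubseteq_c F \upharpoonright C$. The idea is to apply the Generic Embedding Lemma (Corollary~\ref{lem:generic-embedding}) to a carefully chosen sequence of comeager invariant sets $\langle R_i \rangle$ which encode (i) membership in $C$, (ii) the $F$-inequivalence facts just established, and (iii) enough $F$-invariance to run the argument. Concretely, I would include among the $R_i$: the set $C$ itself (and its finite-$\oplus$ closure); the comeager set $\{(x,y) : \neg\, x \FF x\oplus y\}$; the comeager set $\{(x,y,z) : \neg\, x\oplus y \FF x\oplus z\}$; and the various ``transitivity-closure'' comeager sets involving $F$ that appeared in the two observation lemmas, so that $F$-relationships between $\oplus$-combinations of generic elements are fully controlled. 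Let $\varphi$ be the resulting generic embedding, which is a continuous embedding of $\bbF_2$ into $\bbF_2$ preserving set operations, with $\varphi$'s image landing in $C$ and with all the prescribed $F$-(in)equivalences holding on pairwise-disjoint inputs. The claim is that $\varphi$ is then a \emph{reduction} of $\bbF_2$ into $F \upharpoonright C$: if $x \mathrel{\bbF_2} y$ then $\varphi(x) \mathrel{\bbF_2} \varphi(y)$, hence $\varphi(x) \FF \varphi(y)$ since $\bbF_2 \subseteq F$; conversely, if $\neg\, x \mathrel{\bbF_2} y$, then after passing to a point of $X$ one writes $x$ and $y$ in terms of a common part and disjoint private parts — $x \mathrel{\bbF_2} a \oplus u$, $y \mathrel{\bbF_2} a \oplus v$ with $u \not\subseteq v$ or $v\not\subseteq u$ — and uses the $\oplus$-preservation of $\varphi$ together with the generically-guaranteed $F$-inequivalence ($\neg\, \varphi(a)\oplus\varphi(u) \FF \varphi(a)\oplus\varphi(v)$, coming from the meagerness assumptions above transported through $\varphi$) to conclude $\neg\, \varphi(x) \FF \varphi(y)$. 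A reduction which is also a continuous injection is a continuous embedding, giving $\bbF_2 \sqsubseteq_c F\upharpoonright C$.

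\textbf{Main obstacle.} The delicate point is the converse direction of the reduction: arranging the bookkeeping so that \emph{every} pair of $\bbF_2$-inequivalent points $x,y$ is witnessed by a disjoint-decomposition to which the generic $F$-inequivalence facts apply. A single pair $(x\oplus y$ vs.\ $x\oplus z)$ generic inequivalence is not literally enough, since $\bbF_2$-classes are countable sets of reals and two such sets can differ in complicated ways (one a proper subset of the other, or overlapping). I expect one must include in the sequence $\langle R_i\rangle$ enough comeager sets to handle: (a) $x$ a proper subset of $y$ — here one needs $\neg\, \varphi(x) \FF \varphi(y)$, i.e., that $\varphi(a) \not\FF \varphi(a)\oplus\varphi(u)$ for disjoint $a,u$, which is exactly the meagerness of $\{(x,y):x\FF x\oplus y\}$ transported; and (b) the general case, handled via the $x\oplus y$ vs.\ $x\oplus z$ meagerness. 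One must also verify that the $\oplus$-combinations arising this way are fed \emph{pairwise disjoint} arguments to $\varphi$, which is where conditions (4) and (5) of Lemma~\ref{lem:invariantMK} (distinctness of coordinates within and across elements of the good Cantor set $K$) become essential: disjointness of the reals appearing in $x$ and $y$ as abstract countable sets must be reflected in genuine coordinate-disjointness so that goodness of $K$ applies. Managing these disjointness requirements uniformly, while keeping the whole sequence $\langle R_i\rangle$ countable and comeager, is the crux; once it is set up correctly, the verification that $\varphi$ reduces $\bbF_2$ to $F\upharpoonright C$ is a routine unwinding of the preservation-of-set-operations property and the transported (in)equivalence facts.
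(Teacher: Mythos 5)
Your strategy is the same as the paper's: reduce to the case where $F$ has no comeager class, list the relevant $F$-inequivalence conditions as comeager invariant sets, feed them (together with $C$) into the Generic Embedding Lemma, and verify that the resulting $\varphi$ is a reduction of $\bbF_2$ into $F \upharpoonright C$ by a case analysis according to how the countable sets $x$ and $y$ differ. You correctly identify the two meagerness facts and correctly match the proper-subset case to $\{(x,y):\neg x\FF x\oplus y\}$ and the ``overlapping but incomparable'' case to $\{(x,y,z):\neg x\oplus y\FF x\oplus z\}$.

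However, there is a gap: you do not handle the case $x\cap y=\emptyset$. Your decomposition ``$x\mathrel{\bbF_2}a\oplus u$, $y\mathrel{\bbF_2}a\oplus v$ with a common part $a$'' breaks down when the common part is empty, because there is no representation of the empty set in $\bbR^{\omega}$ and the triple $(a,u,v)$ cannot be fed into the good Cantor set machinery. The paper treats this as a third case and uses a third comeager set, $\{(x,y):\neg\,x\FF y\}$, which you omit from your list of $R_i$'s. This set is comeager precisely because $F$ is assumed to have no comeager class (it is finitely invariant, so if it were non-meager it would be comeager, and a non-meager $\{(x,y):x\FF y\}$ gives a comeager $F$-class by Kuratowski--Ulam). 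Adding that set to your sequence and handling the disjoint case via goodness for it closes the gap; the remainder of your argument matches the paper. Your appeal to extra ``transitivity-closure'' sets in the sequence is unnecessary: the four sets $C$, $\{(x,y):\neg\,x\FF y\}$, $\{(x,y):\neg\,x\FF x\oplus y\}$, and $\{(x,y,z):\neg\,x\oplus y\FF x\oplus z\}$ suffice.
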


\begin{proof}
Suppose that $F$ does not have a comeager class.
The above results then show that the following sets are comeager and invariant: $C$, $\{(x,y) : \neg x \FF y \}$, $\{ (x,y,z) : \neg x \oplus y \FF x \oplus z \}$, and $\{(x,y) : \neg x \FF x \oplus y\}$. Let  $\varphi$ be the generic embedding for these sets, so  that the range of $\varphi$ is contained in $C$.  If $x \mathrel{\bbF_2} y$ then $\varphi(x) \mathrel{\bbF_2} \varphi(y)$, so $\varphi(x) \FF \varphi(y)$. Suppose instead $\neg x \mathrel{\bbF_2} y$ and apply the Generic Embedding Lemma for  three possible cases. If $x \cap y = \emptyset$, then goodness for the set $\{(x,y) : \neg x \FF y \}$ shows $\neg \varphi(x) \FF \varphi(y)$. If $x \subsetneqq y$ (or vice versa) then goodness for $\{(x,y) : \neg x \FF x \oplus y\}$ (applied to $x$ and $y \setminus x$) again shows $\neg \varphi(x) \FF \varphi(y)$. Finally, in all other cases goodness for  $\{ (x,y,z) : \neg x \oplus y \FF x \oplus z \}$ (applied to $x \cap y$, $x \setminus y$, and $y \setminus x$) shows $\neg \varphi(x) \FF \varphi(y)$.
\end{proof}

Applying the theorem to $F=\bbF_2$ itself (whose classes are meager), we get:
\begin{cor}
\label{cor:F2comeager}
$\bbF_2$ maintains complexity on invariant comeager sets.
\end{cor}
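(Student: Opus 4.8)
The plan is to obtain this immediately from Theorem~\ref{thm:ksz-main} by applying it with $F = \bbF_2$ itself. Trivially $\bbF_2 \subseteq \bbF_2$ and $\bbF_2$ is Borel, so the theorem applies and yields the dichotomy: either $\bbF_2$ has a comeager equivalence class, or $\bbF_2 \sqsubseteq_c \bbF_2 \upharpoonright C$ for every $\bbF_2$-invariant comeager set $C \subseteq X$. The second alternative is exactly the statement that $\bbF_2$ maintains complexity on invariant comeager sets, so the whole proof reduces to ruling out the first alternative.

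To do this, I would observe that every $\bbF_2$-class is meager. An $\bbF_2$-class consists precisely of all enumerations $\bar y \in X$ of a fixed countable set $A = \{x_n : n \in \omega\} \subseteq 2^{\omega}$, and hence is contained in $\{\bar y \in X : y_0 \in A\} = \bigcup_{n \in \omega} \{\bar y \in X : y_0 = x_n\}$; each term of this countable union is closed with empty interior, so the union, and therefore the class, is meager. In particular no $\bbF_2$-class is comeager, so the first horn of the dichotomy in Theorem~\ref{thm:ksz-main} cannot occur, which completes the argument.

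I do not expect any genuine obstacle here: all of the work has already been done inside Theorem~\ref{thm:ksz-main}, which in turn rests on the Genericity Lemma~\ref{lem:genericity} and the Generic Embedding Lemma~\ref{lem:generic-embedding}. The only point that needs to be verified at this stage is the routine observation that $\bbF_2$ has only meager classes, which is what lets us discard the degenerate case and read off the conclusion.
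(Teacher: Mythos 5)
Your proof is correct and is exactly the paper's approach: apply Theorem~\ref{thm:ksz-main} with $F=\bbF_2$ and observe that $\bbF_2$-classes are meager to rule out the first alternative. The only difference is that you spell out the (routine) meagerness calculation, which the paper leaves as a parenthetical remark.
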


We also have:
\begin{cor}
If $F$ is a Borel equivalence relation with $\bbF_2 \not\leq_B F$ then $\bbF_2$ is generically $F$-ergodic.
\end{cor}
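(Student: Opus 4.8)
The plan is to derive this directly from Theorem~\ref{thm:ksz-main} by pulling $F$ back along a given homomorphism. So suppose $\varphi$ is a Borel homomorphism from $\bbF_2$ to $F$, and set $F' = (\varphi \times \varphi)^{-1}[F]$, an equivalence relation on $X = (2^{\omega})^{\omega}$. First I would record the routine facts: $F'$ is Borel (hence analytic) since $F$ is Borel and $\varphi$ is Borel; $\bbF_2 \subseteq F'$, which is exactly the assertion that $\varphi$ is a homomorphism from $\bbF_2$ to $F$; the map $\varphi$ is by construction a Borel reduction from $F'$ to $F$; and for every $x_0 \in X$ we have $[x_0]_{F'} = \varphi^{-1}[\varphi(x_0)]_F$.

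Next I would apply Theorem~\ref{thm:ksz-main} to $F'$. The theorem gives two alternatives. In the second alternative, taking the $\bbF_2$-invariant comeager set $C = X$ itself, we would obtain $\bbF_2 \sqsubseteq_c F'$; composing this embedding with the reduction $\varphi$ from $F'$ to $F$ would yield $\bbF_2 \leq_B F$, contradicting the hypothesis $\bbF_2 \not\leq_B F$. Hence the first alternative must hold: $F'$ has a comeager equivalence class, say $[x_0]_{F'}$. Since $[x_0]_{F'} = \varphi^{-1}[\varphi(x_0)]_F$, the set $\varphi^{-1}[y]_F$ is comeager for $y := \varphi(x_0)$, which is precisely what generic $F$-ergodicity demands. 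As $\varphi$ was an arbitrary Borel homomorphism, this completes the argument.

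There is essentially no obstacle here beyond this bookkeeping; all the genuine content sits inside Theorem~\ref{thm:ksz-main} (and the machinery of the Genericity Lemma behind it). The only points worth a second glance are that $F'$ genuinely lands in the scope of Theorem~\ref{thm:ksz-main}, i.e.\ that it is an analytic equivalence relation containing $\bbF_2$, and that a single $F$-class pulls back under $\varphi$ to a single $F'$-class rather than a union of several — both of which are immediate from the definition of $F'$. Note in particular that, because we apply the theorem to the pullback $F'$ rather than analyzing $\varphi$ directly, we do not need to pass to a comeager set where $\varphi$ is continuous, so no appeal to Corollary~\ref{cor:F2comeager} is required in this proof.
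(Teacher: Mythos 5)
Your proof is correct and is essentially the argument the paper has in mind: pull $F$ back along the given homomorphism $\varphi$ to get a Borel equivalence relation $F' = (\varphi\times\varphi)^{-1}[F]$ containing $\bbF_2$, apply Theorem~\ref{thm:ksz-main} to $F'$, rule out the embedding alternative (via $C=X$ and the fact that $\varphi$ reduces $F'$ to $F$) using $\bbF_2 \not\leq_B F$, and read off a comeager $F'$-class as a comeager $\varphi$-preimage of a single $F$-class. Your closing remark is also right: since $F'$ already lives on the same space as $\bbF_2$, the theorem applies directly and no continuity reduction for $\varphi$ or appeal to Corollary~\ref{cor:F2comeager} is needed.
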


From this and the characterization of prime equivalence relations from Lemma~\ref{lem:altprime} we then get our desired conclusion. In fact, in the case that $F$ does not have a comeager equivalence class, the range of $\varphi$ in the proof of Theorem~\ref{thm:ksz-main} will be a $F \setminus \bbF_2$-discrete set, so we have:
\begin{thm}
$\bbF_2$ is uniformly prime.
\end{thm}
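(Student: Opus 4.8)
The plan is to feed the given $F$ directly into Theorem~\ref{thm:ksz-main} and read off the two alternatives in the definition of uniform primeness from the two alternatives in that theorem. So let $F$ be a Borel equivalence relation with $\bbF_2 \subseteq F$; since Borel relations are analytic, Theorem~\ref{thm:ksz-main} applies and tells us that either $F$ has a comeager equivalence class, or $\bbF_2 \sqsubseteq_c F \upharpoonright C$ for every $\bbF_2$-invariant comeager set $C$.

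In the first case I would fix $x$ with $[x]_F$ comeager. Because $\bbF_2 \subseteq F$, the $F$-class $[x]_F$ is $\bbF_2$-invariant, and it is comeager, so Corollary~\ref{cor:F2comeager} gives $\bbF_2 \sqsubseteq_c \bbF_2 \upharpoonright [x]_F$, and in particular $\bbF_2 \leq_B \bbF_2 \upharpoonright [x]_F$. This is exactly the first alternative in the definition of uniform primeness, so nothing more is needed in this case.

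In the second case I would unwind the proof of Theorem~\ref{thm:ksz-main}. There the sets $C$, $\{(x,y) : \neg x \FF y\}$, $\{(x,y,z) : \neg x \oplus y \FF x \oplus z\}$, and $\{(x,y) : \neg x \FF x \oplus y\}$ are all comeager and invariant, and $\varphi$ is taken to be the generic embedding for this sequence. The point is that this single $\varphi$ does double duty: by Corollary~\ref{lem:generic-embedding} it is a continuous embedding of $\bbF_2$ into $\bbF_2$, and the case analysis in the proof of Theorem~\ref{thm:ksz-main} shows it is simultaneously a reduction of $\bbF_2$ to $F$ with range contained in $C$. Now set $A = \varphi[X]$. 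Since $\varphi$ is a continuous injection on the Polish space $X$, the set $A$ is Borel (Lusin--Souslin), and $\varphi$, regarded as a map onto $A$, witnesses $\bbF_2 \sqsubseteq_c \bbF_2 \upharpoonright A$, hence $\bbF_2 \leq_B \bbF_2 \upharpoonright A$. Finally $A$ is $F \setminus \bbF_2$-discrete: if $\varphi(u_1), \varphi(u_2) \in A$ with $\varphi(u_1) \FF \varphi(u_2)$, then $u_1 \mathrel{\bbF_2} u_2$ since $\varphi$ reduces $\bbF_2$ to $F$, whence $\varphi(u_1) \mathrel{\bbF_2} \varphi(u_2)$ since $\varphi$ reduces $\bbF_2$ to $\bbF_2$. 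This yields the second alternative and finishes the argument.

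I do not expect a genuine obstacle here: the only point requiring care is the observation that the $\varphi$ constructed in the proof of Theorem~\ref{thm:ksz-main} really is at once an embedding $\bbF_2 \sqsubseteq_c \bbF_2$ and a reduction $\bbF_2 \leq_B F$ (so that its range is both complex enough and $F\setminus\bbF_2$-discrete), and that a comeager single $F$-class is automatically $\bbF_2$-invariant; both are immediate from the set-up already developed, so the work is entirely in assembling pieces that are in hand.
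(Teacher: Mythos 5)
Your proof is correct and follows essentially the same route as the paper: apply Theorem~\ref{thm:ksz-main}, handle the comeager-class case via Corollary~\ref{cor:F2comeager}, and in the other case observe that the range of the generic embedding $\varphi$ is simultaneously of full $\bbF_2$-complexity and $F\setminus\bbF_2$-discrete. The paper states this only tersely (the ``in fact'' remark before the theorem), and your write-up correctly fills in the details, including the observation that $\varphi$ is a reduction of $\bbF_2$ both to $\bbF_2$ and to $F$.
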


Although, as noted earlier, consequences of primeness do not in general extend to infinite products and intersections, they do for $\bbF_2$.

\begin{cor}
If $\bbF_2 \leq_B \bigcap_n E_n$ then $\bbF_2 \leq_B E_n$ for some $n$.
\end{cor}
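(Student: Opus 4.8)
The plan is to pull the infinite intersection back along a reduction and apply Theorem~\ref{thm:ksz-main} to each coordinate relation separately. Suppose the $E_n$ are Borel equivalence relations on a common Polish space $Y$ and $\varphi : X \to Y$ is a Borel reduction of $\bbF_2$ to $\bigcap_n E_n$ (recall $X=(2^{\omega})^{\omega}$). First I would set $F_n = (\varphi \times \varphi)^{-1}[E_n]$. Each $F_n$ is a Borel, hence analytic, equivalence relation on $X$. Since $\varphi$ is in particular a homomorphism from $\bbF_2$ to each $E_n$, we have $\bbF_2 \subseteq F_n$; and since $\varphi$ is a reduction to $\bigcap_n E_n$ we get $\bigcap_n F_n = (\varphi \times \varphi)^{-1}\big[\bigcap_n E_n\big] = \bbF_2$. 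Moreover $\varphi$ is by construction a reduction of $F_n$ to $E_n$, so it suffices to find one $n$ with $\bbF_2 \leq_B F_n$: composing such a reduction with $\varphi$ then witnesses $\bbF_2 \leq_B E_n$.

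Next I would invoke Theorem~\ref{thm:ksz-main} for each $F_n$, which is legitimate because $F_n$ is analytic and contains $\bbF_2$. Taking the invariant comeager set to be $X$ itself, the dichotomy says that either $F_n$ has a comeager equivalence class, or $\bbF_2 \sqsubseteq_c F_n$. If the latter alternative holds for some $n$, we are done by the previous paragraph.

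So the remaining case is that every $F_n$ has a comeager class $[z_n]_{F_n}$. To rule this out I would consider $C = \bigcap_n [z_n]_{F_n}$, a countable intersection of comeager sets, hence comeager and in particular non-empty. Any two points $x,y \in C$ satisfy $x \mathrel{F_n} z_n \mathrel{F_n} y$ for every $n$, hence $x \mathrel{\bigcap_n F_n} y$, i.e.\ $x \mathrel{\bbF_2} y$. Thus $C$ is contained in a single $\bbF_2$-class, which would then be comeager. This contradicts the fact that every $\bbF_2$-class is meager: for $x \in X$, its class is contained in $\{y \in X : y_0 \in \{x_k : k \in \omega\}\}$, a countable union of the nowhere dense sets $\{y : y_0 = x_k\}$.

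The argument is short once Theorem~\ref{thm:ksz-main} is available, and I do not expect any substantial obstacle beyond that theorem itself. The only points requiring care are the elementary bookkeeping in the first paragraph — verifying that the pullbacks $F_n$ really are equivalence relations with $\bbF_2 \subseteq F_n$ and $\bigcap_n F_n = \bbF_2$, and that a reduction of $\bbF_2$ into some $F_n$ transfers along $\varphi$ to a reduction into $E_n$ — together with the (standard) observation that $\bbF_2$-classes are meager, which is what makes the second case contradictory.
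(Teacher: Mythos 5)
Your proof is correct and takes essentially the same approach as the paper's: both reduce to the observation that if $\bbF_2\not\leq_B E_n$ for every $n$, then for each $n$ some comeager set is sent into a single $E_n$-class, and intersecting these comeager sets yields a comeager set inside a single $\bbF_2$-class, contradicting meagerness of $\bbF_2$-classes. The only cosmetic difference is that the paper invokes the intermediate corollary (``$\bbF_2\not\leq_B F$ implies $\bbF_2$ is generically $F$-ergodic'') directly on the $E_n$'s, whereas you pull back to $F_n=(\varphi\times\varphi)^{-1}[E_n]$ and cite Theorem~\ref{thm:ksz-main} on the pullbacks.
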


\begin{proof}
Let $f$ be a reduction from $\bbF_2$ to $\bigcap_n E_n$, so $f$ is a homomorphism from $\bbF_2$ to each $E_n$. If $\bbF_2$ were not reducible to any $E_n$, then for each $n$ there would be a comeager set $C_n$ so that the range of $f \upharpoonright C_n$ is contained in a single $E_n$ class. Then the range of $f$ on the comeager set $C=\bigcap_n C_n$ would be contained in a single $\bigcap_n E_n$ class, contradicting that $\bbF_2$ classes are meager.
\end{proof}

\begin{cor}
If $\bbF_2 \leq_B \prod_n E_n$ then $\bbF_2 \leq_B E_n$ for some $n$.
\end{cor}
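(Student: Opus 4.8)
The plan is to run, coordinate by coordinate, the argument used for the preceding corollary on countable intersections. Suppose $f$ is a Borel reduction from $\bbF_2$ to $\prod_n E_n$, where $E_n$ is an equivalence relation on $X_n$, and write $f(x) = (f_n(x))_{n \in \omega}$ with $f_n \colon X \to X_n$ the $n$-th coordinate of $f$. Since $f$ is a reduction, $x \mathrel{\bbF_2} y$ holds exactly when $f_n(x) \mathrel{E_n} f_n(y)$ for every $n$; in particular each $f_n$ is a Borel homomorphism from $\bbF_2$ to $E_n$.

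Now argue by contradiction: assume $\bbF_2 \not\leq_B E_n$ for all $n$. By the corollary to Theorem~\ref{thm:ksz-main}, $\bbF_2$ is generically $E_n$-ergodic for each $n$, so there is a comeager set $C_n \subseteq X$ on which $f_n$ takes values in a single $E_n$-class. Then $C = \bigcap_n C_n$ is comeager, and for any $x, y \in C$ we have $f_n(x) \mathrel{E_n} f_n(y)$ for all $n$, so $f(x)$ and $f(y)$ are $\prod_n E_n$-equivalent, whence $x \mathrel{\bbF_2} y$. Thus $C$ is contained in a single $\bbF_2$-class, contradicting that every $\bbF_2$-class is meager (the class of $\bar x$ lies inside $\{\bar y : y_0 \in \{x_n : n \in \omega\}\}$, a countable union of nowhere dense sets).

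I do not anticipate a genuine obstacle: the substantive input is the generic ergodicity dichotomy for $\bbF_2$ already established above, together with the elementary facts that a countable intersection of comeager sets is comeager and that $\bbF_2$-classes are meager. The only point worth noting is that the conclusion is slightly stronger than what a direct appeal to the intersection corollary gives, because the factor spaces $X_n$ need not coincide; but no new idea is required, since a reduction into a product restricts in each coordinate to a homomorphism, which is exactly the hypothesis handled by generic ergodicity in the intersection case.
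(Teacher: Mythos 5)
Your proof is correct. It is worth noting, though, that you take a slightly different route from the paper. You rerun the coordinate-wise generic-ergodicity argument directly: each $f_n$ is a homomorphism, so under the contradictory assumption $\bbF_2\not\leq_B E_n$ for all $n$ you get comeager sets $C_n$ where $f_n$ is constant up to $E_n$, intersect them, and contradict the meagerness of $\bbF_2$-classes. The paper instead encodes the product as a decreasing intersection on a single space: setting $F_n = E_1 \times \cdots \times E_n \times \prod_{k>n} I(X_k)$ (so $\prod_n E_n = \bigcap_n F_n$), it invokes the already-proved intersection corollary to get $\bbF_2 \leq_B F_n \sim_B E_1 \times \cdots \times E_n$ for some $n$, and then applies the finite factoring lemma (Lemma~\ref{lem:factor}) for prime relations to pick out a single $E_i$. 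Your version is a self-contained repetition of the argument underlying the intersection corollary and quietly sidesteps the issue of putting all the $E_n$ on a common domain; the paper's version is shorter on the page because it reuses the intersection corollary and the primeness machinery as black boxes, at the cost of the extra decomposition step. Both are sound, and both ultimately rest on generic $F$-ergodicity from Theorem~\ref{thm:ksz-main} together with the meagerness of $\bbF_2$-classes, which you correctly justify.
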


\begin{proof}
Let $F_n = E_1 \times \cdots \times E_n \times \prod_{k>n} I(X) \sim_B E_1 \times \cdots \times E_n$, where $I(X) = X \times X$ is the trivial equivalence relation on $X$ with a single class. Then $\prod_n E_n = \bigcap_n F_n$, so $\bbF_2 \leq_B F_n$ for some $n$. Then $\bbF_2 \leq_B E_1 \times \cdots \times E_n$, so $\bbF_2 \leq_B E_i$ for some $i \leq n$ by primeness.
\end{proof}

Thus, for instance, since $\bbF_2 \not\leq_B \bbE_1^{\omega}$  we have:
\begin{cor}
\label{end-of-KSZ}
$\bbF_2$ is prime to $\bbE_1^{\omega}$.
\end{cor}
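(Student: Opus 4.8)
The plan is to reduce the statement entirely to the primeness of $\bbF_2$, so that it suffices to verify the single non-reducibility $\bbF_2 \not\leq_B \bbE_1^{\omega}$.

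First I would observe that $\bbE_1^{\omega}$ is, by definition, the countable product $\prod_n E_n$ in which every $E_n$ equals $\bbE_1$. Hence the corollary just established — that $\bbF_2 \leq_B \prod_n E_n$ implies $\bbF_2 \leq_B E_n$ for some $n$ — applies directly: were $\bbF_2 \leq_B \bbE_1^{\omega}$, we would obtain $\bbF_2 \leq_B \bbE_1$.

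Second, I would rule this out. That $\bbF_2 \not\leq_B \bbE_1$ is a classical result of Kechris--Louveau; within the present framework it also follows quickly, since $\bbE_1$ is prime to $\bbF_2$ (established earlier), so $\bbE_1 \not\leq_B \bbF_2$, whence $\bbF_2 \leq_B \bbE_1$ would force $\bbF_2 <_B \bbE_1$; then the Kechris--Louveau dichotomy quoted above would give $\bbF_2 \leq_B \bbE_0$, which is impossible since $\bbF_2$ is not essentially countable (for instance $\bbE_0^{\omega} \leq_B \bbF_2$ while $\bbE_0^{\omega} \not\leq_B \bbE_0$). Thus $\bbF_2 \not\leq_B \bbE_1$, and therefore $\bbF_2 \not\leq_B \bbE_1^{\omega}$.

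Finally, since $\bbF_2$ is prime — indeed uniformly prime, as proved above — the defining alternative of primeness applied to the Borel equivalence relation $\bbE_1^{\omega}$ yields that either $\bbF_2 \leq_B \bbE_1^{\omega}$ or $\bbF_2$ is prime to $\bbE_1^{\omega}$; the first has just been excluded, so $\bbF_2$ is prime to $\bbE_1^{\omega}$. I do not anticipate a genuine obstacle: all the substance is contained in the already-established primeness of $\bbF_2$ and the product corollary, the remainder being the standard non-reducibility $\bbF_2 \not\leq_B \bbE_1$; the only point needing a moment's care is confirming that the product corollary applies to $\bbE_1^{\omega}$ presented as an $\omega$-fold power of $\bbE_1$, which is immediate from the definitions.
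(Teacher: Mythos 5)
Your argument is correct and is essentially the paper's argument: the paper records the corollary as an immediate consequence of the primeness of $\bbF_2$ together with the non-reducibility $\bbF_2 \not\leq_B \bbE_1^{\omega}$, which it states as known, placed after the product corollary exactly because that corollary reduces it to $\bbF_2 \not\leq_B \bbE_1$. You have simply filled in the non-reducibility with a self-contained justification (passing through the primeness of $\bbE_1$ to $\bbF_2$, the Kechris--Louveau dichotomy for $\bbE_1$, and $\bbF_2 \not\leq_B \bbE_0$), which is a reasonable amount of extra detail but does not change the route.
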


Unfortunately, attempts to generalize the above results about $\bbF_2$ to other equivalence relations encounter problems. First, note that for $\bbE_1$ we cannot use the meager ideal, as there are equivalence relations $F$ with $\bbE_1 \subseteq F$, every $F$-class meager, yet $\bbE_1 \not\leq_B F$. An example of such an $F$ is $\bbE_0^{\omega}$ ``with coordinates exchanged'' (i.e.,  $x \FF y$ if $\forall m \forall^{\infty} n \, x_n(m)=y_n(m)$), although in this case $\bbE_1$ will be reducible to its restriction to a single $F$-class.

Similarly, we cannot use the meager ideal for $\bbE_0^{\omega}$. Take $F$ to be $\bbE_0 \times I(2^{\omega})^{\bbN}$; then $\bbE_0^{\omega} \subseteq F$, every $F$-class is meager, yet $\bbE_0^{\omega} \not\leq_B F \sim_B \bbE_0$ (although again $\bbE_0^{\omega}$ will be reducible to its restriction to a single $F$-class).

Also, for $E$ such as $\bbE_1$ and $\bbE_0^{\omega}$ we do not have full $S_{\infty}$-invariance, but for $\pi \in S_{\infty}$ we do have $x \mathrel{E} y$ iff $\pi(x) \mathrel{E} \pi(y)$. We do not necessarily have $x \mathrel{F} y$ iff $\pi(x) \mathrel{F} \pi(y)$ for $E \subseteq F$, but might try to impose it somehow.

Now we turn to the analysis of possible partition properties for $\bbF_2$, and see that primeness cannot be improved to Borel weak compactness. Recall from Definition~\ref{dfn:partition} that $[\bbF_2]^2$ denotes the set of pairs of $\bbF_2$-inequivalent elements, and a partition (i.e., $\Delta(2)$-partition) of $[\bbF_2]^2$ is a symmetric invariant subset of $[\bbF_2]^2$. The following partition of $[\bbF_2]^2$ will be the linchpin to studying partition properties for $\bbF_2$:

\begin{dfn} Let $P_0$ be the following partition of $[\bbF_2]^2$: 
\[ (x,y) \in P_0 \ \Leftrightarrow\ x \not\subseteq y \ \wedge\ y \not\subseteq x .\]
\end{dfn}

We will make use of the following well-known observation:
\begin{lem}
\label{lem:orderable}
Let $E$ be an equivalence relation on a Polish space $X$. If $X/E$ admits a Borel linear ordering (in the sense that there is a Borel quasi-order on $X$ which is $E\times E$-invariant and so that the induced ordering on $X/E$ is linear) then $E$ is smooth.
\end{lem}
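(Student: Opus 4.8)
The plan is to reduce everything to the single crucial instance $E=\bbE_0$ by way of the Glimm--Effros dichotomy, and then to show directly that $2^{\omega}/\bbE_0$ carries no Borel linear ordering. So suppose $E$ is a Borel equivalence relation that is \emph{not} smooth (the applications only need the Borel case; the analytic case is handled by the analogous dichotomy). By the Harrington--Kechris--Louveau dichotomy \cite{HKL} there is a continuous embedding $f\colon 2^{\omega}\to X$ with $u\mathrel{\bbE_0}v\iff f(u)\EE f(v)$. Pull back the given quasi-order by setting $u\preceq v$ iff $f(u)\le f(v)$. Since $f$ is a reduction of $\bbE_0$ to $E$ and $\le$ is $E\times E$-invariant, $\preceq$ is a Borel quasi-order on $2^{\omega}$ that is $\bbE_0\times\bbE_0$-invariant, and it induces a \emph{linear} order on $2^{\omega}/\bbE_0$: totality comes from totality of $\le$ on $X/E$ applied to $f(u),f(v)$, and antisymmetry modulo $\bbE_0$ comes from antisymmetry modulo $E$ together with $f$ being a reduction. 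It therefore suffices to derive a contradiction from the existence of such a $\preceq$.

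For this I would use the usual product measure $\mu$ on $2^{\omega}$, which is ergodic with respect to $\bbE_0$: every $\bbE_0$-invariant $\mu$-measurable set is null or conull. Let $R=\{(u,v):u\preceq v\}$, a Borel set. For each $w$ the sections $R_w=\{v:w\preceq v\}$ and $R^w=\{u:u\preceq w\}$ are $\bbE_0$-invariant, by $\bbE_0\times\bbE_0$-invariance of $\preceq$; by linearity on the quotient $R_w\cup R^w=2^{\omega}$, while $R_w\cap R^w=[w]_{\bbE_0}$ is countable, hence $\mu$-null. By Fubini these sections are $\mu$-measurable for $\mu$-a.e.\ $w$, so ergodicity forces $\mu(R_w),\mu(R^w)\in\{0,1\}$, and since they sum to $1$, exactly one of them equals $1$ for a.e.\ $w$. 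Now compute $\mu\times\mu(R)$ two ways: integrating over $u$-sections gives $\mu(S)$ where $S=\{u:\mu(R_u)=1\}$, and integrating over $v$-sections gives $\mu(L)$ where $L=\{v:\mu(R^v)=1\}$; hence $\mu(S)=\mu(L)$. On the other hand $\mathbf{1}_S(w)+\mathbf{1}_L(w)=1$ for a.e.\ $w$, so $\mu(S)+\mu(L)=1$. Therefore $\mu(S)=\tfrac12$. But $S$ is $\bbE_0$-invariant and $\mu$-measurable, so ergodicity gives $\mu(S)\in\{0,1\}$ --- a contradiction.

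Once the passage to $\bbE_0$ is in place the argument is routine; the only points needing care are checking that the pulled-back quasi-order really induces a \emph{linear} (not merely partial) order on $2^{\omega}/\bbE_0$, and keeping straight which family of sections is being integrated in the Fubini computation. The genuine content --- and the only real obstacle --- is the conceptual step of identifying $\bbE_0$ as the obstruction and invoking the dichotomy to replace the arbitrary non-smooth $E$ by it; after that, ergodicity of $\bbE_0$ does all the work. (If one prefers to avoid measure theory, the identical argument runs with ``meager/comeager'' in place of ``null/conull'', using that $\bbE_0$ is generically $\Delta(\bbR)$-ergodic, as recalled above, together with the Kuratowski--Ulam theorem in place of Fubini.)
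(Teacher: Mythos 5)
Your proposal is correct, and the first step (applying the Harrington--Kechris--Louveau dichotomy and pulling the quasi-order back along an embedding $f\colon \bbE_0 \sqsubseteq E$) is exactly what the paper does. The second step, however, is genuinely different. The paper's argument is a short involution trick: it observes that the set $A=\{x : x \prec \bar{x}\}$, where $\bar{x}(n)=1-x(n)$, is $\bbE_0$-invariant and has $\bar{A}=A^c$ (since $x$ and $\bar x$ are never $\bbE_0$-equivalent); the flip is a self-homeomorphism preserving Baire category, so $A$ and $A^c$ would both be meager or both be comeager, contradicting the topological $0$--$1$ law. Your argument instead uses the double-counting of $\mu\times\mu(R)$ via Fubini, the exact partition $R_w \sqcup R^w$ up to the null set $[w]_{\bbE_0}$, and ergodicity of the product measure to derive $\mu(S)=\tfrac12$ for an invariant set $S$ --- a contradiction. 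Both are ``$0$--$1$ law'' arguments at heart; the paper's is slightly shorter and, as a bonus, already shows the stronger fact that there is no Borel selector of one class from each unordered pair, since the same involution argument applies to such a selector. Your Fubini argument, on the other hand, does not rely on the special bit-flip structure of $2^\omega$, so it is a bit more robust and would transfer to other ergodic invariant probability measures unchanged. One small nitpick: you invoke Fubini to get measurability of the sections $R_w$, but this is automatic since $R$ is Borel and Borel sets have Borel sections for every $w$; Fubini is only needed for the measurability of $w\mapsto\mu(R_w)$ and the double-counting identity.
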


\begin{proof}
If not, $\bbE_0$ would be reducible to $E$ by the Generalized Glimm-Effros Dichotomy and we could pull back the ordering of $X/E$ to give a Borel quasi-ordering $\prec$ inducing a linear ordering of $2^{\omega}/E_0$. But there can be no such ordering (or even a function selecting one equivalence class from each unordered pair) or else, letting $x \mapsto \bar{x}$ be the homeomorphism of $2^{\omega}$ given by $\bar{x}(n)= 1 - x(n)$, we would have that the set $\{x : x \prec \bar{x} \}$ would be an $\bbE_0$-invariant set homeomorphic to its complement, contradicting the Topological 0-1 Law that any $\bbE_0$-invariant Baire-measurable set is either meager or comeager.
\end{proof}

We now see that homogeneous sets for the partition $P_0$ completely characterize partition relations for $\bbF_2$.
\begin{lem}
For any Borel equivalence relation $F$, $\bbF_2 \rightarrow_B (F)^2_2$ if and only if $F$ is reducible to $\bbF_2 \upharpoonright H_0$ for some Borel $H_0$ which is homogeneous for $P_0$.
\end{lem}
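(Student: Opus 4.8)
The plan is to treat the two directions very differently. The forward direction is immediate: assuming $\bbF_2 \rightarrow_B (F)^2_2$, I apply it to the Borel partition $P_0$ itself; by definition of $\rightarrow_B$ this produces a Borel set $A$ with $F \leq_B \bbF_2 \upharpoonright A$ on which $P_0$ is constant, which is exactly a Borel set homogeneous for $P_0$, so I take $H_0 = A$.

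For the converse, suppose $F \leq_B \bbF_2 \upharpoonright H_0$ with $H_0$ Borel and homogeneous for $P_0$, and fix an arbitrary Borel partition $f \colon [\bbF_2]^2 \to 2$; by the monotonicity of $\rightarrow_B$ it suffices, for this $f$, to produce a Borel set homogeneous for $f$ to which $\bbF_2 \upharpoonright H_0$ reduces, so I will assume $F = \bbF_2 \upharpoonright H_0$. Homogeneity of $H_0$ for $P_0$ gives two cases. If every pair of distinct $\bbF_2$-classes in $H_0$ is $\subseteq$-comparable, then the sets $\{x_n : n \in \omega\}$ for $\bar x \in H_0$ are linearly ordered by inclusion, so $\bbF_2 \upharpoonright H_0$ is smooth by Lemma~\ref{lem:orderable}; then $F \leq_B \Delta(\bbR)$, and since $\Delta(\bbR)$ is Borel weakly compact (Galvin) and $\Delta(\bbR) \leq_B \bbF_2$, monotonicity of $\rightarrow_B$ already gives $\bbF_2 \rightarrow_B (F)^2_2$. (The degenerate case that $H_0$ has at most one class is subsumed here.) So the content is the remaining case, in which every pair of distinct classes in $H_0$ is $\subseteq$-incomparable.

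The crux is that the generic-embedding machinery of \S\ref{sec:f2prime} only controls $f$ along tuples that are ``fully separated'' in their $\bigoplus$-decomposition; after composing with a set-operation-preserving embedding this lets one make $f$ constant on exactly one of the three regions of $[\bbF_2]^2$ --- the $\subseteq$-comparable pairs, the \emph{properly overlapping} pairs ($x \cap y \neq \emptyset$ but $x,y$ incomparable), and the disjoint pairs --- but \emph{not} directly on the ``incomparable'' region, which is precisely the homogeneity $P_0$ hands us. So the first step is to collapse the incomparable $H_0$ into the properly-overlapping region: let $\Phi$ send (an enumeration of) a countable $S \subseteq 2^{\omega}$ to (an enumeration of) $(\{0\} \times S) \cup \{(1, 0^{\omega})\}$, under a fixed homeomorphism $\{0,1\} \times 2^{\omega} \cong 2^{\omega}$. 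Then $\Phi$ is a continuous embedding of $\bbF_2$, and since the adjoined point $(1, 0^{\omega})$ lies outside $\{0\} \times S$ for every $S$, adjoining it preserves $\subseteq$-incomparability; hence $H_0' := \Phi[H_0]$ has any two distinct classes properly overlapping and satisfies $\bbF_2 \upharpoonright H_0 \sqsubseteq_c \bbF_2 \upharpoonright H_0'$.

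It then remains to homogenize $f$ on the properly-overlapping region. For this I would use $D^{(3)} := \{(a,b,c) \in X^3 : a,b,c \text{ pairwise disjoint and } \neg b \mathrel{\bbF_2} c\}$, which is comeager (using that $\bbF_2$ is meager in $X^2$) and invariant under the \emph{diagonal} $S_\infty$-action, and on which $(a,b,c) \mapsto f(a \oplus b, a \oplus c)$ is well defined and diagonally invariant (because $\pi \cdot (a \oplus b) \mathrel{\bbF_2} (\pi \cdot a) \oplus (\pi \cdot b)$ and $f$ is $\bbF_2 \times \bbF_2$-invariant). Since the ``non-meager $\Rightarrow$ comeager'' lemma, the Genericity Lemma, and the Generic Embedding Lemma all go through verbatim with ``diagonally invariant'' in place of ``invariant'' --- the single permutation the Genericity Lemma produces acts identically on all coordinates --- I can fix $c_1 \in 2$ making $R^{(3)} := \{(a,b,c) : (a,b,c) \in D^{(3)} \Rightarrow f(a \oplus b, a \oplus c) = c_1\}$ comeager, and obtain a set-operation-preserving continuous embedding $\varphi \colon \bbF_2 \sqsubseteq_c \bbF_2$ generic for $R^{(3)}$. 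Setting $A := \varphi[H_0']$ (analytic): for distinct-class $u = \varphi(\bar x), v = \varphi(\bar y) \in A$ with $\bar x, \bar y \in H_0'$, the pair $\bar x, \bar y$ is properly overlapping, so writing $\bar a = \bar x \cap \bar y$, $\bar b = \bar x \setminus \bar y$, $\bar c = \bar y \setminus \bar x$ (enumerations of nonempty, pairwise disjoint sets) we have $\bar x \mathrel{\bbF_2} \bar a \oplus \bar b$ and $\bar y \mathrel{\bbF_2} \bar a \oplus \bar c$, hence $\varphi(\bar x) \mathrel{\bbF_2} \varphi(\bar a) \oplus \varphi(\bar b)$ and $\varphi(\bar y) \mathrel{\bbF_2} \varphi(\bar a) \oplus \varphi(\bar c)$ since $\varphi$ preserves $\oplus$ and is a reduction; applying genericity of $\varphi$ to the pairwise-disjoint triple $\bar a, \bar b, \bar c$ gives $(\varphi(\bar a), \varphi(\bar b), \varphi(\bar c)) \in R^{(3)} \cap D^{(3)}$ (using that $\varphi$ preserves disjointness and is injective), so $f(u,v) = f(\varphi(\bar a) \oplus \varphi(\bar b), \varphi(\bar a) \oplus \varphi(\bar c)) = c_1$ by invariance of $f$. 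Thus $f$ is constant on $A^2 \cap [\bbF_2]^2$ and $F = \bbF_2 \upharpoonright H_0 \leq_B \bbF_2 \upharpoonright A$; finally Lemma~\ref{lem:reflection} upgrades $A$ to a Borel homogeneous set $B \supseteq A$, giving $F \leq_B \bbF_2 \upharpoonright B$, and since $f$ was arbitrary, $\bbF_2 \rightarrow_B (F)^2_2$. Apart from the conceptual obstacle just flagged (the genericity machinery homogenizes onto ``properly overlapping'', never onto ``incomparable'', forcing the $\Phi$-trick), the rest is routine bookkeeping: checking that $\Phi$ behaves as claimed, verifying the diagonal-invariance statements, and confirming that a set-operation-preserving $\varphi$ preserves and reflects $\subseteq$, disjointness, and proper overlapping.
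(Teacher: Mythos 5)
Your overall structure matches the paper's --- the forward direction by instantiating at $P_0$, the smooth case via weak compactness of $\Delta(\bbR)$, and the generic-embedding argument when $[H_0]^2 \subseteq P_0$, including the observation that one must first push $H_0$ into the ``properly overlapping'' region. But the collapsing map $\Phi$ you use for that last step is insufficient. Adjoining the single point $(1,0^\omega)$ only makes $\Phi(\bar x)\cap\Phi(\bar y)$ nonempty: when $\bar x \cap \bar y = \emptyset$ the intersection is a singleton, and $\Phi$ does nothing to the differences $\bar x \setminus \bar y$, $\bar y \setminus \bar x$, which may also be finite. This matters because the Generic Embedding Lemma, although its statement only says ``pairwise disjoint $x_1,\ldots,x_{n_i}$ in $\bbR^\omega$'', inherits from the Genericity Lemma the requirement that the reals being interleaved be \emph{distinct} elements of the good Cantor set $K$. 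If $\bar a$ is a constant (or finitely supported) sequence, the $g(\bar a_j)$'s repeat, the bijection $\rho$ built in that proof cannot be formed, and the genericity conclusion for $(\varphi(\bar a),\varphi(\bar b),\varphi(\bar c))$ does not follow. So applying genericity to a triple whose first coordinate can be a singleton is a genuine gap.

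The paper avoids this by following the reduction $\varphi_0$ with a map that (i) replaces each real by an infinite countable set, injectively so that distinct reals get pairwise disjoint sets, and (ii) adjoins one common fixed infinite set to every sequence. Step (i) makes $\bar x \setminus \bar y$ and $\bar y \setminus \bar x$ infinite whenever nonempty; step (ii) makes $\bar x \cap \bar y$ infinite. This preserves $\subseteq$-incomparability, so $H_0$ stays homogeneous, and it yields representatives for which the triple $(\bar x \cap \bar y, \bar x \setminus \bar y, \bar y \setminus \bar x)$ is always pairwise disjoint \emph{and} infinite; from there your argument, which then coincides with the paper's, goes through. As a minor aside, your detour through ``diagonal'' invariance is unnecessary: since $f$ is $\bbF_2 \times \bbF_2$-invariant and disjointness is preserved by permuting coordinates independently, your $D^{(3)}$ and $R^{(3)}$ are already fully (finitely) invariant in the sense the paper uses, so the lemmas apply as stated.
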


\begin{proof}
The left-to-right implication is immediate from the definitions, so suppose $H_0$ is homogeneous for $P_0$ and $\varphi_0$ is a reduction of $F$ to $\bbF_2 \upharpoonright H_0$. If $[H_0]^2 \cap P_0 = \emptyset$ then $\bbF_2 \upharpoonright H_0$ admits a Borel linear ordering induced by $\subseteq$ and is therefore smooth by the above observation. As Galvin's theorem  shows that there is a perfect homogeneous set for any partition of $[\bbF_2]^2$, we may then assume that $[H_0]^2 \subseteq P_0$. Also note that, following $\varphi_0$ with a map which replaces reals by pairwise disjoint countable sets and adds the same fixed set to everything, we may assume that for each $\bbF_2$-inequivalent $x$ and $y$ in $H_0$ we have $x \cap y$, $x \setminus y$, and $y \setminus x$ all infinite. Now let $P \subseteq [\bbF_2]^2$ be any Borel partition; we will show that $\bbF_2 \upharpoonright H_0 \leq_B \bbF_2 \upharpoonright H$ for some $H$ which is homogeneous for $P$.

Since the set $\{ (u,v,w) : u \oplus v \mathrel{P} u \oplus w\}$ is invariant, it is either meager or comeager. We will assume it is comeager and find $H$ with $[H]^2 \subseteq P$ (when it is meager we find $H$ with $[H]^2 \cap P = \emptyset$ in an identical fashion). Let  $\varphi (x)$ be the generic embedding for this set given by Corollary~\ref{lem:generic-embedding}. Then $\varphi$ is a reduction of $\bbF_2$ to itself; so letting $H$ be the range, we need only check that for $\bbF_2$-inequivalent $x$ and $y$ in $H_0$ we have $(\varphi(x),\varphi(y)) \in P$.  Letting $u = \varphi( x \cap y)$. $v = \varphi( x \setminus y)$, and $w=\varphi(y \setminus x)$, we have $\varphi(x) \mathrel{\bbF_2} u \oplus v$ and $\varphi(y) \mathrel{\bbF_2} u \oplus w$, and the Generic Embedding Lemma gives that $(u \oplus v , u \oplus w) \in P$ as desired.
\end{proof}

Hence analysis of partition properties for $\bbF_2$ reduces to studying $P_0$-homogeneous sets, i.e., antichains under containment. Characterizing such sets amounts to a definable version of Sperner's Theorem which characterizes the size of a maximal antichain under containment in the power set of a finite set. We do not have a complete characterization, but we can exhibit a lower bound.

\begin{thm}
\label{lem:Einfty-partition}
$\bbF_2 \rightarrow_B (E_{\infty}^{\omega})^2_2$.
\end{thm}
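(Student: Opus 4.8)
The plan is to invoke the preceding lemma, which reduces the claim to producing a single Borel set $H_0 \subseteq X$ that is homogeneous for $P_0$ --- equivalently, a Borel antichain under $\subseteq$ among countable sets of reals --- together with a Borel reduction of $E_{\infty}^{\omega}$ to $\bbF_2 \upharpoonright H_0$. After that lemma the problem becomes purely combinatorial, and the key observation is that the orbits of a countable group action form a canonical family of pairwise-disjoint countable sets, hence pairwise $\subseteq$-incomparable.

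Concretely, I would represent $E_{\infty}$ as the orbit equivalence relation of the shift action of $F_2$ on $2^{F_2}$, fix an enumeration $F_2 = \{g_k : k \in \omega\}$ with $g_0 = 1_{F_2}$, fix a continuous injection $c : \omega \times 2^{F_2} \to 2^{\omega}$ whose images $c(n,\cdot)[2^{F_2}]$ are pairwise disjoint (for instance, prefixing by the finite string $0^n1$), and fix a bijection $i \mapsto (n_i,k_i)$ of $\omega$ onto $\omega \times \omega$. Define $\Psi$ on $(2^{F_2})^{\omega}$ by $\Psi(\bar z)(i) = c\bigl(n_i, g_{k_i} \cdot z_{n_i}\bigr)$, so that $\Psi(\bar z)$ enumerates the set $\bigcup_n c(n,[z_n])$, and set $H_0 = \Psi\bigl[(2^{F_2})^{\omega}\bigr]$. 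Then I would check: (i) $\Psi$ is continuous and injective, since each $z_n$ is recovered from the coordinate carrying tag $n$ and group index $0$; hence $H_0$ is Borel by the Lusin--Souslin theorem. (ii) $\Psi$ is a reduction of $E_{\infty}^{\omega}$ to $\bbF_2$: because the slices of $c$ are disjoint, $\Psi(\bar z) \mathrel{\bbF_2} \Psi(\bar w)$ holds iff $[z_n] = [w_n]$ for every $n$, iff $\bar z \mathrel{E_{\infty}^{\omega}} \bar w$. (iii) $[H_0]^2 \subseteq P_0$: if the set enumerated by $\Psi(\bar z)$ is contained in the one enumerated by $\Psi(\bar w)$, then $[z_n] \subseteq [w_n]$ for each $n$, and since two orbits one of which contains the other must coincide, this forces $\Psi(\bar z) \mathrel{\bbF_2} \Psi(\bar w)$. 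Thus $\Psi$ witnesses $E_{\infty}^{\omega} \leq_B \bbF_2 \upharpoonright H_0$ with $H_0$ homogeneous for $P_0$, and the preceding lemma yields $\bbF_2 \rightarrow_B (E_{\infty}^{\omega})^2_2$.

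The only mildly delicate steps are the bookkeeping ensuring $\Psi$ is injective --- so that $H_0$ comes out Borel rather than merely analytic --- and the observation in (iii) that containment of the enumerated sets implies coordinatewise orbit containment, which relies on the disjointness of the tags $c(n,\cdot)$. Neither is a genuine obstacle: the conceptual work was already carried out in reducing partition properties of $\bbF_2$ to the study of $P_0$-homogeneous sets, and the content of this theorem is simply that orbits supply enough incomparable countable sets to encode $E_{\infty}^{\omega}$.
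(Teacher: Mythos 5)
Your proposal is correct and takes essentially the same approach as the paper: the paper's map $\varphi(x) = \langle n \smallfrown g_m \cdot x_n : m,n \in \omega \rangle$ is exactly your $\Psi$ with the prefix $n \smallfrown {}$ playing the role of your tagging injection $c(n,\cdot)$, and both arguments conclude by observing that disjointness of tagged orbits makes the range $P_0$-homogeneous. Your additional care about injectivity of $\Psi$ (to get Borelness of the range via Lusin--Souslin) is a small but welcome explicitness that the paper leaves implicit.
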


\begin{proof}
For simplicity, we will use $\omega^{\omega}$ as $\bbR$ here.
Let $\varphi$ be defined as
\[ \varphi(x) = \langle n \smallfrown g_m \cdot x_n : m,n \in \omega \rangle ,\]
where $\{g_n : n \in \omega\}$ is a countable group generating $E_{\infty}$, so $\varphi(x) = \{ n \smallfrown z : z \mathrel{E_{\infty}} x_n  \wedge n \in \omega\}$. This is easily a reduction from $E_{\infty}^{\omega}$ to $\bbF_2$. On the other hand, if there is an $n$ with $\neg x_n \mathrel{E_{\infty}} y_n$ then $[x_n]_{E_{\infty}}$ is disjoint from $[y_n]_{E_{\infty}}$ so $\varphi(x) \setminus \varphi(y)$ contains $\{ n \smallfrown z : z \mathrel{E_{\infty}} x_n \}$ whereas $\varphi(y) \setminus \varphi(x)$ contains $\{ n \smallfrown z : z \mathrel{E_{\infty}} y_n \}$, so $\varphi (x) \not\subseteq \varphi(y)$ and vice versa, so the range of $\varphi$ is homogeneous for $P_0$.
\end{proof}

We now show that $\bbF_2$ is not Borel weakly compact  by showing that there is no homogeneous set for $P_0$ of size $\bbF_2$. Zapletal has improved this result to show that the restriction of $\bbF_2$ to any $P_0$-homogeneous set must be pinned, but we present our original proof because it gives further insight into $\bbF_2$-invariant functions and possible generalizations to other equivalence relations. We begin with a technical lemma, showing that certain $\bbF_2$-invariant functions can not exist. We will use the following result,  which appears as Theorem 8.3.4 of \cite{gao}.

\begin{thm}[Harrington]
\label{thm:Harrington-separation}
Let $D \subseteq \omega^{\omega}$ be $\bPi^1_1$-complete and $A \subseteq \omega^{\omega}$ be $\bSigma^1_1$ with $D \subseteq A$. Then there is no Borel set $B \subseteq \omega^{\omega} \times \omega^{\omega}$ such that $D \times (A \setminus D) \subseteq B$ and $(A \setminus D) \times D \cap B = \emptyset$,
\end{thm}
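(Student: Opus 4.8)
The plan is to establish this by the standard effective-descriptive-set-theory argument in the Gandy--Harrington topology, of the kind Harrington developed for separation theorems of this shape; throughout, the goal is to show that the existence of such a $B$ would force $D$ to be Borel, contradicting $\bPi^1_1$-completeness.

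First I would relativize to a real parameter so that $D$ is $\Pi^1_1$, $A$ is $\Sigma^1_1$, and the hypothetical $B$ is $\Delta^1_1$; note that $D$ remains non-$\Delta^1_1$, hence non-Borel, over this parameter. Intersecting with $A \times A$ we may take $B \subseteq A \times A$. The key elementary step is the pointwise reformulation of the hypotheses: for all $x,y \in A$, if $(x,y) \in B$ then $y \in D \Rightarrow x \in D$, and if $(x,y) \notin B$ then $x \in D \Rightarrow y \in D$. Consequently $B$ maps $A \setminus D$ into $A \setminus D$; for $x \in D$ the horizontal section $B_x$ contains $A \setminus D$, while for $x \notin D$ it is disjoint from $D$ --- and symmetrically for vertical sections. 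In particular, for $y \in A \setminus D$ the vertical section $B^y$ contains $D \cap A$, and were it equal to $D \cap A$ we would have exhibited $D = D \cap A$ as a Borel set; the same goes for the analogous inclusions for horizontal or vertical sections. So after disposing of these easy cases I may assume all of them are proper --- in particular $A \setminus D = B[A \setminus D]$.

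In this ``regular'' case I would pass to the Gandy--Harrington topology $\tau$ on $\omega^\omega$, generated over the fixed parameter by the $\Sigma^1_1$ sets. Since there are only countably many of these, the union $V$ of all $\Sigma^1_1$ subsets of $D$ is itself $\Sigma^1_1$; by Luzin separation pick a Borel $W$ with $V \subseteq W \subseteq D$ and replace $A,D,B$ by $A \setminus W$, $D \setminus W$, $B \cap (A \setminus W)^2$. All hypotheses persist, $D \setminus W$ is still a nonempty non-Borel $\Pi^1_1$ set, and now no nonempty $\Sigma^1_1$ set is contained in $D$; equivalently $D \cap A$ is $\tau$-closed and $\tau$-nowhere dense in $A$. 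Now I would bring in two standard facts of effective descriptive set theory: that every nonempty $\Sigma^1_1$ set is a Baire space in the relative Gandy--Harrington topology, and $\Sigma^1_1$-boundedness, by which for $x \in D$ the set $B_x^c \cap A$, being a $\Sigma^1_1$ subset of $D$, is bounded below $\omega_1$ in a $\bPi^1_1$-rank on $D$. A Baire-category argument --- run on $A \times A$ with the product Gandy--Harrington topology, so that the sections of $B$ at arbitrary (non-lightface) reals are handled correctly --- should then combine these to force $D \cap A$ to coincide with a Borel set, contradicting its non-Borelness.

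The main obstacle is precisely this last step: converting the family of pointwise rank bounds into a single uniform bound, while respecting that the sections $B_x$ and $B^y$ at arbitrary reals are only $\Delta^1_1$ in those reals rather than lightface --- which is why the genericity and Baire-category bookkeeping must be carried out in the product topology on $A \times A$ rather than coordinatewise. This is the technically delicate core of Harrington-style separation arguments, and it is here that the hypothesis that $D$ is a $\bPi^1_1$-complete (hence non-Borel $\bPi^1_1$) set, which makes $\Sigma^1_1$-boundedness available for every $\Sigma^1_1$ subset of $D$, is used essentially.
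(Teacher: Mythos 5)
The paper does not actually prove this statement; it is cited as Theorem~8.3.4 of Gao's \emph{Invariant Descriptive Set Theory} and used as a black box, so there is no internal argument to compare yours against. Taken on its own terms, your preliminary reductions are sound: the relativization to a parameter, the restriction of $B$ to $A\times A$, the pointwise reformulation of the two inclusions, and the replacement of $D$ by $D\setminus W$ (with $W$ a Borel set swallowing the union $V$ of all lightface $\Sigma^1_1$ subsets of $D$) so that, in the residual situation, $D$ contains no nonempty $\Sigma^1_1$ set and hence is relatively $\tau$-closed and $\tau$-nowhere dense in $A$ for the Gandy--Harrington topology $\tau$. (A small slip: the claim ``$A\setminus D = B[A\setminus D]$'' does not follow from the other observations --- one only gets $B[A\setminus D]\subseteq A\setminus D$ --- but nothing downstream seems to rest on it.)

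The real problem is that the proposal stops exactly where the content of Harrington's theorem begins. You assert that $\Sigma^1_1$-boundedness together with ``a Baire-category argument run on $A\times A$ with the product Gandy--Harrington topology'' \emph{should} combine to exhibit $D\cap A$ as Borel, but no such argument is produced; you yourself flag this step as ``the main obstacle'' and ``the technically delicate core.'' Concretely, the proposal never specifies which nonempty $\Sigma^1_1$ condition in $A\times A$ the category argument is localized to, what comeager/meager dichotomy for the $\Delta^1_1$ set $B$ in the product $\tau\times\tau$ topology is being exploited, or --- most importantly --- how the pointwise bounds obtained from $\Sigma^1_1$-boundedness applied to $B_x^c\cap A\subseteq D$ for $x\in D$, which are only effective relative to $x$, are converted into a single uniform bound and then into a Borel description of $D$. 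Since everything preceding this point is routine setup available in any Gandy--Harrington argument, the missing step \emph{is} the proof: what you have is a plausible toolkit and a statement of the difficulty, not an argument that resolves it.
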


The following lemma is a variation on Friedman's theorem on the non-existence of Borel diagonalizers. We give a proof based on Harrington's proof of Friedman's theorem, following the presentation given in Theorem 8.3.5 of \cite{gao}; we sketch a forcing proof below as well. We note that this lemma can be extended to other equivalence relations besides $\bbF_2$; however, the consequences concerning weak compactness do not follow for other relations.
\begin{lem}
\label{lem:F2diagonal}
There is no $\bbF_2$-invariant Borel function  $F$ with the following properties:
\begin{enumerate}
\item If $\neg x \mathrel{\bbF_2} y$ then $F(x) \not\subseteq F(y)$ and $F(y) \not\subseteq F(x)$.
\item If $x$ and $y$ are disjoint then $F(x)$ and $F(y)$ are disjoint.
\item If $x$ and $y$ are disjoint then $F(x)$ and $F(x \cup y)$ are disjoint.
\end{enumerate}
\end{lem}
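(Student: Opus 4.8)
The plan is to derive a contradiction with Harrington's separation theorem (Theorem~\ref{thm:Harrington-separation}), following Harrington's proof of Friedman's theorem on the non-existence of Borel diagonalizers as presented in Theorem 8.3.5 of \cite{gao}; an alternative argument runs through Cohen forcing and the fact that $\bbF_2$ is pinned, which is sketched afterwards. Suppose toward a contradiction that an $\bbF_2$-invariant Borel function $F$ with properties (1)--(3) exists. Fix a $\bPi^1_1$-complete set $D\subseteq\omega^\omega$ together with a Borel assignment $a\mapsto T_a$ of trees on $\omega$ such that $a\in D$ iff $T_a$ is well-founded. Taking $A=\omega^\omega$ in Theorem~\ref{thm:Harrington-separation}, it suffices to produce a Borel set $B\subseteq\omega^\omega\times\omega^\omega$ with $D\times(\omega^\omega\setminus D)\subseteq B$ and $B\cap((\omega^\omega\setminus D)\times D)=\emptyset$.

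The device producing $B$ is an $F$-labeling of these coding trees that is used to run, in a Borel way, a transfinite rank recursion. Working in $X=(2^\omega)^\omega$, fix a Borel assignment of fresh pairwise disjoint singletons $c_{a,t}\in X$ for $(a,t)\in\omega^\omega\times\omega^{<\omega}$. For $a\in D$ label the nodes of $T_a$ from the leaves up: a leaf $t$ gets $s_{a,t}=c_{a,t}$, and an internal node $t$ gets $s_{a,t}=F(U_{a,t})$, where $U_{a,t}$ is a single countable set assembled, using condition (2) to keep everything pairwise disjoint, from $c_{a,t}$ together with the labels of the children of $t$; put $s_a=s_{a,\emptyset}$. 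Properties (2) and (3) are exactly what make this labeling \emph{rank-faithful}: (2) keeps the labels of incomparable nodes disjoint, while iterating (3) forces the label of a node to be disjoint from every label produced later in the recursion, so that the containment pattern among the $s_{a,t}$ (and in particular the single set $s_a$) determines the well-founded rank of $T_a$ below the given node; condition (1), forbidding $\subseteq$-comparability of labels attached to $\bbF_2$-inequivalent sets, rules out spurious coincidences and makes this a genuine rank comparison for $a,b\in D$.

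One now reads off $B$. Since $T_a$ is well-founded for $a\in D$, the graph of $a\mapsto s_a$ over $D$ is $\bSigma^1_1$ (``there is a valid labeling of $T_a$ with root value $s_a$''), and because a valid total labeling exists precisely when $T_a$ is well-founded and is then unique — using also that $D$ is $\bPi^1_1$ — this graph is in fact $\bDelta^1_1$. From it one extracts, by the uniformization and reflection bookkeeping underlying Harrington's argument, a Borel relation $B$ comparing the rank structures of $T_a$ and $T_b$ so that $B(a,b)$ holds whenever $a\in D$ and $T_b$ is ill-founded (so $T_b$ ``out-ranks'' the well-founded $T_a$) and fails whenever $T_b$ is well-founded and $T_a$ is ill-founded; this contradicts Theorem~\ref{thm:Harrington-separation}. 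The main obstacle — and the precise point where properties (1)--(3) of $F$ are indispensable, and where the effective machinery behind Harrington's proof (Gandy--Harrington forcing, or a fixed point via the recursion theorem, as in \cite{gao}) is needed — is to set up the labeling and the comparison so that $B$ is \emph{provably} Borel and correct on all of $\omega^\omega\times\omega^\omega$, i.e. so that ill-foundedness of $T_a$ versus $T_b$ is genuinely detected by a Borel condition; everything else is routine bookkeeping.
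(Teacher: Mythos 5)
Your high-level plan (run a rank-coding recursion through $F$ and contradict Harrington's separation theorem) is the right one and matches the paper's, but the specific device you propose does not work, and the step you dismiss as ``routine bookkeeping'' is in fact the crux.

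\textbf{Fresh constants destroy comparability.} You assign fresh, pairwise disjoint singletons $c_{a,t}$ depending on $a$, and build $s_{a,t}$ from them via $F$. Then for $a\neq b$ the labelings $s_{a,\cdot}$ and $s_{b,\cdot}$ are built from entirely disjoint raw material, and since $F$ preserves disjointness of disjoint inputs, the two families are pairwise disjoint as well. There is then nothing to compare: no $s_{a,t}$ is $\bbF_2$-equivalent to any $s_{b,t'}$, so no Borel relation on pairs of labelings can ``see'' which rank is larger. The paper's construction fixes this by making the recursion \emph{uniform}: it finds a single $x_0$ with $F(x_0)\cap x_0=\emptyset$ (this preliminary step is essential, and you omit it), and for every linear order $<_x$ it runs $f(n)=F\bigl(F(x_0)\cup\bigcup\{f(m):m<_x n\}\bigr)$. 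All recursions start from the same $F(F(x_0))$, so corresponding stages across different codes are $\bbF_2$-equivalent, and that $\bbF_2$-matching is exactly what the partial bijection $R_{u,v}$ (and then $I_{u,v}$, $J_{u,v}$, and $B$) is built on.

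\textbf{The rank-faithfulness claim is unsupported.} You assert that iterating (3) forces each label to be disjoint from all later ones, but with your setup this fails at the base: a leaf label is $c_{a,t}$, not an $F$-image, and nothing in (1)--(3) prevents $F(U_{a,t'})$ at an ancestor $t'$ from containing $c_{a,t}$. Properties (2) and (3) only give disjointness of $F$-images from $F$-images of disjoint or nested inputs; they say nothing about $F$-images meeting arbitrary constants. The paper's induction ($y_\alpha\not\subseteq F(x_0)\cup\bigcup_{\beta<\alpha}y_\beta$) works precisely because the base of the recursion is $F(x_0)$ with $F(x_0)\cap x_0=\emptyset$, so (3) can actually be applied at every stage.

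\textbf{The separating set $B$ is the hard part, and your setup blocks it.} You only define labelings for well-founded $T_a$, so any relation built from labelings is only meaningful on $D\times D$. To invoke Theorem~\ref{thm:Harrington-separation} you need a Borel $B$ that distinguishes $D\times(A\setminus D)$ from $(A\setminus D)\times D$, and the $A\setminus D$ side must carry something a Borel condition can interrogate. The paper achieves this by taking $A$ to be the $\bSigma^1_1$ set of pairs $(x,f)$ where $x$ is \emph{any} linear order code and $f$ satisfies the recursion locally; $A\setminus D$ then consists of genuine ill-founded witnesses, and $B$ is defined by comparing initial segments of the induced partial isomorphism $\varphi_{u,v}$, using that the well-founded side must run out first. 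None of this is routine, and none of it goes through with the tree-and-fresh-constants design: you have no analogue of $A$ on the ill-founded side and no $\bbF_2$-matching to define $\varphi_{u,v}$.

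In short: keep the Harrington-separation strategy, but replace the tree labeling by the paper's device (a fixed $x_0$ with $F(x_0)\cap x_0=\emptyset$, the recursion $f(n)=F(F(x_0)\cup\bigcup_{m<_x n}f(m))$ over \emph{all} linear order codes, and the explicit partial bijection $R_{u,v}$ used to define $B$). Those details are exactly where (1)--(3) do their work and where Borelness of $B$ is established.
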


\begin{proof}
Suppose there were such an $F$. First we can find $x_0$ so that $F(x_0) \cap x_0 = \emptyset$. To see this, note that condition (3) implies that we can not have $x \subseteq F(x)$ for all $x$, so let $x_1$ be such that $F(x_1) \cap x_1 \subsetneqq x_1$. If $F(x_1) \cap x_1 =\emptyset$ we are done; otherwise let $x_0 = F(x_1) \cap x_1 \subsetneqq x_1$, so that $F(x_0) \cap x_0 \subseteq F(x_0) \cap F(x_1) = \emptyset$. We now define $A \subseteq 2^{\omega} \times (R^{\omega})^{\omega}$ to be the set of all pairs $(x,f)$ so that $x \in \text{LO}$ codes a countable linear order and $f: \omega \rightarrow \bbR^{\omega}$ satisfies, for all $n \in \omega$:
 \begin{enumerate}
 \item $f(n) = F\left( F(x_0) \cup \bigcup\{ f(m) : m <_x n\}\right)$, where $<_x$ is the linear order of $\omega$ coded by $x$, and
 \item $f(n) \not\subseteq \bigcup \{ f(m) : m <_x n \}$.
 \end{enumerate}
 Then $A$ is a Borel set. We claim that for each $x$ coding a well-order there is an $f$ with $(x,f) \in A$.  Using transfinite recursion, define the sequence of sets $y_{\alpha} \in \bbR^{\omega}$ for $\alpha < \omega_1$ by
 \[ y_{\alpha} = F\left(F(x_0) \cup \bigcup_{\beta<\alpha} y_{\beta}\right) .\]
 We check by induction that for all $\alpha < \omega_1$, $y_{\alpha} \not\subseteq F(x_0) \cup \bigcup_{\beta < \alpha} y_{\beta}$. Note that $y_0= F(F(x_0))$ which is disjoint from $F(x_0)$ since $F(x_0)$ is disjoint from $x_0$. Since $F(x_0) \cap x_0 = \emptyset$, we have $\neg x_0 \mathrel{\bbF_2} F(x_0) \cup \bigcup_{\beta< \alpha} y_{\beta}$, and therefore $y_{\alpha} \not\subseteq F(x_0)$. For $\beta < \alpha$, by inductive assumption, $y_{\beta} \not\subseteq F(x_0) \cup \bigcup_{\lambda < \beta} y_{\lambda}$, so
 \[ F(x_0) \cup \bigcup_{\lambda < \beta} y_{\lambda} \subsetneqq F(x_0) \cup \bigcup_{\lambda < \alpha} y_{\lambda} , \]
 and thus $y_{\alpha} = F\left( F(x_0) \cup \bigcup_{\lambda < \alpha} y_{\lambda}\right)$ 
  is disjoint from $y_{\beta} = F\left( F(x_0) \cup \bigcup_{\lambda < \beta} y_{\lambda}\right)$. So $y_{\alpha}$ is not contained in $F(x_0)$ and is disjoint from $\bigcup_{\beta<\alpha} y_{\beta}$ and we have the claim. Transfinite recursion on $<_x$ now allows us to produce an $f$ with $(x,f) \in A$ for each $x \in \text{WO}$ in a uniform manner.
 
 Now let $D \subseteq A$ be given by $D=\{(x,f) \in A : x \in \text{WO}\}$, so that $D$ is $\bPi^1_1$-complete. We can now proceed mostly verbatim as in Theorem 8.3.5 of \cite{gao} to obtain a contradiction by producing a Borel set $B$ separating $D \times (A \setminus D)$ from $(A \setminus D) \times D$, and applying Theorem~\ref{thm:Harrington-separation}. For $u=(x,f)$ and $v=(y,g)$ in $A$, define a binary relation $R_{u,v} \subseteq \omega \times \omega$ by 
 \[n \mathrel{R_{u,v}} m \Leftrightarrow f(n) \mathrel{\bbF_2} g(m).\]
Condition (2) in the definition of $A$ ensures that  if $ n \mathrel{R_{u,v}} m_1$ and $ n \mathrel{R_{u,v}} m_2$ then $m_1 = m_2$. Similarly, if $ n_1 \mathrel{R_{u,v}} m$ and $ n_2 \mathrel{R_{u,v}} m$ then $n_1 = n_2$, so $R_{u,v}$ is a partial bijection between two subsets of $\omega$; we denote this partial bijection by $\varphi_{u,v}$.  We define $I_{u,v} \subseteq \text{dom}(\varphi_{u,v})$ and $J_{u,v} \subseteq \text{range}(\varphi_{u,v})$ by letting
 \begin{align*}
 n \in I_{u,v} & \Leftrightarrow \{ k : k \leq_x n\} \subseteq \text{dom}(\varphi_{u,v}) \\
 &\qquad \wedge \varphi_{u,v}(\{k: k \leq_x n\}) = \{l : l \leq_y \varphi_{u,v}(n)\} \\
 &\qquad \wedge \forall k, k' \leq_x n (k <_x k' \rightarrow \varphi_{u,v}(k) <_y \varphi_{u,v}(k')) 
 \end{align*}
 and $J_{u,v} = \varphi_{u,v}(I_{u,v})$. Finally, define $B \subseteq A \times A$ by setting
 \[ (u,v) \in B \Leftrightarrow I_{u,v} = \omega \vee ( \exists n I_{u,v}=\{ k : k <_x n\} \wedge J_{u,v} \neq \omega) . \]
 Then $B$ is Borel. To check that it separates $D \times (A \setminus D)$ from $(A \setminus D) \times D$, let $u=(x,f)$ and $v=(y,g)$ be in $A$. Suppose first that $x \in \text{WO}$ and $y \notin \text{WO}$; we will show $(u,v) \in B$. If $I_{u,v} = \omega$ we are done, so suppose not. Since $(I_{u,v},<_x)$ is a well-order and $\varphi_{u,v}$ is order-preserving we have $(J_{u,v},<_y)$ is also a well-order, so $J_{u,v} \neq \omega$ since $<_y$ is not a well-order; thus $(u,v) \in B$. Second, suppose $x \notin \text{WO}$ and $y \in \text{WO}$; we will show $(u,v) \notin B$. Suppose $(u,v) \in B$. Since $(I_{u,v},<_x)$ must be a well-order we have $I_{u,v} \neq \omega$, so there must be $n$ so that $I_{u,v} =\{k : k <_x n\}$ and $J_{u,v} \neq \omega$. As $(J_{u,v},<_y)$ is then a proper initial segment of $<_y$, there must be $m$ so that $J_{u,v} = \{ l : l <_y m\}$. For $k \in I_{u,v}$ we have $f(k) \mathrel{\bbF_2} g(\varphi_{u,v}(k))$, so that $\bigcup \{ f(k) : k <_x n\} = \bigcup\{g(l) : l <_y m\}$ and hence $f(n) \mathrel{\bbF_2} g(m)$. But then $n \mathrel{R_{u,v}} m$, i.e., $\varphi_{u,v}(n)=m$, contradicting that $n \notin I_{u,v}$. 
\end{proof}

We briefly sketch a forcing proof of the previous lemma; relevant facts may be found in \cite{zap1} and \cite{zap2}. Note that we may assume $F$ is injective, so that the set $Y = \bigcup \text{range}(F) = \{ a \in 2^{\omega}  : \exists x \exists n (a = F(x)_n) \}$ is Borel.  Let $\bbP = \text{Coll}(\omega,\bbR)$ be the forcing collapsing the reals of $V$ to a countable set, and let $\tau$ be a $\bbP$-name for a generic enumeration of $\bbR^V$, so $\tau[G]$ is a countable set of reals in the generic extension $V[G]$. Since mutually generic $G$ and $H$ will satisfy $\Vdash_{\bbP \times \bbP} \tau[G]=\tau[H]$ and $F$ is $\bbF_2$-invariant, we will have $\Vdash_{\bbP \times \bbP} F(\tau[G]) = F(\tau[H])$, so that $F(\tau)$ is $\bbF_2$-pinned, so there is a set $A \subseteq \bbR^V$ with $A \in V$ so that $\Vdash F(\tau) = \check{A}$. For all $x \in (\bbR^{\omega})^V$ we have $\Vdash \check{x} \subsetneqq \tau$, so $\Vdash F(\check{x}) \cap F(\tau) = \emptyset$, whence $\Vdash F(\check{x}) \cap A = \emptyset$. But then $Y \cap A = \emptyset$, whereas absoluteness gives $\Vdash F(\tau) \subseteq \check{Y}$, a contradiction.

Modifications of the previous lemma can also show the non-existence of other $\bbF_2$-invariant functions. For instance, by removing $F(x_0)$ in condition (1) of the definition of the set $A$ and in the definition of $y_{\alpha}$ in the proof we can obtain the following:

\begin{cor}
For any homomorphism $F$ from $\bbF_2$ to $\bbF_2$ there is $x$ with $F(x) \subseteq x$.
\end{cor}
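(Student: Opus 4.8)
The plan is to follow the proof of Lemma~\ref{lem:F2diagonal}, which becomes substantially simpler here because the negation of the present statement is exactly what is needed to replace the three technical conditions (1)--(3) used there. So suppose toward a contradiction that $F$ is a Borel homomorphism from $\bbF_2$ to $\bbF_2$ with $F(x) \not\subseteq x$ for every $x \in \bbR^\omega$ (this property is $\bbF_2$-invariant in $x$). Fix an arbitrary $a_0 \in \bbR^\omega$ as a seed, and define $A \subseteq \mathrm{LO} \times (\bbR^\omega)^\omega$ exactly as in the proof of Lemma~\ref{lem:F2diagonal}, but with $a_0$ in place of $F(x_0)$ in condition (1), i.e.\ $A$ is the Borel set of pairs $(x,f)$ with $<_x$ a linear order of $\omega$ and, for all $n$, $f(n) = F\bigl(a_0 \cup \bigcup\{f(m):m<_x n\}\bigr)$ and $f(n) \not\subseteq \bigcup\{f(m):m<_x n\}$. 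Note that condition (2) now holds automatically on $A$: applying the hypothesis to $z = a_0 \cup \bigcup\{f(m):m<_x n\}$ gives $f(n) = F(z) \not\subseteq z$, hence $f(n) \not\subseteq \bigcup\{f(m):m<_x n\}$.

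First I would check that for each $x \in \mathrm{WO}$ there is, uniformly in a Borel way, an $f$ with $(x,f) \in A$. This is by transfinite recursion along $<_x$, setting $y_\alpha = F\bigl(a_0 \cup \bigcup_{\beta<\alpha}y_\beta\bigr)$; these are well-defined elements of $\bbR^\omega$ precisely because the seed keeps the argument non-empty at the $<_x$-least element. In contrast to Lemma~\ref{lem:F2diagonal}, no induction invoking conditions (1)--(3) is needed here: the only thing to verify is $y_\alpha \not\subseteq a_0 \cup \bigcup_{\beta<\alpha}y_\beta$, which is immediate from the hypothesis applied to $a_0 \cup \bigcup_{\beta<\alpha}y_\beta$. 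Consequently $D := \{(x,f) \in A : x \in \mathrm{WO}\}$ is $\bPi^1_1$-complete (via $x \mapsto (x,f_x)$) while $A \setminus D$ is $\bSigma^1_1$.

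From this point the argument is copied verbatim from the proof of Lemma~\ref{lem:F2diagonal}, with $a_0$ substituted for $F(x_0)$ throughout: for $u = (x,f)$ and $v = (y,g)$ in $A$ one forms $n \mathrel{R_{u,v}} m \Leftrightarrow f(n) \mathrel{\bbF_2} g(m)$, checks via the (now automatic) condition (2) that $R_{u,v}$ is a partial bijection $\varphi_{u,v}$, uses the defining recursion $f(n) = F(a_0 \cup \bigcup\{f(m):m<_x n\})$ to see that the well-founded initial-segment data $I_{u,v}, J_{u,v}$ match under $\varphi_{u,v}$, and then verifies that the Borel set $B$ defined as there separates $D \times (A\setminus D)$ from $(A\setminus D)\times D$, contradicting Harrington's separation theorem (Theorem~\ref{thm:Harrington-separation}). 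I do not expect a genuine obstacle: the only points not literally present in Lemma~\ref{lem:F2diagonal} are the two bookkeeping observations above — that a seed $a_0$ is needed to make the recursion well-posed at the minimum, and that conditions (1)--(3) are dispensable — and the entire combinatorial core (the definition of $B$ and the proof that it separates) is exactly the one already established there.
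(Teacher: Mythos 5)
Your proposal is correct and follows the paper's intended approach, which is itself just a brief remark that the proof of Lemma~\ref{lem:F2diagonal} goes through "by removing $F(x_0)$ in condition (1) of the definition of the set $A$ and in the definition of $y_{\alpha}$." Your proof fills in two details that the paper's remark glosses over, both of which are worth noting.

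First, you correctly observe that literally removing $F(x_0)$ leaves the recursion ill-posed at the $<_x$-least element, where one would need to evaluate $F$ at an empty union; your introduction of an arbitrary seed $a_0$ in its place is the natural and harmless fix (no special properties of the seed are needed, in contrast to the lemma, where $x_0$ was chosen so that $F(x_0)\cap x_0=\emptyset$ to make the inductive verification work). Second, you correctly identify that the role of hypotheses (1)--(3) in Lemma~\ref{lem:F2diagonal}, namely to establish in the transfinite construction that $y_{\alpha} \not\subseteq a_0 \cup \bigcup_{\beta<\alpha} y_{\beta}$ (and hence to verify condition (2) of $A$), is replaced wholesale here by the single hypothesis $F(z)\not\subseteq z$ applied to $z=a_0\cup\bigcup_{\beta<\alpha} y_{\beta}$. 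Your observation that the defining condition (2) of $A$ is then implied by condition (1) is correct: $f(n)=F(z)\not\subseteq z$ and $\bigcup\{f(m):m<_x n\}\subseteq z$ together give $f(n)\not\subseteq\bigcup\{f(m):m<_x n\}$. From there, the separation argument via $R_{u,v}$, $I_{u,v}$, $J_{u,v}$, $B$, and Harrington's Theorem~\ref{thm:Harrington-separation} does copy over verbatim once $a_0$ is substituted for $F(x_0)$; in Case 2 of the separation check, one uses $\bbF_2$-invariance of $F$ applied to the $\bbF_2$-equivalent arguments $a_0\cup\bigcup\{f(k):k<_x n\}$ and $a_0\cup\bigcup\{g(l):l<_y m\}$, exactly as in the lemma. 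This is the same route as the paper, presented more carefully.
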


Letting $F(x) = x \cup \{f(x) \}$ when $f: \bbR^{\omega} \rightarrow \bbR$ is an $\bbF_2$-invariant function, the corollary generalizes Friedman's original result.

We can now rule out Borel weak-compactness:

\begin{thm}
$\bbF_2 \not\rightarrow_B (\bbF_2)^2_2$, i.e., $\bbF_2$ is not Borel weakly compact.
\end{thm}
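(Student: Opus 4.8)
The plan is to contradict the assumption $\bbF_2 \rightarrow_B (\bbF_2)^2_2$ by producing a Borel $\bbF_2$-invariant function of the kind forbidden by Lemma~\ref{lem:F2diagonal}. First I would apply the partition hypothesis to the $\Delta(2)$-partition $P_0$ of $[\bbF_2]^2$: this gives a Borel set $H$ with $\bbF_2 \leq_B \bbF_2 \upharpoonright H$ such that $H^2 \cap [\bbF_2]^2$ is either contained in $P_0$ or disjoint from $P_0$, and by Lemma~\ref{lem:reflection} we may take $H$ to be $\bbF_2$-invariant. The case $H^2 \cap [\bbF_2]^2 \cap P_0 = \emptyset$ is impossible: then any two $\bbF_2$-inequivalent members of $H$ are $\subseteq$-comparable, so $\subseteq$ is an $\bbF_2$-invariant Borel quasi-order inducing a linear ordering of $H/\bbF_2$, whence $\bbF_2 \upharpoonright H$ is smooth by Lemma~\ref{lem:orderable}; but $\bbF_2 \leq_B \bbF_2 \upharpoonright H$ would then make $\bbF_2$ smooth, which is false. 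Hence $H^2 \cap [\bbF_2]^2 \subseteq P_0$, so the $\bbF_2$-classes meeting $H$ form an antichain under $\subseteq$, and I fix a Borel reduction $\psi \colon \bbF_2 \to \bbF_2 \upharpoonright H$ (which we may take continuous by Corollary~\ref{cor:F2comeager}).

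Next I would normalize $\psi$. Composing it with Borel maps of the sort used in the proof of Theorem~\ref{thm:ksz-main} (replacing reals by pairwise disjoint countable blocks, padding with tags, and so on), one arranges the crucial extra property that $\psi(x) \cap \psi(z) = \emptyset$ whenever $x \subsetneq z$, while keeping the range inside a Borel $\subseteq$-antichain. With such a $\psi$ in hand, I would set $F(x) = \{ \langle s, q \rangle : s \in x,\ q \in \psi(x) \}$, coding pairs of reals as reals; this $F$ is Borel and $\bbF_2$-invariant. Condition (1) of Lemma~\ref{lem:F2diagonal} holds because $\neg x \bbF_2 y$ forces $\psi(x)$ and $\psi(y)$ to be distinct, hence $\subseteq$-incomparable, members of the antichain, so neither $F(x) \subseteq F(y)$ nor $F(y) \subseteq F(x)$. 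Condition (2) holds because a common element of $F(x)$ and $F(y)$ would have first coordinate in $x \cap y = \emptyset$. Condition (3) holds because a common element of $F(x)$ and $F(x \cup y)$ would have second coordinate in $\psi(x) \cap \psi(x \cup y)$, which is empty by the normalization since $x \subsetneq x \cup y$. This contradicts Lemma~\ref{lem:F2diagonal}, so $\bbF_2 \not\rightarrow_B (\bbF_2)^2_2$.

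The main obstacle is the normalization step. Mapping into a $\subseteq$-antichain only yields that the images of $\subseteq$-comparable classes are $\subseteq$-incomparable, which is strictly weaker than the disjointness demanded by condition (3); moreover a reduction with literally disjoint images on all distinct classes cannot exist, since pushing forward a Borel linear order of $2^{\omega}$ would make $\bbF_2$ smooth. So the disjointness must be engineered to hold precisely along $\subseteq$-chains. Making this precise — or, alternatively, replacing the naive pairing definition of $F$ above by a more elaborate coding of $F(x)$ (for instance in terms of the values of $\psi$ on various subsets of $x$) that simultaneously secures all three conditions of Lemma~\ref{lem:F2diagonal} — is the technical heart of the proof, and is where the analogue for other equivalence relations breaks down.
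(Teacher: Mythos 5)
Your outline matches the paper's proof exactly up through the invocation of Lemma~\ref{lem:F2diagonal}: use $P_0$, dispose of the $P_0$-free case via Lemma~\ref{lem:orderable}, and in the $[H]^2 \subseteq P_0$ case build an $\bbF_2$-invariant $F$ violating the diagonal lemma. Your definition $F(x)=\{\langle s,q\rangle : s\in x,\ q\in\psi(x)\}$ is also essentially the paper's $F(x)=f(x)\times\rho_0(x)\times\rho_1(x)$, and you even get condition (2) for free by using $x$ itself as the first coordinate, which is a small simplification (the paper needs the auxiliary $\rho_0$ for that).

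But you have correctly identified, and not filled, a genuine gap: the ``normalization'' step that gives $\psi(x)\cap\psi(x\cup y)=\emptyset$ whenever $x,y$ are disjoint. The devices you cite (disjointifying blocks, tagging) only control disjointness across disjoint inputs; they say nothing about the images of a chain $x\subsetneq x\cup y$, and the antichain property alone only yields incomparability, not disjointness, of $\psi(x)$ and $\psi(x\cup y)$. This is precisely where the paper's argument does real work, and it is not a normalization of $f$ at all: one considers the $\bbF_2$-invariant map $y\mapsto f(x)\setminus f(x\cup y)$, uses generic $\Delta(2^{\omega})$-ergodicity of $\bbF_2$ to show that $\{(x,y,z): f(x)\setminus f(x\cup y)\mathrel{\bbF_2} f(x)\setminus f(x\cup z)\}$ is comeager and invariant, and then applies the Generic Embedding Lemma (Corollary~\ref{lem:generic-embedding}) to replace $f$ by a generic embedding $g_1$ on whose range this becomes literal equality. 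That produces a Borel $\bbF_2$-invariant $\rho_1$ with exactly the needed property, and the final $F$ is a coordinate-wise combination of $f$ and $\rho_1$ (and $\rho_0$, obtained the same way from $f(x)\setminus f(y)$). So your proposal is a correct skeleton with the central mechanism missing; without the generic-ergodicity / generic-embedding step (or something equally substantial) the argument does not close.
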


\begin{proof}
We will show that there is no homogeneous set $H$ for $P_0$ with $\bbF_2 \leq_B \bbF_2 \upharpoonright H$.
As observed earlier, if $H$ is such that $[H]^2 \cap P_0 = \emptyset$, then $\bbF_2 \upharpoonright H$ admits a linear ordering induced by $\subseteq$ and is therefore smooth. Hence we will assume there is a reduction $f$ from $\bbF_2$ to $\bbF_2 \upharpoonright H$ with $[H]^2 \subseteq P_0$ and derive a contradiction. As above, we may assume that for $\bbF_2$-inequivalent $x$ and $y$ we have both $f(x) \setminus f(y)$ and $f(y) \setminus f(x)$ infinite.

Fix $x \in \bbR^{\omega}$ and define the map $\varphi_x : \bbR^{\omega} \rightarrow 2^{\omega}$ by 
\[ \varphi_x(y)(n) = \begin{cases} 1 & \text{if $\neg \exists m f(x)(n)  = f(y)(m)$} \\ 0 & \text{otherwise,} \end{cases} ,\]
i.e., $\varphi_x(y)$ records $f(x) \setminus f(y)$ as a subset of $f(x)$. Since $\varphi_x$ is Borel and $\bbF_2$-invariant and $\bbF_2$ is generically $\Delta(2^{\omega})$-ergodic, we have that for each $x$ there is $z_x \in 2^{\omega}$ and a comeager set $C_x$ so that for all $y \in C_x$ we have $\varphi_x(y) = z_x$; hence for $y, z \in C_x$ we have $f(x) \setminus f(y) \mathrel{\bbF_2} f(x) \setminus f(z)$. The set 
\[ \{ (x,y,z) : f(x) \setminus f(y) \mathrel{\bbF_2} f(x) \setminus f(z) \} \]
is therefore invariant and comeager, so applying the Generic Embedding Lemma we can find an $\bbF_2$-invariant function $g_0: \bbR^{\omega} \rightarrow \bbR^{\omega}$ so that if $x$, $y$, and $z$ are disjoint we have $f(g_0(x))  \setminus f(g_0(y)) = f(g_0(x))  \setminus f(g_0(z))$;  call this set $w_{g_0(x)}$, which then depends only on $x$. Note that if $y$ and $z$ are each disjoint from $x$, we may find $w$ disjoint from all three, so that this conclusion does not require $y$ and $z$ be disjoint. Now set $\rho_0(x) = w_{g_0(x)}$ (enumerated in increasing order as a subset of $f(g_0(x))$) and observe that $\rho_0$ is Borel, since (noting that all countable sequences appearing are injective), we may calculate
\begin{align*}
\rho_0(x) &= \langle f(g_0(x))(n) : \exists y (x \cap y = \emptyset \wedge f(g_0(x))(n) \notin f(g_0(y)) ) \rangle \\
&= \langle f(g_0(x))(n) : \forall y (x \cap y = \emptyset \rightarrow f(g_0(x))(n) \notin f(g_0(y)) ) \rangle  .
\end{align*}
Hence $\rho_0$ is a Borel $\bbF_2$-invariant function with the property that if $x$ and $y$ are disjoint then $\rho_0(x)$ and $\rho_0(y)$ are disjoint. 

Applying the same approach to $f(x) \setminus f(x \cup y)$ instead of $f(x) \setminus f(y)$, we may find a Borel $\bbF_2$-invariant function $\rho_1$  with the property that if $x$ and $y$ are disjoint then $\rho_1(x)$ and $\rho_1(x \cup y)$ are disjoint. Let $F(x) = f(x) \times \rho_0(x) \times \rho_1(x)$, i.e., $F(x)_m = f(x)_{(m)_0} \oplus \rho_0(x)_{(m)_1} \oplus \rho_1(x)_{(m)_2}$, where $m=\langle (m)_0, (m)_1, (m)_2 \rangle$ is a bijection of $\omega$ with $\omega^3$. Then $F$ is a Borel reduction of $\bbF_2$ to $\bbF_2$ with the properties:
\begin{enumerate}
\item If $\neg x \mathrel{\bbF_2} y$ then $F(x) \not\subseteq F(y)$ and $F(y) \not\subseteq F(x)$.
\item If $x$ and $y$ are disjoint then $F(x)$ and $F(y)$ are disjoint.
\item If $x$ and $y$ are disjoint then $F(x)$ and $F(x \cup y)$ are disjoint.
\end{enumerate}
This contradicts Lemma~\ref{lem:F2diagonal}, and completes the proof.
\end{proof}

\begin{cor}
Uniform primeness does not imply Borel weak compactness.
\end{cor}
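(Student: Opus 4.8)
The plan is simply to combine the two immediately preceding results: we have already shown that $\bbF_2$ is uniformly prime, and we have just shown that $\bbF_2$ is not Borel weakly compact, i.e., $\bbF_2 \not\rightarrow_B (\bbF_2)^2_2$. Hence $\bbF_2$ itself witnesses that the implication ``uniformly prime $\Rightarrow$ Borel weakly compact'' fails, and there is essentially nothing further to do.

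In more detail, I would first recall that $\bbF_2$ is uniformly prime: given any Borel $F$ with $\bbF_2 \subseteq F$, Theorem~\ref{thm:ksz-main} yields either a comeager $F$-class (so $\bbF_2 \leq_B F\upharpoonright [x]_F$ for some $x$ by Corollary~\ref{cor:F2comeager}) or a continuous embedding of $\bbF_2$ onto an $F\setminus\bbF_2$-discrete set, which is exactly the dichotomy in the definition of uniform primeness. Second, I would invoke the preceding theorem, whose proof produces from a putative homogeneous set $H$ for $P_0$ with $\bbF_2 \leq_B \bbF_2\upharpoonright H$ an $\bbF_2$-invariant Borel function $F$ of the sort forbidden by Lemma~\ref{lem:F2diagonal}; thus no such $H$ exists, and since (by the reduction of partition properties to $P_0$) $\bbF_2 \rightarrow_B (\bbF_2)^2_2$ would require exactly such an $H$, Borel weak compactness fails for $\bbF_2$.

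There is no real obstacle here: both halves are theorems established earlier in this section, so the corollary is a two-line deduction. If anything, the only thing to be careful about is making sure the logical direction is right — we need a single equivalence relation that has the weaker property but lacks the stronger one, and $\bbF_2$ serves precisely this role. One could add the remark that this also shows the converse implication ``Borel weakly compact $\Rightarrow$ uniformly prime'' (already noted in the excerpt) is strict, so that the hierarchy primeness / uniform primeness / Borel weak compactness does not collapse at its top two levels, with $\bbF_2$ as the separating example between the latter two.
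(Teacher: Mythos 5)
Your proposal is correct and is exactly the paper's intended argument: the corollary follows immediately from the preceding two results, namely that $\bbF_2$ is uniformly prime and that $\bbF_2$ is not Borel weakly compact. Nothing further is needed.
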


Zapletal has improved the above result, using forcing methods which we briefly sketch below. We have given the proof above using our original technique, as it seems likely to be generalizable to other equivalence relations, and the results about $\bbF_2$-invariant functions are of independent interest. In particular, it should be possible to extend the above result to show that $E^{+}$ is not Borel weakly compact for other equivalence relations $E$.

\begin{thm}[Zapletal]
If $H$ is homogeneous for $P_0$, then $\bbF_2 \upharpoonright H$ is pinned
\end{thm}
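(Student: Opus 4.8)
The plan is to split on the two types of $P_0$-homogeneity. If $[H]^2 \cap P_0 = \emptyset$, then containment $\subseteq$ is a Borel quasi-order which is $\bbF_2$-invariant (it depends only on the underlying countable sets) and which linearly orders $H/\bbF_2$, so $\bbF_2 \upharpoonright H$ is smooth by Lemma~\ref{lem:orderable}; since smooth equivalence relations reduce to $\Delta(\bbR)$, which is pinned, and pinnedness is downward closed under Borel reducibility (see \cite{zap1,zap2}), $\bbF_2 \upharpoonright H$ is pinned. As Galvin's theorem always supplies a perfect $P_0$-homogeneous set, the only case that remains is $[H]^2 \subseteq P_0$, i.e.\ $H$ is an antichain under $\subseteq$; I assume this below and aim for a contradiction from the existence of a non-trivial pin.

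Suppose $\bbF_2 \upharpoonright H$ carries a non-trivial pin $(\mathbb{P},\tau)$. Because $\tau$ is forced to name a sequence whose underlying set lies in $H$, that underlying set is forced to be countable, and so the theory of pinned equivalence relations and virtual classes for Friedman--Stanley jumps applies: absorbing $\mathbb{P}$ into a Levy collapse and using the homogeneity of $\mathrm{Coll}$ together with Shoenfield/Borel absoluteness (see \cite{zap1,zap2}), I may assume $\mathbb{P} = \mathrm{Coll}(\omega,\kappa)$ for a suitable $\kappa$ and that $\tau$ is forced to enumerate some fixed $A \in V$ with $A \subseteq \bbR$; non-triviality of the pin means precisely that $A$ is uncountable in $V$. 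Thus $V$ satisfies the statement ``$\Vdash_{\mathrm{Coll}(\omega,\kappa)}$ there is an enumeration of $\check A$ lying in $H$'', whose only parameters are the set $A$ and a Borel code for $H$.

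The contradiction is then extracted by an elementary-submodel argument. Fix large $\theta$ and take countable elementary submodels $M_0 \in M_1$ of $H_\theta$ with $A, H \in M_0$; then $M_0 \subseteq M_1$, and since $A$ is uncountable while $M_0$ is countable we get $A \cap M_0 \subsetneq A \cap M_1$ with both sets countably infinite. For each $i$, elementarity gives $M_i \models$ ``$\Vdash$ there is an enumeration of $\check A$ in $H$''; passing to the transitive collapse $\bar M_i$ of $M_i$ — which fixes reals, hence fixes the Borel code of $H$ and the set $A \cap M_i$, and which sends $A$ to $A \cap M_i$ — and forcing over $\bar M_i$ with a $\bar M_i$-generic filter $\bar G_i$ for the image of the collapse produces in $\bar M_i[\bar G_i]$ an enumeration $y_i$ of $A \cap M_i$ belonging to $H$. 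Since $\bar M_i[\bar G_i]$ is a countable well-founded model containing $y_i$, $A\cap M_i$, and the code of $H$, Mostowski absoluteness of Borel membership gives $y_i \in H$ in $V$. But now $y_0, y_1 \in H$ have underlying sets $A\cap M_0 \subsetneq A \cap M_1$, so $y_0$ and $y_1$ are $\bbF_2$-inequivalent while $y_0 \subseteq y_1$; thus $(y_0,y_1) \in [H]^2 \setminus P_0$, contradicting $[H]^2 \subseteq P_0$.

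The hard part is the reduction in the second paragraph: presenting a non-trivial pin of $\bbF_2 \upharpoonright H$ as the Levy collapse of a \emph{fixed uncountable} ground-model set of reals. This is exactly where the antichain hypothesis bites, since interpreting the name for such an enumeration inside a countable submodel $M$ returns an enumeration of the \emph{proper} subset $A \cap M$ rather than a re-enumeration of $A$, and a proper subset realized inside $H$ at once destroys antichain-ness; everything after that is routine absoluteness and elementary-submodel bookkeeping. (One does not need to first replace $H$ by an $\bbF_2$-invariant set, as $\bbF_2$-inequivalence and $P_0$ depend only on underlying sets, but this could be arranged by a reflection argument as in Lemma~\ref{lem:reflection} if convenient.)
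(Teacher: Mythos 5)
Your argument is correct and takes essentially the same route as the paper's: reduce to the $[H]^2 \subseteq P_0$ case via Lemma~\ref{lem:orderable}, extract an uncountable ground-model set $A$ from a non-trivial pin, and use countable well-founded models plus $\bDelta^1_1$-absoluteness to realize enumerations of two nested countable pieces of $A$ inside $H$, contradicting antichain-ness. The only difference is bookkeeping: the paper obtains the nested pair by a two-step iterated collapse over a single elementary submodel $M$, yielding $A \cap M \subsetneq A \cap M[G_1]$, whereas you take two nested submodels $M_0 \in M_1$ and force over their transitive collapses separately; both are standard and interchangeable devices for the same end.
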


\begin{proof}[Sktech]
Since $\bbF_2$ restricted to a homogeneous set in the complement of $P_0$ must be smooth (and hence pinned) by Lemma~\ref{lem:orderable}, we need only consider an $P_0$-positive set $H$. Suppose that $\bbF_2$ restricted to $H$ is unpinned, and let $\tau$ be a non-trivial pinned name for an $\bbF_2$-class in $H$. Then there is an uncountable set of reals $A$ in $V$ so that $\text{Coll} \Vdash \tau= \check{A} \in H$. 
Let $M$ be a countable elementary substructure of $H_{\theta}$ for some sufficiently large $\theta$ with $A \in M$.
We have $A \cap M \in H$, so $M \vDash \text{Coll} \Vdash A^M \in H$, so $\text{Coll} \Vdash A \cap M \in H$.
Consider now a two-step iterated forcing $M[G_1][G_2]$. We have that $M[G_1][G_2]$ thinks that both $A \cap M$ and $A \cap M[G_1]$ are in $H$, but $A \cap M$ is a proper subset of $A \cap M[G_1]$, contradicting that $H$ is homogeneous for $P_0$. 
\end{proof}

As $\bbF_2$ is not pinned, this implies that $\bbF_2$ is not Borel weakly compact.

\begin{question}
Does $\bbF_2 \rightarrow_B (F)^2_2$ for every pinned $F$ with $F <_B \bbF_2$?
\end{question}

We can see that when $\bbF_2 \upharpoonright H$ is pinned, then there is no embedding of $\omega_1$ into $H$ under $\subseteq$, so we may hope to somehow decompose $H$ into $P_0$-homogeneous sets.
We observe that Lemma~\ref{lem:Einfty-partition} is not sharp, in that there are equivalence relations $F$ strictly above $E_{\infty}^{\omega}$  which satisfy $\bbF_2 \rightarrow_B (F)^2_2$. In particular, this holds for the relation $E_{\infty}^{[\bbZ]}$ considered in \cite{cc}. We mention another collection of pinned equivalence relations below $\bbF_2$ introduced in \cite{zap1}.

\begin{dfn} 
Let $\mathcal{G}$ be a Borel graph on $\bbR$. The set of \emph{$\mathcal{G}$-cliques}, $C_{\mathcal{G}}$, consists of those $x \in \bbR^{\omega}$ so that $(x_m,x_n) \in \mathcal{G}$ for all $m \neq n$. The equivalence relation $F_{\mathcal{G}}$ is then $\bbF_2 \upharpoonright C_{\mathcal{G}}$.
\end{dfn}

Then we have the following from \cite{zap1}:
\begin{lem}
$F_{\mathcal{G}}$ is pinned if and only if $\mathcal{G}$ has no uncountable cliques.
\end{lem}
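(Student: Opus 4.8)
The plan is to prove both directions using the theory of pinned names for $\bbF_2 \upharpoonright C$. For the easy direction, suppose $\mathcal{G}$ has an uncountable clique $A \subseteq \bbR$. I would force with $\mathbb{P} = \text{Coll}(\omega, |A|)$ and let $\tau$ be a $\mathbb{P}$-name for a surjection $\omega \to A$, viewed as a name for an element of $\bbR^{\omega}$. Since $\mathcal{G}$ is Borel, the assertion that $A$ is a $\mathcal{G}$-clique is absolute, so $\mathbb{P}$ forces $\tau \in C_{\mathcal{G}}$; and for mutually generic $G, H$ the ranges of $\tau[G]$ and $\tau[H]$ both equal $A$, so $\mathbb{P} \times \mathbb{P}$ forces the two copies of $\tau$ to be $\bbF_2$-equivalent and each to lie in $C_{\mathcal{G}}$, i.e., $\tau$ is an $F_{\mathcal{G}}$-pinned name. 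It is non-trivial: any $\bar a \in \bbR^{\omega}$ in $V$ enumerates a countable set, which (equality of sets being absolute) remains distinct from the uncountable set $A$ in any extension, so $\neg\, \tau \mathrel{\bbF_2} \check{\bar a}$ is forced. Hence $F_{\mathcal{G}}$ is not pinned.

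For the converse, assume $\mathcal{G}$ has no uncountable clique, and let $\tau$ be an arbitrary $F_{\mathcal{G}}$-pinned name on a poset $\mathbb{P}$; so $\mathbb{P}$ forces $\tau \in C_{\mathcal{G}}$ and $\mathbb{P} \times \mathbb{P}$ forces the two copies of $\tau$ to have the same range as sets of reals. Set
\[ A = \{ r \in \bbR \cap V : \exists p \in \mathbb{P}\ \ (p \Vdash \check r \in \tau) \}. \]
The first claim is that $A$ is a $\mathcal{G}$-clique. Given distinct $r, r' \in A$, fix conditions forcing $\check r \in \tau$ and $\check{r'} \in \tau$, pass to a $(\mathbb{P}\times\mathbb{P})$-generic $G \times H$ over $V$ meeting the pair of them; since $\tau[G]$ and $\tau[H]$ have the same range, $r, r'$ both belong to $\tau[G] \in C_{\mathcal{G}}$, whence $(r,r') \in \mathcal{G}$ holds in $V[G][H]$ and so, by Borel absoluteness, in $V$. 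By hypothesis $A$ is therefore countable, and $\check A$ names an element of $C_{\mathcal{G}}$.

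It remains to check $\mathbb{P} \Vdash \tau = \check A$ as sets, which makes $\tau$ trivial. For $\check A \subseteq \tau$: if $p \Vdash \check r \in \tau$ and $G$ is generic, choose $H$ mutually generic with $p \in H$; then $r \in \tau[H]$, which has the same range as $\tau[G]$, so $r \in \tau[G]$. For $\tau \subseteq \check A$: let $s \in \tau[G]$ and take $H$ generic over $V[G]$, so that $G \times H$ is $(\mathbb{P}\times\mathbb{P})$-generic over $V$; then $s \in \tau[G]$ and $\tau[G]$, $\tau[H]$ have the same range, so $s \in V[H]$, hence $s \in V[G] \cap V[H] = V$, and now some condition in $G$ forces $\check s \in \tau$, so $s \in A$. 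Thus every $F_{\mathcal{G}}$-pinned name is trivial and $F_{\mathcal{G}}$ is pinned.

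The substantive point — the main thing to get right — is the converse direction: producing the canonical ground-model representative $A$ and verifying $\mathbb{P}\Vdash\tau=\check A$, especially that elements of $\tau[G]$ already lie in $V$. This rests on the standard product-forcing facts that a $V[G]$-generic $H$ makes $G \times H$ generic for $\mathbb{P}\times\mathbb{P}$ and that $V[G]\cap V[H]=V$, together with Mostowski absoluteness for the Borel relation $\mathcal{G}$; the no-uncountable-clique hypothesis is used precisely to ensure $A$ is countable, hence an admissible $F_{\mathcal{G}}$-representative. Alternatively, one may black-box the known correspondence between non-trivial $F_{\mathcal{G}}$-pinned names and uncountable-in-$V$ sets $A$ of reals with $\text{Coll}(\omega,|A|) \Vdash \check A \in C_{\mathcal{G}}$, and then simply observe that ``$\check A \in C_{\mathcal{G}}$'' is equivalent, by Borel absoluteness, to $A$ being a $\mathcal{G}$-clique in $V$.
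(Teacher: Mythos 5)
The paper does not give a proof of this lemma --- it is quoted from Zapletal's \emph{Pinned equivalence relations} (\cite{zap1}) as a cited fact --- so there is no in-text argument to compare against. Your proof is correct and is the standard forcing argument (and, as far as I am aware, essentially the one in the cited source): the forward direction by the collapse name on an uncountable clique, the converse by extracting the ground-model set $A$ of reals forceably placed in $\tau$, using mutual genericity plus $V[G]\cap V[H]=V$ to see $\mathrm{range}(\tau[G])=A$ and Borel absoluteness to see $A$ is a $\mathcal{G}$-clique, hence countable, hence a ground-model $F_{\mathcal{G}}$-representative. The one point worth stating explicitly if you write this up is that the no-uncountable-clique hypothesis is exactly what lets you pass from ``$A$ is a clique'' to ``$A$ is countable, so some enumeration $\bar A$ lies in $C_{\mathcal{G}}\cap V$''; you do say this, so the argument is complete.
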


\begin{question}
Does $\bbF_2 \rightarrow_B (F_{\mathcal{G}})^2_2$ for every Borel graph $\mathcal{G}$ with no uncountable cliques?
\end{question}

In the same way that the partition $P_0$ is central to $\bbF_2$, other natural partitions arise for equivalence relations which are symmetrizations of quasiorders, as considered, e.g., in \cite{rosendal}. For instance, the partition $\{ (x,y) : x \not\leq_T y \wedge y \not\leq_T x\}$ may provide insight into partition relations for $\equiv_T$, and similarly for $\equiv_A$ which gives a representation of $E_{\infty}$. In Section~\ref{section:E1} we will consider a partition which is similarly fundamental for $\bbE_1$.

\section{Iterates of the Friedman--Stanley jump}

We consider extensions of the above results about $\bbF_2$ to higher iterates of the Friedman--Stanley jump.

\begin{dfn}
For a Borel equivalence relation $E$ on $X$, the \emph{Friedman--Stanley jump of $E$}, denoted $E^{+}$, is defined on $X^{\omega}$ by setting $\bar{x} \mathrel{E^{+}} \bar{y}$ iff $\{ [x_n]_E : n \in \omega\} = \{[y_n]_E : n \in \omega\}$. Up to bireducibility, this is induced by the action of $S_{\infty}$ which permutes the coordinates of a sequence $\bar{x} \in X^{\omega}$.
\end{dfn}

In particular, $\bbF_2=\Delta(\bbR)^{+}$. Friedman--Stanley show in \cite{FS} that if $E$ has at least two classes then $E <_B E^{+}$. Kanovei--Sabok--Zapletal's result shows that $\Delta(\bbR)^{+}$ is prime to $\Delta(\bbR)$, and one may wonder if $E^{+}$ is prime to $E$ for general Borel $E$. This, however, is false.

\begin{lem}
\label{lem:doublejump}
If $F$ has perfectly many classes and $E$ is prime to $F^{+}$ then $E$ is prime to $F^{++}$.
\end{lem}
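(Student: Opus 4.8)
The plan is to start with a Borel homomorphism $\varphi$ from $E$ to $F^{++}$ and, in two stages, replace it by one whose range lies in a single $F^{++}$-class; both stages will exploit that $E$ is prime to $F^{+}$. Regard $F^{+}$ as living on $Y^{\omega}$ and $F^{++}$ on $\left(Y^{\omega}\right)^{\omega}$, and write $\varphi(x)=\langle\varphi_{n,m}(x)\rangle_{n,m\in\omega}$, so $\varphi_n(x)=\langle\varphi_{n,m}(x)\rangle_{m}\in Y^{\omega}$. The first step is to notice that the ``flattening'' $\psi(x)=\langle\varphi_{n,m}(x)\rangle_{(n,m)\in\omega^2}\in Y^{\omega}$ is a Borel homomorphism from $E$ to $F^{+}$: if $x_1\EE x_2$ then $\{[\varphi_n(x_1)]_{F^{+}}:n\}=\{[\varphi_n(x_2)]_{F^{+}}:n\}$, and taking the union of each side over $n$ yields $\{[\varphi_{n,m}(x_1)]_F:n,m\}=\{[\varphi_{n,m}(x_2)]_F:n,m\}$, which is exactly $\psi(x_1)\mathrel{F^{+}}\psi(x_2)$.

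Next I would apply primeness of $E$ relative to $F^{+}$ to $\psi$: this gives a Borel reduction $\rho_0$ from $E$ to $E$ together with some $\bar a\in Y^{\omega}$ so that $\psi(\rho_0(x))\mathrel{F^{+}}\bar a$ for all $x$. Set $D=\bigcup_k[\bar a_k]_F$, a Borel $F$-invariant set carrying only countably many $F$-classes. By the choice of $\rho_0$, every coordinate of $\varphi(\rho_0(x))$ lands in $D$, so $\varphi\circ\rho_0$ is a Borel homomorphism from $E$ to $(F\upharpoonright D)^{++}=F^{++}\upharpoonright(D^{\omega})^{\omega}$. Since $F\upharpoonright D$ has at most countably many classes, $F\upharpoonright D\leq_B\Delta(\omega)$, hence $(F\upharpoonright D)^{++}\leq_B\Delta(\omega)^{++}\sim_B\bbF_2$ (using $\Delta(\omega)^{+}\sim_B\Delta(\bbR)$ and monotonicity of the Friedman--Stanley jump). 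Separately, because $F$ has perfectly many classes, Silver's theorem gives $\Delta(\bbR)\leq_B F$, so $\bbF_2=\Delta(\bbR)^{+}\leq_B F^{+}$; as $E$ is prime to $F^{+}$, Lemma~\ref{lem:basic} (2) shows $E$ is prime to $\bbF_2$, and applying Lemma~\ref{lem:basic} (2) once more shows $E$ is prime to $(F\upharpoonright D)^{++}$.

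Finally I would feed the homomorphism $\varphi\circ\rho_0$ into the relative primeness of $E$ to $(F\upharpoonright D)^{++}$ just obtained, producing a Borel reduction $\rho_1$ from $E$ to $E$ for which $\varphi\circ\rho_0\circ\rho_1$ has range inside a single $(F\upharpoonright D)^{++}$-class, which is contained in a single $F^{++}$-class; then $\rho=\rho_0\circ\rho_1$ witnesses that $E$ is prime to $F^{++}$. The step I expect to carry the weight is the conceptual one: $F^{++}$ itself need not satisfy $F^{++}\leq_B F^{+}$, so Lemma~\ref{lem:basic} (2) cannot be invoked on $F^{++}$ directly — one must first use primeness relative to $F^{+}$ once to squeeze the image of $\varphi$ into the jump of a countable-index restriction of $F$, which is then dominated by $\bbF_2$, and $\bbF_2$ is in turn controlled by $F^{+}$ precisely because $F$ has perfectly many classes. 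The remaining items — that the flattening map is an $F^{+}$-homomorphism, that the second jump commutes with restriction to an invariant Borel set, and that $\Delta(\omega)^{+}\sim_B\Delta(\bbR)$ — are routine.
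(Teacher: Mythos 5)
Your argument is correct and follows essentially the same two-stage strategy as the paper: use primeness to $F^{+}$ once to confine the coordinates of $\varphi$ to countably many $F$-classes, observe that $F^{++}$ restricted accordingly reduces to $\bbF_2$, invoke primeness to $\bbF_2$ (which follows from $\bbF_2\leq_B F^{+}$ via Silver), and compose the two reductions. The one cosmetic difference is in the middle step: where the paper exhibits an explicit reduction of $F^{++}\upharpoonright X$ to $\bbF_2$ via pairwise disjoint Borel envelopes $\widetilde{Y}_j\supseteq[y_j]_F$ obtained by analytic separation, you simply note that each class $[\bar a_k]_F$ is Borel (a section of the Borel relation $F$), so $F\upharpoonright D\leq_B\Delta(\omega)$ by sending $y$ to the least $k$ with $y\FF\bar a_k$, and then apply monotonicity of the jump twice together with $\Delta(\omega)^{+}\sim_B\Delta(\bbR)$ — a slightly more streamlined route to the same conclusion, valid here since $F$ is Borel.
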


\begin{proof}
Note that $F$ has perfectly many classes iff $\bbF_2 \leq_B F^{+}$, so $E$ is prime to $\bbF_2$. 
Let $\varphi$ be a homomorphism from $E$ to $F^{++}$.
Let $\psi : F^{++} \rightarrow F^{+}$ be the homomorphism given by
\[ \psi(\{ \{ x_{i,n} : n \in \omega\} : i \in \omega \}) = \{x_{i,n} : i,n \in \omega \} \]
(using some pairing function to enumerate the countable set). Fix $y=\{y_i : i \in \omega\}$ and let $X = \psi^{-1}[y]_{F^{+}}$.
We claim that $F^{++} \upharpoonright X \leq_B \bbF_2$. To show this,
let $Y_i=[y_i]_F$ for $i \in \omega$. These are pairwise disjoint analytic sets (when $\neg y_i \EE y_j$) so by analytic separation we may find Borel sets $\widetilde{Y}_i \supseteq Y_i$ which are pairwise disjoint (when $\neg y_i \EE y_j$). On $X$, define the Borel function $f$ by
\[ f: \{ \{x_{i,n} : n \in \omega\} : i \in \omega \} \mapsto \{ \{j \in \omega : \exists n \ x_{i,n} \in \widetilde{Y}_j \} : i \in \omega\} .\]
This is a Borel reduction from $F^{++} \upharpoonright X$ to $\bbF_2$ (identifying $\mathcal{P}(\omega)$ with $2^{\omega}$).

Now, since $\psi \circ \varphi$ is a homomorphism from $E$ to $F^{+}$, there is $y$ so that $E \leq_B E \upharpoonright (\psi \circ \varphi )^{-1} [y]_{F^{+}}$.  Letting $X = \psi^{-1}[y]_{F^{+}}$, this shows that $E \upharpoonright \varphi^{-1}[X]$ is prime to $F^{+}$, and hence to $\bbF_2$, since $E \leq_B E \upharpoonright \varphi^{-1}[X]$. The claim shows that $F^{++} \upharpoonright X \leq_B \bbF_2$, so $E \upharpoonright \varphi^{-1}[X]$ is prime to $F^{++} \upharpoonright X$. Thus, there is $z \in X$ so that $E \upharpoonright \varphi^{-1}[X]$ (and hence $E$ itself) is reducible to $E \upharpoonright \varphi^{-1}[z]_{F^{++}}$. Hence $E$ is prime to $F^{++}$.
\end{proof}

Since no $E$ other than $\Delta(1)$ is prime to itself, this gives:
\begin{cor}
If $E$ has perfectly many classes, then $E^{++}$ is not prime to $E^{+}$. Hence if $E$ is a Borel equivalence relation with perfectly many classes then $E^{++}$ is not regular (and hence not prime).
\end{cor}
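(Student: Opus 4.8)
The plan is to read off the corollary as a formal consequence of Lemma~\ref{lem:doublejump}, after first isolating the elementary fact that a nontrivial equivalence relation is never prime to itself. For the latter: suppose $E'$ has at least two classes and $E'$ is prime to $E'$; applying the definition of primeness to the identity homomorphism $\varphi=\mathrm{id}$ produces a Borel reduction $\rho$ from $E'$ to $E'$ whose range lies in a single $E'$-class, so $E'\leq_B E'\upharpoonright[\rho(x)]_{E'}$. But $E'$ restricted to a single class has exactly one class, which forces $E'$ to have at most one class --- a contradiction. (This is the sentence preceding the corollary in the text; I would include the one-line justification for completeness.)

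Next I would prove that $E^{++}$ is not prime to $E^{+}$ by contradiction. Assume $E^{++}$ is prime to $E^{+}$. Since $E$ has perfectly many classes by hypothesis, Lemma~\ref{lem:doublejump} applies with the role of its ``$F$'' played by $E$ and the role of its ``$E$'' played by $E^{++}$: the hypothesis ``$F$ has perfectly many classes'' becomes ``$E$ has perfectly many classes'', and the hypothesis ``$E$ is prime to $F^{+}$'' becomes ``$E^{++}$ is prime to $E^{+}$'', which we have assumed. The conclusion of the lemma is then that $E^{++}$ is prime to $E^{++}$. However, $E^{++}$ has perfectly many classes (since $E\leq_B E^{+}\leq_B E^{++}$ by Friedman--Stanley), hence at least two, so this contradicts the observation of the previous paragraph. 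Therefore $E^{++}$ is not prime to $E^{+}$.

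For the final clause I would invoke $E^{+}<_B E^{++}$, which is the Friedman--Stanley result applied to $E^{+}$ (which has perfectly many, hence at least two, classes). If $E^{++}$ were regular, then by definition it would be prime to every $F$ with $F<_B E^{++}$, in particular to $E^{+}$ --- contradicting what was just shown. So $E^{++}$ is not regular, and since every prime equivalence relation is regular (as noted in Section~\ref{sec:nodes}), $E^{++}$ is not prime. I do not expect a genuine obstacle here: all the real work is packaged in Lemma~\ref{lem:doublejump}, and the only care needed is to apply that lemma with the correct substitution ($F\mapsto E$, $E\mapsto E^{++}$) and to track that ``perfectly many classes'' propagates up the jump tower.
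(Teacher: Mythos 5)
Your proposal is correct and takes essentially the same approach as the paper: the paper proves the corollary by combining Lemma~\ref{lem:doublejump} with the observation (stated immediately before the corollary) that no equivalence relation other than $\Delta(1)$ is prime to itself, which is exactly the argument you spell out.
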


\begin{question}
Is it the case that if $\bbF_2 \leq_B E$ then $E^{+}$ is not regular?
\end{question}

Although the Friedman--Stanley jump does not always give an equivalence relation prime to $E$, we may ask if we can always find one.

\begin{question}
Does every equivalence relation $E$ have some $F$ which is prime to it? Is so, are there such $F$ of arbitrarily high Wadge degree? Is there a jump operator so that $J(E)$ is prime to $E$ for every Borel $E$?
\end{question}

We now consider iterations of the Friedman--Stanley jump.

\begin{dfn}
For $\alpha< \omega_1$, the equivalence relation $\bbF_{\alpha}$ is defined inductively by $\bbF_0 = \Delta(\omega)$ and for $\alpha>0$, $\bbF_{\alpha} = \left(\amalg_{\beta<\alpha} \bbF_{\beta}\right)^{+}$. Alternately, $\bbF_{\alpha}$ is the isomorphism relation on well-founded trees of rank at most $2+\alpha$.
\end{dfn}
Note that we could instead consider trees of rank less than $\alpha$, which makes a slight difference at limits.
The previous lemma in particular shows that  $\bbF_3=\bbF_2^{+}=\Delta(\bbR)^{++}$ is not prime to $\bbF_2$, and hence not regular. More generally:

\begin{cor}
If $E$ is prime to $\bbF_{\alpha}$ for $\alpha \geq 2$ then $E$ is prime to $\bbF_{\alpha+n}$ for all $n \in \omega$. Hence $\bbF_{\alpha+1}$ is not regular.
\end{cor}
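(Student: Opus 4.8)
The plan is to reduce the whole statement to Lemma~\ref{lem:doublejump} by viewing each successor iterate of the Friedman--Stanley jump as a double jump. Write $G_\gamma = \amalg_{\beta<\gamma}\bbF_\beta$, so that $\bbF_\gamma = G_\gamma^{+}$ by definition, and note that for $\gamma \geq 2$ the relation $G_\gamma$ has perfectly many classes, since $\bbF_1 = \Delta(\bbR) \leq_B G_\gamma$; thus Lemma~\ref{lem:doublejump} applies to each such $G_\gamma$. The key auxiliary fact I would first establish is that $\bbF_{\gamma+1} \sim_B (\bbF_\gamma)^{+}$ for every $\gamma \geq 1$, equivalently $\amalg_{\beta\leq\gamma}\bbF_\beta \sim_B \bbF_\gamma$. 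One clean way to see this is through the tree representation: $\bbF_\gamma$ is isomorphism of well-founded trees of rank at most $2+\gamma$, and for any equivalence relation presented as isomorphism of trees of rank at most $r$, its Friedman--Stanley jump is isomorphism of trees of rank at most $r+1$ (attach a sequence of such trees below a new root); hence $(\bbF_\gamma)^{+}$ is isomorphism of trees of rank at most $2+(\gamma+1)$, which is $\bbF_{\gamma+1}$. In particular $(\bbF_\gamma)^{+} = (G_\gamma^{+})^{+} = G_\gamma^{++}$.

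Granting this, the first assertion follows by induction on $n$. The case $n=0$ is trivial. For the step, suppose $E$ is prime to $\bbF_{\alpha+n}$; since $\alpha\geq 2$ we have $\alpha+n\geq 2$, so $G_{\alpha+n}$ has perfectly many classes and $\bbF_{\alpha+n}=G_{\alpha+n}^{+}$. By Lemma~\ref{lem:doublejump}, $E$ is prime to $G_{\alpha+n}^{++} = (\bbF_{\alpha+n})^{+} \sim_B \bbF_{\alpha+n+1}$, and since primeness is preserved when the target is replaced by something reducible to it (Lemma~\ref{lem:basic}~(2)), $E$ is prime to $\bbF_{\alpha+n+1}$, completing the induction.

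For the final clause, recall $\bbF_\alpha <_B \bbF_{\alpha+1}$ by Friedman--Stanley. If $\bbF_{\alpha+1}$ were regular it would be prime to $\bbF_\alpha$, and then the case $n=1$ of what was just proved, applied with $E=\bbF_{\alpha+1}$, would make $\bbF_{\alpha+1}$ prime to itself; but an equivalence relation with at least two classes is never prime to itself, since $E$ prime to $F$ forces $E\not\leq_B F$ in that case. (Alternatively, one can invoke the corollary to Lemma~\ref{lem:doublejump} directly, as $\bbF_{\alpha+1}\sim_B G_\alpha^{++}$ with $G_\alpha$ having perfectly many classes, and regularity is a bireducibility invariant.)

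The main obstacle is the auxiliary identification $\bbF_{\gamma+1}\sim_B(\bbF_\gamma)^{+}$, i.e.\ the absorption $\amalg_{\beta\leq\gamma}\bbF_\beta\sim_B\bbF_\gamma$. If one prefers not to appeal to the tree picture, this can be proved directly: a Friedman--Stanley jump $G^{+}$ absorbs any countable disjoint union of equivalence relations each reducible to $G^{+}$, provided $G$ has at least $\omega$ classes, by using $\omega$ fixed $G$-classes (here supplied by the $\bbF_0=\Delta(\omega)$ summand of $G_\gamma$) as flags and a self-embedding of $G$ whose range avoids those classes (available for $\gamma\geq 2$ since $\Delta(\bbR)=\bbF_1\leq_B G_\gamma$ and $\Delta(\bbR)$ absorbs a disjoint copy of $\Delta(\omega)$). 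Everything else is routine bookkeeping.
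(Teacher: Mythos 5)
Your argument is correct and takes the same route as the paper: write $\bbF_\gamma = G_\gamma^{+}$ with $G_\gamma = \amalg_{\beta<\gamma}\bbF_\beta$, note $G_\gamma$ has perfectly many classes for $\gamma\geq 2$, apply Lemma~\ref{lem:doublejump}, and induct on $n$. The one place you go beyond the paper's single-sentence proof is in explicitly isolating and justifying the identification $\bbF_{\gamma+1}\sim_B(\bbF_\gamma)^{+}=G_\gamma^{++}$ (equivalently, that $\amalg_{\beta\leq\gamma}\bbF_\beta$ is absorbed by $\bbF_\gamma$); this step is genuinely needed to pass from ``$E$ prime to $G_\gamma^{++}$'' to ``$E$ prime to $\bbF_{\gamma+1}$'' via Lemma~\ref{lem:basic}~(2), and the paper leaves it unstated. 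Your tree-rank justification is the cleanest way to see it, and it meshes with the alternate description of $\bbF_\alpha$ as isomorphism of well-founded trees that the paper already records in the definition; the flag-and-self-embedding absorption argument you sketch as a fallback is also sound. One small wording slip: you describe $\bbF_{\gamma+1}\sim_B(\bbF_\gamma)^{+}$ and $\amalg_{\beta\leq\gamma}\bbF_\beta\sim_B\bbF_\gamma$ as equivalent, but only the implication from the second to the first (monotonicity of the jump) is used or obviously true; this does not affect the proof.
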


\begin{proof}
Our definition implies that for $\lambda$ a limit, $\bbF_{\lambda} = \left(\amalg_{\beta<\lambda} \bbF_{\beta}\right)^{+}$, so each $\bbF_{\alpha}$ with $\alpha \geq 2$ is $F^{+}$ for some $F$ with perfectly many classes.
\end{proof}

In fact:
\begin{lem}
If $\alpha > 2$ and $\alpha$ is not of the form $\omega^{\beta}$ then $\bbF_{\alpha}$ is not regular.
\end{lem}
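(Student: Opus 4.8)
The plan is to handle successor and limit $\alpha$ separately. If $\alpha$ is a successor (necessarily $\geq 3$ since $\alpha>2$), then $\bbF_{\alpha}$ is not regular by the Corollary above, so I would reduce at once to the case that $\alpha$ is a limit. Write $\alpha$ in Cantor normal form $\alpha=\omega^{\beta_1}n_1+\dots+\omega^{\beta_k}n_k$; the hypothesis that $\alpha$ is not a power of $\omega$ means $k\geq 2$ or $n_1\geq 2$, and since $\alpha$ is a limit $\beta_k\geq 1$. Put $\beta=\beta_k$ and let $\mu$ be $\alpha$ with one copy of its final term $\omega^{\beta_k}$ deleted, so $\mu+\omega^{\beta}=\alpha$, $\mu<\alpha$, and (checking the two cases of the normal‑form hypothesis) $\mu\geq\omega^{\beta}\geq\omega$; in particular $\alpha=\mu+\omega^{\beta}\geq\omega^{\beta}\cdot 2$. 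I would then assume toward a contradiction that $\bbF_{\alpha}$ is regular, and construct a Borel homomorphism from $\bbF_{\alpha}$ to $\bbF_{\mu}$ all of whose fibres support only an equivalence relation strictly below $\bbF_{\alpha}$.

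For this I would use the tree picture: $\bbF_{\gamma}$ is isomorphism of well‑founded trees of rank at most $L_{\gamma}:=2+\gamma$, so $L_{\alpha}=L_{\mu}+\omega^{\beta}$. Writing $\mathrm{rk}_T(v)$ for the foundation rank of a node $v$ in a tree $T$, fix the cut ordinal $\kappa=L_{\mu}$ and define the truncation map $\psi$ sending a tree $T$ of rank $\leq L_{\alpha}$ to the tree consisting of a fresh root whose immediate subtrees are the maximal subtrees of $T$ of rank $<\kappa$. Since ``$\mathrm{rk}_T(v)<\kappa$'' is a Borel condition for fixed countable $\kappa$, $\psi$ is Borel; it is isomorphism‑invariant because isomorphisms preserve rank; and $\mathrm{rk}(\psi(T))\leq\kappa=L_{\mu}$, so $\psi$ is a Borel homomorphism from $\bbF_{\alpha}$ to $\bbF_{\mu}$.

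The crux is the analysis of a single fibre $\psi^{-1}[T_0]_{\bbF_{\mu}}$. Call a node of $T$ \emph{high} if its rank is $\geq\kappa$ and \emph{low} otherwise; the low nodes form the maximal low subtrees of $T$, each hanging below a node of the \emph{high part} $H$ (the induced subtree on the high nodes, again well‑founded). A short induction on rank in $H$ should give $\mathrm{rk}_T(u)=\kappa+\mathrm{rk}_H(u)$ for every high node $u$: a minimal high node has all children low, hence rank $\leq\kappa$, and $\geq\kappa$ by highness, so exactly $\kappa$, and the inductive step uses $\kappa+\sup S=\sup(\kappa+S)$. Taking $u$ the root yields $L_{\alpha}\geq\mathrm{rk}_T(T)=\kappa+\mathrm{rk}(H)$, so $\mathrm{rk}(H)\leq L_{\alpha}\ominus\kappa=\omega^{\beta}$. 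Next, for $T$ in the fibre the multiset of isomorphism types of the maximal low subtrees equals that of the immediate subtrees of $T_0$, hence is a fixed countable multiset; so up to isomorphism $T$ is recovered from $H$ together with an assignment to each node of $H$ of a sub‑multiset of this fixed countable multiset (subject to the closed requirement that minimal high nodes receive low subtrees of ranks cofinal in $\kappa$). I would conclude that $\bbF_{\alpha}\upharpoonright\psi^{-1}[T_0]_{\bbF_{\mu}}$ is Borel reducible to isomorphism of well‑founded trees of rank $\leq\omega^{\beta}$ labelled by points of a fixed Polish space (plus at most one further ``degenerate'' class coming from trees of rank $<\kappa$), and that the latter is Borel reducible to $\bbF_{\omega^{\beta}+1}$ — e.g. by encoding each label as a small auxiliary subtree and grafting it on, which for $\beta\geq 1$ does not raise the rank.

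Granting this, regularity of $\bbF_{\alpha}$ together with $\bbF_{\mu}<_B\bbF_{\alpha}$ (Friedman–Stanley) makes $\bbF_{\alpha}$ prime to $\bbF_{\mu}$, so applying primeness to the homomorphism $\psi$ yields a $T_0$ with $\bbF_{\alpha}\leq_B\bbF_{\alpha}\upharpoonright\psi^{-1}[T_0]_{\bbF_{\mu}}$, whence $\bbF_{\alpha}\leq_B\bbF_{\omega^{\beta}+1}$; but $\omega^{\beta}+1<\omega^{\beta}\cdot 2\leq\alpha$, so $\bbF_{\omega^{\beta}+1}<_B\bbF_{\alpha}$, a contradiction. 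The main obstacle is the fibre analysis of the third paragraph — the rank identity $\mathrm{rk}_T(u)=\kappa+\mathrm{rk}_H(u)$ and the bookkeeping showing that redistributing the low subtrees costs only bounded extra complexity. This is exactly where the hypothesis ``$\alpha$ is not a power of $\omega$'' is used: it forces $L_{\alpha}\ominus L_{\mu}=\omega^{\beta}$ to sit strictly below $\alpha$ with room to absorb the bounded slack, whereas when $\alpha$ is a power of $\omega$ the same truncation has fibres as complex as $\bbF_{\alpha}$ itself and the argument rightly collapses.
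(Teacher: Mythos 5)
Your argument follows the same route as the paper's: decompose $\alpha$ nontrivially as $\gamma+\delta$ with $\gamma<\alpha$ and $\delta+1<\alpha$ (possible exactly because $\alpha$ is additively decomposable), truncate the tree below rank $L_{\gamma}=2+\gamma$ to obtain a Borel homomorphism to $\bbF_{\gamma}$, and bound the fibres by $\bbF_{\delta+1}$ so that regularity yields a contradiction. Your specific choice $\gamma=\mu$, $\delta=\omega^{\beta}$ from the Cantor normal form is a valid instance of this; the preliminary reduction to limit $\alpha$ via the preceding corollary is harmless but unnecessary, since the uniform decomposition already covers successors (take $\delta=1$). The one step that needs a different justification is the fibre bound. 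Grafting a rank-$\omega$ label gadget onto every node of the high tree $H$ does raise the rank even when $\mathrm{rk}(H)=\omega^{\beta}\geq\omega$: if $H$ has rank $\omega$ with children of every finite rank, grafting pushes leaves to rank $\omega+1$, a rank-$n$ node to roughly $\omega+n+1$, and the root to $\omega\cdot 2$. The conclusion you want is nonetheless correct, but it is cleaner to argue by a jump recursion. Let $\mathcal{L}_{\gamma}$ denote isomorphism of well-founded trees of rank $\leq\gamma$ with labels from a fixed Polish space; then $\mathcal{L}_{0}\leq_B\Delta(\bbR)$, $\mathcal{L}_{\gamma+1}\sim_B\Delta(\bbR)\times(\mathcal{L}_{\gamma})^{+}$, and $\mathcal{L}_{\lambda}\sim_B\Delta(\bbR)\times\bigl(\amalg_{\gamma<\lambda}\mathcal{L}_{\gamma}\bigr)^{+}$ at limits, and a routine induction gives $\mathcal{L}_{\gamma}\leq_B\bbF_{\gamma+1}$ for all $\gamma$ (with $\mathcal{L}_{\lambda}\leq_B\bbF_{\lambda}$ at limits). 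Applied with $\gamma=\omega^{\beta}$ this supplies exactly the bound your fibre analysis needs.
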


\begin{proof}
If $\alpha>2$ is not of the form $\omega^{\beta}$, then we can write $\alpha=\gamma + \delta$ with $\gamma < \alpha$ and $\delta + 1 < \alpha$. As in the proof of Lemma~\ref{lem:doublejump} we can define a homomorphism $\varphi$ from $\bbF_{\alpha}$ to $\bbF_{\gamma}$ by collapsing the top $\delta$ levels of the tree, so that $\bbF_{\alpha}$ restricted to the preimage of any class is reducible to $\bbF_{\delta+1}$.
\end{proof}

\begin{question}
Can $\bbF_{\omega^{\beta}}$ be regular? Borel weakly compact?
\end{question}

We can ask whether the earlier analysis that $\bbF_2$ is not Borel weakly compact can also be applied to $E^{+}$  for other Borel equivalence relations $E$.

\begin{question}
Does $E^{+} \rightarrow_B (E^{\omega})^2_2$ for suitably nice $E$?
Note that when $E=\bbF_{< \lambda}$ for $\lambda$ a limit, then $E^{+} \sim_B E^{\omega}$, which would imply that $\bbF_{\lambda}$ was Borel weakly compact. Thus this can not happen if $\lambda$ is not of the form $\omega^{\beta}$.
\end{question}

A weaker question:
\begin{question}
Given $E$, is there always an $F$ so that $F \rightarrow_B (E)^2_2$? Does $E^{+} \rightarrow_B (E)^2_2$?
\end{question}

\begin{question}
Suppose $F \leq_B E \leq_B F^{+}$ and $\rho : E \leq_B F^{+}$. Define $x \mathrel{P} y$ iff $\rho(x) \not\subseteq \rho(y)$ and $\rho(y) \not\subseteq \rho(x)$. What can we say about $P$-homogeneous sets?
\end{question}

Although $\bbF_3$ is not prime, we do get a weaker result from the primeness of $\bbF_2$ and Lemma~\ref{lem:prime-cofinal}:

\begin{cor}
$\bbF_2$ is a Borel cofinality for $\bbF_3$.
\end{cor}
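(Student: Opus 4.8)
The plan is to invoke Lemma~\ref{lem:prime-cofinal}, which asserts that any prime equivalence relation reducible to $E$ is a Borel cofinality for $E$. Applying it with $F = \bbF_2$ and $E = \bbF_3$, it suffices to verify the two hypotheses: that $\bbF_2$ is prime, and that $\bbF_2 \leq_B \bbF_3$.

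For the first, $\bbF_2$ is prime by the theorem of Kanovei--Sabok--Zapletal recalled above; indeed it is uniformly prime, via Theorem~\ref{thm:ksz-main}. For the second, recall that $\bbF_3 = \bbF_2^{+}$ by definition (equivalently $\bbF_3 = \Delta(\bbR)^{++}$); since $\bbF_2$ has at least two equivalence classes, the Friedman--Stanley result that $E <_B E^{+}$ for every such $E$ gives $\bbF_2 <_B \bbF_3$, and in particular $\bbF_2 \leq_B \bbF_3$.

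Unwinding what Lemma~\ref{lem:prime-cofinal} produces in this case: given a Borel equivalence relation $R$, if $\bbF_2 \not\leq_B R$ then $\bbF_2$ is prime to $R$; fixing a reduction $\rho$ witnessing $\bbF_2 \leq_B \bbF_3$, any Borel homomorphism $\varphi$ from $\bbF_3$ to $R$ yields a Borel homomorphism $\varphi \circ \rho$ from $\bbF_2$ to $R$, so there is $y$ with $\bbF_2 \leq_B \bbF_2 \upharpoonright (\varphi \circ \rho)^{-1}[y]_R$, and hence $\bbF_2 \leq_B \bbF_3 \upharpoonright \varphi^{-1}[y]_R$. This is exactly $\bbF_2 \leq_B \bbF_3 / R$, so $\bbF_2$ is a Borel cofinality for $\bbF_3$.

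There is essentially no obstacle here: all of the substance is contained in the primeness of $\bbF_2$ established earlier in this section together with the Friedman--Stanley strict inequality $E <_B E^{+}$, and the present statement is a direct application of Lemma~\ref{lem:prime-cofinal}.
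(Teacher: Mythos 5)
Your proof is correct and follows exactly the paper's intended argument: the paper derives this corollary directly from Lemma~\ref{lem:prime-cofinal} together with the primeness of $\bbF_2$ (Kanovei--Sabok--Zapletal) and the reducibility $\bbF_2 \leq_B \bbF_3 = \bbF_2^{+}$. The unwinding you give of Lemma~\ref{lem:prime-cofinal} matches the paper's own proof of that lemma, so there is nothing to add.
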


\begin{question}
Which of the results about $\bbF_2$ from above extend to general $E^{+}$? Maintaining complexity on comeager sets? Failure of weak compactness?
\end{question}

\begin{question}
If $E$ is prime to $F$ and $E$ is prime to $\bbF_2$, is $E$ prime to $F^{+}$?
\end{question}

\begin{question}
If $E$ is prime to $\bbF_2$, is $E$ prime to $\bbF_{\alpha}$ for all $\alpha < \omega_1$? 
\end{question}

This is true if primeness is replaced by generic ergodicity, as in the case of turbulent actions. Here the difficulty arises at limit ordinals, which are trivially handled for generic ergodicity.

\begin{question}
If $E$ is prime to $\bbF_{\alpha}$ for all $\alpha<\omega_1$, is $E$ prime to $\cong_{\text{graph}}$? 
\end{question}

This would imply $\cong_{\text{graph}}$ is not regular. This is related to the question of Friedman--Stanley of whether $E$ is Borel-complete if all $\bbF_{\alpha}$ are reducible to $E$.

\begin{question}
Is graph isomorphism prime? 
\end{question}

\section{${\mathbb E}_1$ and equivalence relations induced by ideals}
\label{section:E1}

As noted earlier, a natural candidate for primeness is $\bbE_1$.

\begin{conjecture}
$\bbE_1$ is a prime equivalence relation.
\end{conjecture}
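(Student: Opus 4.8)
\smallskip

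\noindent\textbf{A possible line of attack.}
The plan is to verify the criterion of Lemma~\ref{lem:altprime}: it suffices to show that for every Borel equivalence relation $F$ with $\bbE_1 \subseteq F$ such that $\bbE_1 \upharpoonright [x]_F <_B \bbE_1$ for all $x$, one has $\bbE_1 \leq_B F$. Fix such an $F$. Since each $[x]_F$ is $\bbE_1$-invariant and Borel, $\bbE_1\upharpoonright[x]_F \leq_B \bbE_1$; by the Kechris--Louveau dichotomy and the hypothesis this gives $\bbE_1 \upharpoonright [x]_F \leq_B \bbE_0$ for every $x$. Moreover, since $\bbE_1$ maintains complexity on non-meager sets, a non-meager $F$-class would satisfy $\bbE_1 \sqsubseteq_c \bbE_1\upharpoonright[x]_F$, contradicting $\bbE_1 \not\leq_B \bbE_0$; hence every $F$-class is meager, and by the Kuratowski--Ulam theorem $F$ is meager in $X^2$. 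In particular $\{(\bar x,\bar y) : \neg\, \bar x \FF \bar y\}$ is comeager, and it is invariant under changing finitely many coordinates of either argument, since such a change keeps a point in its $\bbE_1$-class and hence in its $F$-class.

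When $F$ has countable index over $\bbE_1$ — each $F$-class meeting only countably many $\bbE_1$-classes — one simply invokes the theorem that $E_{\ideal} \sqsubseteq_c F$ for every $\bSigma^1_1$ equivalence relation $F$ of countable index over $E_{\ideal}$, applied with $E_{\ideal}=\bbE_1$ (using also Lemma~\ref{lem:basic}(3) and the fact that $\bbE_1$ maintains complexity on non-meager sets to pass between $F$ and its restrictions to the individual classes of a coarser countable-index relation). So the real work is the case of uncountable index over $\bbE_1$. Here the goal would be to build a Borel reduction of $\bbE_1$ to $F$ directly: a continuous self-embedding $\rho : \bbE_1 \sqsubseteq_c \bbE_1$ — already a homomorphism into $F$ since $\bbE_1 \subseteq F$ — that in addition sends $\bbE_1$-inequivalent points to $F$-inequivalent points, so that $\rho$ drives pairs from distinct classes into the comeager set $\{\neg\, \bar x \FF \bar y\}$. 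One would attempt this by the Cantor-scheme method used above to prove that $\bbE_1$ is prime to every orbit equivalence relation, respecting the filtration $\bbE_1 = \bigcup_n F_n$ via the symmetric-difference actions of the groups $(2^\omega)^n$, but now \emph{omitting} the group-element bookkeeping: instead of keeping $\varphi(x_s)$ inside a shrinking ball one need only steer each pair $(x_s,x_t)$ with $\neg s\FF t$ into $\{\neg\FF\}$, using comeagerness and coordinate-invariance of that set.

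The hard part is precisely the feasibility of that fusion. For an orbit equivalence relation, Hjorth's local-control lemma (Lemma~\ref{lem:hjorth}) supplies exactly the structure needed at each stage; for an arbitrary Borel $F$ with all fibers essentially hyperfinite there is no such structure, and although comeagerness of $\{\neg\FF\}$ lets one avoid a single $F$-merging, it is not clear that the countably many merging requirements can be met simultaneously subject to the tail-agreement constraints of an $\bbE_1$-scheme — in effect one wants a two-dimensional version of ``$\bbE_1$ maintains complexity on non-meager sets.'' A more promising alternative may be to bypass the uncountable-index case with a relative dichotomy: that any Borel coarsening $F$ of $\bbE_1$ either has countable index over $\bbE_1$ on an $\bbE_1$-invariant Borel set to which $\bbE_1$ reduces, or else has some class $[x]_F$ with $\bbE_1 \leq_B \bbE_1\upharpoonright[x]_F$, which would make the hypothesis of Lemma~\ref{lem:altprime} vacuous. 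Establishing such a ``relative Glimm--Effros'' statement for $\bbE_1$ appears to be the genuine obstacle, and is presumably why the conjecture remains open.
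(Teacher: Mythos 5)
This statement is posed as a conjecture in the paper, not proved, and your proposal correctly recognizes that it is open. The route you sketch — reducing to Lemma~\ref{lem:altprime}, applying the Kechris--Louveau dichotomy to get $\bbE_1 \upharpoonright [x]_F \leq_B \bbE_0$, observing that every $F$-class is meager, and then invoking the countable-index embedding theorem for $E_{\ideal}$ — is precisely the partial progress recorded in \S 8, including the caveat that the paper's result only covers $F$-classes that are actually countable (not merely essentially countable over $\bbE_1$), so your identification of the uncountable-index/essentially-countable case as the genuine obstruction matches the paper's own assessment.
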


Kechris and Louveau have conjectured that for any Borel equivalence relation $F$, either $\bbE_1 \leq_B F$ or $F$ is reducible to some orbit equivalence relation $E_G^X$. If true, this would imply the above conjecture since $\bbE_1$ is prime to any $F$ which is reducible to some $E_G^X$. A special case of this conjecture, due to Hjorth, is that $\bbE_1$ is reducible to any treeable equivalence relation which is not essentially countable.
The question of whether or not $\bbE_1$ is prime seems much more approachable. We might consider whether the two conjectures are equivalent.

\begin{question}
If $\bbE_1$ is prime to $E$, is $E \leq_B E_G^X$ for some orbit equivalence relation $E_G^X$?
If $E$ is treeable and $\bbE_1$ is prime to $E$, is $E$ essentially countable?
\end{question}

We do not know whether $\bbE_1$ is prime; however, we will show that $\bbE_1$ fails to be Borel weakly compact. We also obtain some partial results and possible approaches to the question of primeness. Recall from our earlier characterization that $\bbE_1$ is prime iff whenever $\bbE_1 \subseteq F$ is such that $\bbE_1 \upharpoonright [x]_F <_B \bbE_1$ for all $x$, then $\bbE_1 \leq_B F$. The condition that $\bbE_1 \upharpoonright [x]_F <_B \bbE_1$ is equivalent to saying that $\bbE_1 \upharpoonright [x]_F$ is essentially hyperfinite or that $\bbE_1 \upharpoonright [x]_F$ is essentially countable.
We use the following parameterized version of a result from Chapter 11 of \cite{kanovei}:

\begin{thm}[Kanovei]
Let $X \subseteq (2^{\omega})^{\omega}$ be $\Delta^1_1(z)$ for some parameter $z$. Then exactly one of the following holds:
\begin{enumerate}
\item For all $x \in X$, $\forall^{\infty} n \ x_n \in \Delta^1_1(z,x_{> n})$, where $x_{>n} = \langle x_m \rangle_{m>n}$.
\item $\bbE_1 \leq_B \bbE_1 \upharpoonright X$.
\end{enumerate}
\end{thm}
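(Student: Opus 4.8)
The plan is to prove this as a dichotomy in the style of the Glimm--Effros and Kechris--Louveau theorems, using the effective descriptive set theory of the Gandy--Harrington topology relativized to $z$. Write $D$ for the set of $x \in X$ such that $\exists^{\infty} n\ x_n \notin \Delta^1_1(z,x_{>n})$, so that alternative (1) says precisely $D = \emptyset$ and its negation says $D \neq \emptyset$. First I would dispose of mutual exclusivity; the real content is then to show that $D \neq \emptyset$ implies $\bbE_1 \leq_B \bbE_1 \upharpoonright X$ (in fact via a continuous embedding), which yields exhaustiveness.

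For mutual exclusivity I would argue that alternative (1) forces $\bbE_1 \upharpoonright X$ to be essentially countable: if for each $x \in X$ there is $N = N(x)$ with $x_n \in \Delta^1_1(z,x_{>n})$ for all $n \geq N$, then, using $\Sigma^1_1$-selection to make $N(x)$ and the recovery of $x_n$ from $x_{>n}$ definable, the $\bbE_1$-class of $x$ inside $X$ is determined up to countable ambiguity by the tail $x_{>N(x)}$ together with the integer $N(x)$; this produces a Borel reduction of $\bbE_1 \upharpoonright X$ to a countable Borel equivalence relation. Since $\bbE_1$ is not essentially countable, (1) and (2) cannot both hold. (Alternatively, and more in keeping with the genericity methods used for (2): given a Borel reduction $f$ witnessing (2), one may choose $x$ sufficiently Cohen-generic over a code for $(f,z)$ so that both $x$ and $f(x)$ fail eventual $\Delta^1_1$-constructibility at infinitely many coordinates, exhibiting a point of $D \cap X$ and contradicting (1).)

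For the main direction, assume $D \neq \emptyset$. The target is a continuous map $\varphi \colon (2^{\omega})^{\omega} \to X$ which is a homomorphism from each $F_n$ into $\bbE_1$ and a cohomomorphism from $\bbE_1$ into $\bbE_1$, i.e. a continuous embedding of $\bbE_1$ into $\bbE_1 \upharpoonright X$. This is a fusion (Cantor-scheme) construction in the Gandy--Harrington topology $\tau_z$ generated by the $\Sigma^1_1(z)$ sets. The first step is a thinning lemma: from $D \neq \emptyset$, produce a nonempty $\Sigma^1_1(z)$ set $Y \subseteq X$ inside which ``room to diagonalize'' is available at every coordinate uniformly, i.e. for every $\tau_z$-basic $Y' \subseteq Y$ and every $n$ one can still realize in $Y'$ a value $a \neq x_n$ with $a \notin \Delta^1_1(z,x_{>n})$; this comes from the existence of some $x^{*} \in D$ together with the Gandy basis theorem and an analytic separation / $\Sigma^1_1$-selection argument arranging that the infinitely-often-bad coordinates can be met one at a time. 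The second step builds $\Sigma^1_1(z)$ conditions $Y_s$ indexed by finite sequences $s$ over a suitable index set, with $\overline{Y_{s'}}^{\tau_z} \subseteq Y_s$ for $s \subseteq s'$, shrinking diameters, branches landing in $Y$, and arranged so that: branching at level $n$ affects only coordinate $n$ of the limit points; two branches agreeing from coordinate $n$ on have limits agreeing from coordinate $n$ on (giving the $F_n$-homomorphism); and two branches disagreeing cofinally have $\bbE_1$-inequivalent limits because at infinitely many coordinates they were forced apart via the thinning lemma (giving the cohomomorphism). Taking $\varphi$ to be the branch map completes the proof.

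The hard part is the thinning lemma and its integration with the product structure of $\bbE_1$: turning the single qualitative fact ``some $x \in X$ fails eventual $\Delta^1_1$-constructibility'' into a quantitative, coordinate-by-coordinate splitting property that persists inside one $\Sigma^1_1(z)$ set and can be fed into a fusion, all while keeping track of the parameters $x_{>n}$, which themselves vary along the construction. Once that is in hand, the scheme construction is routine, but it must be organized so that coordinate $n$ of $\varphi(\bar{x})$ depends only on finitely much of $\bar{x}$, so that the $F_n$-homomorphism and the $\bbE_1$-cohomomorphism properties both genuinely hold; this bookkeeping is the second, lesser, source of friction.
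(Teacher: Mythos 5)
The paper does not prove this theorem; it cites it as a parameterized form of a result from Chapter~11 of Kanovei's book and immediately moves on to use it. So there is no internal proof to compare against, and I have to judge your sketch on its own.

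Your architecture is the right one: the Gandy--Harrington fusion construction relativized to $z$ is indeed how this dichotomy is proved (in Kanovei's book, and in spirit it is the effective version of the Kechris--Louveau argument). But as a \emph{proof}, the sketch defers essentially all of the mathematical content. The ``thinning lemma'' you describe --- extracting from a single $x^{*} \in D$ a $\Sigma^1_1(z)$ condition $Y$ in which one can, uniformly and persistently at every coordinate, realize a value $a \neq x_n$ with $a \notin \Delta^1_1(z, x_{>n})$ --- is not a routine step one then ``integrates'' into a fusion; it \emph{is} the theorem. What makes $\bbE_1$ hard (compared to, say, the Glimm--Effros argument for $\bbE_0$) is exactly that the forbidden-constructibility condition refers to the tail $x_{>n}$, which changes as the scheme grows, and that the splits must be made at infinitely many coordinates while respecting the $F_n$-structure. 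You name this difficulty in your last paragraph, but naming it does not discharge it: ``Gandy basis theorem plus analytic separation plus $\Sigma^1_1$-selection'' is not an argument, it is a list of plausible tools.

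There is also a gap in your mutual-exclusivity argument that is worth flagging even though the overall strategy (show (1) forces $\bbE_1 \upharpoonright X \leq_B \bbE_0$, then invoke $\bbE_1 \not\leq_B \bbE_0$) is sound. The assertion that ``the $\bbE_1$-class of $x$ inside $X$ is determined up to countable ambiguity by the tail $x_{>N(x)}$ together with $N(x)$'' does not give a Borel reduction to a countable Borel equivalence relation, because (a) the tail $x_{>N}$ by itself already determines the $\bbE_1$-class outright (this is trivially true for any $x$ and carries no information about $X$), and (b) the assignment $x \mapsto (N(x), x_{>N(x)})$ is not an $\bbE_1$-homomorphism --- $\bbE_1$-equivalent $x,y$ can disagree above $\max(N(x),N(y))$, and $N$ can jump across a class. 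The actual argument must convert $x_n \in \Delta^1_1(z,x_{>n})$ into an \emph{integer code} for $x_n$ relative to $x_{>n}$ (via a uniform enumeration of $\Delta^1_1(z,w)$), and then do a nontrivial bookkeeping/selection argument to assemble those codes into a reduction to something like $\bbE_0$; this is where the $\Sigma^1_1$-boundedness and reflection really enter, and none of it is present. Your parenthetical Cohen-generic alternative is closer to something that could be made rigorous, but it too is left as an assertion (it is not clear without argument that $f(x)$ inherits failure of eventual constructibility from $x$, since $f$ is an arbitrary Borel reduction, not a coordinatewise map).

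In short: the proposal correctly identifies the strategy the cited proof uses, but it proves nothing --- both the thinning lemma and the (1)-implies-essentially-countable direction are stated, not established.
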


Thus, to establish that $\bbE_1$ is prime, it is sufficient to show the following:
Whenever $\bbE_1 \subseteq F$ and for all $(x,y) \in F$ we have
$\forall^{\infty} n\ x_n \in \Delta^1_1(y, x_{> n})$, then $\bbE_1 \leq_B F$. This characterization motivates the following partition for considering Borel weak compactness of $\bbE_1$:

\begin{dfn}
Let $P$ be the following partition on $[\bbE_1]^2$:
\[ P(x,y) \Leftrightarrow \forall m \forall^{\infty}n (x_n \in \Delta^1_1(y_{>m},x_{>n}) \wedge y_n \in \Delta^1_1(x_{>m},y_{>n})) .\]
\end{dfn}
This is symmetric and $\bbE_1\times \bbE_1$-invariant. Note, though, that it is $\Pi^1_1$ and not Borel. We make a few simple observations about this partition.

\begin{lem}
Suppose $F$ is a $\Delta^1_1$ equivalence relation with $\bbE_1 \subseteq F$ and $\bbE_1 \upharpoonright [x]_F <_B \bbE_1$ for all $x$. Then $F \subseteq P$. 
\end{lem}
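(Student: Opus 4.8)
The plan is to derive this directly from Kanovei's parameterized dichotomy, the one subtlety being the bookkeeping of parameters so that one obtains membership in $\Delta^1_1(y_{>m},x_{>n})$ rather than merely in $\Delta^1_1(y,x_{>n})$.

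First I would reduce to a one-sided statement. Note that $P$ is symmetric, $F$ is symmetric, and the two conjuncts defining $P(x,y)$ are exchanged when $x$ and $y$ are swapped. Hence it suffices to prove: for every $(x,y)\in F$ and every $m\in\omega$, $\forall^{\infty}n\ x_n\in\Delta^1_1(y_{>m},x_{>n})$. Applying this also to the pair $(y,x)\in F$ yields the other conjunct $\forall^{\infty}n\ y_n\in\Delta^1_1(x_{>m},y_{>n})$, and (since a conjunction under $\forall^{\infty}n$ is the conjunction of the two $\forall^{\infty}n$ statements) these together give $P(x,y)$, i.e.\ $F\subseteq P$.

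Now fix $(x,y)\in F$ and $m\in\omega$. The key move is to replace $y$ by an $\bbE_1$-equivalent representative coded by the tail $y_{>m}$ alone: let $\tilde y$ be the sequence agreeing with $y$ on all coordinates $>m$ and equal to $0$ on coordinates $\le m$, so $\tilde y\mathrel{\bbE_1}y$ and $\tilde y\in\Delta^1_1(y_{>m})$. Since $\bbE_1\subseteq F$ we have $\tilde y\mathrel{F}y\mathrel{F}x$, so $x\in[\tilde y]_F=[x]_F$. As $F$ is $\Delta^1_1$ (more precisely $\Delta^1_1(p,\,\cdot\,)$ if $F\in\Delta^1_1(p)$; we suppress $p$ throughout, as the statement of $P$ does), the set $X:=[x]_F=\{w:(\tilde y,w)\in F\}$ is $\Delta^1_1(y_{>m})$. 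Apply Kanovei's theorem to $X$ with parameter $z=y_{>m}$. Alternative (2) would give $\bbE_1\leq_B\bbE_1\upharpoonright X=\bbE_1\upharpoonright[x]_F$; combined with the trivial reduction $\bbE_1\upharpoonright[x]_F\leq_B\bbE_1$ this forces $\bbE_1\sim_B\bbE_1\upharpoonright[x]_F$, contradicting the hypothesis $\bbE_1\upharpoonright[x]_F<_B\bbE_1$. So alternative (1) holds: for all $w\in X$, $\forall^{\infty}n\ w_n\in\Delta^1_1(y_{>m},w_{>n})$. Taking $w=x\in X$ gives exactly $\forall^{\infty}n\ x_n\in\Delta^1_1(y_{>m},x_{>n})$, as required.

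The \emph{main obstacle}, and really the only point needing care, is this parameter arithmetic. A naive application of Kanovei to $[x]_F$ with the full parameter $y$ (legitimate, since $[x]_F=[y]_F$ is $\Delta^1_1(y)$) yields only $\forall^{\infty}n\ x_n\in\Delta^1_1(y,x_{>n})$, which is strictly weaker than what $P$ demands, because $\Delta^1_1(y_{>m},x_{>n})\subseteq\Delta^1_1(y,x_{>n})$; one genuinely has to shrink the parameter from $y$ to $y_{>m}$, and it is precisely $\bbE_1\subseteq F$ that makes this possible by forcing $[\tilde y]_F=[x]_F$, so that the dichotomy is applied to the same set with a smaller code. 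The remaining points are routine: $[x]_F$ is nonempty and contained in $(2^{\omega})^{\omega}$ so Kanovei's theorem applies, and "$\forall^{\infty}n$" absorbs the finitely many $n\le m$.
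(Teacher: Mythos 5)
Your proof is correct and is essentially the same argument as the paper's: you replace $y$ by an $\bbE_1$-equivalent tail representative $\tilde y$ (the paper's $z_m$) to shrink the parameter from $y$ to $y_{>m}$, observe $[\tilde y]_F=[x]_F$ is $\Delta^1_1(y_{>m})$, and invoke Kanovei's dichotomy, whose alternative (2) is excluded by the hypothesis. Your explicit remarks on the parameter bookkeeping and the symmetry reduction just make explicit what the paper's "Similarly" and the choice of $z_m$ leave implicit.
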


\begin{proof}
Suppose $x \FF y$. For each $m$, let $z_m$ be the result of replacing the first $m$ columns of $y$ by 0's so that $z_m \mathrel{\bbE_1} y$ (and hence $z_m \FF y)$ and $z_m \equiv_T y_{>m}$. Then $X=[x]_F=[y]_F=[z_m]_F$ is $\Delta_1^1(z_m)$, so $\forall^{\infty} n\ x_n \in \Delta^1_1(z_m,x_{>n}) = \Delta^1_1(y_{>m},x_{>n})$. Similarly,  $\forall^{\infty} n\ y_n \in \Delta^1_1(x_{>m},y_{>n})$.
\end{proof}

\begin{lem}
If $[H]^2_{\bbE_1} \subseteq P$ then $\bbE_1 \upharpoonright H <_B \bbE_1$.
\end{lem}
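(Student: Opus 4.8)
The plan is to reduce the statement to the theorem of Kanovei quoted above. Since $\bbE_1 \upharpoonright H \leq_B \bbE_1$ always holds (the inclusion is a reduction of $\bbE_1 \upharpoonright H$ into $\bbE_1$), it suffices to prove $\bbE_1 \not\leq_B \bbE_1 \upharpoonright H$. So I would suppose for contradiction that $g$ is a Borel reduction of $\bbE_1$ to $\bbE_1 \upharpoonright H$ and use the hypothesis $[H]^2_{\bbE_1} \subseteq P$ to produce a contradiction with Kanovei's dichotomy.

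First I would isolate a single class. Put $\bar 0 = \langle 0,0,\ldots\rangle \in (2^{\omega})^{\omega}$ and $A = \{\bar a : \neg\, \bar a \mathrel{\bbE_1} \bar 0\}$. The set $A$ is comeager and $\bbE_1$-invariant, so since $\bbE_1$ maintains complexity on non-meager sets we have $\bbE_1 \sqsubseteq_c \bbE_1 \upharpoonright A$, and composing with $g$ gives $\bbE_1 \leq_B \bbE_1 \upharpoonright g[A]$. Now for any $\bar a \in A$ the points $g(\bar a)$ and $g(\bar 0)$ both lie in $H$ and are $\bbE_1$-inequivalent (as $g$ is a reduction), so $P(g(\bar a), g(\bar 0))$ holds; taking the instance $m=0$ in the definition of $P$ and writing $w := g(\bar 0)_{>0}$ we obtain that for every $\bar a \in A$, $\forall^{\infty} n\; g(\bar a)_n \in \Delta^1_1(w, g(\bar a)_{>n})$. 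In other words, $g[A] \subseteq H_w$, where $H_w := \{ u \in (2^{\omega})^{\omega} : \forall^{\infty} n\; u_n \in \Delta^1_1(w, u_{>n}) \}$.

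Next I would reflect $H_w$ down to a Borel set and feed it to Kanovei. Using that hyperarithmetic reducibility $\{(a,c) : a \in \Delta^1_1(c)\}$ is a $\Pi^1_1$ relation and that $\Pi^1_1$ is closed under countable unions and intersections, the predicate ``$u \in H_w$'' is $\bPi^1_1$ (with parameter $w$), so ``$S \subseteq H_w$'' is a $\bPi^1_1$-on-$\bSigma^1_1$ property of $S$. As $g[A]$ is analytic and contained in $H_w$, the First Reflection Theorem gives a Borel set $B$ with $g[A] \subseteq B \subseteq H_w$. Fix a real $z$ coding both $w$ and a $\Delta^1_1$-code for $B$, so that $B$ is $\Delta^1_1(z)$ and, for every $u \in B$, $\forall^{\infty} n\; u_n \in \Delta^1_1(w, u_{>n}) \subseteq \Delta^1_1(z, u_{>n})$. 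Thus $B$ satisfies alternative (1) of Kanovei's dichotomy with parameter $z$, and therefore $\bbE_1 \not\leq_B \bbE_1 \upharpoonright B$. But $g[A] \subseteq B$ yields $\bbE_1 \leq_B \bbE_1 \upharpoonright g[A] \leq_B \bbE_1 \upharpoonright B$ — a contradiction. Hence $\bbE_1 \not\leq_B \bbE_1 \upharpoonright H$, so $\bbE_1 \upharpoonright H <_B \bbE_1$.

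I expect the main obstacle to be the descriptive-set-theoretic bookkeeping in the reflection step: verifying that ``$u \in H_w$'' is genuinely $\bPi^1_1$ (which relies on the classical fact that $\leq_h$ is $\Pi^1_1$), and then handling parameters so that the reflected Borel set $B$ becomes $\Delta^1_1(z)$ for a $z$ that absorbs $w$, which is exactly what lets us apply Kanovei's theorem in the form stated. Everything else is soft: taking $m=0$ in $P$ and discarding the single class $[\bar 0]_{\bbE_1}$ provide just enough slack, and the argument never uses that $H$ itself is Borel, only that there is a Borel reduction of $\bbE_1$ into $\bbE_1 \upharpoonright H$.
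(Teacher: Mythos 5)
Your proof is correct, but it is considerably more elaborate than the one the paper has in mind, and it goes by a genuinely different route. The paper simply uses that the homogeneous set $H$ is Borel: fix a parameter $z$ with $H \in \Delta^1_1(z)$ and pick any $y \in H$. Then for every $x \in H$, either $x \mathrel{\bbE_1} y$ (in which case $x_n = y_n \in \Delta^1_1(y)$ for all large $n$) or $(x,y) \in [H]^2_{\bbE_1} \subseteq P$, and in both cases $\forall^{\infty} n\; x_n \in \Delta^1_1(y, x_{>n}) \subseteq \Delta^1_1(z \oplus y, x_{>n})$. So $H$ itself satisfies alternative (1) of Kanovei's dichotomy with parameter $z \oplus y$, and $\bbE_1 \not\leq_B \bbE_1 \upharpoonright H$ follows in one step — no contradiction hypothesis, no passage to the analytic image of a reduction, and no reflection.

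What your argument buys is generality: by supposing a reduction $g : \bbE_1 \leq_B \bbE_1 \upharpoonright H$, carving out the comeager complement $A$ of a single class, observing $g[A]$ lands inside the $\bPi^1_1$ set $H_w$, and reflecting to a Borel $B$ with $g[A] \subseteq B \subseteq H_w$, you never use that $H$ is Borel — only that some Borel reduction of $\bbE_1$ into $\bbE_1 \upharpoonright H$ would exist. That is a real (if modest) strengthening, since the statement as written does not display the Borelness hypothesis, even though in the paper's setting the homogeneous set is always Borel. The cost is the extra machinery (maintaining complexity on non-meager sets, First Reflection, re-coding parameters) that the paper's one-line argument avoids. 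One small point to be explicit about: in the reflection step you need to verify that $\Phi(S) \equiv S \subseteq H_w$ is $\bPi^1_1$-on-$\bSigma^1_1$, which does follow from the $\Pi^1_1$-ness of $\leq_h$ and closure of $\Pi^1_1$ under number quantifiers, exactly as you indicate — but that check is part of the argument, not an afterthought.
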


\begin{proof}
Let $H$ be $\Delta^1_1(z)$ for some $z$, and fix any $y \in H$, so $H$ is also $\Delta^1_1(z \oplus y)$. For any $x \in H$ we have $\forall^{\infty} n \ x_n \in \Delta^1_1(y,x_{>n}) \subseteq \Delta^1_1(z \oplus y, x_{>n})$, so $\bbE_1 \not\leq_B \bbE_1 \upharpoonright H$.
\end{proof}

\begin{lem}
 If $[H]^2_{\bbE_1} \cap P = \emptyset$ then $\bbE_1 \upharpoonright H \leq_B F$ for any Borel equivalence relation $F$ with $\bbE_1 \subseteq F \subseteq P$. 
 \end{lem}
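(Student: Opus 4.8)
The plan is to show that, under the stated hypotheses, $F$ and $\bbE_1$ actually coincide as relations on $H$, so that the inclusion map $H \hookrightarrow (2^{\omega})^{\omega}$ is already a Borel reduction of $\bbE_1 \upharpoonright H$ to $F$. Essentially all of the work is in unwinding the definitions of $[H]^2_{\bbE_1}$ and of the containment $\bbE_1 \subseteq F \subseteq P$.

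First I would observe that, because $\bbE_1 \subseteq F$, the inclusion $\iota : H \hookrightarrow (2^{\omega})^{\omega}$ is automatically a homomorphism from $\bbE_1 \upharpoonright H$ to $F$: if $x, y \in H$ and $x \mathrel{\bbE_1} y$, then $x \FF y$. It remains to check that $\iota$ is also a cohomomorphism, i.e., that for $x, y \in H$ the relation $x \FF y$ forces $x \mathrel{\bbE_1} y$. Suppose toward a contradiction that $x, y \in H$ with $x \FF y$ but $\neg x \mathrel{\bbE_1} y$. Then $(x,y) \in [H]^2_{\bbE_1}$, while the hypothesis $F \subseteq P$ --- read, as in the preceding lemmas, as the assertion that every $\bbE_1$-inequivalent pair related by $F$ lies in the partition class $P$ --- gives $P(x,y)$. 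Hence $(x,y) \in [H]^2_{\bbE_1} \cap P$, contradicting the hypothesis that this set is empty. Therefore $x \FF y \Leftrightarrow x \mathrel{\bbE_1} y$ for all $x, y \in H$, so $\iota$ is a continuous (hence Borel) reduction of $\bbE_1 \upharpoonright H$ to $F$, and $\bbE_1 \upharpoonright H \leq_B F$ as desired.

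I do not expect a genuine obstacle here; the only point requiring a moment's attention is the precise reading of ``$\bbE_1 \subseteq F \subseteq P$'' --- namely that $F$ can differ from $\bbE_1$ only on pairs that belong to $P$ --- which is exactly what makes $P$-homogeneity of $H$ incompatible with $F$ identifying any two $\bbE_1$-inequivalent points of $H$. I would also note that the argument uses nothing about the descriptive complexity of $H$ and goes through verbatim when $F$ is merely analytic, and remark that, together with the two preceding lemmas, this produces the ``dichotomy'' shape one would want for primeness of $\bbE_1$; the obstruction to completing that argument is precisely that $P$ is $\bPi^1_1$ rather than Borel, so that Galvin's theorem cannot be applied to extract a $P$-homogeneous set.
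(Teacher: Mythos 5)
Your proposal is correct and is essentially the paper's own argument: the paper's one‑line proof observes that $[H]^2_{\bbE_1}\cap P=\emptyset$ together with $F\subseteq P$ makes $H$ an $F\setminus\bbE_1$‑discrete set, which is precisely your claim that $F$ and $\bbE_1$ agree on $H$ so that the inclusion map reduces $\bbE_1\upharpoonright H$ to $F$. Your reading of ``$F\subseteq P$'' as ``$F\setminus\bbE_1\subseteq P$'' is the intended one, and the additional remarks (analytic $F$, the $\bPi^1_1$ obstruction to applying Galvin's theorem) are accurate.
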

 
 \begin{proof}
 If $[H]^2_{\bbE_1} \cap P = \emptyset$ and $F \subseteq P$, then $H$ is $F \setminus \bbE_1$-discrete.
 \end{proof}
 
 Hence $\bbE_1$ would be uniformly prime if there were a homogeneous set of size $\bbE_1$ (necessarily satisfying $[H]^2_{\bbE_1} \cap P = \emptyset$) for this partition $P$. This, however, turns out to be false. In fact there is a Borel sub-partition of $P$ with no homogeneous set of size $\bbE_1$.
 
 \begin{dfn}
 Let $\widetilde{P}$ be the following partition on $[\bbE_1]^2$:
\[ \widetilde{P}(x,y) \Leftrightarrow \forall m \forall^{\infty}n (x_n \leq_T y_{>m} \oplus x_{>n} \wedge y_n \leq_T x_{>m} \oplus y_{>n}) .\]
 \end{dfn}
 
 Since $\widetilde{P} \subseteq P$, if $[H]^2_{\bbE_1} \subseteq \widetilde{P}$ then $\bbE_1 \upharpoonright H <_B \bbE_1$. We will also rule out a large homogeneous set disjoint from $\widetilde{P}$. The following builds on the existence of a square coding function for the ideal generated by reverse cubes in \cite{ksz} (see Theorem 7.34 of \cite{ksz}), but is complicated slightly by the fact that we need to encode a set of columns on which to do our coding.

\begin{lem}
If $[H]^2_{\bbE_1} \cap \widetilde{P} = \emptyset$ then $\bbE_1 \upharpoonright H <_B \bbE_1$.
\end{lem}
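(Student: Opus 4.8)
The plan is to prove the contrapositive: assuming $\bbE_1 \leq_B \bbE_1 \upharpoonright H$ (equivalently $\bbE_1 \sim_B \bbE_1 \upharpoonright H$, since $\bbE_1 \upharpoonright H \leq_B \bbE_1$ always holds), I will produce an $\bbE_1$-inequivalent pair $x,y \in H$ with $\widetilde P(x,y)$, contradicting $[H]^2_{\bbE_1}\cap\widetilde P=\emptyset$. We may harmlessly take $H$ to be Borel (pass to a Borel superset still avoiding $\widetilde P$ via the same reflection used in Lemma~\ref{lem:reflection}, noting $\widetilde P$ is itself Borel). Note that the naive approach — take $\bbE_1$-inequivalent generic inputs and push them through the reduction — is not enough: genericity controls the Baire-category behaviour of the images but says nothing about genuine Turing reductions between their tails, and $\widetilde P$ is phrased entirely in terms of $\leq_T$, which is strictly stronger than the $\Delta^1_1$ condition appearing in Kanovei's dichotomy and in the coarser partition $P$.

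First I would put the reduction into a canonical form. Since $\bbE_1 \leq_B \bbE_1 \upharpoonright H$, the first alternative of Kanovei's theorem fails for $H$, and carrying out the Sacks-style fusion in its proof produces a continuous $\bbE_1$-embedding $g:(2^\omega)^\omega \hookrightarrow H$, a strictly increasing sequence $(N_k)$, and a real parameter $p$ such that $g$ is \emph{column-local} — changing the first $k$ coordinates of $\bar a$ changes only the first $N_k$ coordinates of $g(\bar a)$, i.e.\ $g(\bar a)_{\geq N_k}$ is a continuous function of $\bar a_{\geq k}$ alone — and such that tails are two-sided Turing-computable in a uniform way: $g(\bar a)_{\geq N_k} \leq_T \bar a_{\geq k} \oplus p$ and $\bar a_{\geq k} \leq_T g(\bar a)_{\geq N_k} \oplus p$. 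The essential obstruction is already visible here: the parameter $p$ and the reindexing $(N_k)$ must ultimately be read off from tails of the \emph{images} $x,y$ themselves, since $\widetilde P$ admits no external parameter.

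Next I would invoke the square coding function of Kanovei--Sabok--Zapletal for the ideal $\mathcal J$ generated by reverse cubes (Theorem~7.34 of \cite{ksz}): in the form we need, it provides a Borel map $c$ so that for any $\mathcal J$-positive Borel set $B$ and any prescribed $r\in 2^\omega$ there are $\bbE_1$-inequivalent $x,y\in B$ with $c(x,y)=r$, and moreover $c(x,y)$ is Turing-computable from every tail of $x$ and from every tail of $y$. Because $\bbE_1 \leq_B \bbE_1 \upharpoonright H$, the range of $g$ (or a suitable pullback along $c$) is $\mathcal J$-positive, so we may choose $\bbE_1$-inequivalent $x,y \in \mathrm{ran}(g) \subseteq H$ whose coding value $c(x,y)$ encodes $p$, the sequence $(N_k)$, and the infinite set $S\subseteq\omega$ of columns on which the fusion actually permitted control — this last point being the ``encode a set of columns'' complication: the fusion need not control every column, only an infinite subsequence, and $S$ must itself be recoverable. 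Since $c(x,y)$ is now readable inside every tail of $x$ and of $y$, column-locality together with two-sided tail-computability yields, for each $m$ and all sufficiently large $n$, both $x_n \leq_T y_{>m}\oplus x_{>n}$ and $y_n \leq_T x_{>m}\oplus y_{>n}$: from a late tail $y_{>m}$ one decodes $p$, $(N_k)$, $S$; from $x_{>n}$ together with $p$ one recovers the block of $x$-columns needed (the remaining finitely many being redundantly coded by the square coding function) and hence $x_n$. Thus $\widetilde P(x,y)$ holds while $\neg x \mathrel{\bbE_1} y$, the desired contradiction.

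I expect the main difficulty to be exactly this bookkeeping. One must arrange the embedding $g$ to be simultaneously continuous, column-local, and tail-to-tail Turing-bounded in \emph{both} directions through a single parameter; then thread $p$, $(N_k)$, and $S$ through all tails of the images via the square coding function; and finally verify that every column $x_n$ (including those outside $S$) becomes honestly computable — not merely $\Delta^1_1$ — from a tail of $y$ and a tail of $x$. The passage from the $\Delta^1_1$-level information handed to us by Kanovei's dichotomy to genuine $\leq_T$ reductions is precisely where the explicit coding function of \cite{ksz} does the real work, and adapting it to code only on the available subsequence $S$, while keeping $S$ itself recoverable from tails, is the new technical point.
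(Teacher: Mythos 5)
Your high-level strategy matches the paper's: argue by contrapositive, invoke the Kanovei--Sabok--Zapletal reverse-cube machinery to obtain a controlled continuous embedding into $H$, and exhibit a single $\bbE_1$-inequivalent pair in the image satisfying $\widetilde P$. But the paper does not use Theorem~7.34 of \cite{ksz} as a black box; it starts from Theorem~7.14 (the existence of an $s$-cube $f$ inside $H$) and runs a fresh fusion along that cube, and the reason it must do so is precisely where your proposal has a gap.

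The gap is in the verification that $x_n \leq_T y_{>m} \oplus x_{>n}$ for all large $n$. Your scheme arranges that the auxiliary data $p$, $(N_k)$, $S$ is packed into the single coded real $c(x,y)$ and hence readable from every tail. But that data cannot by itself carry the contents of the missing column $x_n$, which is a whole real whose bits are not present in $x_{>n}$, in $y$, or in $c(x,y)$, unless you have explicitly re-encoded those bits into later columns while building $x$ and $y$. The clause ``the remaining finitely many being redundantly coded by the square coding function'' is where this is elided: Theorem~7.34 encodes \emph{one prescribed real} into the tail pattern of an inequivalent pair, not the infinitely many distinct columns of that pair (each itself a real) into its own tails. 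What the paper's proof actually does is make this redundancy explicit: at stage $n=\langle m,k,i\rangle$ of the fusion, the bit of disagreement $\tilde x(t_n)(d_n) \neq \tilde y(t_n)(d_n)$ is forced to record the $k$-th bit of column $m$ of $\tilde x$ (if $i=0$) or of $\tilde y$ (if $i=1$); since $(m,k,i)$ exhausts $\omega\times\omega\times 2$, every bit of every column is re-encoded in the disagreement digits. Moreover the sequence $t$ of disagreement columns is itself recoverable from any tail pair (it is read off from where $\tilde x$ and $\tilde y$ first disagree past $d_{n-1}$), so no externally encoded parameter needs to survive. That bit-by-bit square coding of all columns, together with the bookkeeping that the relevant columns $t_n$ lie in the cube's set $s$, is the real content, and it is exactly what your writeup defers to the cited theorem rather than carrying out.

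A secondary weak point: you assert that Kanovei's dichotomy, via a Sacks-style fusion, yields a column-local embedding with two-sided uniform Turing bounds through a single parameter $p$. The dichotomy as stated gives $\Delta^1_1$ information, not Turing reductions, and you acknowledge as much. You are right that this passage from $\Delta^1_1$ to $\leq_T$ is the main technical hurdle; the point is that it is not a consequence of anything cited but is precisely what the explicit $s$-cube fusion in the paper is built to deliver.
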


\begin{proof}
Suppose $\bbE_1 \leq_B \bbE_1 \upharpoonright H$; we will show there are $\tilde{x}, \tilde{y} \in H$ with $\neg \tilde{x} \mathrel{\bbE_1} \tilde{y}$ and $\widetilde{P}(\tilde{x},\tilde{y})$. By Theorem 7.14 of \cite{ksz}, there is an infinite $s \subseteq \omega$ so that $H$ contains an \emph{$s$-cube}, i.e., the range of a continuous map $f: \left( 2^{\omega} \right)^{\omega} \rightarrow \left( 2^{\omega} \right)^{\omega}$ with the properties that 
\begin{enumerate}
\item if $x(i) \neq y(i)$, then $f(x)(\pi(j)) \neq f(y)(\pi(j))$ for all $j \leq i$, and
\item if $x(j)=y(j)$ for all $j \geq i$, then $f(x)(n)=f(y)(n)$ for all $n \geq \pi(i)$, 
\end{enumerate}
where $\pi: \omega \rightarrow s$ is an increasing enumeration of $s$. We will produce increasing sequences $t_n$, $k_n$, and $d_n$ from $\omega$, and elements $x_n,y_n \in \left( 2^{\omega} \right)^{\omega}$ with the following properties:
\begin{enumerate}
\item $x_{n+1} \upharpoonright \left( 2^{k_n} \right)^{k_n} = x_n \upharpoonright \left( 2^{k_n} \right)^{k_n}$ and $y_{n+1} \upharpoonright \left( 2^{k_n} \right)^{k_n} = y_n \upharpoonright \left( 2^{k_n} \right)^{k_n}$.
\item Whenever $x \upharpoonright \left( 2^{k_n} \right)^{k_n} = x_n \upharpoonright \left( 2^{k_n} \right)^{k_n}$ and $y \upharpoonright \left( 2^{k_n} \right)^{k_n} = y_n \upharpoonright \left( 2^{k_n} \right)^{k_n}$ then $f(x) \upharpoonright \left( 2^{d_n+1} \right)^{t_n+1} = f(x_n) \upharpoonright \left( 2^{d_n+1} \right)^{t_n+1}$ and $f(y) \upharpoonright \left( 2^{d_n+1} \right)^{t_n+1} = f(y_n) \upharpoonright \left( 2^{d_n+1} \right)^{t_n+1}$.
\item $f(x_n)(t_n) \upharpoonright d_n = f(y_n)(t_n) \upharpoonright d_n$ and $f(x_n)(t_n)(d_n) \neq f(y_n)(t_n)(d_n)$.
\item $f(x_n)(t_n)(d_n)= z(n)$, where $z(n) = \begin{cases} f(x_n)(m)(k) & \text{if $n=\langle m,k,0 \rangle$} \\ f(y_n)(m)(k)  & \text{ if $n=\langle m,k,1\rangle$} \end{cases}$, 

\noindent
where $n = \langle m,k,i \rangle$ is a bijection from $\omega$ to $\omega \times \omega \times 2$ with $m,k < n$ for $n\neq 0$ (and $0=\langle 0,0,0\rangle$).
\item For all $m \geq n+1$, $t_{n+1}$ is the least $k \geq d_n$ so that $f(x_m)(k) \neq f(y_m)(k)$.
\item If $i$ is least such that $d_n \leq \pi(i)$, then $t_{n+1} \leq \pi(i)$ and $d_{n+1} \geq \pi(i+1)$.
\item If $\pi(i) \geq t_n$, then $x_n(m) = y_n(m)$ for all $m \geq i+1$.
\end{enumerate}

At the end, the limits $x_{\infty} = \lim_n x_n$ and $y_{\infty}=\lim_n y_n$ exist, and we let $\tilde{x}=f(x_{\infty})$ ,  $\tilde{y}=f(y_{\infty})$, and $t = \langle t_n \rangle \subseteq \omega$. Then $\tilde{x}$ and $\tilde{y}$ are in $H$, and $\neg \tilde{x} \mathrel{\bbE_1} \tilde{y}$ by condition (3). We have $t \leq_T \tilde{x}_{>m}\oplus \tilde{y}_{>m}$ for all $m$, and $\tilde{x}_n \leq_T t \oplus \tilde{y}_{>m}\oplus \tilde{x}_{>n}$ and $\tilde{y}_n \leq_T t \oplus \tilde{x}_{>m}\oplus \tilde{y}_{>n}$ for all $m$ and $n$, so $\widetilde{P}(\tilde{x},\tilde{y})$ as desired.

For the construction,  let $t_0 = \pi(0)$ be the least element of $s$. Choose any $x$, and choose $k$ large enough so that $f(x)(0)(0)$ is determined by $x \upharpoonright \left( 2^{k} \right)^{k}$. By the properties of $f$, we can then find $x_0$ and $y_0$ so that $x_0 \upharpoonright \left( 2^{k} \right)^{k} = y_0 \upharpoonright \left( 2^{k} \right)^{k} = x \upharpoonright \left( 2^{k} \right)^{k}$, $f(x_0)(t_0) \upharpoonright \pi(1) = f(y_0)(t_0) \upharpoonright \pi(1)$, $x_0(0) \neq y_0(0)$ (so $f(x_0)(t_0) \neq f(y_0)(t_0)$), $x_0(m)=y_0(m)$ for $m \geq 1$, and $f(x_0)(t_0)(d_0)= f(x_0)(0)(0)=f(y_0)(0)(0)$, where $d_0$ is the least $d$ such that $f(x_0)(t_0)(d) \neq f(y_0)(t_0)(d)$. Choose $k_0$ large enough so that $f(x_0) \upharpoonright \left( 2^{d_0+1} \right)^{t_0+1}$ is determined by $x_0 \upharpoonright \left( 2^{k_0} \right)^{k_0}$ and similarly for $y_0$.

Given $t_n$, $d_n$, $k_n$, $x_n$, and $y_n$ meeting the conditions, let $i$ be least so that $d_n \leq \pi(i)$.  Let $t_{n+1}$ be the least $k \geq d_n$ meeting the following condition: 
\begin{quote}
For every $d$, there are $x$ and $y$ with $x \upharpoonright \left( 2^{k_n} \right)^{k_n} = x_n \upharpoonright \left( 2^{k_n} \right)^{k_n}$ and $y \upharpoonright \left( 2^{k_n} \right)^{k_n} = y_n \upharpoonright \left( 2^{k_n} \right)^{k_n}$, $f(x)(k) \upharpoonright d = f(y)(k) \upharpoonright d$, $f(x)(k) \neq f(y)(k)$, $f(x)(j) = f(y)(j)$ for all $d_n \leq j < k$, and $x(m)=y(m)$ for all $m \geq i+1$.
\end{quote}
We claim that such a $k$ exists, and satisfies $k \leq \pi(i)$. Suppose the condition does not hold for any $k$ with $d_n \leq k < \pi(i)$.  Then for each such $k$ there is a bound $b_k$ so that if $x$ and $y$ meeting the rest of the condition satisfy $f(x)(k) \upharpoonright b_k = f(y)(k) \upharpoonright b_k$ then $f(x)(k)=f(y)(k)$. We can take large enough initial segments of $x_n$ and $y_n$ (for which $f(x_n)$ and $f(y_n)$ agree on such columns $k$) to fix identical values  of $f(x)(k) \upharpoonright b_k = f(y)(k) \upharpoonright b_k$, thus ensuring all future extensions  satisfy $f(x)(k)=f(y)(k)$. Now for any $d$, we can find further extensions $x$ and $y$ so that $f(x)(y)(\pi(i)) \upharpoonright d = f(y)(\pi(i)) \upharpoonright d$ but $x(i)\neq y(i)$ so that $f(x)(\pi(i)) \neq f(y)(\pi(i))$, so that $k=\pi(i)$ satisfies the condition.

Take $d= \pi(i+1)$, and find $x$ and $y$ witnessing the above condition for $k=t_{n+1}$. Let $d_{n+1}\geq d$ be the least $j$ so that $f(x)(t_{n+1})(j) \neq f(y)(t_{n+1})(j)$. We claim that we can modify $x$ and $y$ if necessary to find $x_{n+1}$ and $y_{n+1}$ additionally satisfying  $f(x_{n+1})(t_{n+1})(d_{n+1})= z(n+1)$ (noting that $z(n+1)$ has already been determined, as this is either $f(x_{n+1})(m)(k)$ or $f(y_{n+1})(m)(k)$ with $m,k \leq n$, which must agree with $f(x_{n})(m)(k)$ and $f(y_{n})(m)(k)$, respectively, by condition (2)).  If $f(x)(t_{n+1})(d_{n+1})= z(n+1)$ we may take $x_{n+1}=x$ and $y_{n+1}=y$, so suppose instead that $f(y)(t_{n+1})(d_{n+1}) = z(n+1)$. Consider large enough initial segments of $x_n$ and $y_n$ (and hence of $x$ and $y$) which agree on columns $\geq i-1$ and force agreement of $f(x)$ and $f(y)$ on columns $d_n \leq k < t_{n+1}$ and on column $t_{n+1}$ up to $d$. Note that these agree on all columns beyond $t_{n+1}$. Let $x_{n+1}$ and $y_{n+1}$ be the results of interchanging the additional coordinates of $x$ and $y$. Then $x_{n+1}(j) = y(j)$ for $j \geq i-1$, so $f(x_{n+1})(m)  = f(y)(m)$ for $m \geq\pi(i-1)$, and also $f(y_{n+1})(m)=f(x)(m)$ for $m \geq \pi(i-1)$, so $d_{n+1}$ is still the first disagreement in column $t_{n+1}$ and $f(x_{n+1})(t_{n+1})(d_{n+1})= y(t_{n+1})(d_{n+1})=z(n+1)$ as required. This completes the construction of stage $n+1$, and hence the proof.
\end{proof}

Hence we have:

\begin{thm}
$\bbE_1$ is not Borel weakly compact.
\end{thm}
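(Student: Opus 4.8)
The plan is to package the Borel sub‑partition $\widetilde{P}$ into a single Borel $\Delta(2)$‑partition and then play the two preceding lemmas against the two possible constant values. First I would check that $\widetilde{P}$ genuinely is a Borel $\Delta(2)$‑partition of $[\bbE_1]^2$ in the sense of Definition~\ref{dfn:partition}: Turing reducibility between reals is Borel and ``$\forall m\,\forall^{\infty}n$'' is an arithmetic quantifier combination, so $\widetilde{P}\subseteq[\bbE_1]^2$ is Borel; it is symmetric in $x,y$ by inspection; and it is $\bbE_1\times\bbE_1$‑invariant. The one point needing an argument is this invariance: if $x\mathrel{\bbE_1}x'$ agree on all columns $\geq N$, then for every $m$ the real $x'_{>m}$ Turing‑computes $x_{>N-1}$ (drop the finitely many leading columns), so in the defining formula the leading ``$\forall m$'' absorbs the finitely many altered columns, and the remaining conjuncts are unaffected for $n\geq N$; hence $\widetilde{P}(x,y)\Leftrightarrow\widetilde{P}(x',y)$, and symmetrically in the second coordinate. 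Extend the characteristic function of $\widetilde{P}$ to a Borel $f:X^2\rightarrow 2$ by setting $f\equiv 0$ on $\bbE_1$; then $f$ is symmetric and $\bbE_1\times\bbE_1$‑invariant.

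Next, suppose toward a contradiction that $\bbE_1$ is Borel weakly compact, i.e.\ $\bbE_1\rightarrow_B(\bbE_1)^2_2$. Applying this to $f$ yields a Borel set $A$ with $\bbE_1\leq_B\bbE_1\upharpoonright A$ on which $f$ is constant on $[A]^2_{\bbE_1}$. So either $[A]^2_{\bbE_1}\subseteq\widetilde{P}$ (when that constant value is $1$) or $[A]^2_{\bbE_1}\cap\widetilde{P}=\emptyset$ (when it is $0$).

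In the first case, since $\widetilde{P}\subseteq P$, the lemma that $[H]^2_{\bbE_1}\subseteq P$ implies $\bbE_1\upharpoonright H<_B\bbE_1$ gives $\bbE_1\upharpoonright A<_B\bbE_1$. In the second case, the preceding lemma — whose proof is the square‑coding construction carried out just above, producing from any reduction of $\bbE_1$ into $\bbE_1\upharpoonright A$ a pair of $\bbE_1$‑inequivalent $\tilde{x},\tilde{y}\in A$ with $\widetilde{P}(\tilde{x},\tilde{y})$ — again gives $\bbE_1\upharpoonright A<_B\bbE_1$. Either way this contradicts $\bbE_1\leq_B\bbE_1\upharpoonright A$, so $\bbE_1$ is not Borel weakly compact.

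The genuine work here is already behind us: the main obstacle is the lemma ruling out a large homogeneous set disjoint from $\widetilde{P}$ (simultaneously coding a set of active columns and running a square coding function for the reverse‑cube ideal), together with Kanovei's dichotomy handling the complementary case. Given those two lemmas, the theorem itself is just the bookkeeping of matching each lemma to one of the two constant values of $f$; the only new verification is that $\widetilde{P}$ really is an $\bbE_1\times\bbE_1$‑invariant Borel partition, so that Borel weak compactness can legitimately be applied to it.
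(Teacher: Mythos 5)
Your proposal is correct and follows the same route as the paper: apply the hypothetical partition relation to the Borel sub-partition $\widetilde{P}$, use $\widetilde{P}\subseteq P$ together with the lemma that $[H]^2_{\bbE_1}\subseteq P$ forces $\bbE_1\upharpoonright H<_B\bbE_1$ for the positive case, and the square-coding lemma for the negative case $[H]^2_{\bbE_1}\cap\widetilde{P}=\emptyset$. The paper leaves this final bookkeeping implicit; your explicit check that $\widetilde{P}$ is a symmetric Borel $\bbE_1\times\bbE_1$-invariant partition (absorbing the finitely many altered columns via the universal quantifier over $m$) is the only step worth writing out, and you have done it correctly.
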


We thus have a complete characterization of Borel partition properties for $\bbE_1$:

\begin{cor}
A Borel equivalence relation satisfies $\bbE_1 \rightarrow_B (F)^2_2$ if and only if $F \leq_B \bbE_0$.
\end{cor}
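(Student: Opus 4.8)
The plan is to derive both implications of the corollary by combining the results about the partition $\widetilde{P}$ established earlier in this section with the Kechris--Louveau dichotomy (for the forward direction) and with Conley's Borel weak compactness of $\bbE_0$ (for the reverse direction).

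For the forward direction, assume $\bbE_1 \rightarrow_B (F)^2_2$ and apply this relation to the Borel partition $\widetilde{P}$ of $[\bbE_1]^2$. This yields a Borel set $A$ with $F \leq_B \bbE_1 \upharpoonright A$ which is homogeneous for $\widetilde{P}$, so that either $[A]^2_{\bbE_1} \subseteq \widetilde{P}$ or $[A]^2_{\bbE_1} \cap \widetilde{P} = \emptyset$. In the first case, since $\widetilde{P} \subseteq P$, the lemma recorded above gives $\bbE_1 \upharpoonright A <_B \bbE_1$; in the second case the main lemma of this section gives the same conclusion. In either case $\bbE_1 \upharpoonright A <_B \bbE_1$, so the Kechris--Louveau dichotomy yields $\bbE_1 \upharpoonright A \leq_B \bbE_0$, and hence $F \leq_B \bbE_1 \upharpoonright A \leq_B \bbE_0$.

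For the reverse direction, assume $F \leq_B \bbE_0$. By the monotonicity of $\rightarrow_B$ it suffices to prove $\bbE_1 \rightarrow_B (\bbE_0)^2_2$, so fix a Borel symmetric $\bbE_1 \times \bbE_1$-invariant partition $g \colon [\bbE_1]^2 \to 2$. Let $\iota \colon 2^{\omega} \to (2^{\omega})^{\omega}$ be the continuous reduction of $\bbE_0$ to $\bbE_1$ given by $\iota(z)_n = \langle z(n+k) : k \in \omega \rangle$, so that $\iota(z) \mathrel{\bbE_1} \iota(w)$ iff $z \mathrel{\bbE_0} w$. Pulling $g$ back along $\iota \times \iota$, and setting the value to $0$ off $[\bbE_0]^2$, defines a symmetric Borel function $c \colon (2^{\omega})^2 \to 2$; by Conley's theorem there is a nonsmooth compact $K \subseteq 2^{\omega}$ on which $c$ is constant off $\bbE_0$. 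Put $A = \iota[K]$, which is compact, hence Borel. Since $K$ is nonsmooth we have $\bbE_0 \sqsubseteq_c \bbE_0 \upharpoonright K$, and $\iota \upharpoonright K$ is a continuous isomorphism of $\bbE_0 \upharpoonright K$ with $\bbE_1 \upharpoonright A$, so $\bbE_0 \leq_B \bbE_1 \upharpoonright A$. Moreover, for $\bbE_1$-inequivalent $\iota(z), \iota(w) \in A$ we have $z, w \in K$ with $\neg z \mathrel{\bbE_0} w$, whence $g(\iota(z), \iota(w)) = c(z, w)$ equals the constant value of $c$ on $K^2 \setminus \bbE_0$; thus $A$ is homogeneous for $g$, as required.

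The forward direction is immediate once the lemmas of this section are available, so the one step that takes any real thought is the reverse direction: observing that $\bbE_1 \rightarrow_B (\bbE_0)^2_2$ follows from Conley's theorem by transporting the partition along the tail embedding $\iota$. The remaining points --- that $\widetilde{P}$ and $c$ are Borel, and that $\iota[K]$ is Borel with $\bbE_1 \upharpoonright \iota[K]$ continuously isomorphic to $\bbE_0 \upharpoonright K$ --- are routine and present no obstacle.
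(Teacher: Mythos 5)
Your proof is correct, and the forward direction is exactly the paper's intended argument: apply the partition relation to the Borel sub-partition $\widetilde{P}$, deduce $\bbE_1 \upharpoonright A <_B \bbE_1$ from the two lemmas about $\widetilde{P}$-homogeneous sets, and invoke the Kechris--Louveau dichotomy. For the reverse direction your reasoning is also correct, but you have re-derived by hand something the paper already packages in a lemma: the paper's monotonicity lemma (``Suppose $E \rightarrow_B (R)^n_F$, $E \leq_B E'$, $F' \leq_B F$, and $R' \leq_B R$; then $E' \rightarrow_B (R')^n_{F'}$'') applied with $E = \bbE_0$, $E' = \bbE_1$, $R = \bbE_0$, $R' = F$, and $F' = F = \Delta(2)$ gives $\bbE_1 \rightarrow_B (F)^2_2$ directly from Conley's theorem that $\bbE_0$ is Borel weakly compact together with $\bbE_0 \leq_B \bbE_1$. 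Your explicit construction via the tail embedding $\iota$ and pullback of the coloring is exactly the proof of that lemma specialized to this case, so the content is identical; citing the lemma would make the argument shorter, but nothing you wrote is wrong.
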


Note that $\widetilde{P}$ is far from transitive, and transitivity of an equivalence relation $F$ is likely to be a crucial distinction necessary to establish uniform primeness vs. weak compactness for $\bbE_1$, as it was for $\bbF_2$. We mention a question somewhat related to primeness:
\begin{question}
If $\bbE_1 \leq_B F^{+}$ is $\bbE_1 \leq_B F$?
\end{question}

Although we do not know how to establish that $\bbE_1 \leq_B F$ whenever $\bbE_1 \subseteq F$ and $\bbE \upharpoonright [x]_F$ is essentially countable for all $x$, we can prove a weaker result if we assume that $\bbE_1 \upharpoonright [x]_F$ is countable for each $x$, not just essentially countable:

\begin{lem}
\label{lem:ctbleoverE1}
If $F$ is a $\bSigma^1_1$ equivalence relation with $\bbE_1 \subseteq F$ and $\bbE_1 \upharpoonright [x]_F$ is countable for all $x$, then $\bbE_1 \sqsubseteq_c F$.
\end{lem}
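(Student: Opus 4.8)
The plan is to produce a continuous reduction $\rho$ of $\bbE_1$ to $F$ by the kind of fusion argument used in the $\bbE_1$-constructions above, with the countable-index hypothesis feeding into the Kanovei dichotomy to exclude $F$-collisions. Fix $z$ so that $F$ is a $\bSigma^1_1(z)$ relation and work relative to $z$. The key preliminary observation is: for every $x$ the class $[x]_F$ is a $\bSigma^1_1(z,x)$ set which, by hypothesis, meets only countably many $\bbE_1$-classes, so $\bbE_1 \not\leq_B \bbE_1 \upharpoonright [x]_F$; hence the $\bSigma^1_1$ form of Kanovei's theorem puts $[x]_F$ in its first alternative, and therefore
\[ \text{for all } x \text{ and all } u \FF x,\qquad \forall^{\infty} n\ u_n \in \Delta^1_1(z, x, u_{>n}). \]
So if $\rho(x)$ were $F$-equivalent to some $\rho(y)$, its tail columns would hyperarithmetically compute its leading columns relative to $z$ and $\rho(y)$; the construction is designed to contradict this whenever $\neg x \mathrel{\bbE_1} y$.

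Now I would build $\rho$ following the template of the proof that $\bbE_1$ is prime to $E_G^X$. Fix an enumeration $A_n \nearrow \omega \times \omega$ and construct, along the tree $\bigcup_n 2^{A_n}$, finite approximations $x_s \in (2^{\omega})^{\omega}$ and an increasing sequence $m_n$ meeting the usual requirements, so that $\rho(x) = \lim_n x_{x \upharpoonright A_n}$ is a well-defined continuous injection which is simultaneously a homomorphism and a cohomomorphism from $\bbE_1$ to $\bbE_1$; in particular $\rho$ is a continuous embedding of $\bbE_1$ into itself. To these I would add a genericity requirement, interleaved exactly as in that proof through a comeager set $C_{\infty}$ closed under the relevant Vaught transforms: whenever a node is split by introducing a disagreement in a new column, that column is to be filled in Cohen-generically over a fixed countable model containing $z$ and the current data of the other branch, and symmetrically as the branches are extended. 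As in Kanovei's argument, the effect is that whenever $x$ and $y$ differ in infinitely many columns there are infinitely many $n$ with $\rho(x)_n$ Cohen-generic over a model containing $z$, $\rho(y)$, and $\rho(x)_{>n}$, so that $\rho(x)_n \notin \Delta^1_1(z, \rho(y), \rho(x)_{>n})$.

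Granting the construction, $\rho$ witnesses $\bbE_1 \sqsubseteq_c F$. It is a continuous embedding of $\bbE_1$ into itself, and since $\bbE_1 \subseteq F$ it is in particular a homomorphism from $\bbE_1$ to $F$. Conversely, suppose $\rho(x) \FF \rho(y)$ while $\neg x \mathrel{\bbE_1} y$; then $x$ and $y$ differ in infinitely many columns, so the genericity clause yields infinitely many $n$ with $\rho(x)_n \notin \Delta^1_1(z, \rho(y), \rho(x)_{>n})$, whereas $\rho(x) \in [\rho(y)]_F$ and the preliminary observation give $\forall^{\infty} n\ \rho(x)_n \in \Delta^1_1(z, \rho(y), \rho(x)_{>n})$ --- a contradiction. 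Hence $\rho(x) \FF \rho(y)$ forces $x \mathrel{\bbE_1} y$, so $\rho$ is a reduction, completing the proof.

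The main obstacle is the fusion itself: making $\rho$ a genuine embedding of $\bbE_1$ into $\bbE_1$ forces the two sides of every split to agree on ever larger finite blocks of columns, while the genericity clause requires them to keep diverging mutually generically on infinitely many columns, and reconciling these opposing demands within one bookkeeping --- exactly which column is refreshed generically at which stage, and relative to which finite part of the other branch --- is the delicate point, to be handled as in the earlier $\bbE_1$ constructions. A secondary point is to confirm that Kanovei's dichotomy is available in the $\bSigma^1_1$ form invoked above for the classes $[x]_F$.
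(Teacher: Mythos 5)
Your approach is genuinely different from the paper's, and it has two substantial gaps, both of which you flag but neither of which you resolve.

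The paper proves this lemma as a special case of the more general theorem about ideal-induced equivalence relations: it treats $\bbE_1 = E_{\text{FIN} \times 0}$, shows (via the notion of a vertically invariant ideal, Lemma~\ref{lem:visections}) that $E_{\text{FIN} \times 0}$ \emph{generically maintains complexity on sections}, shows that it \emph{generically separates classes} (so, by Lemma~\ref{lem:countable-sep}, generically separates classes within any $F$ of countable index), and then invokes the purely Baire-category Lemma~\ref{lem:sep-embed} to get the continuous embedding. This argument uses Kuratowski--Ulam on the Silver cubes $X_{z,\alpha}$, needs only that $F$ is $\bSigma^1_1$ (so that the separation set $A$ is $\bPi^1_1$), and at no point needs any dichotomy theorem for $\bbE_1$.

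Your first gap is the appeal to a ``$\bSigma^1_1$ form of Kanovei's theorem.'' The version quoted in the paper is stated for $\Delta^1_1(z)$ sets, and it produces the conclusion $\forall^{\infty} n\ u_n \in \Delta^1_1(z_0, u_{>n})$ where $z_0$ is a parameter from which $X$ is $\Delta^1_1$. Here $[x]_F$ is only $\bSigma^1_1(z,x)$. It is true that $[x]_F$ is a countable union of $\bSigma^0_2$ sets and hence Borel, but the code of that union requires choosing representatives of the countably many $\bbE_1$-classes, and there is no reason that choice is $\Delta^1_1(z,x)$-uniform. So the crucial inference $u \FF x \implies \forall^{\infty} n\ u_n \in \Delta^1_1(z,x,u_{>n})$ does not follow from the dichotomy as stated; you would either need to produce the analytic form of the dichotomy or argue that the relevant parameter can be absorbed, and neither is done. (Note that the paper's own version of the $F \subseteq P$ inference, just before Definition of $\widetilde{P}$, is explicitly hypothesized for $\Delta^1_1$ $F$, which is exactly why the paper cannot use it here.)

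Your second gap is the fusion itself, which you correctly identify as the main obstacle and then defer. The genericity clause you need --- that for \emph{every} pair of non-$\bbE_1$-equivalent branches $x,y$, infinitely many columns of $\rho(x)$ are Cohen generic over a model containing $z$, the whole real $\rho(y)$, and the tail $\rho(x)_{>n}$ --- must hold simultaneously for continuum many pairs $(x,y)$, with $\rho(y)$ itself only determined at the end of the construction. Reconciling this with the conditions that make $\rho$ a continuous injective reduction of $\bbE_1$ to $\bbE_1$ (which force agreement of $\rho(x)$ and $\rho(y)$ on increasingly long initial blocks and on all but finitely many columns when $x \mathrel{\bbE_1} y$) is not a routine variant of the fusion used for the $E_G^X$ argument, where the requirements at each node are Vaught-transform comeagerness conditions about the group action, not mutual genericity conditions between distinct branches. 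Without the explicit bookkeeping, this step is a genuine hole, not a deferred detail.

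So while your strategy --- use the Kanovei dichotomy to constrain $F$-equivalent points, then diagonalize against the hyperarithmetic condition by Cohen genericity --- captures the intuition behind the statement, the proof as given is incomplete. The paper's Baire-category route via generic separation of classes is the cleaner road precisely because it needs neither the analytic dichotomy nor the pairwise genericity.
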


We will show that this in fact true for a large class of equivalence relations induced by ideals, such as $\bbE_0^{\omega}$, which we introduce.

\begin{dfn}
Let $P$ be a countable set, and $\ideal$ an ideal on $P$. We define the equivalence relation $E_{\ideal}$ on $2^P$ by $x \mathrel{E_{\ideal}} y$ iff $\{p : x(p) \neq y(p)\} \in \ideal$.
\end{dfn}

We introduce some specific ideals we will use.

\begin{dfn}
The \emph{empty ideal} on $\omega$ is $0=\{\emptyset\}$. The \emph{finitary ideal} or \emph{Fr\'{e}chet ideal} on $\omega$ is ${\text{FIN}} = \{ A \subseteq \omega: \text{$A$ is finite} \}$. The \emph{summable ideal} on $\omega$ is $\ideal_s = \{A \subseteq \omega : \sum_{n \in A} \frac{1}{n+1}  < \infty \}$.
\end{dfn}

\begin{dfn}
Given ideals $\ideal$ on $P$ and $\ideal[J]$ on $Q$, the product ideal $\ideal \times \ideal[J]$ is defined on $P \times Q$ by setting $A \in \ideal \times \ideal[J]$ iff $\{ x : A_x \notin \ideal[J]\} \in \ideal$.
\end{dfn}

For instance, $\bbE_0 = E_{\text{FIN}}$, $\bbE_1 = E_{\text{FIN} \times 0}$, $\bbE_0^{\omega} = E_{0 \times \text{FIN}}$, $\bbE_1^{\omega} = E_{0 \times \text{FIN} \times 0}$, and $\bbE_2 = E_{\ideal_s}$.

\begin{dfn}
Let $E$ be an equivalence relation on $X = 2^P$, where $P$ is a countable set. For $x, z \in 2^P$ we write $x \upharpoonright z$ for the element given by $x \upharpoonright z (p) = \min (x(p),z(p))$. In particular, for $x,y \in X$ we have $x \upharpoonright z = y \upharpoonright z$ iff $\forall p (z(p)=1 \rightarrow x(p)=y(p))$.
\end{dfn}

\begin{dfn}
For $E$ an equivalence relation on $X = 2^P$, we say that $E$ \emph{generically separates classes} if for all $x,y \in X$ we have 
\[ \forall^{\ast} z \forall x' \forall y' (x \EE x' \wedge y \EE y' \wedge x' \upharpoonright z = y' \upharpoonright z \rightarrow x' \EE y').\]
\end{dfn}

\begin{dfn}
For $E \subseteq F$ on $X = 2^P$, we say $E$ \emph{generically separates classes within $F$} if for every $w \in X$ we have
\[ \forall^{\ast} z  \forall x (x \FF w \wedge x \upharpoonright z = w \upharpoonright z \rightarrow x \EE w) .\]
\end{dfn}

\begin{lem}
\label{lem:countable-sep}
If $E$ generically separates classes and $F$ is of countable index over $E$, then $E$ generically separates classes within $F$.
\end{lem}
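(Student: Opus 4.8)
The plan is to unwind the two ``generic separation'' definitions and exploit that a countable intersection of comeager sets is comeager. Fix $w \in X = 2^P$. Since $F$ is of countable index over $E$ (so in particular $E \subseteq F$), the $F$-class $[w]_F$ decomposes as $\bigcup_{n \in \bbN} [y_n]_E$ for some sequence $\langle y_n : n \in \bbN \rangle$ of elements of $X$; we may take $y_0 = w$. For each $n$, I would apply the hypothesis that $E$ generically separates classes to the pair $(w, y_n)$: this produces a comeager set $Z_n \subseteq X$ such that for every $z \in Z_n$ and all $x', y' \in X$ with $w \EE x'$, $y_n \EE y'$, and $x' \upharpoonright z = y' \upharpoonright z$, one has $x' \EE y'$. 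Let $Z = \bigcap_{n} Z_n$, which is comeager.

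Next I would check that $Z$ witnesses that $E$ generically separates classes within $F$ at $w$. So let $z \in Z$ and suppose $x \FF w$ with $x \upharpoonright z = w \upharpoonright z$. Since $x \in [w]_F = \bigcup_n [y_n]_E$, fix $n$ with $x \EE y_n$. Now instantiate the defining property of $Z_n$ at this $z$ with $x' := w$ and $y' := x$: the hypotheses $w \EE w$, $y_n \EE x$, and $w \upharpoonright z = x \upharpoonright z$ all hold (the last by assumption on $x$), so we conclude $w \EE x$, i.e. $x \EE w$, as required. Hence $\forall^{\ast} z\, \forall x\, (x \FF w \wedge x \upharpoonright z = w \upharpoonright z \rightarrow x \EE w)$, which is what we wanted since $w$ was arbitrary.

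There is essentially no real obstacle here: the entire content is the bookkeeping of the two definitions together with the observation that ``countable index'' lets us replace a statement about a single $F$-class by countably many statements about $E$-classes. The one point worth a word is that the decomposition $[w]_F = \bigcup_n [y_n]_E$ is needed only for the fixed $w$ under consideration, which is immediate from the definition of countable index; no uniform-in-$w$ choice of the sequence is required, since the conclusion to be proved is itself the pointwise statement ``for every $w$''.
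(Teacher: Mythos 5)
Your proof is correct and takes essentially the same approach as the paper's: decompose $[w]_F$ into countably many $E$-classes, invoke generic separation of $E$ to get a comeager set for each relevant pair, and intersect. The only (harmless) difference is that the paper intersects over all pairs $(x_i, x_j)$ of class representatives, while you only need the pairs $(w, y_n)$ with $w$ fixed in the first slot, since the definition of ``generically separates classes within $F$'' only compares points against $w$ itself; this is a slight streamlining but not a different argument.
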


\begin{proof}
Fix $w$ and let $\{ x_i : i \in \omega \}$ enumerate representatives of the $E$-classes contained in $[w]_F$, so $[w]_F = \bigcup_i [z_i]_E$. For each $i$ and $j$ there is then a comeager set $C_{i,j}$ such that 
\[ \forall z \in C_{i,j} \forall x \forall y (x_i \EE x \wedge x_j \EE y \wedge x \upharpoonright z = y \upharpoonright z \rightarrow x \EE y).\]
Then for all $z$ in the comeager set $\bigcap_{i,j} C_{i,j}$ we have the desired conclusion since every $x \in [w]_F$ is in some $[x_i]_E$.
\end{proof}

\begin{dfn}
We say that $E$ on $X= 2^P$ \emph{generically maintains complexity on sections} if for a comeager set of $z$ and $\alpha$ we have that $E \sqsubseteq_c E \upharpoonright C$ for any $C$ comeager in $X_{z,\alpha}$, where $X_{z,\alpha} = \{ x \in X : x \upharpoonright z = \alpha \upharpoonright z \}$.
\end{dfn}
Note that when $z$ is co-infinite (which happens generically, and is the intention) then $X_{z,\alpha}$ is a Silver cube homeomorphic to $2^P$.
We can take as a clopen basis for the topology of $2^P$ all sets of the form $N_s= \{ x : x \upharpoonright \text{dom}(s) = s\}$, where $s \in 2^{< P}$.

\begin{lem}
\label{lem:hypermycielski}
Let $C \subseteq 2^P$ be comeager. Then $\forall^{\ast}z \forall^{\ast} \alpha\; X_{z,\alpha} \subseteq C$.
\end{lem}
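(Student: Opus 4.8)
The plan is to read this as a two-parameter Mycielski-type statement and prove it via the Kuratowski--Ulam theorem on the product $2^P \times 2^P$. Set $D = \{(z,\alpha) \in 2^P \times 2^P : X_{z,\alpha} \subseteq C\}$. If $D$ is comeager in $2^P \times 2^P$, then Kuratowski--Ulam gives that for comeagerly many $z$ the section $D_z = \{\alpha : X_{z,\alpha} \subseteq C\}$ is comeager, which is precisely the assertion $\forall^{\ast} z \, \forall^{\ast} \alpha \; X_{z,\alpha} \subseteq C$. So the whole task reduces to showing $D$ is comeager.

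To do this, write $2^P \setminus C \subseteq \bigcup_n F_n$ with each $F_n$ closed and nowhere dense, and put $G_n = \{(z,\alpha) : X_{z,\alpha} \cap F_n \neq \emptyset\}$, so that $D^c \subseteq \bigcup_n G_n$. First I would check that each $G_n$ is closed: the map $\Phi$ on $(2^P)^3$ given by $\Phi(z,\alpha,\beta)(p) = \alpha(p)$ if $z(p)=1$ and $\Phi(z,\alpha,\beta)(p) = \beta(p)$ if $z(p)=0$ is continuous, and $G_n$ is the projection to the first two coordinates of the compact set $\Phi^{-1}(F_n)$, hence closed. It therefore suffices to show $G_n$ contains no basic clopen box $N_s \times N_t$; being closed, this makes $G_n$ nowhere dense, so $\bigcup_n G_n$ is meager and $D$ is comeager.

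The heart of the argument is producing, inside an arbitrary box $N_s \times N_t$, a pair $(z,\alpha)$ with $X_{z,\alpha} \cap F_n = \emptyset$. I would take $z$ equal to $s$ on $\mathrm{dom}(s)$ and equal to $1$ everywhere else, so $z^{-1}(1) = P \setminus s^{-1}(0)$ with $s^{-1}(0)$ \emph{finite}; then $X_{z,\alpha}$ depends only on $\alpha \upharpoonright (P \setminus s^{-1}(0))$, and under the homeomorphism $2^P \cong 2^{s^{-1}(0)} \times 2^{P \setminus s^{-1}(0)}$ it is the finite fiber $2^{s^{-1}(0)} \times \{\alpha \upharpoonright (P \setminus s^{-1}(0))\}$. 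Since $s^{-1}(0)$ is finite, for each $\gamma \in 2^{s^{-1}(0)}$ the slice $F_n^{\gamma} = \{\delta : (\gamma,\delta) \in F_n\}$ is closed nowhere dense in $2^{P \setminus s^{-1}(0)}$ (use that $\{\gamma\}$ is clopen in the finite factor), so the finite union $\bigcup_{\gamma} F_n^{\gamma}$ is again closed nowhere dense there. Its complement is dense open and meets the nonempty open set determined by $t \upharpoonright (\mathrm{dom}(t) \setminus s^{-1}(0))$, so I can choose $\delta_0 \in 2^{P \setminus s^{-1}(0)}$ extending that restriction with $\delta_0 \notin \bigcup_{\gamma} F_n^{\gamma}$. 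Letting $\alpha$ restrict to $\delta_0$ off $s^{-1}(0)$ and to $t$ (extended arbitrarily) on $s^{-1}(0)$ yields $z \supseteq s$, $\alpha \supseteq t$, and $X_{z,\alpha} \cap F_n = \{(\gamma,\delta_0) : \delta_0 \in F_n^{\gamma}\} = \emptyset$.

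I expect this last step to be the main obstacle. One cannot fix a single product decomposition of $2^P$ and invoke Kuratowski--Ulam directly: the analogous fixed-decomposition statement is false (e.g. for $C$ the complement of a diagonal the set of bad $\alpha$ is all of $2^P$), so it is essential that $z$ varies. The insight that unlocks it is to choose $z$ to be $1$ on a cofinite set of coordinates, which collapses $X_{z,\alpha}$ to a finite fiber over a cofinite coordinate set and reduces the whole matter to the triviality that a finite union of nowhere dense sets is nowhere dense. The remaining compatibility bookkeeping between $s$, $t$, $z$, $\alpha$ is routine.
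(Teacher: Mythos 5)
Your proof is correct and follows essentially the same route as the paper: both reduce, via Baire category in $2^P\times 2^P$, to checking that the set of pairs $(z,\alpha)$ with $X_{z,\alpha}\subseteq C$ is comeager, and both implement the density/nowhere-density step by choosing $z$ to agree with $s$ on $\mathrm{dom}(s)$ and equal $1$ elsewhere, so that $X_{z,\alpha}$ becomes a finite set. The paper phrases things dually (a decreasing sequence of open dense $G_i$ with $\bigcap_i G_i\subseteq C$ rather than closed nowhere dense $F_n$ covering $2^P\setminus C$), certifies openness of $\{(z,\alpha):X_{z,\alpha}\subseteq G_i\}$ via compactness of $X_{z,\alpha}$ and the continuity of $(z,\alpha)\mapsto X_{z,\alpha}$ into $K(2^P)$ where you instead project the compact set $\Phi^{-1}(F_n)$, and handles the finite set $X_{z,\alpha}$ by iteratively extending $d_0\sqsubseteq d_1\sqsubseteq\cdots$ where you invoke that a finite union of nowhere dense sets is nowhere dense---all dual formulations of the same underlying argument.
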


\begin{proof}
Let $C \supseteq \bigcap_i G_i$ where each $G_i$ is open dense. Let $H_i = \{(z,\alpha) : X_{z,\alpha} \subseteq G_i \}$. It will suffice to show that each $H_i$ is open dense in $2^P \times 2^P$. Note that since $X_{z,\alpha}$ is compact and the map $(z,\alpha) \mapsto X_{z,\alpha}$ is continuous from $2^P$ to $K(2^P)$, we have that $H_i$ is open (see Section 4F of \cite{Kechris} for properties of the space $K(X)$). To see that it is dense, let $N_{s}\times N_{t}$ be any basic open set in $2^P \times 2^P$; by shrinking we may assume $\text{dom}(s)=\text{dom}(t)$. Let $z = s \smallfrown 1^{P \setminus \text{dom}(s)}$, so $z \in N_{s}$. Let $\{ c_j : j < N \}$ enumerate $\{c \in 2^{\text{dom}(s)} : c \upharpoonright s = t \upharpoonright s\}$, so that for any $(z,\alpha) \in N_{s}\times N_{t}$ and any $x \in X_{z,\alpha}$ we have that $c_j \sqsubseteq x$ for some $c_j$. 
Since $G_i$ is dense, there is some $d_0$ such that $N_{c_0 \smallfrown d_0} \subseteq G_i$. Likewise there is $d_1 \sqsupseteq d_0$ such that $N_{c_1 \smallfrown d_1} \subseteq G_i$, and for each $j<N$ there is $d_j \sqsupseteq d_{j-1}$ such that $N_{c_j \smallfrown d_j} \subseteq G_i$. Now if $\alpha$ is any extension of $t \smallfrown d_{N-1}$ we will have that $(z,\alpha) \in N_{s}\times N_{t}$ and $X_{z,\alpha} \subseteq G_i$, so $H_i$ is dense.
\end{proof}

\begin{lem}
If $E$ generically maintains complexity on sections, then $E$ maintains complexity on non-meager sets.
\end{lem}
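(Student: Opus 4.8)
The plan is to reduce a non-meager set to a section $X_{z,\alpha}$ of the kind appearing in the hypothesis and apply generic maintenance there. Let $C \subseteq X = 2^P$ be non-meager with the Baire property (for instance, Borel). Then $C$ is comeager in some basic clopen set $N_s$, $s \in 2^{<P}$, so $D := C \cap N_s$ is comeager in $N_s$. Write $\mathcal{G} \subseteq 2^P \times 2^P$ for the comeager set of pairs witnessing that $E$ generically maintains complexity on sections. I want to produce a pair $(z,\alpha) \in \mathcal{G}$ with $z$ co-infinite and $X_{z,\alpha} \subseteq D$. Granting this, $X_{z,\alpha}$ is a Silver cube homeomorphic to $2^P$, and --- being (trivially) comeager in itself --- the hypothesis applied to $(z,\alpha)$ yields a continuous embedding $\varphi$ witnessing $E \sqsubseteq_c E \upharpoonright X_{z,\alpha}$; since $X_{z,\alpha} \subseteq D \subseteq C$, the same $\varphi$ witnesses $E \sqsubseteq_c E \upharpoonright C$, which is what we want.

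To find such a pair I work inside the nonempty clopen set $\mathcal{C}_s = \{(z,\alpha) : z(p) = 1$ and $\alpha(p) = s(p)$ for all $p \in \mathrm{dom}(s)\}$; for $(z,\alpha) \in \mathcal{C}_s$ one has $X_{z,\alpha} \subseteq N_s$. Put $P' = P \setminus \mathrm{dom}(s)$. The restriction map $(z,\alpha) \mapsto (z \upharpoonright P', \alpha \upharpoonright P')$ is a homeomorphism of $\mathcal{C}_s$ onto $2^{P'} \times 2^{P'}$, and under it (together with the homeomorphism $N_s \cong 2^{P'}$, $x \mapsto x \upharpoonright P'$) the section $X_{z,\alpha}$ corresponds to the section $X_{z \upharpoonright P',\, \alpha \upharpoonright P'}$ computed in $2^{P'}$, while $D$ corresponds to a comeager subset $D'$ of $2^{P'}$. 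By Lemma~\ref{lem:hypermycielski} applied in $2^{P'}$ (indeed its proof shows the set in question is comeager in the product), $\{(z',\alpha') : X_{z',\alpha'} \subseteq D'\}$ is comeager in $2^{P'} \times 2^{P'}$; pulling back, $\{(z,\alpha) \in \mathcal{C}_s : X_{z,\alpha} \subseteq D\}$ is comeager in $\mathcal{C}_s$.

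Now $\mathcal{G} \cap \mathcal{C}_s$ is comeager in $\mathcal{C}_s$ (a comeager subset of $2^P \times 2^P$ meets the nonempty open set $\mathcal{C}_s$ in a comeager subset; if the hypothesis is read as $\forall^\ast z\,\forall^\ast\alpha$, first invoke Kuratowski--Ulam), and so is $\{(z,\alpha) : z \text{ co-infinite}\} \cap \mathcal{C}_s$. Intersecting these two with the set from the previous paragraph gives a comeager, hence nonempty, subset of $\mathcal{C}_s$; any member $(z,\alpha)$ completes the argument. The only point needing care is the bookkeeping that identifies sections $X_{z,\alpha} \subseteq N_s$ of $2^P$ with sections of the product $2^{P'}$, so that Lemma~\ref{lem:hypermycielski} applies verbatim; there is no substantive obstacle beyond this.
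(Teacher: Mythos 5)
Your proof is correct and follows essentially the same route as the paper's: reduce to a basic clopen set $N_s$, then locate a section $X_{z,\alpha} \subseteq N_s$ on which the hypothesis applies. The paper's own proof asserts rather tersely that ``there is a non-meager set of $z$ and $\alpha$ with $C$ comeager in $X_{z,\alpha}$''; you make the justification explicit by relativizing Lemma~\ref{lem:hypermycielski} to $N_s \cong 2^{P'}$ (which in fact gives the stronger conclusion $X_{z,\alpha} \subseteq C$ on a comeager set of pairs), and you also record the intersection with the comeager sets $\mathcal{G}$ and $\{(z,\alpha): z \text{ co-infinite}\}$ that the paper leaves implicit. This is a faithful, slightly more detailed rendition of the same argument.
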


\begin{proof}
Let $C \subseteq 2^P$ be non-meager, so $C$ is comeager in some neighborhood $N_s$ determined by $s \in 2^{< P}$. If $z(n) = 1$ and $\alpha(n)=s(n)$ for all $n < |s|$, then $X_{z, \alpha} \subseteq N_s$, so there is a non-meager set of $z$ and $\alpha$ with $C$ comeager in $X_{z, \alpha}$. Hence there is such a pair with $E \sqsubseteq_c E \upharpoonright C \cap X_{z, \alpha}$.
\end{proof}

\begin{lem}
\label{lem:sep-embed}
If $E$ is $\bPi^1_1$, $F$ is $\bSigma^1_1$, $E \subseteq F$, $E$ generically maintains complexity on sections, and $E$ generically separates classes within $F$, then $E \sqsubseteq_c F$.
\end{lem}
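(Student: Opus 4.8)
The plan is to locate a single section $X_{z_0,\alpha_0}$ of $2^P$ on which $F$ coincides with $E$ and which is ``generic enough'' that the hypothesis on maintaining complexity on sections applies, and then to compose the resulting continuous embedding of $E$ into $E \upharpoonright X_{z_0,\alpha_0}$ with the inclusion into $F$.

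To find such a section, set
$B = \{(w,z) \in X \times X : \forall x\,((x \FF w \wedge x \upharpoonright z = w \upharpoonright z) \rightarrow x \EE w)\}$.
Since $F$ is $\bSigma^1_1$ and $E$ is $\bPi^1_1$, the matrix inside the quantifier is $\bPi^1_1$, and as $\bPi^1_1$ is closed under quantification over a Polish space, $B$ is $\bPi^1_1$ and in particular has the Baire property. The assumption that $E$ generically separates classes within $F$ says exactly that every vertical section $B_w = \{z : (w,z) \in B\}$ is comeager; by the Kuratowski--Ulam theorem $B$ is therefore comeager in $X^2$, and hence $\forall^{\ast} z$ the horizontal section $B^z = \{w : (w,z) \in B\}$ is comeager in $X$. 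Since $E$ generically maintains complexity on sections, the set $M$ of pairs $(z,\alpha)$ for which $E \sqsubseteq_c E \upharpoonright C$ holds for every $C$ comeager in $X_{z,\alpha}$ is comeager in $X^2$, so again by Kuratowski--Ulam $\forall^{\ast} z$ the section $M^z = \{\alpha : (z,\alpha) \in M\}$ is comeager. Because $\forall^{\ast} z$ both $z$ and its complement are infinite, I may fix $z_0$ with $z_0$ and $P \setminus z_0$ infinite, $B^{z_0}$ comeager, and $M^{z_0}$ comeager.

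Now slice $B^{z_0}$ by the sections through $z_0$. Writing $I = \{p : z_0(p) = 1\}$ and $J = \{p : z_0(p) = 0\}$, both infinite, identify $X$ with $2^I \times 2^J$, so that $X_{z_0,\alpha} = \{\alpha \upharpoonright I\} \times 2^J$. Applying Kuratowski--Ulam to the comeager set $B^{z_0} \subseteq 2^I \times 2^J$, for comeagerly many $\beta \in 2^I$ the set $B^{z_0} \cap (\{\beta\} \times 2^J)$ is comeager in $\{\beta\} \times 2^J$, and pulling this back along the open continuous projection $X \to 2^I$ gives that $\forall^{\ast} \alpha$ the set $B^{z_0} \cap X_{z_0,\alpha}$ is comeager in $X_{z_0,\alpha}$. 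Intersecting with the comeager set $M^{z_0}$, fix $\alpha_0$ with $(z_0,\alpha_0) \in M$ and with $B^{z_0} \cap X_{z_0,\alpha_0}$ comeager in $X_{z_0,\alpha_0}$, and let $C \subseteq B^{z_0} \cap X_{z_0,\alpha_0}$ be a Borel subset that is comeager in $X_{z_0,\alpha_0}$. On $C$ we have $E \upharpoonright C = F \upharpoonright C$: the inclusion $E \subseteq F$ is given, and if $x,x' \in C$ with $x \FF x'$ then, since $x \in C \subseteq B^{z_0}$ while $x' \FF x$ and $x' \upharpoonright z_0 = \alpha_0 \upharpoonright z_0 = x \upharpoonright z_0$, the definition of $B$ forces $x' \EE x$. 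Since $(z_0,\alpha_0) \in M$ and $C$ is comeager in $X_{z_0,\alpha_0}$, there is a continuous embedding $\rho$ of $E$ into $E \upharpoonright C$, i.e.\ a continuous injection $\rho : X \to C$ with $x \EE x' \Leftrightarrow \rho(x) \EE \rho(x')$; as $\rho(x),\rho(x') \in C$ the right side is equivalent to $\rho(x) \FF \rho(x')$, so $\rho$ witnesses $E \sqsubseteq_c F$.

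The heart of the argument, and the step I expect to require the most care, is the passage from the hypothesis --- which for each fixed $w$ only supplies comeagerly many separating $z$ --- to a single pair $(z_0,\alpha_0)$ for which comeagerly many $w$ in the \emph{meager} section $X_{z_0,\alpha_0}$ are separating. This is exactly why two applications of Kuratowski--Ulam are needed (first to promote $B$ to a comeager subset of $X^2$, using that $\bPi^1_1$ sets have the Baire property, and then to slice a good horizontal section out of it), and why one must simultaneously keep $z_0$ co-infinite so that $X_{z_0,\alpha_0}$ is a genuine Silver cube to which the ``maintains complexity on sections'' hypothesis applies. The remaining points --- that $B$ is only $\bPi^1_1$ (handled by shrinking to a Borel comeager subset before invoking that hypothesis) and that the various sets of $z$'s are comeager --- are routine.
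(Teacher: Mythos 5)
Your proof is correct and follows essentially the same route as the paper's: define the set of ``separating'' pairs $(w,z)$, observe that the hypothesis makes every vertical section comeager so Kuratowski--Ulam yields comeager horizontal sections, combine with the ``generically maintains complexity on sections'' hypothesis to fix a suitable $z_0$, slice once more by Kuratowski--Ulam to find a section $X_{z_0,\alpha_0}$ on which a comeager set of points are separating, note $E$ and $F$ agree there, and conclude via the embedding hypothesis. Your version is slightly more explicit than the paper's about why the Baire property holds (observing that $B$ is $\bPi^1_1$) and about the set $M$ of good section-parameters, but the substance of the argument is identical.
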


\begin{proof}
We have that for all $w$ there is a comeager set of $z$ with $(w,z)$ in the $\bPi^1_1$ set 
\[ A = \{ (w,z) :  \forall x (x \FF w \wedge x \upharpoonright z = w \upharpoonright z \rightarrow x \EE w) \}, \]
so by the Kuratowski--Ulam Theorem we have that $\forall^{\ast}z (\text{$A^z$ is comeager})$, and since $E$ generically maintains complexity on sections we may choose $z$ and a comeager set $C \subseteq X$ such that $(w,z) \in A$ for all $w \in C$ and for a comeager set of $\alpha$ we have $E \sqsubseteq_c E \upharpoonright D$ for any comeager $D \subseteq X_{z,\alpha}$.
We can also choose $z$ so that for $i \in 2$ the sets $P_i  = \{p \in P : z(p)=i\}$ are both infinite. Let $\varphi$ be the natural homeomorphism from $2^{P_0} \times 2^{P_1}$ to $2^P$; note that $\varphi(\beta,\gamma) \in X_{z,\alpha}$ for any $\alpha$ with $\alpha \upharpoonright P_1 = \gamma$. We have that $\{(\beta,\gamma) : \varphi(\beta,\gamma) \in C\}$ is comeager, so there is $\gamma_0$ such that $\forall^{\ast} \beta \in 2^{P_0} (\varphi(\beta,\gamma_0) \in C)$, and such that there is $\alpha$ with $\alpha \upharpoonright P_1 = \gamma_0$ for which $E \sqsubseteq_c E \upharpoonright D$ for any comeager $D \subseteq X_{z,\alpha}$. But then the map $\beta \mapsto \varphi(\beta,\gamma_0)$ is a homeomorphism from $2^{P_0}$ to $X_{z,\alpha}$, so the set $D = C \cap X_{z,\alpha}$ is comeager in $X_{z,\alpha}$. There is then a continuous embedding $\psi$ from $E$ to $E \upharpoonright D$. Since $D \subseteq C$ and $x \upharpoonright z = y \upharpoonright z = \alpha \upharpoonright z = \gamma_0$ 
for all $x,y \in X_{z,\alpha}$, we have that if $\psi(x) \FF \psi(y)$ then $\psi(x) \EE \psi(y)$; thus $\psi$ is a continuous embedding from $E$ to $F$.
\end{proof}

It remains to show that particular equivalence relations generically maintain complexity on sections, and generically separate classes within certain larger equivalence relations. We will see that this holds for $E_{\ideal}$ when $\ideal$ is suitably robust.

\begin{dfn}
We say that an ideal $\ideal$ on $P \times Q$ is \emph{vertically invariant} if for every function $f: P \times Q \rightarrow P \times Q$ satisfying $(f(p,q))_0=p$ and $f(p,q_1)\neq f(p,q_2)$ for $q_1 \neq q_2$ we have that for every $A \subseteq P \times Q$, $A \in \ideal$ iff $f[A] \in \ideal$.
\end{dfn}

An equivalent property of ideals was defined in \cite{clm}, which used the terminology \emph{determined by cardinalities on vertical sections}.

\begin{lem}
Any ideal of the form $\ideal[J] \times 0$ or $\ideal[J] \times \text{FIN}$ is vertically invariant.
\end{lem}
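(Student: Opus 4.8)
The plan is to simply unwind the two relevant definitions; the key observation is that a function $f$ of the type appearing in the definition of vertical invariance acts on each vertical fiber $\{p\}\times Q$ as an injection \emph{into that same fiber}, so it preserves both emptiness and infinitude of fibers, and it is exactly these features of the fibers that govern membership in $\ideal[J]\times 0$ and $\ideal[J]\times\text{FIN}$.

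First I would fix such an $f\colon P\times Q\to P\times Q$, so that $(f(p,q))_0=p$ for all $(p,q)$ and $q\mapsto f(p,q)$ is injective on $Q$ for each fixed $p$. Writing $g_p\colon Q\to Q$ for the map $q\mapsto (f(p,q))_1$, the hypotheses say precisely that each $g_p$ is an injection and that $f(p,q)=(p,g_p(q))$. Then for any $A\subseteq P\times Q$ and any $p\in P$, the vertical section $(f[A])_p=\{q:(p,q)\in f[A]\}$ equals $g_p[A_p]$: indeed $(p,q)\in f[A]$ iff $f(p',q')=(p,q)$ for some $(p',q')\in A$, and $(f(p',q'))_0=p'$ forces $p'=p$.

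Next I would treat the two cases. For $\ideal=\ideal[J]\times 0$: by definition $A\in\ideal[J]\times 0$ iff $\{p:A_p\neq\emptyset\}\in\ideal[J]$. Since $g_p$ is a function, $g_p[A_p]=\emptyset$ iff $A_p=\emptyset$, so $\{p:(f[A])_p\neq\emptyset\}=\{p:A_p\neq\emptyset\}$ and the two membership statements coincide. For $\ideal=\ideal[J]\times\text{FIN}$: by definition $A\in\ideal[J]\times\text{FIN}$ iff $\{p:A_p\text{ is infinite}\}\in\ideal[J]$. Since $g_p$ is injective, $g_p[A_p]$ is infinite iff $A_p$ is infinite, so again $\{p:(f[A])_p\text{ is infinite}\}=\{p:A_p\text{ is infinite}\}$ and the membership statements coincide.

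There is no real obstacle here. The only point worth noting is that vertical invariance is phrased as an ``iff'', but since in each case the subset of $P$ controlling membership is literally unchanged under $f$ (not merely sent into $\ideal[J]$ in one direction), both implications follow simultaneously; in fact no assumption on $f$ beyond the two stated is used, and injectivity of $f$ itself is a consequence rather than a hypothesis.
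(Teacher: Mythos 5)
Your proof is correct and takes essentially the same approach as the paper, which simply notes that membership in $\ideal[J]\times 0$ or $\ideal[J]\times\text{FIN}$ depends only on the cardinality of each vertical section and that the admissible functions $f$ do not change those cardinalities; you have just spelled out the identity $(f[A])_p = g_p[A_p]$ with $g_p$ injective, which makes that observation precise.
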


\begin{proof}
This is immediate since membership in such ideals depends only on the cardinality in each section, and the admissible functions do not change this.
\end{proof}

Note that $\bbE_0 = E_{\text{FIN}}$, $\bbE_1 = E_{\text{FIN} \times 0}$, $\bbE_0^{\omega} = E_{0 \times \text{FIN}}$, and $\bbE_1^{\omega} = E_{0 \times \text{FIN} \times 0}$, so these equivalence relations are generated by vertically invariant ideals (where we can  think of $\text{FIN}$ as $\ideal[J] \times \text{FIN}$ where $\ideal[J]$ is the trivial ideal on a one element set).

\begin{lem}
\label{lem:visections}
Let $\ideal$ be a vertically invariant ideal on $P \times Q$. Then $E_{\ideal}$ on $2^{P \times Q}$ generically maintains complexity on sections.
\end{lem}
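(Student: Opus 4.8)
The plan is to unpack the definition of ``generically maintains complexity on sections'' and reduce it to the fact, already available in the excerpt, that $E_{\ideal}$ maintains complexity on non-meager sets (or more precisely, to reprove that fact section by section). Concretely, by Lemma~\ref{lem:hypermycielski} the generic section $X_{z,\alpha}$ is, for co-infinite $z$, a Silver cube homeomorphic to $2^{P \times Q}$ itself. So the statement $E_{\ideal} \sqsubseteq_c E_{\ideal} \upharpoonright C$ for $C$ comeager in $X_{z,\alpha}$ will follow if I can identify $E_{\ideal} \upharpoonright X_{z,\alpha}$ with a copy of $E_{\ideal[K]}$ for some vertically invariant ideal $\ideal[K]$ on a countable set, and then show any such $E_{\ideal[K]}$ maintains complexity on comeager subsets of its whole space.

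First I would fix notation: given $z \in 2^{P \times Q}$, let $S = \{(p,q) : z(p,q)=0\}$ be the ``free'' coordinates. For $x \in X_{z,\alpha}$ the values off $S$ are frozen to $\alpha$, so $E_{\ideal} \upharpoonright X_{z,\alpha}$ is isomorphic (via the obvious homeomorphism with $2^{S}$) to the equivalence relation $x \sim y$ iff $\{(p,q) \in S : x(p,q) \neq y(p,q)\} \in \ideal \upharpoonright S$, where $\ideal \upharpoonright S$ is the restricted ideal. The key point is that I get to choose $z$ generically, and generically each vertical section $S_p = \{q : (p,q) \in S\}$ is infinite (this is a comeager condition on $z$ since it is a countable intersection of open dense sets). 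When $\ideal = \ideal[J] \times 0$ or $\ideal[J] \times \text{FIN}$ and each $S_p$ is infinite, there is a bijection $P \times Q \to S$ of the admissible form in the definition of vertical invariance restricted to each row, so vertical invariance gives $\ideal \upharpoonright S \cong \ideal$ as ideals on $P \times Q$; hence $E_{\ideal} \upharpoonright X_{z,\alpha} \cong E_{\ideal}$. (For $\text{FIN}$ itself, viewed as $\ideal[J] \times \text{FIN}$ over a one-point $P$, this is just the statement that $\bbE_0$ restricted to an infinite set of free coordinates is $\bbE_0$.) So for a comeager set of $z$ (those with all rows of $S$ infinite) and every $\alpha$, $E_{\ideal} \upharpoonright X_{z,\alpha}$ is a homeomorphic copy of $E_{\ideal}$ on its full space.

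It then remains to check that $E_{\ideal}$, for $\ideal$ of this form, admits a continuous self-embedding into its restriction to any comeager subset of the whole space — this is exactly ``maintains complexity on comeager sets,'' and the standard argument (find a comeager dense $G_\delta$ set $D' \subseteq D$, build a continuous injection of $2^{P\times Q}$ into $D'$ that is an $E_{\ideal}$-embedding by a fusion/Cantor-scheme construction coordinate by coordinate, using that $\ideal$ is a P-ideal-like Borel ideal so finite modifications stay in the same class and disjoint-support modifications change the class) goes through; alternatively one cites the known fact for $\bbE_0, \bbE_1, \bbE_0^\omega, \bbE_1^\omega, \bbE_2$ individually, and the general case is handled by the same density argument. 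Composing the embedding $E_{\ideal} \sqsubseteq_c E_{\ideal} \upharpoonright D'$ with the homeomorphism onto $X_{z,\alpha}$ yields the conclusion.

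The main obstacle I anticipate is the bookkeeping in the ideal isomorphism $\ideal \upharpoonright S \cong \ideal$: one must verify that the natural bijection $P \times Q \to S$ obtained by enumerating each infinite row $S_p$ can be chosen to be an admissible (row-preserving, injective-on-rows) function so that vertical invariance applies, and handle the $\text{FIN}$ case and the $\ideal_s$-flavored cases (which appear in the theorem statement this lemma feeds into) where ``infinite rows'' may not suffice and one instead needs the generic section to be large in the relevant quantitative sense — but for the ideals $\ideal[J] \times 0$ and $\ideal[J] \times \text{FIN}$ covered by this lemma, cardinality of rows is all that matters, so this is routine. The only genuinely delicate point is making sure ``comeagerly many $z$ have all rows of $S$ infinite'' together with the Kuratowski--Ulam-style quantifier over $\alpha$ lands inside the comeager set promised by the definition, which is handled by Lemma~\ref{lem:hypermycielski}.
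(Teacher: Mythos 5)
Your reduction of the problem to a comeager-set statement is correct as far as it goes, but it stops short of a proof at the decisive step, and the route you sketch for that step is both unproved and different from what the paper actually does.

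Concretely, you correctly observe that for a comeager set of $z$ (all rows of $\{(p,q):z(p,q)=0\}$ infinite) and any $\alpha$, the section $X_{z,\alpha}$ with $E_{\ideal}$ restricted is a homeomorphic copy of $(2^{P\times Q},E_{\ideal})$, via a row-preserving bijection licensed by vertical invariance. You then reduce to the claim that $E_{\ideal}$ maintains complexity on comeager subsets of its whole space, and here you gesture at a ``fusion/Cantor-scheme construction'' that uses $\ideal$ being ``a P-ideal-like Borel ideal.'' That is a genuine gap: the lemma is stated for arbitrary vertically invariant ideals, which need not be P-ideals or even Borel, so the fusion argument as you describe it has no foundation here. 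Citing the known self-embeddings for $\bbE_0,\bbE_1,\bbE_0^\omega,\bbE_1^\omega,\bbE_2$ does not cover the general statement either.

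The paper closes the gap differently and more cheaply: given a comeager $C\subseteq X_{z,\alpha}$, one applies Lemma~\ref{lem:hypermycielski} \emph{relativized to} $X_{z,\alpha}$ (which is itself a Silver cube) to find a further sub-cube $X_{z',\alpha'}\subseteq C$ with $z'\supseteq z$ still having all rows infinite. Then the row-preserving bijection $(p,q)\mapsto(p,e(p,q))$ onto the free coordinates of $z'$ gives a continuous map $\varphi:2^{P\times Q}\to X_{z',\alpha'}$ that is an $E_{\ideal}$-embedding directly by vertical invariance, with range inside $C$. So the ``maintains complexity on comeager sets'' step is absorbed by a second use of the Mycielski-type lemma rather than any fusion argument, and no structural hypotheses on $\ideal$ beyond vertical invariance are needed. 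If you replace your proposed fusion step with a repeated application of Lemma~\ref{lem:hypermycielski} inside the copy of $2^{P\times Q}$, your argument becomes essentially the paper's; as written, it does not yet prove the lemma.
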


\begin{proof}
The set $U=\{ z \in 2^{P \times Q} : \forall p \exists^{\infty} q ( z(p,q)=0) \}$ is comeager. For such $z$ and any $\alpha$, let $C \subseteq X_{z,\alpha}$ be comeager. A straightforward relativization of Lemma~\ref{lem:hypermycielski} to $X_{z,\alpha}$ shows that for a relatively comeager set of $z' \supseteq z$ and $\alpha'$ with $\alpha' \upharpoonright z = \alpha \upharpoonright z$ we have $X_{z',\alpha'} \subseteq C$. Choose some such $z' \in U$ and any such $\alpha'$. Let $e(p,\cdot)$ be a bijection for each $p$ from $Q$ to the set of $q'$ such that $z'(p,q')=0$. Define $\varphi: X \rightarrow X_{z',\alpha'}$ by $\varphi(x)(p,e(p,q)) = x(p,q)$ and $\varphi(x)(p,q)=\alpha'(p,q)$ otherwise. As the map $(p,q) \mapsto (p,e(p,q))$ satisfies the hypotheses of vertical invariance, this is an embedding of $E_{\ideal}$ into $E_{\ideal} \upharpoonright C$.
\end{proof}

\begin{lem}
\label{lem:separating-condition}
For an ideal $\ideal$, $E_{\ideal}$ generically separates classes iff for any $x$ and $y$ with $\neg x \mathrel{E_{\ideal}} y$ the set $\{ z :\neg x \upharpoonright z \mathrel{E_{\ideal}} y \upharpoonright z\}$ is comeager.
\end{lem}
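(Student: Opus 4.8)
The plan is to translate both properties into elementary statements about subsets of $P$ and membership in $\ideal$, and then observe that they literally coincide. Write $D(x,y)=\{p\in P:x(p)\neq y(p)\}$, so that $x\mathrel{E_{\ideal}}y$ iff $D(x,y)\in\ideal$. For $z\in 2^P$ put $Z=z^{-1}(1)$; from $x\upharpoonright z(p)=\min(x(p),z(p))$ one reads off that $x\upharpoonright z$ agrees with $x$ on $Z$ and vanishes off $Z$, so $D(x\upharpoonright z,\,y\upharpoonright z)=Z\cap D(x,y)$. Hence $\neg\,x\upharpoonright z\mathrel{E_{\ideal}}y\upharpoonright z$ iff $Z\cap D(x,y)\notin\ideal$, and since every $A\subseteq P$ equals $D(x,y)$ for suitable $x,y$ (take $x\equiv 0$, $y=\chi_A$), the right-hand side of the lemma says exactly: \emph{for every $A\notin\ideal$ the set $\{z:Z\cap A\notin\ideal\}$ is comeager.}

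Next I would fix $x,y$ and a single $z$ and analyze the inner clause of the definition of generic separation, namely that every $x',y'$ with $D(x,x'),D(y,y')\in\ideal$ and $x'\upharpoonright z=y'\upharpoonright z$ satisfies $x'\mathrel{E_{\ideal}}y'$. Identifying points of $2^P$ with their $1$-sets, parametrize $x'=x\triangle I_x$ and $y'=y\triangle I_y$, where $I_x=D(x,x')$ and $I_y=D(y,y')$ range independently over $\ideal$; then $D(x',y')=A\triangle(I_x\triangle I_y)$ with $A=D(x,y)$, and $I_x\triangle I_y$ sweeps out all of $\ideal$ (it lies in $I_x\cup I_y\in\ideal$, and any $I\in\ideal$ is realized by $I_x=I$, $I_y=\emptyset$). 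The constraint $x'\upharpoonright z=y'\upharpoonright z$ is $D(x',y')\cap Z=\emptyset$, i.e. $A\cap Z=I\cap Z$ for $I:=I_x\triangle I_y$. Thus the inner clause holds at $(x,y,z)$ iff for every $I\in\ideal$ with $A\cap Z=I\cap Z$ one has $A\triangle I\in\ideal$. If $A\cap Z\notin\ideal$ there is no such $I$, so the clause is vacuous; if $A\cap Z\in\ideal$, then for any valid $I$ we have $A\triangle I=(A\setminus Z)\triangle(I\setminus Z)$ with $I\setminus Z\in\ideal$, so (taking in particular $I=A\cap Z$) the clause is equivalent to $A\setminus Z\in\ideal$. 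In short: the inner clause holds at $(x,y,z)$ iff $A\cap Z\notin\ideal$ or $A\setminus Z\in\ideal$.

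Finally I would assemble the pieces. If $x\mathrel{E_{\ideal}}y$ (i.e. $A\in\ideal$) the inner clause holds for every $z$, directly since $D(x',y')\subseteq D(x',x)\cup A\cup D(y,y')\in\ideal$. If $\neg\,x\mathrel{E_{\ideal}}y$ (i.e. $A\notin\ideal$), then $A=(A\cap Z)\cup(A\setminus Z)\notin\ideal$ forbids $A\cap Z$ and $A\setminus Z$ from both lying in $\ideal$, so ``$A\cap Z\notin\ideal$ or $A\setminus Z\in\ideal$'' collapses to ``$A\cap Z\notin\ideal$''. Therefore $E_{\ideal}$ generically separates classes iff for every $x,y$ with $\neg\,x\mathrel{E_{\ideal}}y$ one has $\forall^{\ast}z\,(D(x,y)\cap Z\notin\ideal)$, i.e. iff for every $A\notin\ideal$ the set $\{z:Z\cap A\notin\ideal\}$ is comeager — which by the first paragraph is exactly the stated condition. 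There is no genuine obstacle here, as the lemma is a reformulation; the only points needing care are the case split in the middle paragraph (whether $A\cap Z\in\ideal$) and verifying that the parametrization $x'=x\triangle I_x$, $y'=y\triangle I_y$ is exhaustive with $I_x\triangle I_y$ ranging over all of $\ideal$.
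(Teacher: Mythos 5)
Your proof is correct, and at its core it is the same argument as the paper's, systematically recast in the algebra of symmetric differences: both directions reduce to a pointwise (in $z$) equivalence between the inner separation clause and $\neg\,x\upharpoonright z\mathrel{E_{\ideal}}y\upharpoonright z$, and your choice of witness $I=A\cap Z$ (equivalently $I_y=A\cap Z$, $I_x=\emptyset$) is exactly the paper's $y'=y\,\triangle\,(x\upharpoonright z\,\triangle\, y\upharpoonright z)$. Your version also spells out the trivial case $x\mathrel{E_{\ideal}}y$, which the paper leaves implicit since the definition quantifies over all pairs while the lemma only mentions inequivalent ones.
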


\begin{proof}
Suppose $\neg x \upharpoonright z  \mathrel{E_{\ideal}} y \upharpoonright z$, and $x' \mathrel{E_{\ideal}} x$ and $y' \mathrel{E_{\ideal}} y$. Then $x' \upharpoonright z \mathrel{E_{\ideal}} x \upharpoonright z$ and $y' \upharpoonright z \mathrel{E_{\ideal}} y \upharpoonright z$, so if $\neg x' \mathrel{E_{\ideal}} y'$ then $\neg x' \upharpoonright z \mathrel{E_{\ideal}} y' \upharpoonright z$; in particular $x' \upharpoonright z \neq y' \upharpoonright z$, so $z$ satisfies the separation condition for $x$ and $y$. Conversely, suppose $z$ satisfies this condition for $x$ and $y$ with $\neg x \mathrel{E_{\ideal}} y$. Let $y' = y \Delta (x \upharpoonright z \Delta y \upharpoonright z)$, so $x \upharpoonright z = y' \upharpoonright z$. If $y \mathrel{E_{\ideal}} y'$ then $x \mathrel{E_{\ideal}} y'$, contradicting that $\neg x \mathrel{E_{\ideal}} y$. Hence $\neg y \mathrel{E_{\ideal}} y'$, so $\neg x \upharpoonright z \mathrel{E_{\ideal}} y \upharpoonright z$. 
\end{proof}

\begin{lem}
$E_{\text{FIN}}=\bbE_0$ generically separates classes.
\end{lem}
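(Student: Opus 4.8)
The plan is to apply Lemma~\ref{lem:separating-condition}, which reduces the claim to showing that for any $x,y \in 2^{\omega}$ with $\neg x \mathrel{\bbE_0} y$, the set $Z_{x,y} = \{ z : \neg x \upharpoonright z \mathrel{\bbE_0} y \upharpoonright z \}$ is comeager. First I would unwind the definition of $\upharpoonright$: since $x \upharpoonright z(p) = \min(x(p),z(p))$, the values $x \upharpoonright z(p)$ and $y \upharpoonright z(p)$ differ precisely when $z(p)=1$ and $x(p) \neq y(p)$. Hence the symmetric difference of $x \upharpoonright z$ and $y \upharpoonright z$ is exactly $\{p : z(p)=1\} \cap D$, where $D = \{p : x(p) \neq y(p)\}$.

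Next I would use the hypothesis $\neg x \mathrel{\bbE_0} y$, which means $D$ is infinite. So $z \in Z_{x,y}$ iff $z$ takes the value $1$ at infinitely many coordinates of $D$. I would then observe that $Z_{x,y}$ is comeager (indeed a dense $G_{\delta}$): writing $Z_{x,y} = \bigcap_{n} U_n$ where $U_n = \{ z : \exists p \in D\ (p \geq n \wedge z(p)=1) \}$, each $U_n$ is open, and it is dense because $D$ is infinite, so for any finite condition $s$ one can find $p \in D$ beyond $\text{dom}(s)$ and extend $s$ by setting $z(p)=1$. This gives the required comeagerness, and Lemma~\ref{lem:separating-condition} then yields that $E_{\text{FIN}} = \bbE_0$ generically separates classes.

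I do not expect a serious obstacle here; the argument is essentially a direct computation followed by a standard Baire category fact. The only points that require a little care are the bookkeeping in identifying the symmetric difference of $x\upharpoonright z$ and $y \upharpoonright z$ with $\{p : z(p)=1\}\cap D$, and recalling that $\bbE_0 = E_{\text{FIN}}$ so that $\bbE_0$-equivalence is exactly finiteness of this symmetric difference.
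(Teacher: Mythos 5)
Your proof is correct and follows essentially the same route as the paper: reduce to Lemma~\ref{lem:separating-condition}, observe that the relevant set of $z$ is $\{ z : \exists^{\infty} n\, (x(n) \neq y(n) \wedge z(n)=1) \}$, and note this is comeager because $\{n : x(n)\neq y(n)\}$ is infinite. The extra bookkeeping you include about identifying the symmetric difference of $x\upharpoonright z$ and $y\upharpoonright z$ with $\{p:z(p)=1\}\cap D$ is just an unpacking of what the paper states tersely.
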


\begin{proof}
Let $x$ and $y$ be given with $\{n : x(n) \neq y(n) \} \notin \text{FIN}$. Then the  set $\{ z : \exists^{\infty} n (x(n) \neq y(n) \wedge z(n)=1) \}$ is comeager and satisfies the condition of Lemma~\ref{lem:separating-condition} for generically separating classes.
\end{proof}

\begin{lem}
If $\ideal[J]$ is an ideal such that $E_{\ideal[J]}$ generically separates classes, then so does $E_{\ideal[J] \times 0}$.
\end{lem}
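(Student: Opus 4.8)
The plan is to apply the characterization of generic separation from Lemma~\ref{lem:separating-condition} and to reduce the problem to a single coordinate per column, at which point the hypothesis transfers essentially verbatim. Regard $\ideal[J]$ as an ideal on a countable set $P$, so that $\ideal[J] \times 0$ lives on $P \times \omega$ and $x \mathrel{E_{\ideal[J] \times 0}} y$ holds exactly when $D(x,y) := \{ p \in P : \exists n\; x(p,n) \neq y(p,n) \} \in \ideal[J]$. By Lemma~\ref{lem:separating-condition}, it is enough to show that whenever $\neg\, x \mathrel{E_{\ideal[J] \times 0}} y$, i.e.\ $D := D(x,y) \notin \ideal[J]$, the set of $z \in 2^{P \times \omega}$ satisfying $\{ p : \exists n\; z(p,n) = 1 \wedge x(p,n) \neq y(p,n) \} \notin \ideal[J]$ is comeager (using that $(x \upharpoonright z)(p,n) \neq (y \upharpoonright z)(p,n)$ precisely when $z(p,n)=1$ and $x(p,n)\neq y(p,n)$).

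First I would fix, for each $p \in D$, the least $n_p$ with $x(p,n_p) \neq y(p,n_p)$, and consider the map $\theta \colon 2^{P \times \omega} \to 2^{D}$ defined by $\theta(z)(p) = z(p,n_p)$. Since the coordinates $(p,n_p)$ for $p \in D$ are pairwise distinct, $\theta$ is, up to the obvious relabeling of coordinates, the projection of $2^{P \times \omega}$ onto a subproduct; in particular it is a continuous open surjection, so it pulls comeager sets back to comeager sets.

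Next I would invoke the hypothesis via Lemma~\ref{lem:separating-condition} applied to $E_{\ideal[J]}$, taking the two points $\bar{x}$ the constantly-$0$ element of $2^{P}$ and $\bar{y} = \chi_{D}$ its characteristic function. Since $\{ p : \bar{x}(p) \neq \bar{y}(p) \} = D \notin \ideal[J]$, we have $\neg\, \bar{x} \mathrel{E_{\ideal[J]}} \bar{y}$, so the set of $w \in 2^{P}$ with $\neg\, (\bar{x} \upharpoonright w) \mathrel{E_{\ideal[J]}} (\bar{y} \upharpoonright w)$, i.e.\ with $\{ p \in D : w(p) = 1 \} \notin \ideal[J]$, is comeager; restricting to the coordinates in $D$ (the others being irrelevant) we get that $B := \{ w \in 2^{D} : \{ p \in D : w(p) = 1 \} \notin \ideal[J] \}$ is comeager in $2^{D}$. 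Then $\theta^{-1}[B]$ is comeager in $2^{P \times \omega}$, and for $z \in \theta^{-1}[B]$ we have $\{ p \in D : z(p,n_p) = 1 \} \notin \ideal[J]$; as $x(p,n_p) \neq y(p,n_p)$ for every $p \in D$, this set is contained in $\{ p : \exists n\; z(p,n) = 1 \wedge x(p,n) \neq y(p,n) \}$, which is therefore not in $\ideal[J]$. This is exactly the condition supplied to Lemma~\ref{lem:separating-condition}, so $E_{\ideal[J] \times 0}$ generically separates classes.

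The only point that genuinely needs care is the reduction from ``many witnesses per column'' to ``one witness per column'': the map recording, for each $p$, whether $z$ meets the whole set $\{ n : x(p,n) \neq y(p,n) \}$ fails to be continuous when that set is infinite, so one must select a single witness $n_p$ in advance; once that is done the relevant map is an honest coordinate projection and the transfer of genericity from $E_{\ideal[J]}$ to $E_{\ideal[J] \times 0}$ is immediate. (One may note in passing that $D$ is necessarily infinite, since otherwise $B$ being comeager in the finite discrete space $2^{D}$ would force $\emptyset \notin \ideal[J]$, which is absurd — so no degenerate case arises.)
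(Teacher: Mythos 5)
Your proof is correct and uses essentially the same strategy as the paper: reduce to $E_{\ideal[J]}$ by selecting a single witness coordinate $n_p$ in each column where $x$ and $y$ disagree, apply Lemma~\ref{lem:separating-condition} at that level, and pull the resulting comeager set back through the (open, continuous) coordinate projection. The only cosmetic difference is that you normalize the pair to $0$ and $\chi_D$ whereas the paper works directly with $\tilde{x}(p)=x(p,q_p)$ and $\tilde{y}(p)=y(p,q_p)$; both yield the same comeager pullback.
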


\begin{proof}
Let $\neg x \mathrel{E_{\ideal[J] \times 0}} y$; then there is $A \notin \ideal[J]$ such that for all $p \in A$ there is $q_p$ with $x(p,q_p) \neq y(p,q_p)$; set $q_p=0$ for $p \notin A$. For $w \in 2^{P \times Q}$, let $\widetilde{w} \in 2^P$ be given by $\widetilde{w}(p)=w(p,q_p)$. Then $\neg \tilde{x} \mathrel{E_{\ideal[J]}} \tilde{y}$, so by Lemma~\ref{lem:separating-condition} there is a comeager set $C_0$ of $z \in 2^P$ for which $\neg \tilde{x} \upharpoonright z \mathrel{E_{\ideal[J]}} \tilde{y} \upharpoonright z$. Define $C \subseteq 2^{P \times Q}$ by $C = \{ z : \tilde{z} \in C_0\}$. Then $C$ is comeager, and if $x \upharpoonright z \mathrel{E_{\ideal[J] \times 0}} y \upharpoonright z$ then $\tilde{x} \upharpoonright \tilde{z} \mathrel{E_{\ideal[J]}} \tilde{y} \upharpoonright \tilde{z}$, so $E_{\ideal[J] \times 0}$ generically separates classes by Lemma~\ref{lem:separating-condition}.  
\end{proof}

\begin{lem}
If $\ideal$ is any ideal and $\ideal[J]$ is an ideal such that $E_{\ideal[J]}$ generically separates classes, then so does $E_{\ideal \times \ideal[J]}$. 
\end{lem}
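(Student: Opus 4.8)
The plan is to reduce to the criterion of Lemma~\ref{lem:separating-condition}: to show $E_{\ideal \times \ideal[J]}$ generically separates classes it suffices to prove that for every $x, y \in 2^{P \times Q}$ with $\neg x \mathrel{E_{\ideal \times \ideal[J]}} y$ the set $\{z : \neg x \upharpoonright z \mathrel{E_{\ideal \times \ideal[J]}} y \upharpoonright z\}$ is comeager. First I would unwind the product ideal. Write $D = \{(p,q) : x(p,q) \neq y(p,q)\}$, with vertical sections $D_p = \{q : x(p,q) \neq y(p,q)\}$. That $\neg x \mathrel{E_{\ideal \times \ideal[J]}} y$ means $D \notin \ideal \times \ideal[J]$, i.e.\ $B := \{p \in P : D_p \notin \ideal[J]\} \notin \ideal$. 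For each $p \in B$, the rows $x_p = x(p,\cdot)$ and $y_p = y(p,\cdot)$ satisfy $\neg x_p \mathrel{E_{\ideal[J]}} y_p$, so the hypothesis that $E_{\ideal[J]}$ generically separates classes, together with Lemma~\ref{lem:separating-condition}, gives that $C_p := \{w \in 2^Q : \neg x_p \upharpoonright w \mathrel{E_{\ideal[J]}} y_p \upharpoonright w\}$ is comeager in $2^Q$.

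Next I would note that, identifying $z$ with the set $\{(p,q) : z(p,q) = 1\}$, the difference set of $x \upharpoonright z$ and $y \upharpoonright z$ is exactly $D \cap z$, whose $p$-th section is $D_p \cap z_p$, where $z_p = z(p,\cdot)$. Hence if $z_p \in C_p$ for every $p \in B$, then $D_p \cap z_p \notin \ideal[J]$ for every $p \in B$, so $\{p : (D \cap z)_p \notin \ideal[J]\} \supseteq B$ and is therefore not in $\ideal$ (as $\ideal$ is downward closed). This means $D \cap z \notin \ideal \times \ideal[J]$, i.e.\ $\neg x \upharpoonright z \mathrel{E_{\ideal \times \ideal[J]}} y \upharpoonright z$. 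Thus it only remains to verify that $Z := \{z \in 2^{P \times Q} : \forall p \in B\ (z_p \in C_p)\}$ is comeager.

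For the last step, since $P$ is countable so is $B$, and $Z = \bigcap_{p \in B}\{z : z_p \in C_p\}$. Each set $\{z : z_p \in C_p\}$ is the preimage of the comeager set $C_p$ under the coordinate projection $z \mapsto z_p$; viewing $2^{P \times Q} \cong 2^Q \times 2^{(P \setminus \{p\}) \times Q}$, this projection is continuous and open, so the preimage of a comeager set is comeager (equivalently, the set is $C_p \times 2^{(P \setminus \{p\}) \times Q}$, comeager by Kuratowski--Ulam). A countable intersection of comeager sets is comeager, so $Z$ is comeager, and applying Lemma~\ref{lem:separating-condition} in the reverse direction finishes the argument. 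I do not anticipate a genuine obstacle here; the only point needing care is the routine bookkeeping that matches membership of $D \cap z$ in $\ideal \times \ideal[J]$ with the generic separation condition on the individual rows, and the observation that only countably many rows (those in $B$) need to be controlled.
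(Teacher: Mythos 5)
Your proposal is correct and follows essentially the same route as the paper's proof: take the $\ideal$-positive set of rows $p$ where the sections disagree modulo $\ideal[J]$, use the row-wise generic separation via Lemma~\ref{lem:separating-condition} to get comeager $C_p$, intersect the countably many comeager cylinder sets $\{z : z_p \in C_p\}$, and observe that membership in the intersection forces $\neg x \upharpoonright z \mathrel{E_{\ideal \times \ideal[J]}} y \upharpoonright z$. The only cosmetic difference is that you make explicit the difference set $D$ and the observation that $(x \upharpoonright z) \Delta (y \upharpoonright z) = D \cap z$, which the paper leaves implicit.
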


\begin{proof}
Let $\neg x \mathrel{E_{\ideal \times \ideal[J]}} y$, so there is a set $A \notin \ideal$ such that for all $p \in A$ we have $\neg x_p \mathrel{E_{\ideal[J]}} y_p$. Then by Lemma~\ref{lem:separating-condition} for each $p \in A$ there is a comeager set $C_p$ of $z$ for which $\neg x_p \upharpoonright z \mathrel{E_{\ideal[J]}}  y_p \upharpoonright z$. Let $C = \{ z : \forall p \in A (z_p \in C_p) \}$. Then $C$ is comeager and for any $z \in C$ and any $p \in A$ we have $(x \upharpoonright z)_p = x_p \upharpoonright z_p$ and $y_p \upharpoonright z_p = (y \upharpoonright z)_p$ so $\neg (x \upharpoonright z)_p \mathrel{E_{\ideal[J]}} (y \upharpoonright z)_p$; hence $\neg x \upharpoonright z \mathrel{E_{\ideal \times \ideal[J]}} y \upharpoonright z$. Thus $E_{\ideal \times \ideal[J]}$ generically separates classes by Lemma~\ref{lem:separating-condition}.
\end{proof}

Although the summable ideal $\ideal_s$ generating $\bbE_2$ is not vertically invariant, we can still establish the relevant properties for $\bbE_2=E_{\ideal_s}$.

\begin{lem}
$\bbE_2$ generically maintains complexity on sections.
\end{lem}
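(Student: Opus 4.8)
The plan is to mimic the proof of Lemma~\ref{lem:visections}, replacing the use of vertical invariance with a direct construction tailored to the summable ideal. Recall that $\bbE_2 = E_{\ideal_s}$ on $2^{\omega}$, where $\ideal_s = \{A : \sum_{n \in A} \frac{1}{n+1} < \infty\}$. We need to show that for a comeager set of pairs $(z,\alpha)$, we have $\bbE_2 \sqsubseteq_c \bbE_2 \upharpoonright C$ for any $C$ comeager in the Silver cube $X_{z,\alpha} = \{x : x \upharpoonright z = \alpha \upharpoonright z\}$. As before, the set $U$ of $z$ with $z$ co-infinite is comeager, and for such $z$ a relativization of Lemma~\ref{lem:hypermycielski} lets us, given comeager $C \subseteq X_{z,\alpha}$, find $z' \supseteq z$ (still co-infinite) and $\alpha'$ extending $\alpha$ on the support of $z$ with $X_{z',\alpha'} \subseteq C$. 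So it suffices to produce, for each co-infinite $z'$ and each $\alpha'$, a continuous embedding of $\bbE_2$ into $\bbE_2 \upharpoonright X_{z',\alpha'}$.

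The key step is the construction of this embedding. Let $Q = \{q_0 < q_1 < q_2 < \cdots\}$ enumerate the complement of the support of $z'$ (the coordinates free in $X_{z',\alpha'}$). Define $\varphi : 2^{\omega} \to X_{z',\alpha'}$ by $\varphi(x)(q_n) = x(n)$ and $\varphi(x)(p) = \alpha'(p)$ for $p$ in the support of $z'$. This is clearly a continuous injection with range in $X_{z',\alpha'}$, and it respects $\bbE_2$ in the forward direction provided $\sum_n \frac{1}{q_n + 1}$ behaves well: specifically, $\varphi(x) \mathrel{\bbE_2} \varphi(y)$ iff $\sum \{ \frac{1}{q_n + 1} : x(n) \neq y(n) \} < \infty$, which need not coincide with $\sum\{\frac{1}{n+1} : x(n) \neq y(n)\} < \infty$ when $Q$ is sparse. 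The fix is the standard trick of blocking: instead of a single free coordinate per bit of $x$, assign to the $n$-th bit a finite block $B_n \subseteq Q$ of free coordinates chosen so that $\sum_{q \in B_n} \frac{1}{q+1} \in [1/(n+1), 2/(n+1)]$ — possible since $\sum_{q \in Q} \frac{1}{q+1}$ diverges (as $Q$ is infinite and we may, by further shrinking via Lemma~\ref{lem:hypermycielski}, arrange $z'$ so that $Q$ is not too sparse; in fact any infinite $Q$ works if we allow the blocks to grow, since the tail sum $\sum_{q \in Q, q \geq N} \frac{1}{q+1}$ can be made to exceed any prescribed finite value). Then set $\varphi(x)(q) = x(n)$ for $q \in B_n$ and $\varphi(x)(p) = \alpha'(p)$ off $\bigcup_n B_n$. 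Now $\{q : \varphi(x)(q) \neq \varphi(y)(q)\} = \bigcup\{B_n : x(n) \neq y(n)\}$, and by the choice of the blocks, $\sum$ over this set is finite iff $\sum\{\frac{1}{n+1} : x(n) \neq y(n)\} < \infty$, i.e., iff $x \mathrel{\bbE_2} y$. Hence $\varphi$ is a continuous reduction, in fact embedding, of $\bbE_2$ into $\bbE_2 \upharpoonright X_{z',\alpha'} \subseteq \bbE_2 \upharpoonright C$.

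The main obstacle is the sparseness issue just described: one must be careful that the generic $z'$ produced by the relativized Lemma~\ref{lem:hypermycielski} does not leave the free coordinates $Q$ so thin that $\sum_{q \in Q} \frac{1}{q+1}$ converges, which would break the blocking argument. The cleanest way around this is to observe that the conclusion of Lemma~\ref{lem:hypermycielski} already gives $\forall^{\ast} z' \forall^{\ast} \alpha'$, and the set of co-infinite $z'$ whose complement $Q$ satisfies $\sum_{q \in Q} \frac{1}{q+1} = \infty$ is comeager (indeed it is generic for a co-infinite complement to contain arbitrarily long runs, hence to have divergent harmonic-type sum), so we may intersect with this comeager set and proceed. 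Once that is secured, the verification that $\varphi$ is an embedding is the routine calculation with the block sums sketched above. This completes the proof that $\bbE_2$ generically maintains complexity on sections.
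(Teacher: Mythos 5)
Your proposal is essentially the same as the paper's proof: relativize Lemma~\ref{lem:hypermycielski} to find a Silver cube $X_{z',\alpha'} \subseteq C$, and then construct an embedding of $\bbE_2$ into $X_{z',\alpha'}$ by replacing single free coordinates with finite blocks $B_n$ whose partial sums $\sum_{q \in B_n} \frac{1}{q+1}$ are comparable to $\frac{1}{n+1}$; the paper does exactly this (with blocks $A_k$ satisfying $|\sum_{n \in A_k}\frac{1}{n+1} - \frac{1}{k+1}| < 2^{-k}$). Both handle the sparseness issue the same way, by restricting $z$ and $z'$ to the comeager set $U = \{z : \bar{z} \notin \ideal_s\}$, which is comeager precisely because $\ideal_s$ is a free $F_\sigma$ ideal and hence meager.

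One thing to flag: the parenthetical claim ``in fact any infinite $Q$ works if we allow the blocks to grow, since the tail sum $\sum_{q \in Q, q \geq N}\frac{1}{q+1}$ can be made to exceed any prescribed finite value'' is false --- if $Q = \{2^n : n \in \omega\}$, say, then $\sum_{q \in Q}\frac{1}{q+1} < \infty$ and the tail sums tend to $0$, so the blocks cannot be built. You do correctly identify this as the main obstacle in your final paragraph and supply the right fix, so the proof as a whole survives, but the parenthetical should be deleted since it contradicts the (genuinely needed) observation that $z'$ must be chosen with $\bar{z'} \notin \ideal_s$, which holds generically among extensions of $z \in U$.
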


\begin{proof}
As $\ideal_s$ is a free ideal with the Baire property, it is meager; hence the set $U= \{z : \bar{z} \notin \ideal_s\}$ is comeager, where $\bar{z}(n) = 1 - z(n)$. Fix $z \in U$ and any $\alpha$, and let $C \subseteq X_{z,\alpha}$ be comeager. As in Lemma~\ref{lem:visections} we can find $z' \in U$ and $\alpha'$ such that $X_{z',\alpha'} \subseteq C$. As $\bar{z'} \notin \ideal_s$, we can find disjoint finite sets $A_k \subseteq \omega$ such that $z'(n)=0$ for all $n \in A_k$ and $| \sum \{\frac{1}{n+1}: n \in A_k\} - \frac{1}{k+1}| < 2^{-k}$. The map $\varphi$ given by $\varphi(x)(n) = x(k)$ if $n \in A_k$ and $\varphi(x)(n) = \alpha'(n)$ otherwise is then an embedding of $\bbE_2$ into $\bbE_2 \upharpoonright X_{z',\alpha'}$.  
\end{proof}

\begin{lem}
$\bbE_2$ generically separates classes.
\end{lem}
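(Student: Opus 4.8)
The plan is to apply the criterion of Lemma~\ref{lem:separating-condition} and then run a routine Baire-category argument using divergence of the series defining the summable ideal. By that lemma it suffices, given $x,y\in 2^{\omega}$ with $\neg x\mathrel{\bbE_2}y$, to show that $\{z : \neg (x\upharpoonright z)\mathrel{\bbE_2}(y\upharpoonright z)\}$ is comeager. I would set $D=\{n : x(n)\neq y(n)\}$, so that the hypothesis $\neg x\mathrel{\bbE_2}y$ says exactly $\sum_{n\in D}\frac{1}{n+1}=\infty$. Since $(x\upharpoonright z)(n)=\min(x(n),z(n))$ and likewise for $y$, one checks immediately that $x\upharpoonright z$ and $y\upharpoonright z$ disagree precisely on $D\cap z^{-1}(1)$. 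So the goal becomes: the set of $z$ with $\sum\{\frac{1}{n+1} : n\in D,\ z(n)=1\}=\infty$ is comeager in $2^{\omega}$.

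For this I would introduce, for each $m\in\omega$, the set $U_m=\{z : \sum\{\frac{1}{n+1} : n\in D,\ z(n)=1\}>m\}$. Each $U_m$ is open: if $z\in U_m$ then the strict inequality is already realized by some finite subset $F\subseteq D\cap z^{-1}(1)$, and every $z'$ with $z'(n)=1$ for all $n\in F$ again lies in $U_m$. Each $U_m$ is dense: given a basic open $N_s$ with $\text{dom}(s)=[0,\ell)$, the tail $\sum_{n\in D,\ n\geq\ell}\frac{1}{n+1}$ still diverges, so I can pick a finite $F\subseteq D\cap[\ell,\infty)$ with $\sum_{n\in F}\frac{1}{n+1}>m$ and take any $z$ extending $s$ with $z(n)=1$ for $n\in F$; then $z\in N_s\cap U_m$. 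Hence $\bigcap_m U_m$ is comeager, and every $z$ in it has $\sum\{\frac{1}{n+1} : n\in D,\ z(n)=1\}$ larger than every $m$, i.e.\ infinite, so $\neg (x\upharpoonright z)\mathrel{\bbE_2}(y\upharpoonright z)$. Combined with Lemma~\ref{lem:separating-condition} this gives the result.

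I do not expect a genuine obstacle here: the only subtlety is the observation that ``the summable norm of $D\cap z^{-1}(1)$ exceeds $m$'' is a finitely-witnessed, hence open, condition, which is what makes the $U_m$ both open and (via the divergence of any tail of $\sum_{n\in D}\frac{1}{n+1}$) dense. The same argument would go through with $\ideal_s$ replaced by any tall ideal whose associated submeasure is continuous from below, which is the reason the earlier vertical-invariance machinery is not needed for $\bbE_2$ at this step.
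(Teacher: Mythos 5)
Your proof is correct and is essentially the paper's argument; the paper simply states that the condition ``$\forall k\,\sum\{\frac{1}{n+1}:x\upharpoonright z(n)\neq y\upharpoonright z(n)\}>k$'' is evidently open dense for each $k$, whereas you have spelled out the open-dense verification via the sets $U_m$. Same decomposition, same appeal to Lemma~\ref{lem:separating-condition}, just more detail.
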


\begin{proof}
Let $\neg x \mathrel{\bbE_2} y$. Then $\neg x \upharpoonright z \mathrel{\bbE_2} y \upharpoonright z$ iff $\forall k \sum \{ \frac{1}{n+1} : x \upharpoonright z(n) \neq y \upharpoonright z(n) \} > k$. The inside condition is evidently open dense for each $k$, so the set of such $z$ is comeager and hence $\bbE_2$ generically separates classes by Lemma~\ref{lem:separating-condition}.
\end{proof}

Putting this all together we arrive at:

\begin{thm}
Let $\ideal$ be one of the following: $\text{FIN}$,  $\text{FIN} \times 0$, $\ideal_s$, $\ideal_s \times 0$, or a $\bPi_1^1$ ideal of the form $\ideal[J] \times \text{FIN}$ or $\ideal[J] \times \ideal_s \times 0$ or $\ideal[J] \times \text{FIN} \times 0$. Then for any $\bSigma_1^1$ equivalence relation $F$ of countable index over $E_{\ideal}$ we have $E_{\ideal} \sqsubseteq_c F$.
\end{thm}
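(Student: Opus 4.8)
The plan is to obtain the conclusion from Lemma~\ref{lem:sep-embed}, so the whole proof consists of verifying its four hypotheses for $E = E_{\ideal}$ and the given $\bSigma^1_1$ equivalence relation $F$: that $E_{\ideal}$ is $\bPi^1_1$, that $F$ is $\bSigma^1_1$, that $E_{\ideal} \subseteq F$, that $E_{\ideal}$ generically maintains complexity on sections, and that $E_{\ideal}$ generically separates classes within $F$. The first three are essentially free: $F$ is $\bSigma^1_1$ by hypothesis, $E_{\ideal} \subseteq F$ is part of what it means for $F$ to be of countable index over $E_{\ideal}$, and $E_{\ideal}$ is $\bPi^1_1$ because for $\ideal \in \{\text{FIN}, \text{FIN} \times 0, \ideal_s, \ideal_s \times 0\}$ the ideal is Borel (so $E_{\ideal}$ is Borel), while for $\ideal$ of the form $\ideal[J] \times \text{FIN}$, $\ideal[J] \times \ideal_s \times 0$, or $\ideal[J] \times \text{FIN} \times 0$ the Borel function sending a set to the set of indices on which its relevant slice lies outside the inner (Borel) ideal reduces membership in $\ideal$ to membership in $\ideal[J]$, so $\ideal$, and hence $E_{\ideal}$, is $\bPi^1_1$.

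The content lies in the two ``generic'' hypotheses. For ``$E_{\ideal}$ generically maintains complexity on sections'', Lemma~\ref{lem:visections} reduces matters to showing that $\ideal$ is vertically invariant, and the lemma that every ideal of the form $\ideal[J] \times 0$ or $\ideal[J] \times \text{FIN}$ is vertically invariant disposes of almost every case once one uses associativity of the product: $\text{FIN}$ may be read as $\ideal[J] \times \text{FIN}$ with $\ideal[J]$ the trivial ideal on a one-element set; $\text{FIN} \times 0$ and $\ideal_s \times 0$ are of the form $\ideal[J] \times 0$; $\ideal[J] \times \text{FIN}$ is on the list verbatim; and $\ideal[J] \times \ideal_s \times 0 = (\ideal[J] \times \ideal_s) \times 0$ and $\ideal[J] \times \text{FIN} \times 0 = (\ideal[J] \times \text{FIN}) \times 0$ are again of the form $\ideal[J]' \times 0$. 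The lone exception is $\ideal = \ideal_s$, which is not vertically invariant; for it we quote the separately established fact that $\bbE_2 = E_{\ideal_s}$ generically maintains complexity on sections.

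For ``$E_{\ideal}$ generically separates classes within $F$'', Lemma~\ref{lem:countable-sep} together with the countable-index hypothesis reduces this to showing that $E_{\ideal}$ generically separates classes, and I would prove this by induction along the construction of $\ideal$. The base cases are $\bbE_0 = E_{\text{FIN}}$ and $\bbE_2 = E_{\ideal_s}$, both of which generically separate classes by earlier lemmas. Closure under $\ideal[J] \mapsto \ideal[J] \times 0$ then gives the property for $E_{\text{FIN} \times 0}$ and $E_{\ideal_s \times 0}$; and closure under $\ideal[J] \mapsto \ideal \times \ideal[J]$ for arbitrary $\ideal$ gives it for $\ideal[J] \times \text{FIN}$ (built over $\text{FIN}$), for $\ideal[J] \times (\ideal_s \times 0)$ (built over $\ideal_s \times 0$), and for $\ideal[J] \times (\text{FIN} \times 0)$ (built over $\text{FIN} \times 0$). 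With all four hypotheses in hand, Lemma~\ref{lem:sep-embed} delivers $E_{\ideal} \sqsubseteq_c F$.

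I do not expect a serious obstacle: the theorem is a synthesis of the preceding lemmas, and the only thing demanding care is the bookkeeping --- in particular, reading the triple-product ideals as instances of $\ideal[J]' \times 0$ so that the vertical-invariance lemma applies, and, in the generic-separation induction, applying the ``arbitrary $\ideal$'' closure step to the correct inner ideal. The one genuinely substantial ingredient, Lemma~\ref{lem:sep-embed} itself --- which builds the continuous embedding of $E_{\ideal}$ into $F$ through a suitably generic Silver cube, on which generic separation forces $F$-related images to be $E_{\ideal}$-related --- is already available, so nothing beyond the case analysis remains to be done here.
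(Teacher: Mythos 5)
Your proof is correct and follows essentially the same approach as the paper: both invoke Lemma~\ref{lem:sep-embed} after verifying the generic-section property (via vertical invariance and Lemma~\ref{lem:visections}, except for $\ideal_s$ which is handled by its own lemma) and generic separation within $F$ (via the inductive generic-separation lemmas together with Lemma~\ref{lem:countable-sep}). The paper's proof is simply a more compressed version of your bookkeeping.
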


\begin{proof}
All such ideals (excepting $\ideal_s$) are vertically invariant so the corresponding equivalence relations generically maintain complexity on sections, and all generically separate classes. Hence they generically separate classes within $F$ by Lemma~\ref{lem:countable-sep}, and so $E_{\ideal} \sqsubseteq_c F$ by Lemma~\ref{lem:sep-embed}.
\end{proof}

This establishes Lemma~\ref{lem:ctbleoverE1}, as well as the following:
\begin{cor}
If $F$ is a $\bSigma_1^1$ equivalence relation of countable index over $\bbE_0^{\omega}$, then $\bbE_0^{\omega} \sqsubseteq_c F$.
If $F$ is a $\bSigma_1^1$ equivalence relation of countable index over $\bbE_1^{\omega}$, then $\bbE_1^{\omega} \sqsubseteq_c F$.
If $F$ is a $\bSigma_1^1$ equivalence relation of countable index over $\bbE_2$, then $\bbE_2 \sqsubseteq_c F$.
\end{cor}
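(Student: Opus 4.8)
The plan is to assemble the theorem from the machinery built in this section, with Lemma~\ref{lem:sep-embed} as the workhorse: it says that if $E$ is $\bPi^1_1$, $F$ is $\bSigma^1_1$, $E \subseteq F$, $E$ generically maintains complexity on sections, and $E$ generically separates classes within $F$, then $E \sqsubseteq_c F$. So the whole task reduces to checking, for each ideal $\ideal$ on the list, that $E_{\ideal}$ (i) is $\bPi^1_1$, (ii) generically maintains complexity on sections, and (iii) generically separates classes; property (iii) is then upgraded to ``separates classes within $F$'' by Lemma~\ref{lem:countable-sep}, using the countable-index hypothesis on $F$, and $E_{\ideal} \subseteq F$ is part of that hypothesis as well. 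For (i) I would observe that for every ideal on the list the ``inner'' membership conditions defining $E_{\ideal}$ are arithmetic (hence Borel) in the pair $(x,y)$, so that when the outermost ideal $\ideal[J]$ is $\bPi^1_1$ the relation $E_{\ideal}$ is $\bPi^1_1$, and for $\text{FIN}$ and $\ideal_s$ it is outright Borel.

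For property (ii) I would note that every ideal on the list \emph{except $\ideal_s$ itself} is of the form $\ideal[K]\times 0$ or $\ideal[K]\times\text{FIN}$ for a suitable auxiliary ideal $\ideal[K]$: writing $\text{FIN}$ as (trivial ideal)$\,\times\,\text{FIN}$, taking $\text{FIN}\times 0$ and $\ideal_s\times 0$ as given, and rewriting $\ideal[J]\times\ideal_s\times 0$ as $(\ideal[J]\times\ideal_s)\times 0$ and $\ideal[J]\times\text{FIN}\times 0$ as $(\ideal[J]\times\text{FIN})\times 0$. Each of these is vertically invariant by the lemma identifying products $\ideal[J]\times 0$ and $\ideal[J]\times\text{FIN}$ as vertically invariant, so $E_{\ideal}$ generically maintains complexity on sections by Lemma~\ref{lem:visections}. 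The single exception $\ideal_s$ is covered by the separate result that $\bbE_2=E_{\ideal_s}$ generically maintains complexity on sections.

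For property (iii) I would build ``generically separates classes'' up from the two base cases $E_{\text{FIN}}=\bbE_0$ and $E_{\ideal_s}=\bbE_2$ using the two closure lemmas: the lemma that $E_{\ideal[J]\times 0}$ generically separates classes whenever $E_{\ideal[J]}$ does handles the trailing $\times 0$ in $\text{FIN}\times 0$, $\ideal_s\times 0$, $\ideal[J]\times\ideal_s\times 0$, and $\ideal[J]\times\text{FIN}\times 0$; and the lemma that $E_{\ideal\times\ideal[J]}$ generically separates classes whenever $E_{\ideal[J]}$ does (with $\ideal$ arbitrary) handles $\ideal[J]\times\text{FIN}$ directly and, applied first, gives that $E_{\ideal[J]\times\ideal_s}$ and $E_{\ideal[J]\times\text{FIN}}$ generically separate classes, after which the $\times 0$ lemma finishes the two triple-product cases. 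Combining (i)--(iii) and invoking Lemmas~\ref{lem:countable-sep} and~\ref{lem:sep-embed} yields $E_{\ideal}\sqsubseteq_c F$; the stated corollaries for $\bbE_1$ (via $\text{FIN}\times 0$), $\bbE_0^\omega$ (via $0\times\text{FIN}$), $\bbE_1^\omega$ (via $0\times\text{FIN}\times 0$), and $\bbE_2$ (via $\ideal_s$) drop out as special cases.

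The work here is essentially organizational rather than analytic, so there is no genuine obstacle: the one place to be careful is matching each listed ideal to a single decomposition that is simultaneously recognized as vertically invariant (or as $\ideal_s$) \emph{and} reachable from $\bbE_0$ or $\bbE_2$ by the two permitted product operations, together with a double-check of the $\bPi^1_1$ bookkeeping in the iterated-product cases, since Lemma~\ref{lem:sep-embed} genuinely requires $E$ to be $\bPi^1_1$ and not merely analytic. No new hard construction is needed beyond the square-coding and genericity arguments already established.
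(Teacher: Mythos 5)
Your proposal is correct and tracks the paper's argument almost exactly: the corollary is simply the theorem (whose proof you reconstruct) specialized to $\bbE_0^{\omega}=E_{0\times\text{FIN}}$, $\bbE_1^{\omega}=E_{0\times\text{FIN}\times 0}$, and $\bbE_2=E_{\ideal_s}$, using Lemma~\ref{lem:sep-embed} as the engine, Lemma~\ref{lem:visections} plus vertical invariance (with $\ideal_s$ handled separately) for maintaining complexity on sections, the two closure lemmas from the base cases $\bbE_0$ and $\bbE_2$ for generic separation, and Lemma~\ref{lem:countable-sep} to pass to separation within $F$. Your explicit check that $E_{\ideal}$ is $\bPi^1_1$ is a point the paper leaves implicit in the hypothesis that $\ideal$ is $\bPi^1_1$, but it is the same argument.
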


\begin{question}
Does this hold for any Polishable ideal?
\end{question}

In establishing that the above equivalence relations generically maintain complexity on sections, we have also established that they maintain complexity on non-meager sets.

\begin{thm}
\label{thm:non-meager-sections}
Let $\ideal$ be one of the following: $\text{FIN}$, $\ideal_s$, or a $\bPi_1^1$ ideal of the form $\ideal[J] \times 0$ or $\ideal[J] \times \text{FIN}$. Then $E_{\ideal}$ maintains complexity on non-meager sets. In particular, this holds for $\bbE_0$, $\bbE_1$, $\bbE_2$, $\bbE_0^{\omega}$, and $\bbE_1^{\omega}$.
\end{thm}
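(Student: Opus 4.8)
The plan is to obtain the theorem immediately from results already established, since the substantive work lies in proving that the relevant $E_{\ideal}$ generically maintain complexity on sections. Recall the lemma proved above: if $E$ on $2^P$ generically maintains complexity on sections, then $E$ maintains complexity on non-meager sets (any non-meager $C$ is comeager in a basic clopen $N_s$, which contains a Silver cube $X_{z,\alpha}$ of the required generic type, so the section-level conclusion transfers to $C$). Hence I would reduce the theorem to checking that each $E_{\ideal}$ in the list generically maintains complexity on sections, and then invoke that lemma.

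For the ideals of the form $\ideal[J] \times 0$ or $\ideal[J] \times \text{FIN}$ — treating $\text{FIN}$ itself as the instance $\ideal[J] \times \text{FIN}$ with $\ideal[J]$ the trivial ideal on a one-point set, as remarked above — I would cite the two preceding lemmas: first that every ideal of this shape is vertically invariant (membership depending only on the cardinalities of vertical sections, which the admissible re-indexings preserve), and then Lemma~\ref{lem:visections}, which gives that $E_{\ideal}$ generically maintains complexity on sections whenever $\ideal$ is vertically invariant. For $\ideal = \ideal_s$, which is \emph{not} vertically invariant, I would instead quote the separate lemma just above that $\bbE_2 = E_{\ideal_s}$ generically maintains complexity on sections; there one uses that $\ideal_s$ is a free meager ideal, so a generic $z$ leaves infinitely many disjoint finite blocks of coordinates whose summable weights can be tuned to mimic $\ideal_s$ on all of $2^{\omega}$.

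Combining these with the connecting lemma yields the conclusion for every $\ideal$ on the list, and the asserted special cases follow from the identifications $\bbE_0 = E_{\text{FIN}}$, $\bbE_1 = E_{\text{FIN} \times 0}$, $\bbE_2 = E_{\ideal_s}$, $\bbE_0^{\omega} = E_{0 \times \text{FIN}}$, and $\bbE_1^{\omega} = E_{0 \times \text{FIN} \times 0}$, each ambient ideal being of a permitted form (and Borel, hence $\bPi^1_1$).

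Because this theorem is essentially a repackaging, there is no new obstacle here; the real content is Lemma~\ref{lem:visections} and its $\bbE_2$ analogue. If one had to reprove Lemma~\ref{lem:visections}, the heart of the argument is: given $C$ comeager in a Silver cube $X_{z,\alpha}$ with $z$ chosen so that $\{q : z(p,q)=0\}$ is infinite for every $p$, use the relativized Mycielski-type fact (Lemma~\ref{lem:hypermycielski}) to pass down to a sub-cube $X_{z',\alpha'} \subseteq C$, fix for each $p$ a bijection $e(p,\cdot)$ from $Q$ onto $\{q : z'(p,q)=0\}$, and define the copying map $\varphi(x)(p,e(p,q)) = x(p,q)$ and $\varphi(x)(p,q)=\alpha'(p,q)$ otherwise; vertical invariance is exactly what makes $\varphi$ a continuous embedding of $E_{\ideal}$ into $E_{\ideal}\upharpoonright X_{z',\alpha'} \subseteq E_{\ideal}\upharpoonright C$. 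The analogous point for $\bbE_2$ is the only spot where the failure of vertical invariance for $\ideal_s$ requires genuine care.
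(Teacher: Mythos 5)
Your proposal is correct and matches the paper exactly: the paper states this theorem without a separate proof, as an immediate consequence of the preceding lemmas (vertical invariance of the listed ideals, Lemma~\ref{lem:visections}, the separate lemma for $\bbE_2$, and the lemma that generic maintenance of complexity on sections implies maintenance on non-meager sets), and you have assembled those pieces in the same way.
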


We may hope to apply similar techniques to $F$ which are smooth over $E$ or essentially countable over $E$ in order to prove primeness for $\bbE_1$ or $\bbE_0^{\omega}$.
Note that rather than requiring a comeager set of $z$ in the above properties, it would suffice to have a set which was large with respect to some ideal which preserves Baire category.

\section{Non-dichotomy results}

Earlier work of Clemens--Lecomte--Miller in \cite{clm} has ruled out the possibility of certain global dichotomies. In particular, Theorem 2 of \cite{clm} established the following:

\begin{thm}[Clemens--Lecomte--Miller]
If $\Gamma$ is a Borel Wadge class containing $\bSigma^0_2$, then there is no minimum non-potentially $\Gamma$ Borel equivalence relation $E$.
\end{thm}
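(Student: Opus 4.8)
The plan is to argue by contradiction. Suppose $E$ is a Borel equivalence relation on $X$ that is $\leq_B$-least among all Borel equivalence relations not in $\text{pot}(\Gamma)$. I will derive a contradiction by exhibiting two Borel equivalence relations $F_0$ and $F_1$, \emph{neither} in $\text{pot}(\Gamma)$, such that every Borel equivalence relation $G$ with $G \leq_B F_0$ and $G \leq_B F_1$ lies in $\text{pot}(\bSigma^0_2) \subseteq \text{pot}(\Gamma)$. Two observations shape the argument. First, a single relation cannot do the job: any $F \notin \text{pot}(\Gamma)$ is a $\leq_B$-lower bound of itself and is not in $\text{pot}(\bSigma^0_2)$, so the obstruction must genuinely be split between two ``orthogonal'' relations. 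Second, minimality already forces $E$ to be non-smooth --- a smooth relation lies in $\text{pot}(\bPi^0_1) \subseteq \text{pot}(\bSigma^0_2) \subseteq \text{pot}(\Gamma)$ --- so $\bbE_0 \leq_B E$ by the Generalized Glimm--Effros dichotomy; thus $\bbE_0$ is the largest common lower bound we can hope to tolerate, which is precisely why the hypothesis is $\bSigma^0_2 \subseteq \Gamma$ rather than merely $\bPi^0_1 \subseteq \Gamma$.

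For the dual pair I would follow the analysis of potential Wadge classes for equivalence relations due to Lecomte. Fix a sufficiently complex set $A \subseteq 2^{\omega}$ witnessing that $\Gamma$ properly exceeds $\bSigma^0_2$, together with a ``dual'' set (its complement when $\Gamma$ is non-self-dual, or sets from the two relevant sub-classes when $\Gamma$ is self-dual). From these one builds, by a Borel coding, equivalence relations $F_0$ and $F_1$ which each contain an isomorphic copy of $\bbE_0$ but whose failure to be in $\text{pot}(\Gamma)$ is ``charged'' to the descriptive complexity of the coded set: each $F_i$ is arranged so that $F_i \in \text{pot}(\Gamma)$ would force the coded set to be potentially of the dual class, which it is not. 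Hence $F_0, F_1 \notin \text{pot}(\Gamma)$, while the two obstructions are of opposite type and so cannot both be realized inside a single reduction target.

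The heart of the matter is the \emph{simultaneous reduction lemma}: if $G$ is Borel with $G \leq_B F_0$ and $G \leq_B F_1$, then $G \in \text{pot}(\bSigma^0_2)$. I would prove this by a Gandy--Harrington / effective-descriptive-set-theory argument in the style of the proofs of the Harrington--Kechris--Louveau and Kechris--Louveau dichotomies: given Borel reductions $\rho_0$ of $G$ into $F_0$ and $\rho_1$ of $G$ into $F_1$, one shows that on a large (comeager, or invariant Borel) set the relevant coordinate of $\rho_0$ must avoid the first obstruction and that of $\rho_1$ the second, since a $\Gamma$-type obstruction cannot be pulled back through a reduction whose target carries only a $\check\Gamma$-type obstruction and vice versa; this pins both reductions down to the $\bbE_0$-parts of their targets and forces $G \in \text{pot}(\bSigma^0_2)$. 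This is the step where I expect essentially all of the difficulty to lie, and it is exactly where $\bSigma^0_2 \subseteq \Gamma$ is used, so that collapsing to the $\bbE_0$-level is harmless; one must also check that the coding and the lemma go through uniformly across all Borel Wadge classes $\Gamma \supseteq \bSigma^0_2$, self-dual or not.

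The theorem then follows immediately: since $F_0, F_1 \notin \text{pot}(\Gamma)$, minimality of $E$ gives $E \leq_B F_0$ and $E \leq_B F_1$, so the simultaneous reduction lemma yields $E \in \text{pot}(\bSigma^0_2) \subseteq \text{pot}(\Gamma)$, contradicting $E \notin \text{pot}(\Gamma)$. So no such minimum $E$ exists.
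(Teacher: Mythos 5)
The high-level shape of your argument --- suppose $E$ is minimum, produce relations not in $\text{pot}(\Gamma)$ to which $E$ must reduce, and derive a contradiction from the combined structure of those reductions --- is a reasonable skeleton. But both the construction you propose and the crucial lemma on which the argument rests are quite different from what Clemens--Lecomte--Miller actually do, and the lemma is precisely the part you leave unproved. The paper does not reprove this theorem (it is cited from \cite{clm}), but the proposition it records immediately afterward describes the CLM machinery: a single Borel treeing $T$ of a $K_\sigma$ equivalence relation $E_{[T]}$, together with a family $E_{\ideal}^{\ast} = E_{\ideal} \cap E_{[T]}$ indexed by vertically invariant ideals $\ideal$, where (a) each $E_{\ideal}^{\ast}$ is treeable, (b) each $E_{\ideal}^{\ast}$ restricts to a smooth relation on every $E_{[T]}$-class, (c) $\ideal \notin \Gamma$ forces $E_{\ideal}^{\ast} \notin \text{pot}(\Gamma)$, and (d) vertically invariant ideals realize cofinally many Wadge degrees. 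The non-existence of a minimum is then squeezed out of the tension between (c)--(d) (which produce witnesses outside $\text{pot}(\Gamma)$) and (b) (a uniform structural constraint over the fixed $\bSigma^0_2$ skeleton $E_{[T]}$). Your plan instead posits two equivalence relations $F_0, F_1$ built by a ``Lecomte-style'' coding of a set $A$ and a dual set, linked by what you call a simultaneous reduction lemma.

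The genuine gap is that the simultaneous reduction lemma is not established, and I do not think it is true at the level of generality you state it. You need: for every Borel Wadge class $\Gamma \supseteq \bSigma^0_2$, two Borel $F_0, F_1 \notin \text{pot}(\Gamma)$ such that \emph{every} common Borel $\leq_B$-lower bound lies in $\text{pot}(\bSigma^0_2)$. The only evidence you offer is a one-sentence sketch of a Gandy--Harrington argument in which ``the relevant coordinate of $\rho_0$ must avoid the first obstruction and that of $\rho_1$ the second,'' but this presupposes a coordinate structure on $F_0$ and $F_1$ that you have not described and does not correspond to any dichotomy I recognize. By contrast, in the CLM setting the analogous work is done not by pitting two ad hoc codings against each other, but by the interaction between the arbitrarily complex ideal $\ideal$ and the fixed $K_\sigma$ skeleton: the witnesses $E_{\ideal}^{\ast}$ all share $E_{[T]}$ and the smooth-on-$E_{[T]}$-classes property, and it is this shared geometry --- not an abstract pairing of ``opposite type'' obstructions --- that collapses common reductions. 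Your preliminary observations (that $E$ cannot be smooth, that $\bbE_0 \leq_B E$, and that no single $F$ can serve) are fine, and the reduction of the theorem to the existence of such a pair $(F_0, F_1)$ is logically valid. But as written the proposal replaces the hard core of the theorem with an unproved and, in my view, doubtful lemma, so it is not a proof.
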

Here an equivalence relation $E$ is minimum non-potentially $\Gamma$ if for every Borel equivalence relation $F$, either $F$ is reducible to some equivalence relation in $\Gamma$, or $E \leq_B F$. We have that $\Delta(\bbR)$ is a minimum non-potentially $\bDelta^0_1$ equivalence relation, and $\bbE_0$ is a minimum non-potentially $\bPi^0_2$ equivalence relation, but these are the only such.
We might hope to be able to instead use primeness to characterize potentially $\Gamma$ equivalence relations for more complex $\Gamma$; however, we will see that this, too, is impossible. We briefly summarize some concepts and results from \cite{clm}.

\begin{prop} There is a treeing $T$ inducing a $K_{\sigma}$ equivalence relation $E_{[T]}$ with the following properties:
\begin{enumerate} 
\item If $\ideal \supseteq \text{FIN}$ is an ideal on $\omega$, then $E_{\ideal}^{\ast} = E_{\ideal} \cap E_{[T]}$ is treeable, with a treeing given by $T \cap E_{\ideal}$.
\item  $E_{\ideal}^{\ast} \subseteq E_{[T]}$, and the restriction of $E_{\ideal}^{\ast}$ to a single $E_{[T]}$-class is smooth.
\item If $\ideal$ is {vertically invariant} and $\ideal \notin \Gamma$ for a Wadge class $\Gamma$, then $E_{\ideal}^{\ast} \notin \text{pot}(\Gamma)$.
\item There are vertically invariant $\ideal$ of arbitrarily high Wadge degree, so the class of all $E_{\ideal}^{\ast}$ has elements of cofinal potential Wadge degree.
\end{enumerate}
\end{prop}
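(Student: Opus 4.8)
The plan is to push everything into one construction --- a closed acyclic Borel graph $T$ on a space $2^{P}$ ($P$ a countable set), organized into ``clusters'' carrying almost-disjoint difference families --- and then obtain (1)--(4) by comparatively soft arguments; the substantive content is the construction itself and the potential-complexity computation in (3), and this is exactly the content of the corresponding result of \cite{clm}. Concretely, I would arrange $2^{P}$ Borel-ly into clusters, each of small graph diameter, so that $E_{[T]}$ is a finite union of continuous images of compact sets (hence $K_{\sigma}$, in fact closed), and so that within each cluster the ``difference from the base point'' sets form an almost-disjoint family. The design requirement that ties $T$ to every ideal at once is: for each $\ideal\supseteq\text{FIN}$, the unique $T$-path between any two $E_{\ideal}$-equivalent points already lies in that $E_{\ideal}$-class --- equivalently, reading $T$ off the group structure of $2^{P}$, along each $T$-path the accumulated symmetric differences are $\subseteq^{\ast}$-increasing with union the total difference.

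Granting such a $T$, (1) is immediate: $T\cap E_{\ideal}$ is a subforest of the forest $T$, hence acyclic, and by the path property its connectedness relation is exactly $E_{[T]}\cap E_{\ideal}=E_{\ideal}^{\ast}$ (the reverse inclusion being trivial). For (2), $E_{\ideal}^{\ast}\subseteq E_{[T]}$ holds by definition, and within a single $E_{[T]}$-class the relation $E_{\ideal}^{\ast}$ identifies exactly those points whose base-point difference lies in $\ideal$; since those differences run over an almost-disjoint family, this is a ``one big class plus singletons'' relation, hence smooth, and this works uniformly in $\ideal\supseteq\text{FIN}$.

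The heart of the argument is (3). Here one needs a \emph{coding function}: the construction should be set up so that, for a suitable cluster, the map sending the base point $x_{0}$ and a point $y$ to their difference $\delta(x_{0},y)$ is continuous and realizes an almost-disjoint family that is rich enough that, for a vertically invariant ideal $\ideal$, the section $\{y:(x_{0},y)\in E_{\ideal}^{\ast}\}$ is Wadge-complete for the same class as $\ideal$ --- this is where vertical invariance is used, since membership in $\ideal$ then depends only on the pattern of infinite vertical sections and the coding family can be built so that this pattern is driven by an arbitrary real. This already gives $E_{\ideal}^{\ast}\notin\Gamma$ when $\ideal\notin\Gamma$; to upgrade it to $E_{\ideal}^{\ast}\notin\text{pot}(\Gamma)$ one adds a rigidity lemma, showing the cluster structure is homogeneous enough that any finer Polish topology making $E_{\ideal}^{\ast}$ a $\Gamma$-set must, on the relevant cluster, agree up to a Borel rearrangement with the original topology, so the coding reduction survives and forces $\ideal\in\Gamma$.

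Finally (4) is a separate, purely combinatorial point: transfinite iterated Fubini products of $\text{FIN}$ give Borel ideals of cofinal Wadge rank, and prefixing such an ideal $\ideal[J]$ with $\times\,\text{FIN}$ keeps it vertically invariant (shown in the excerpt) while not lowering its Wadge complexity, since $A\mapsto A\times\omega$ continuously reduces $\ideal[J]$ to $\ideal[J]\times\text{FIN}$; so by (3) the $E_{\ideal}^{\ast}$ have cofinal potential Wadge degree. The main obstacle I expect is producing $T$ with the coding function \emph{and} the rigidity needed for (3) at the same time: getting (1) and (2) alone is easy --- the single-bit-flip graph, whose clusters are $\bbE_{0}$-classes, already satisfies them --- but there $E_{\ideal}^{\ast}=\bbE_{0}$ for every $\ideal$, so (3) fails badly, and one needs clusters rich enough to detect all of $\ideal$, yet still treeable compatibly with every $E_{\ideal}$ and rigid enough to block simplification by change of topology.
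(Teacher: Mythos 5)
The paper does not prove this Proposition at all: it is introduced with the sentence ``We briefly summarize some concepts and results from \cite{clm}'' and then stated without a proof environment, so it is a citation of Clemens--Lecomte--Miller, not a result established here. There is therefore no in-paper proof to compare your attempt against.

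Taken on its own terms, your sketch is a sensible reverse-engineering of what such a $T$ must look like, and the soft deductions you draw are sound: given the ``$T$-paths stay inside $E_{\ideal}$-classes'' property, the connectedness relation of the subforest $T\cap E_{\ideal}$ is exactly $E_{[T]}\cap E_{\ideal}$ (item 1); given that the base-point differences within a cluster form an almost-disjoint family, the trichotomy ``$\delta(x_0,y)\in\ideal$ vs.\ not'' splits a cluster into one $E_{\ideal}^{\ast}$-class plus singletons, which is smooth uniformly in $\ideal\supseteq\text{FIN}$ (item 2); and for (4), passing from $\ideal[J]$ to $\ideal[J]\times\text{FIN}$ is vertically invariant and $A\mapsto A\times\omega$ is a continuous reduction, so vertically invariant ideals reach cofinal Wadge rank. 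But what you call ``the heart of the argument,'' namely producing the treeing and verifying the coding property and the rigidity needed for (3), you explicitly defer (``this is exactly the content of the corresponding result of \cite{clm}'' and ``the main obstacle I expect is producing $T$ with the coding function and the rigidity \emph{at the same time}''). That is the substance of the Proposition, and it is not supplied; in particular there is a real tension you do not resolve between demanding that the base-point difference family be almost-disjoint (which you use for (2)) and demanding that the vertical-section cardinality pattern of those differences sweep out enough of $2^{P}$ to Wadge-reduce a given vertically invariant $\ideal$ (which you need for (3)). One should also note that your illustrative ``easy'' example is off: the single-bit-flip graph on $2^{\omega}$ is not acyclic (flip bit $0$, flip bit $1$, flip bit $0$, flip bit $1$ is a $4$-cycle), so it is not a treeing of $\bbE_{0}$; one needs a genuine treeing of $\bbE_{0}$ for the point you are making, though the underlying observation --- that a naive $T$ collapses all $E_{\ideal}^{\ast}$ to $\bbE_{0}$ and so fails (3) --- is correct. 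In short: the outline is coherent, but it establishes the easy consequences of the construction rather than the construction itself, which is where the content lies, and for which one should simply cite \cite{clm}.
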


From this we get:

\begin{lem}
If $E$ is prime to $E_{[T]}$, then $E$ is prime to $E_{\ideal}^{\ast}$ for all $\ideal$.
\end{lem}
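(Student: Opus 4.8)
The plan is to apply Lemma~\ref{lem:basic}(3) with $F = E_{[T]}$ and $F' = E_{\ideal}^{\ast}$. By clause (2) of the preceding Proposition, $E_{\ideal}^{\ast} = E_{\ideal} \cap E_{[T]} \subseteq E_{[T]}$, so the containment hypothesis of that lemma holds automatically, and the only thing left to verify is that $E$ is prime to $E_{\ideal}^{\ast} \upharpoonright [y]_{E_{[T]}}$ for every $y$.

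First I would note that $E_{[T]}$ has perfectly many classes, and hence $\Delta(\bbR) \leq_B E_{[T]}$ by Silver's theorem. Indeed, if $E_{[T]}$ had only countably many classes, then each such class would be Borel, and by clause (2) of the Proposition the restriction of $E_{\ideal}^{\ast}$ to it would be smooth; thus $E_{\ideal}^{\ast}$ would be a countable disjoint union of smooth equivalence relations on Borel pieces, hence itself smooth --- contradicting clauses (3) and (4), which supply vertically invariant $\ideal$ of arbitrarily high Wadge degree with $E_{\ideal}^{\ast}$ not potentially closed. Since $E$ is prime to $E_{[T]}$, Lemma~\ref{lem:basic}(2) now gives that $E$ is prime to $\Delta(\bbR)$.

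By clause (2) of the Proposition, for each $y$ the restriction $E_{\ideal}^{\ast} \upharpoonright [y]_{E_{[T]}}$ is smooth, hence Borel reducible to $\Delta(\bbR)$; so a second application of Lemma~\ref{lem:basic}(2) --- using that $E$ is prime to $\Delta(\bbR)$ --- shows $E$ is prime to $E_{\ideal}^{\ast} \upharpoonright [y]_{E_{[T]}}$ for every $y$. Lemma~\ref{lem:basic}(3) then applies and yields that $E$ is prime to $E_{\ideal}^{\ast}$. Since $\ideal$ was arbitrary, this holds for all $\ideal$.

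The argument is essentially an assembly of results already in place (Lemma~\ref{lem:basic}(2),(3) together with the Proposition), and I do not expect any real difficulty; the one point that needs a moment's care is the transfer step, namely verifying $\Delta(\bbR) \leq_B E_{[T]}$ --- equivalently that $E_{[T]}$ has perfectly many classes, equivalently that some $E_{\ideal}^{\ast}$ is genuinely non-smooth --- which is what lets primeness of $E$ over $E_{[T]}$ descend to primeness over $\Delta(\bbR)$ and hence over the smooth sections $E_{\ideal}^{\ast} \upharpoonright [y]_{E_{[T]}}$.
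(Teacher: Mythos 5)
Your proof is correct and follows essentially the same route as the paper: reduce to Lemma~\ref{lem:basic}(2) and (3) via the facts that $\Delta(\bbR) \leq_B E_{[T]}$ and that $E_{\ideal}^{\ast}$ restricted to a single $E_{[T]}$-class is smooth. The only difference is that you supply a (valid) indirect justification for $\Delta(\bbR) \leq_B E_{[T]}$ via the Wadge-degree clauses, whereas the paper takes this for granted, it being immediate from the construction of $T$ in \cite{clm}.
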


\begin{proof}
Since $\Delta(\bbR) \leq_B E_{[T]}$, $E$ is prime to any smooth equivalence relation by Lemma~\ref{lem:basic} (2). Then by property (2) of $E_{[T]}$ and part (3) of Lemma~\ref{lem:basic}, we have that $E$ is prime to $E_{\ideal}^{\ast}$.
\end{proof}

For instance, since $\bbE_0^{\omega}$ is prime to any $F_{\sigma}$ equivalence relation:

\begin{cor}
$\bbE_0^{\omega}$ is prime to $E_{\ideal}^{\ast}$ for all $\ideal$.
\end{cor}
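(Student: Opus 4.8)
The plan is to apply the preceding Lemma with $E = \bbE_0^{\omega}$, so that everything reduces to verifying its single hypothesis: that $\bbE_0^{\omega}$ is prime to $E_{[T]}$. Since $E_{[T]}$ is by construction a $K_{\sigma}$ equivalence relation, and in particular $F_{\sigma}$, this is exactly the instance of Theorem~\ref{thm:Fsigma} alluded to in the sentence preceding the statement.

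First I would check that $\bbE_0^{\omega}$ meets the hypotheses of Theorem~\ref{thm:Fsigma} applied with the base relation $\bbE_0$ (so that $\bbE_0^{\omega} = (\bbE_0)^{\omega}$): the relation $\bbE_0$ has a dense equivalence class --- for instance the class of an eventually-zero sequence, whose closure is all of $2^{\omega}$ --- so $\bbE_0 \leq_B \bbE_0 \upharpoonright \overline{[x_0]_{\bbE_0}}$; and $\bbE_0^{\omega}$ maintains complexity on comeager sets, indeed on non-meager sets, by Theorem~\ref{thm:non-meager-sections}. Hence Theorem~\ref{thm:Fsigma} gives that $\bbE_0^{\omega}$ is prime to every $F_{\sigma}$ equivalence relation, and in particular to $E_{[T]}$. (Alternatively, one may simply note $E_{[T]} \leq_B E_{K_{\sigma}}$ and combine the already-established primeness of $\bbE_0^{\omega}$ to $E_{K_{\sigma}}$ with Lemma~\ref{lem:basic}(2).)

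Finally, the preceding Lemma --- whose proof uses property (2) of $E_{[T]}$, namely that the restriction of $E_{\ideal}^{\ast}$ to a single $E_{[T]}$-class is smooth, together with parts (2) and (3) of Lemma~\ref{lem:basic} --- converts primeness of $\bbE_0^{\omega}$ to $E_{[T]}$ into primeness of $\bbE_0^{\omega}$ to $E_{\ideal}^{\ast}$ for every ideal $\ideal$, which is the assertion. There is essentially no obstacle here; the only point requiring a moment's care is confirming that $\bbE_0^{\omega}$ genuinely maintains complexity on comeager sets, so that Theorem~\ref{thm:Fsigma} applies, but this is precisely Theorem~\ref{thm:non-meager-sections}.
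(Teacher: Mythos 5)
Your proof is correct and is essentially the paper's own argument: the paper also notes that $E_{[T]}$ is $K_\sigma$ (hence $F_\sigma$), that $\bbE_0^{\omega}$ is prime to every $F_\sigma$ equivalence relation by Theorem~\ref{thm:Fsigma}, and then invokes the immediately preceding lemma. Your verification of the hypotheses of Theorem~\ref{thm:Fsigma}, and the alternative route via $E_{[T]} \leq_B E_{K_\sigma}$ and Lemma~\ref{lem:basic}(2), are both sound.
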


Note that for a Wadge class $\Gamma$, we can clearly not have an equivalence relation $E$ such that $F \in \text{pot}(\Gamma)$ iff $E$ is not prime to $F$; the above lemma also immediately rules out a converse approach for non-trivial $\Gamma$:

\begin{thm}
If $\Gamma$ is a Wadge class containing $\bSigma^0_2$, then there is no equivalence relation $E$ such that for any Borel equivalence relation $F$, $F \in \text{pot}(\Gamma)$ iff $E$ is prime to $F$.
\end{thm}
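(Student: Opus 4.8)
The plan is to argue by contradiction, exploiting the tension between the ``cofinal in Wadge degree'' phenomenon provided by the family $\{E_{\ideal}^{\ast}\}$ and the rigidity of potential Wadge complexity. Suppose $E$ were an equivalence relation such that for every Borel $F$, $F \in \text{pot}(\Gamma)$ iff $E$ is prime to $F$. First I would observe that since $\Gamma$ contains $\bSigma^0_2$ and hence $\bDelta^0_1$, every smooth equivalence relation lies in $\text{pot}(\Gamma)$; therefore $E$ must be prime to every smooth equivalence relation. In particular $E$ is prime to $\Delta(\bbR)$, so $E$ has perfectly many classes (by Lemma~\ref{lem:basic}(2) together with $\Delta(2) \leq_B \Delta(\bbR)$, a homomorphism to $\Delta(2)$ that fails to be absorbed into one class would witness non-primeness). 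More importantly, $E$ cannot itself be in $\text{pot}(\Gamma)$: no non-unit equivalence relation is prime to itself, so $E \notin \text{pot}(\Gamma)$, and in fact $E \not\leq_B F$ for any $F \in \text{pot}(\Gamma)$.

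The key step is to compare $E$ with the treeing $E_{[T]}$ from the displayed proposition. I claim $E$ is prime to $E_{[T]}$. Indeed, $E_{[T]}$ is $K_{\sigma}$, hence in particular $\bSigma^0_2$ and so in $\text{pot}(\Gamma)$, and therefore by hypothesis $E$ is prime to $E_{[T]}$. Now apply the Lemma immediately preceding the theorem: since $E$ is prime to $E_{[T]}$, $E$ is prime to $E_{\ideal}^{\ast}$ for \emph{every} ideal $\ideal$. By hypothesis, this forces $E_{\ideal}^{\ast} \in \text{pot}(\Gamma)$ for every $\ideal$. But property (3) of the proposition says that if $\ideal$ is vertically invariant with $\ideal \notin \Gamma$, then $E_{\ideal}^{\ast} \notin \text{pot}(\Gamma)$; and property (4) provides vertically invariant ideals $\ideal$ of arbitrarily high Wadge degree, in particular of Wadge degree strictly above $\Gamma$ (so that $\ideal \notin \Gamma$). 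Fixing such an $\ideal$ yields $E_{\ideal}^{\ast} \notin \text{pot}(\Gamma)$, contradicting what we just derived. Hence no such $E$ exists.

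I expect the only genuine subtlety to be the bookkeeping at the start: verifying that the hypothesis on $E$ really does entail $E$ is prime to every member of $\text{pot}(\Gamma)$ that we need — namely to $E_{[T]}$ — which is immediate once we note $E_{[T]}$ is $K_{\sigma} \subseteq \bSigma^0_2 \subseteq \Gamma$ (as a pointclass, $\Gamma$ being a Wadge class containing $\bSigma^0_2$ is closed downward under Wadge reducibility, so it contains every $K_\sigma$ set), and that ``$E_{[T]} \in \Gamma$'' certainly gives ``$E_{[T]} \in \text{pot}(\Gamma)$''. The remaining ingredients — the Lemma transferring primeness from $E_{[T]}$ to all $E_{\ideal}^{\ast}$, and properties (3) and (4) of the proposition — are quoted directly from the excerpt, so the argument is short. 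The main conceptual point, worth stating explicitly in the writeup, is that primeness to a single $K_\sigma$ relation propagates (via the treeable approximations $E_{\ideal}^{\ast}$, which are smooth on each $E_{[T]}$-class) to relations of unbounded potential complexity, which is exactly the opposite of the behavior a characterization of $\text{pot}(\Gamma)$ would require.
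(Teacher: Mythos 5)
Your proof is correct and follows essentially the same route as the paper's: observe $E_{[T]}$ is $K_\sigma \subseteq \bSigma^0_2 \subseteq \Gamma$ so $E$ must be prime to $E_{[T]}$, apply the preceding lemma to conclude $E$ is prime to every $E_{\ideal}^{\ast}$, and then use properties (3)–(4) of the proposition to find $E_{\ideal}^{\ast} \notin \text{pot}(\Gamma)$, a contradiction. The opening remarks about $E$ having perfectly many classes and $E \notin \text{pot}(\Gamma)$ are true but not needed; the paper omits them and goes straight to the comparison with $E_{[T]}$.
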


\begin{proof}
Since $\bSigma^0_2 \subseteq \Gamma$, we have $E_{[T]} \in \Gamma$, hence $E$ would be prime to $E_{[T]}$. But then $E$ is prime to all $E_{\ideal}^{\ast}$, which are not all in $\text{pot}(\Gamma)$, a contradiction.
\end{proof}

We can ask whether any sufficiently complicated equivalence relation has this property.

\begin{question}
Let $E$ be a Borel equivalence relation which is not essentially hyperfinite. Are there $F$ of arbitrarily high potential Wadge degree such that $E$ is prime to $F$?
\end{question}
This would strengthen Theorem~6.2 of \cite{clm} that for any Borel equivalence relation $E$ which is not essentially hyperfinite there are $F$ of arbitrarily high potential Wadge degree such that $E$ is incompatible with $F$ under Borel reducibility.


\providecommand{\bysame}{\leavevmode\hbox to3em{\hrulefill}\thinspace}
\providecommand{\MR}{\relax\ifhmode\unskip\space\fi MR }
\providecommand{\MRhref}[2]{%
  \href{http://www.ams.org/mathscinet-getitem?mr=#1}{#2}
}
\providecommand{\href}[2]{#2}

\end{document}